\theoremstyle{plain}
\newtheorem{theorem}{Theorem}[section]
\newtheorem{lemma}[theorem]{Lemma}
\newtheorem{definition-theorem}[theorem]{Definition-Theorem}
\newtheorem{proposition}[theorem]{Proposition}
\newtheorem{corollary}[theorem]{Corollary}
\newtheorem{conjecture}[theorem]{Conjecture}
\theoremstyle{definition}
\newtheorem{definition}[theorem]{Definition}
\newtheorem{example}[theorem]{Example}
\newtheorem{remark}[theorem]{Remark}
\newtheorem{notation}[theorem]{Notation}
\newcommand \bth[1] { \begin{theorem}\label{t#1} }
\newcommand \ble[1] { \begin{lemma}\label{l#1} }
\newcommand \bpr[1] { \begin{proposition}\label{p#1} }
\newcommand \bco[1] { \begin{corollary}\label{c#1} }
\newcommand \bde[1] { \begin{definition}\label{d#1}\rm }
\newcommand \bex[1] { \begin{example}\label{e#1}\rm }
\newcommand \bre[1] { \begin{remark}\label{r#1}\rm }
\newcommand \bnota[1] {\begin{notation}\label{n#1}\rm }
\newcommand {\ele} { \end{lemma} }
\newcommand {\epr} { \end{proposition} }
\newcommand {\eco} { \end{corollary} }
\newcommand {\ede} { \end{definition} }
\newcommand {\eex} { \end{example} }
\newcommand {\ere} { \end{remark} }
\newcommand {\enota} { \end{notation} }
\begin{document}
%\tableofcontents
\setlength{\baselineskip}{1.2\baselineskip}
%%%%%%%%%%%%%%%%%%%%%%%%%%%%%%%%%%%%%%%%%%%%%%%%%%%%%%%%%%%%%%%%%%%%%%%%%%%
%%%%%%%%%%%%%%%%%%%%%%    Title    %%%%%%%%%%%%%%%%%%%%%%%%%%%%%%%%%%%%%%%%
\title[Non-tempered Gan-Gross-Prasad conjecture]
{Restriction for General Linear Groups: the local non-tempered Gan-Gross-Prasad conjecture (non-Archimedean case)  }
\author[Kei Yuen Chan]{Kei Yuen Chan}

%Restriction pour les groupes lin\'eaires g\'en\'eraux: la conjecture locale non temp\'er\'ee de Gan-Gross-Prasad (cas non archimédien) \\

\address{
Shanghai Center for Mathematical Sciences \\
Fudan University}
\email{kychan@fudan.edu.cn}

\maketitle

%\selectlanguage{french} 
%\begin{abstract}
%Nous prouvons une conjecture locale de Gan-Gross-Prasad sur la pr\'evision de la loi de branchement pour les repr\'esentations non-temp\'er\'ees des groupes lin\'eaires g\'en\'eraux dans le cas des champs non-archim\'ediens. Nous g\'en\'eralisons \'egalement les mod\'eles Bessel et Fourier-Jacobi et \'etudions une \'eventuelle g\'en\'eralisation aux lois d'Ext-branchement.
%\end{abstract}
\selectlanguage{english} 

\begin{abstract}
We prove a local Gan-Gross-Prasad conjecture on predicting the branching law for the non-tempered representations of general linear groups in the case of non-Archimedean fields. We also generalize to Bessel and Fourier-Jacobi models and study a possible generalization to Ext-branching laws.\end{abstract}

\section{Introduction}

In 1990s', Gross-Prasad \cite{GP93} formulated conjectures which determine when an irreducible generic representation of $\mathrm{SO}_{n-1}(F)$ appears in a quotient of an irreducible generic representation of $\mathrm{SO}_n(F)$, where $F$ is a local field. The conjectural answer is in terms of symplectic root numbers, providing deep connections with number theory. About ten years ago, Gan-Gross-Prasad \cite{GGP12} generalized the conjectures to other classical groups. For $p$-adic groups, the local generic conjectures in orthogonal, unitary and symplectic-metaplectic cases have been respectively settled by Waldspurger \cite{Wa12}, M{\oe}glin-Waldspurger \cite{MW12}, and by Beuzart-Plessis \cite{BP14}, Gan-Ichino \cite{GI16}, and by Atobe \cite{At18}; and for real groups, the unitary cases for tempered representations and independently for discrete series are settled by Beuzart-Plessis \cite{BP15} and H. He \cite{He17} respectively. We remark that the generic case for general linear groups has been known long from the work of Jacquet--Piateski-Shapiro--Shalika \cite{JPSS83}.

Recently, Gan-Gross-Prasad \cite{GGP19} formulated new conjectures for certain nontempered representations arising from a local component of an automrophic representation. The main goal of this paper is to prove one of those conjectures for general linear groups over a non-Archimedean local field and study related generalizations.

%This paper studies the restriction of representations from $\mathrm{GL}_{n+1}(F)$ to $\mathrm{GL}_n(F)$, where $F$ is a local non-Archimedean field. 

%The smaller rank cases are dealt in Flicker and Prasad. There are some more recent advances in studying the restriction from $G_{n+1}$ to $G_n$, including the uniqueness of irreducible quotients [AGRS], and study of homological properties. 

%For studying when an irreducible representation of $G_n$ appears a quotient of an irreducible representation of $G_{n+1}$, a complete understanding only limits to certain classes of representations. This includes generic representations, distinguished representations. 

%Two main approaches in the study of these problems: one is to construct an explicit non-zero element in the $\mathrm{Hom}_{G_n}(\pi_1, \pi_2)$ for irreducibles $\pi_1$ and $\pi_2$ of $G_{n+1}$ and $G_n$ respectively. 

%A key of our approach is to combine Mackey theory and Bernstein-Zelevinsky filtrations to obtain finer information. This allows one to effectively to handle each Speh modules separately. We also 

\subsection{Local non-tempered Gan-Gross-Prasad conjecture}

We begin with a precise formulation of the non-tempered conjecture. Let $G_n=\mathrm{GL}_n(F)$, the general linear group over a local field $F$. 

Let $W_F$ be the Weil group of $F$. The Weil-Deligne group $WD_F$ of $F$ is defined as:
\[ WD_F =\left\{ \begin{array}{l l} W_F \times \mathrm{SL}_2(\mathbb C) & \mbox{ if $F$ is non-Archimedean } \\
      W_F & \mbox{ if $F$ is Archimedean } \end{array} \right.
\]
The set of Langlands parameters of $G_n$ is the set of equivalence classes of homomorphisms
\[  \phi: WD_F \rightarrow {}^LG= \mathrm{GL}_n(\mathbb C) ,
\]
under conjugation by elements in $\mathrm{GL}_n(\mathbb C)$, and the restriction to the factor of $\mathrm{SL}_2(\mathbb{C})$ in $W_F$ is algebraic. The local Langlands correspondence for $\mathrm{GL}_n(F)$ is now known by \cite{Ze80,LRS93, HT01, He00, Sc13}.

Define the Arthur parameters \cite{Ar89} as the set of ${}^LG$-orbits of maps 
\[  \psi: WD_F\times \mathrm{SL}_2(\mathbb C) \rightarrow {}^LG
\]
such that $\psi|_{WD_F}$ has bounded image i.e. has tempered Langlands parameter, and the restriction to the $\mathrm{SL}_2(\mathbb C)$ factor is algebraic. For each Arthur parameter $\psi$, one assigns a $L$-parameter given by
\[  \phi_{\psi}(w) =\psi(w, \begin{pmatrix} |w|^{1/2} & 0 \\ 0 & |w|^{-1/2} \end{pmatrix}) .
\]
%The map $\psi \mapsto \phi_{\psi}$ is an injection. 

Let $\mathrm{Sym}^k(\mathbb C^2)$ be the unique $(k+1)$-dimensional irreducible representation of $\mathrm{SL}_2(\mathbb C)$. The Arthur parameter, as a finite $WD_F  \times \mathrm{SL}_2(\mathbb{C})$-representation $\psi$, takes the form
\begin{align} \label{eqn arthur para form}
 M_A= \sum_d M_d \otimes \mathrm{Sym}^{d}(\mathbb{C}^2) ,
\end{align}
where each $M_d$ is a representation of $WD_F$ such that $\psi|_{WD_F}$ has bounded image i.e. each $M_i$ corresponds to a tempered representation. It gives rise to a Langlands parameter $M$ as described above, and gives a $G_n$-representation denoted by $\pi_M$. Any irreducible smooth representation of $G_n$ associated to the Langlands parameter $\phi_{\psi}$ coming from an Arthur parameter is called a representation of Arhtur type.

A key notion in \cite{GGP19} is the relevant pair which governs the branching law of representations of Arthur type: 

\begin{definition} \label{def relevant pair}  \cite{GGP19}
For any $n,m \in \mathbb Z_{\geq 0}$, two Arthur parameters $M_A$ and $N_A$ for respective $G_n$ and $G_m$ are said to form a {\it relevant} pair if there exists $WD_F$-representations $M_0^+, \ldots, M_r^+, M_0^-, \ldots, M_s^-$ (possibly zero) corresponding to tempered representations such that 
\begin{align} \label{eqn relevant MA}
 M_A = \sum_{d=0}^r M_{d}^+ \otimes \mathrm{Sym}^d(\mathbb{C}^2) \oplus \sum_{d=1}^s M_{d}^- \otimes \mathrm{Sym}^{d-1}(\mathbb{C}^2) ,
\end{align}
and
\begin{align} \label{eqn relevant NA}
 N_A=\sum_{d=1}^r M_d^+ \otimes \mathrm{Sym}^{d-1}(\mathbb{C}^2) \oplus \sum_{d=0}^s M_d^-\otimes \mathrm{Sym}^{d}(\mathbb{C}^2)  .
\end{align}

\end{definition}

We remark that in the above definition, the dimensions of the Arthur parameters $M_A$ and $N_A$ are not required to be of corank $1$.

%The Arthur parameter of dimension in this specific case takes the form
%\[  \sum_{i=1}^d \sigma_i \boxtimes V(k_i) \boxtimes V(l_i),\]
%where $\sigma_i$ is an irreducible representation of $W_F$ and $V(m)$ denotes the irreducible $\mathrm{SL}_2(\mathbb C)$-representation with the highest weight $m+1$. 

%Let $\phi_{\sigma, k,l}=\sigma \boxtimes V(k) \boxtimes V(l)$ be a Arthur parameter.

We regard $G_n$ as a subgroup of $G_{n+1}$ via the embedding $g \mapsto \mathrm{diag}(g ,1 )$. A non-tempered Gan-Gross-Prasad conjecture predicts which Arthur type representations of $G_n$ appear in the quotient of an Arthur type representation of $G_{n+1}$, in terms of relevant pairs.

\begin{conjecture} \label{conj ggp orig} \cite[Conjecture 5.1]{GGP19}
Let $F$ be a local field. Let $\pi_M$ and $\pi_N$ be Arthur type representations of $\mathrm{GL}_{n+1}(F)$ and $\mathrm{GL}_n(F)$ respectively. Then $\mathrm{Hom}_{G_n}(\pi_M, \pi_N) \neq 0$ if and only if  their respective associated Arthur parameters $M_A$ and $N_A$ are relevant.
\end{conjecture}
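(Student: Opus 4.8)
The plan is to prove the two directions of Conjecture~\ref{conj ggp orig} separately, with the bulk of the work lying in establishing the ``if'' direction (constructing nonzero homomorphisms for relevant pairs). My strategy is to reduce everything to the combinatorics of Zelevinsky/Langlands multisegments via the local Langlands correspondence for $\mathrm{GL}_n$, and then to feed the resulting branching statements into the machinery of Bernstein--Zelevinsky derivatives. Concretely, the first step is to translate Definition~\ref{def relevant pair}: an Arthur parameter $M_A = \sum_d M_d \otimes \mathrm{Sym}^d(\mathbb{C}^2)$ corresponds, under $\psi \mapsto \phi_\psi$, to a Speh-type representation, namely a product (in the sense of parabolic induction) of Speh representations $\mathrm{Sp}(\delta_d, d+1)$ built from the discrete series $\delta_d$ attached to $M_d$. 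The relevant-pair condition then says precisely that $\pi_N$ is obtained from $\pi_M$ by replacing each block $\mathrm{Sym}^d \to \mathrm{Sym}^{d-1}$ (shrinking the $\mathrm{SL}_2$-ladder by one step) for the ``$+$'' constituents and doing the reverse for the ``$-$'' constituents. This is a clean combinatorial relation on the underlying multisegments that I will record as a lemma.

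For the ``if'' direction I would argue by induction on $n$, peeling off one Speh block at a time. The key tool is the theory of Bernstein--Zelevinsky filtrations of $\pi_M|_{G_n}$: the restriction has a filtration whose subquotients are parabolically induced from (shifted) derivatives, and $\mathrm{Hom}_{G_n}(\pi_M, \pi_N) \neq 0$ can be detected by finding $\pi_N$ as a quotient of one such layer, which by Frobenius reciprocity / second adjointness amounts to a statement about (highest or appropriately chosen) derivatives of $\pi_M$ and $\pi_N$. Since the Zelevinsky/Bernstein--Zelevinsky derivatives of Speh representations and their products are completely computable (they are again products of Speh representations, possibly with a shift), the relevant-pair recipe should match exactly the combinatorics of which derivative of $\pi_M$ surjects onto which derivative of $\pi_N$. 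To make the induction close I expect to need: (a) an ``exactness/nonvanishing'' input showing the relevant layer of the BZ-filtration genuinely contributes a quotient (rather than being killed), which is where results on the indecomposability or the socle/cosocle of these induced modules enter; and (b) a multiplicity-one type statement, or at least a nonvanishing statement, for $\mathrm{Hom}$ between the base cases (e.g.\ a Speh representation restricted to $G_n$ mapping onto a smaller Speh representation), which can be bootstrapped from the generic GGP already known, or from an explicit model.

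For the ``only if'' direction the plan is contrapositive: if $(M_A, N_A)$ is \emph{not} relevant, I want to show $\mathrm{Hom}_{G_n}(\pi_M, \pi_N) = 0$. Here I would again use the BZ-filtration, but now as an upper bound: every subquotient of $\pi_M|_{G_n}$ is an induced representation from derivatives, and I must check that none of them admits $\pi_N$ as a quotient unless the relevance relation holds. This reduces to a statement purely about when one product of Speh representations can be a subquotient of a parabolic induction involving derivatives of another --- a question controlled by cuspidal support and by the combinatorial geometry of segments (intersections, linkedness). The main obstacle, I anticipate, is exactly this ``only if'' bookkeeping: the BZ-filtration can have many layers, derivatives of Speh modules reintroduce many segments, and ruling out \emph{all} spurious homomorphisms requires a careful invariant --- most likely counting, for each cuspidal line $\{\rho, \rho\nu, \rho\nu^2,\dots\}$, the ``shape'' of the associated Young-diagram-like data on both sides and showing the homomorphism forces the shrink-by-one-step relation. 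A secondary subtlety is bounding $\dim \mathrm{Hom}$ (to get the precise dichotomy, not just nonvanishing), for which I would invoke or adapt known multiplicity-one results for $(\mathrm{GL}_{n+1}, \mathrm{GL}_n)$ in the non-Archimedean case. I would organize the final argument so that both directions are consequences of a single structural description of $\mathrm{Hom}_{G_n}(\pi_M, \pi_N)$ in terms of the multisegment combinatorics, with the relevant-pair condition emerging as the precise nonvanishing criterion.
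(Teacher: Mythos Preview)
Your overall architecture---induction via Bernstein--Zelevinsky filtrations, peeling off Speh blocks, reducing to derivatives---matches the paper's strategy in broad outline, but there are two concrete ingredients missing that the paper relies on essentially, and without them your induction will not close.

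First, you induct on $n$, but the paper inducts on the number of \emph{non-cuspidal} Speh factors in $\pi_M$ and $\pi_N$ combined. This matters because the reduction step does not simply lower $n$: after isolating the bottom BZ-layer $u_{\rho}(m,d-1)\times\bigl((\Pi\bar\times\pi')|_G\bigr)$, the paper replaces the Gelfand--Graev piece $\Pi\bar\times\pi'$ by $(\sigma\times\pi')|_G$ for a well-chosen cuspidal $\sigma$ (Lemma~\ref{lem cuspidal supp}), so that one is again looking at a restriction of an Arthur type representation---but now with one fewer non-cuspidal factor. You gesture at ``(a) an exactness/nonvanishing input,'' but the specific mechanism is this transfer, and it is what makes the inductive hypothesis applicable.

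Second, and more seriously, your plan does not address the asymmetry between $\pi_M$ and $\pi_N$. The cuspidal-support argument that kills all but the bottom layer only works if the \emph{largest} Speh block (in the sense of $m+d$) sits in $\pi_M$. When it sits in $\pi_N$ instead, the paper invokes a \emph{dual restriction} (Proposition~\ref{prop bessel transfer}): $\mathrm{Hom}_{G_n}(\pi_M,\pi_N)\neq 0$ is equivalent to $\mathrm{Hom}_{G_{n+1}}(\sigma'\times\pi_N,\pi_M)\neq 0$ for a suitable cuspidal $\sigma'$ of $G_2$, which swaps the roles of $M$ and $N$. Without this device your BZ-filtration argument stalls in half the cases.

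For the ``only if'' direction, your proposed combinatorial bookkeeping on cuspidal lines is in the spirit of Gurevich's proof \cite{Gu18}, which the paper cites as an alternative. The paper itself takes a different route: once reduced to $\mathrm{Hom}\bigl(u_{\rho}(m,d-1)\times(\text{stuff}),\pi_N\bigr)\neq 0$, it invokes a structural result (Proposition~\ref{prop preserving quo sp}, proved via a fully-faithful product functor in Section~\ref{s product functor}) saying that under a cuspidal-support hypothesis, any irreducible quotient of $u\times\pi_1$ is of the form $u\times\omega$ for an irreducible quotient $\omega$ of $\pi_1$. This immediately forces $\pi_N\cong u_{\rho}(m,d-1)\times\omega$ and feeds the induction. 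Your segment-counting approach may well succeed, but you should be aware that it is a genuinely different argument from the paper's, and that the obstacle you flag---ruling out contributions from \emph{all} non-highest derivatives---is exactly where Gurevich's work is nontrivial.
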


The main result of the paper is to prove the conjecture for non-Archimedean field $F$. Previously, for non-Archimedean $F$, certain cases including when the Deligne $\mathrm{SL}_2(\mathbb{C})$ in $WD_F$ acts trivially are proved in \cite{GGP19}, and M. Gurevich \cite{Gu18} proves the only if direction. We shall give another proof for the only if direction in this paper. Recently, Gourevitch-Sayag \cite{GS20} have results towards the Archimedean case. The unitary restriction problem is studied in \cite{Ve05} by Venkatesh.

\begin{theorem}
If $F$ is non-Archimedean, Conjecture \ref{conj ggp orig} holds.
\end{theorem}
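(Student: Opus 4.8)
The plan is to reduce the conjecture to a purely representation-theoretic statement about the branching of standard and Arthur-type modules, and then to exploit the Bernstein-Zelevinsky derivative machinery together with results on the Gan-Gross-Prasad problem for generic representations (already settled in the general linear case). First I would translate the condition that $M_A$ and $N_A$ form a relevant pair into combinatorial data on the associated multisegments, using the Mœglin-Tadi\'c / Zelevinsky description of Arthur type representations of $G_n$ as Langlands quotients built from Speh representations $\mathrm{Sp}(\Delta, d)$. A relevant pair amounts to saying that $\pi_N$ is obtained from $\pi_M$ by "shifting down" each Speh block by one, which at the level of cuspidal support is the same combinatorial move as passing from a generic representation of $G_{n+1}$ to the generic representation of $G_n$ in a relevant pair in the tempered setting.

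For the \emph{if} direction I would proceed by induction on the number of Speh blocks, producing the nonzero $G_n$-map explicitly. The key tool is the theory of Bernstein-Zelevinsky derivatives and the "big derivative" / highest derivative computations: $\mathrm{Hom}_{G_n}(\pi_M,\pi_N)\neq 0$ can be detected by showing a suitable (shifted) derivative of $\pi_M$ has $\pi_N$ as a quotient, via the standard Mackey-type filtration of the restriction $\pi_M|_{G_n}$ whose graded pieces involve the derivatives $\pi_M^{(i)}$ twisted and induced. Since Speh representations have very clean derivatives (Tadi\'c's formula), the shift-down operation on Arthur parameters matches exactly the operation of taking the appropriate derivative, and one pieces together the map from the tempered/generic case by an exact-sequence and induction-in-stages argument. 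Here I would lean on the already-known cases in \cite{GGP19} (e.g. trivial Deligne $\mathrm{SL}_2$) as the base of the induction.

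For the \emph{only if} direction, the cleanest route is to combine a reduction to the generic case with Gurevich's argument \cite{Gu18}, but I would prefer a self-contained proof: show that if $\mathrm{Hom}_{G_n}(\pi_M,\pi_N)\neq 0$ then all the relevant derivatives are forced to be nonzero and compatible, which by the classification of derivatives of Speh representations pins down the shape \eqref{eqn relevant MA}--\eqref{eqn relevant NA}. Concretely, one uses that $\pi_M$ and $\pi_N$ are irreducible and unitarizable (hence $\square$-irreducible / of Arthur type forces strong multiplicity-one properties on their derivatives), and that the Bernstein-Zelevinsky filtration together with the non-vanishing of $\mathrm{Hom}$ constrains the cuspidal supports of $\pi_M$ and $\pi_N$ to differ by precisely a relevant shift; an Euler-Poincar\'e / Ext-vanishing input ensures no cancellation occurs in the filtration.

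The main obstacle I anticipate is the \emph{if} direction when the parameter has several Speh blocks of different sizes $d$ whose segments $M_d^{\pm}$ interact (linked segments): the derivative of a product of Speh representations is not simply the product of derivatives, so controlling which constituent of the derivative actually carries the map to $\pi_N$ requires a careful irreducibility analysis — showing the relevant Langlands quotient appears with multiplicity one in the appropriate layer of the Bernstein-Zelevinsky filtration and is not killed by a higher layer. This is where the bulk of the technical work (commutativity of taking derivatives with the relevant Zelevinsky operations, and $\square$-irreducibility of the building blocks) will be needed.
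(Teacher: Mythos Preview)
Your plan correctly identifies the broad architecture---induction, the Bernstein--Zelevinsky filtration, and the clean derivative formulas for Speh representations---but it is missing the three specific mechanisms the paper uses to close the argument, and without them the proof stalls at exactly the obstacle you flag at the end. For the \emph{if} direction, the approach ``detect $\mathrm{Hom}\neq 0$ by matching a shifted derivative of $\pi_M$ with $\pi_N$ through the standard Mackey filtration'' is essentially what was already attempted in \cite{GGP19}, and as noted there (their Remark~5.6, recalled in the paper's introduction) it breaks down when several Speh blocks interact: the relevant layer need not survive as a quotient. The paper's fix is a \emph{transfer} (Lemma~\ref{lem cuspidal supp}): the bottom layer of the refined filtration involves a mirabolic piece $\Pi_p\bar\times\pi'$, and one replaces it, up to controlled Ext-vanishing, by $(\sigma\times\pi')|_G$ for a fresh unitary cuspidal $\sigma$ with disjoint support. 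Since $\sigma\times\pi'$ is again Arthur type with strictly fewer non-cuspidal factors, induction applies. Your proposal has no analogue of this transfer, and ``commutativity of derivatives with Zelevinsky operations'' does not substitute for it.

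Two further missing pieces. First, the induction peels off the Speh block with largest $m+d$, but that block may lie in $\pi_N$ rather than $\pi_M$; the paper handles this asymmetry by a \emph{dual restriction} (Proposition~\ref{prop bessel transfer}), which converts $\mathrm{Hom}_{G_n}(\pi_M,\pi_N)\neq 0$ into $\mathrm{Hom}_{G_{n+1}}(\sigma'\times\pi_N,\pi_M)\neq 0$ for a cuspidal $\sigma'\in\mathrm{Irr}^{u,c}(G_2)$, swapping the roles. Second, for the \emph{only if} direction, ``Ext-vanishing ensures no cancellation'' is too weak: after the reduction one has $\mathrm{Hom}(u_{\rho}(m,d{-}1)\times\tau,\pi_N)\neq 0$ with $\tau$ a (non-admissible) restriction, and one must conclude $\pi_N\cong u_{\rho}(m,d{-}1)\times\omega$ for an irreducible Arthur-type quotient $\omega$ of $\tau$. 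This is not a cuspidal-support bookkeeping statement; it requires that the product functor $u_{\rho}(m,d{-}1)\times(-)$ be \emph{fully faithful} on the relevant subcategory (Proposition~\ref{prop preserving quo sp}, Theorem~\ref{thm fully faith product}), a structural result the paper proves separately by showing the functor preserves indecomposability of length-two objects.
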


\subsection{Representation-theoretic reformulation} \label{ss rep the reform}

From now on, we assume $F$ is non-Archimedean. Let $\mathrm{Alg}(G_n)$ be the category of smooth $G_n$-representations. We first reformulate the problem into a representation theory setup.

For representations $\pi_i$ in $\mathrm{Alg}(G_{n_i})$ $(i=1,\ldots, k)$ and $n=n_1+\ldots +n_k$, we define the product
\[  \pi_1 \times \ldots \times \pi_k \in \mathrm{Alg}(G_n)
\]
to be the normalized parabolic parabolically induced module from $\pi_1 \boxtimes \ldots \boxtimes \pi_k$.  For more detailed notions of Zelevinsky segments and product, see Section \ref{sec zel segments}. For an irreducible unitarizable cuspidal representation $\rho$ of $G_l$, let 
\[ \Delta_{\rho}(m)=[ \nu^{-(m-1)/2}\rho, \nu^{(m-1)/2}\rho] \]
 be a Zelevinsky segment. Any square integrable representation is known to be isomorphic to $\mathrm{St}(\Delta_{\rho}(m))$ for some such Zelevinsky segment $\Delta_{\rho}(m)$ \cite{Ze80}.  Any tempered representation is isomorphic to a product of some square-integrable representations, and corresponds to a $WD_F$-representation $\psi$ with bounded image $\psi(WD_F)$.

Let $v_{\rho}(m,d)$ be the unique irreducible quotient of the product 
\[  \mathrm{St}(\nu^{(d-1)/2}\Delta_{\rho}(m)) \times \ldots \times \mathrm{St}(\nu^{-(d-1)/2}\Delta_{\rho}(m)) ,
\]
which is so-called a Speh representation and is unitarizable. Each factor $M_d \otimes \mathrm{Sym}^d(\mathbb{C})$ in (\ref{eqn arthur para form}) corresponds to a product of Speh representations of the form 
\begin{align} \label{eqn speh product} v_{\rho_1}(m_1, d+1) \times \ldots \times v_{\rho_r}(m_r, d+1) .
\end{align} 
Any Arthur type representation is a product of some Speh representations. It follows from \cite{Be84, Ta86} that such product is irreducible, and is independent of the ordering of Speh representations.

The notion of a derivative is defined in \cite{Ze80} (see Section \ref{ss bz functor} for the detail). For an irreducible $\pi \in \mathrm{Alg}(G_r)$, let $\widetilde{\pi}$ be the highest derivative of $\pi$ and let $\pi^-=\nu^{1/2}\widetilde{\pi}$, where $\nu(g)=|\mathrm{det}g|_F$. A key observation in \cite{GGP19} is that
\begin{align} \label{eqn derivative unitary}
  v_{\rho}(m,d+1)^- \cong v_{\rho}(m,d) ,
\end{align}
(when $d=0$, we regard $v_{\rho}(m,0)=1$) and so
\begin{align} \label{eqn derivative product}
 (v_{\rho_1}(m_1, d+1)\times \ldots \times v_{\rho_r}(m_r, d+1) )^- \cong v_{\rho_1}(m_1,d)\times \ldots \times v_{\rho_r}(m_r, d) ,
\end{align}
which is also a motivation for the notion of relevant pairs in \cite{GGP19}. The isomorphism (\ref{eqn derivative unitary}) follows from the well-known highest derivative of Zelevinsky \cite{Ze80} (and its translation to the Zelevinsky classification via \cite{Ta86}).

Thus combining Definition \ref{def relevant pair}, (\ref{eqn speh product}) and (\ref{eqn derivative product}), we have the following reformulation: \\

\noindent
{\bf Reformulation of Conjecture \ref{conj ggp orig} for non-Archimedean.} Let $F$ be a non-Archimedean local field. Let $\pi_M$ and $\pi_N$ be Arthur type representations of $\mathrm{GL}_{n+1}(F)$ and $\mathrm{GL}_n(F)$ respectively. Then $\mathrm{Hom}_{G_n}(\pi_M, \pi_N) \neq 0$ if and only if there exist Speh representations $\pi_{p,1}, \ldots, \pi_{p,r}$ and $\pi_{q,1}, \ldots, \pi_{q,s}$ such that
\[ \pi_M \cong \pi_{p,1}\times \ldots \times \pi_{p,r}\times \pi_{q,1}^-\times \ldots \times \pi_{q,s}^- 
\]
and
\[ \pi_N \cong \pi_{p,1}^-\times \ldots \times \pi_{p,r}^- \times \pi_{q,1} \times \ldots \times \pi_{q,s} .
\]

\subsection{Generalizations} \label{ss intro generalize}

The first generalization is on Bessel and Fourier-Jacobi models (Theorem \ref{thm general cases}). Such generalization is also expected in \cite{GGP19}. The strategy for proving general cases is connecting those models functorially via Bernstein-Zelevinsky theory (Corollary \ref{cor all same models}) and then using the reduction to basic case similar to \cite{GGP12}. The functorial connection is a key difference of our study from the one in \cite{GGP12}. We remark that we also deduce the equal rank Fourier-Jacobi case from the basic case of restricting $G_{n+1}$ to $G_n$ representations, which differs from that some results (such as multiplicity one theorems) are proved separately for equal rank Fourier-Jacobi models.

In more detail, let 
\[  H_r^R=\left\{  \begin{pmatrix} g & & x \\ & 1 & v^t \\ & & u \end{pmatrix} : g \in G_r, x \in Mat_{r \times (n-r)}, v\in F^{n-r}, u \in U_{n-r} \right\}\subset G_{n+1} ,
\]
where $U_{n-r}$ is the subgroup of unipotent upper triangular matrices. It is sometimes referred to a Rankin-Selberg subgroup. Let $\psi$ be a nondegenerate character on a subgroup $U_{n-r} \ltimes F^{n-r}$, extending trivially to $H_r^R$ (also see Section \ref{ss bessel}). We show that the restriction problem for a Bessel model or a Fourier-Jacobi model is equivalent to the problem of determining the corresponding Rankin-Selberg model (Corollary \ref{cor all same models}), i.e. determining if
\[  \mathrm{Hom}_{H_r^R}(\pi_1 \otimes \psi \otimes \nu^{-(n-r)/2}, \pi_2) \neq 0,
\]
where $\pi_1$ and $\pi_2$ are respective irreducible $G_{n+1}$ and $G_r$ representations. 
%The connections of models partly come from Bernstein-Zelevinsky theory and Fourier transform.
%\[ \mathrm{Hom}_{H}(\pi_1, \pi_2 \otimes \psi)\neq 0 \] for some irreducible representations of $\pi_1$ and $\pi_2$ of $G_n$ and $G_r$ respectively.

The second generalization is on Ext-branching laws. The generic case for Ext-branching law is simpler: for respective generic irreducible representations $\pi_1$ and $\pi_2$ of $G_{n+1}$ and $G_n$, 
\[  \mathrm{Hom}_{G_n}(\pi_1, \pi_2)\cong \mathbb C, \quad \mbox{ and } \quad \mathrm{Ext}_{G_n}^i(\pi_1, \pi_2)=0,  \quad \mbox{ for $i\geq 1$ }.
\]
The Ext-vanishing part is conjectured by D. Prasad \cite{Pr18} and proved in \cite{CS18}, and the Ext-result also extends to standard modules in \cite{Ch21}. One may consider an analogous problem of Ext-branching laws for Arthur representations. However, there is no such general Ext-vanishing result for Arthur representations, and we do not have a way predicting non-vanishing Ext at the moment.
% because it seems that even Ext-groups between two Arthur type representations in a non-restriction setting is not much known %except some special cases such as tempered representations \cite{OS12} and Speh representations \cite{Ch16}.

Nevertheless, we formulate a conjecture in Section \ref{conj ext sum}, which reduces computations of Ext-groups for branching laws to computation of Ext-groups of derivatives. The conjecture is partly based on the derivative approach in \cite{GGP19}, as well as some examples computed in this paper.

\subsection{Outline of the proof of non-tempered GGP}

We shall consider the reformulated problem in Section \ref{ss rep the reform}. Let 
\begin{align} \label{eqn piM product express}
 \pi_M =\pi_{p,1} \times \ldots \times \pi_{p,r} \in \mathrm{Alg}(G_{n+1}), \end{align}
and
\begin{align} \label{eqn piN product express}
 \pi_N = \pi_{q,1} \times \ldots \times \pi_{q,s} \in \mathrm{Alg}(G_n),
\end{align}
where each $\pi_{p,i}$ and $\pi_{q,j}$ is an (irreducible) Speh representation.

The proof is on the induction of the total number of factors $\pi_{p,i}$ and $\pi_{q,j}$ which are not cuspidal representations. The basic case is that all factors are cuspidal representations. Then the associated Arthur parameters $M_A$ and $N_A$ are automatically relevant. Since the representations $\pi_M$ and $\pi_N$ are generic in this case, we always have $\mathrm{Hom}_{G_n}(\pi_M, \pi_N) \neq 0$.

The strategy of the general case is to find a suitable filtration on $\pi_M|_{G_n}$
\[  0 \rightarrow \lambda \rightarrow \pi_M|_{G_n} \rightarrow \omega \rightarrow 0
\]
such that 
\begin{align} \label{eqn hom ext vanishing}
  \mathrm{Hom}_{G_n}(\omega, \pi_N)=\mathrm{Ext}^1_{G_n}(\omega, \pi_N) =0 
\end{align}
and $\mathrm{Hom}_{G_n}(\lambda,\pi_N)$ can be transferred to another Hom space computable from the inductive case. Now a long exact sequence argument gives 
\[ \mathrm{Hom}_{G_n}(\pi_M|_{G_n}, \pi_N) \cong \mathrm{Hom}_{G_n}(\lambda, \pi_N) \]
and so one concludes the former from the latter one. The way to find such filtration is based on a combination of Bernstein-Zelevinsky filtration and Mackey theory, and (\ref{eqn hom ext vanishing}) would follow from comparing cuspidal supports on $\omega$ and $\pi_N$. A more systematic filtration is given in Proposition \ref{cor filtration par ind mod}.

In more detail, an Arthur type representation $\pi_M$ is written as a product of Speh representations in (\ref{eqn piM product express}). Now we write $\pi_{p,k}=v_{\rho_k}(m_k,d_k)$ for all $k$. As shown in Proposition \ref{prop bessel transfer}, there is a duality between the original restriction problem $\mathrm{Hom}_{G_n}(\pi_M, \pi_N) \neq 0$ and the dual restriction problem 
\[\mathrm{Hom}_{G_{n+1}}(\sigma' \times \pi_N, \pi_M) \neq 0 ,\] where $\sigma'$ is a certain unitarizable cuspidal representation of $G_2$. With the commutation of the Speh representations in the product, we may assume that $m_1+d_1$ is the largest among all Speh representation factors in $\pi_M$ and $\pi_N$. Such arrangement allows one (easily) finds a suitable filtration to obtain the vanishing (\ref{eqn hom ext vanishing}).

Now some cuspidal support consideration in the filtration reduces to the study of the bottom layer (of the filtration):
\begin{align} \label{eqn transfer}  \mathrm{Hom}_{G_{n+1}}( v^- \times ((\Pi \bar{\times} \pi')|_{G_k}), \pi_N) ,
\end{align}
for some $G_k$, where $v=v_{\rho_1}(m_1,d_1)$, $\pi'=\pi_{p,2}\times \ldots \times \pi_{p,r}$, $\Pi$ is the Gelfand-Graev representation and $\bar{\times}$ indicates the mirabolic induction in Section \ref{sec mir ind}. A key here is that a Gan-Gross-Prasad type reduction can be used to transfer the study of $(\Pi \bar{\times} \pi')$ to $(\sigma \times \pi')|_{G_k}$ for some suitable choice of a unitarizable cuspidal representation $\sigma$. (Here $\sigma \times \pi'$ is an irreducible $G_{k+1}$-representation.)

Now $\sigma \times \pi'$ is still Arthur type and so induction can be applied. It is clear that if $\lambda$ is a quotient of $(\sigma \times \pi')|_{G_k}$, then $v^- \times \lambda$ is still a quotient of $v^- \times ((\sigma \times \pi')|_{G_k})$, which basically deals with the if direction. The converse of the statement is not true in general, but holds under suitable assumption that fulfills our purpose. For which, we have to study the product with $\pi_{p,1}^-$ preserves extensions in some situations (Corollary \ref{cor stronger}), which handles the only if direction. We also need some product preserving irreducibility results from \cite{LM16}.

\subsection{Remarks}

For irreducible generic quotients of $G_n$ appearing in an irreducible generic representation of $G_{n+1}$ (also known as generic GGP conjecture for $\mathrm{GL}$-case), it is shown by Rankin-Selberg integrals \cite{JPSS83, Pr93}. In \cite{CS18}, G. Savin and the author give another proof for the generic case using variations of Bernstein-Zelevinsky filtrations. We also remark that in the above outline, one may replace the mirabolic induction $\Pi \bar{\times} \pi$ with certain Rankin-Selberg model discussed in Section \ref{sec general}. Such interpretation is later motivated by the approach in the generic case of orthogonal groups by M\oe glin-Waldspurger \cite{MW12}.

Our method for Arthur type representations is again a variation of Bernstein-Zelevinsky filtration method  which exploits the product structure of Arthur representations. To illustrate how the refinement gives more information, we consider respective representations in $\mathrm{GL}_5(F)$ and $\mathrm{GL}_4(F)$ in \cite[Remark 5.6]{GGP19} with $A$-parameters:
\[   M_A = 1 \otimes \mathrm{Sym}^0(\mathbb C^2) \otimes \mathrm{Sym}^2(\mathbb C^2) \oplus 1 \otimes \mathrm{Sym}^0(\mathbb C^2) \otimes \mathrm{Sym}^0(\mathbb C^2) \oplus 1 \otimes \mathrm{Sym}^0(\mathbb C^2) \otimes \mathrm{Sym}^0(\mathbb C^2) 
\]
and
\[  N_A = 1 \otimes \mathrm{Sym}^0(\mathbb C^2) \otimes \mathrm{Sym}^1(\mathbb C^2) \oplus  1 \otimes \mathrm{Sym}^1(\mathbb C^2) \otimes \mathrm{Sym}^0(\mathbb C^2) .
\]
(Here $1$ is the trivial representation of the Weil group and the first $\mathrm{Sym}^k$ factor is the irreducible $(k+1)$-dimensional representation of the $\mathrm{SL}_2(\mathbb C)$ in the Weil-Deligne group.) Their respective representations take the form:
\[   \pi_1 = \langle [\nu^{-1}, \nu] \rangle \times 1  \times 1, \mbox{ and } \quad  \pi_2 = \langle [\nu^{-1/2}, \nu^{1/2}] \rangle \times \mathrm{St}([\nu^{-1/2}, \nu^{1/2}]) .\]
(Here $1$ is the trivial character of $F^{\times}$.) Now the Mackey theory gives two layers on $\pi_1|_{G_4}$ 
\[   \langle [\nu^{-1/2},\nu^{3/2}] \rangle \times ((1 \times 1 )|_{G_1}), \mbox{ and } \quad  \langle [\nu^{-1/2}, \nu^{1/2}] \rangle \times  ((1|_{M_1} \times 1 \times 1)|_{G_2})\]
Set $\tau =\langle [\nu^{-1/2}, \nu^{1/2}] \rangle$. A key difference of our method from the one in \cite{GGP19} is to use transfer in (\ref{eqn transfer}) to deduce that $\tau \times  ((1|_{M_1} \times 1 \times 1)|_{G_2})$ has a quotient of $\pi_2$, as $G_4$ representations. The above filtration is coarser than the full Bernstein-Zelevinsky filtration, but has the advantage of using transfer and induction as mentioned before. It can also deal with an obstruction in \cite[Remark 5.6]{GGP19}.

%In comparison with the full Bernstein-Zelevinsky filtration, the layer 
%\[\tau \times  ((1|_{M_1} \times 1 \times 1)|_{G_2})\] 
%further decomposes into three following layers: (for the notions of functors $\Psi^+, \Phi^+$, see Section \ref{ss bz functor})
%\[  \tau \times \nu^{1/2} \times \nu^{1/2} \quad \mbox{ (multiplicity 1)}, \   \tau \times (\Phi^+\Psi^+( 1 )|_{G_2})  \quad \mbox{ (multiplicity 2)}, \]
%\[   \tau \times ((\Phi^+)^2\Psi^+(1)|_{G_2}) \quad \mbox{ (multiplicity 1)}.\]
%(Here, the $1$ in the third layer is as $G_0$-representation.) Indeed, $\tau \times ((\Phi^+)^2\Psi^+(1)|_{G_2})$ contributes the non-zero Hom for $\mathrm{Hom}_{G_4}(\pi_1, \pi_2)$. However, in this decomposition, we lose information on the extensions between these three layers, and give an obstruction on concluding the non-vanishing Hom. 
% the non-vanishing on $\mathrm{Ext}^1_{G_4}(\tau \times (\Phi^+\Psi^+( 1 )|_{G_2}), \pi_2)$ gives obstruction to simply applying long exact sequence to conclude the non-vanishing on $\mathrm{Hom}_{G_4}(\pi_1, \pi_2)$.

An irreducible cuspidal representation of $G_n$ restricted to the mirabolic subgroup is isomorphic to the Gelfand-Graev representation, which is an essential step in our proof. This classical fact plays crucial roles, and is generalized to essentially square-integrable representations when restricted to $G_{n-1}$ via Hecke algebra realization \cite{CS18, CS19} (also see \cite{Ch19} for further generalization to representations restricted to be projective), but we do not critically need any Hecke algebra technique in this paper. We also remark that such fact also plays important roles, for example, in the reductions in \cite{GGP12} and in proving the Ext-vanishing theorem in \cite{CS18}. 

%Another important transfer is between original and dual restriction problems (Proposition \ref{prop bessel transfer}), which reduces most of computations to simple comparison of cuspidal supports. A motivation of such transfer comes from the symmetry nature of a relevant pair. We later learnt from D. Prasad that there is a use of going back and forth between Bessel models and basic cases for generic cases of orthogonal groups \cite{MW12}. 

%\footnote{D. Prasad pointed out this to the author, and the author would like to thank him} e.g. \cite{MW12}. 

%The only if direction is proved by M. Gurevich \cite{Gu18}, in which the Bernstein-Zelevinsky filtration approach leads to an analysis of multiple products of generalized Speh representations (which is called quasi-Speh representation in \cite{Gu18}), arisen from the derivatives of Arthur type representations. 

Our approach in only if direction only requires a study of producting with one Speh representations (rather than several generalized Speh representations), compared with the study of multiple products of generalized Speh representations in \cite{Gu18}. We also work directly in the $p$-adic group without passing to other categories, and so the method is different from \cite{Gu18}. We show under some conditions on cuspidal supports that producting with a Speh representation preserves extensions and is a fully-faithful functor. This improves one of results of Lapid-M\'inguez \cite{LM16} which shows producting with Speh representations preserves irreducibility under a related condition. 

%We originally have a proof of constructing the quotient (for if part of Conjecture \ref{conj ggp orig}) in a more explicit way. The current proof is simplified by incorporating ingredients of dual restriction and transfers. %One may expect that the method can be extended to other classical groups at least for certain parabolically induced modules.

\subsection{Summary of results and structure of the paper}

We summarize the key results of this paper below:
\begin{enumerate}
\item A proof of Conjecture \ref{conj ggp orig} in the case of a non-Archimedean field (Theorem \ref{thm conj ggp})
\item Generalize Conjecture \ref{conj ggp orig} to Bessel, Fourier-Jacobi and other mixed models (Theorem \ref{thm general cases})
\item A filtration as a tool to study restriction problem for parabolically induced modules (Proposition \ref{cor filtration par ind mod})
\item Product with a Speh representation preserves indecomposability and is a fully-faith functor under some conditions (Theorems \ref{thm preserve extension} and \ref{thm fully faith product})
\end{enumerate}

In Section \ref{sec notation prelim}, we set up notations and recall some results such as properties of Speh representations. In Section \ref{sec mir ind}, we study parabolically induced modules restricted to mirabolic subgroups. In Section \ref{sec proof conj}, we prove Conjecture \ref{conj ggp orig} for non-Archimedean case. In Section \ref{sec general}, we generalize results to general cases including Bessel models and Fourier-Jacobi models. In Section \ref{sec fj bz theory}, we establish connections between models. In Section \ref{ss ext branching}, we study Ext-branching laws for Arthur type representations. In Sections \ref{ss prod preserve} and \ref{s product functor}, we prove Theorems \ref{thm preserve extension} and \ref{thm fully faith product}.

\subsection{Acknowledgement}
This project grows out from discussions with Dipendra Prasad, and the author would like to thank him for helpful discussions and comments. He would also like to thank Gordan Savin for discussions on various topics and helpful comments. The author would also like to thank Max Gurevich for helpful correspondences on the preprint. The author would like to thank the referee for careful reading and useful comments.

\section{Notations and Preliminaries} \label{sec notation prelim}

\subsection{Bernstein-Zelevinsky functors} \label{ss bz functor}

%We recall Bernstein-Zelevinsky functors \cite{BZ77}. 

For a connected reductive group $G$, let $\mathrm{Alg}(G)$ be the category of smooth (complex) representations of $G$. Let $G_n=\mathrm{GL}_n(F)$. All representations in this paper are smooth and we usually drop the term 'smooth'. For a representation $\pi$ of $G_n$, set $n_{\pi}=n$.

Let $G=G_n$. For a closed subgroup $H$ of $G$ and a representation $\pi$ in $\mathrm{Alg}(H)$, let $\mathrm{Ind}_{H}^{G} \pi$ to be the space of smooth functions $f: G \rightarrow \pi$ satisfying $f(hg)=\delta(h)^{1/2} h.f(g)$, where $\delta$ is the modulus function of $H$. The $G$-action on $\mathrm{Ind}_H^G\pi$ is given by $(g.f)(g_0)=f(g_0g)$ for any $g, g_0 \in G$. Let $\mathrm{ind}_H^G \pi$ be the subrepresentation of $\mathrm{Ind}_H^G\pi$ containing all functions with compact support modulo $H$. We shall use ${}^u\mathrm{ind}$ and ${}^u\mathrm{Ind}$ for corresponding unnormalized inductions of $\mathrm{ind}$ and $\mathrm{Ind}$ respectively. Those functors $\mathrm{Ind}, \mathrm{ind}, {}^u\mathrm{Ind}, {}^u\mathrm{Ind}$ are exact \cite[Proposition 2.25(a)]{BZ76}.

Let $M_n$ be the mirabolic subgroup of $G_n$ i.e. $M_n$ is the subgroup of $G_n$ with all the matrices with the last row $(0, \ldots, 0,1)$. We shall also regard $G_{n-1}$ as a subgroup of $M_n$ via the embedding $g \mapsto \begin{pmatrix} g & \\ & 1 \end{pmatrix}$. Thus we have a chain of subgroups:
\[  1=G_0= M_1\subset \ldots \subset         M_{n-1} \subset  G_{n-1} \subset M_n \subset  G_n.
\]
For $\pi \in \mathrm{Alg}(G_n)$, we may simply write $\pi|_{M}$ for the restriction $\pi|_{M_n}$

 %i.e. $V$ contains matrices whose last row is $(0,\ldots, 0,1)$.  

Let $V=V_{n-1}$ be the unipotent radical of $M_n$. Let $\bar{\psi}: F \rightarrow \mathbb{C}$ be a non-degenerate character. Let $\psi: V \rightarrow \mathbb{C}$ by $\psi(v)=\bar{\psi}(v_{n-1})$, where $v_{n-1}$ is the last entry in $v$. Note the action of $M_{n-1}$ stabilizes $\psi: V \rightarrow \mathbb{C}$. For a character $\lambda$ of $V$ and a representation $\pi$ of $M_n$, define 
\[  \pi_{V, \lambda} = \delta^{-1/2} \pi /\langle v.x-\lambda(v)x : v \in V, x\in \pi \rangle ,
\]
where $\delta$ is the modulus function of $M_n$. When $\lambda=1$ (resp. $\lambda=\psi$), we regard $\pi_{V,\lambda}$ as $G_{n-1}$-representation (resp. $M_{n-1}$-representation).

Define $\theta=\theta_n: G_n \rightarrow G_n$ by $\theta(g)=g^{-t}$, the Gelfand-Kazhdan involution \cite[Section 7]{BZ76}. For any irreducible representation $\pi$ of $G_n$, $\theta(\pi)\cong \pi^{\vee}$.

Define 
\[  \Phi^+: \mathrm{Alg}(M_n) \rightarrow \mathrm{Alg}(M_{n+1}) ; \quad \Psi^+:\mathrm{Alg}(G_n) \rightarrow \mathrm{Alg}(M_{n+1}) 
\]
\[  \Phi^-: \mathrm{Alg}(M_{n+1})\rightarrow \mathrm{Alg}(M_n); \quad \Psi^-: \mathrm{Alg}(M_{n+1}) \rightarrow \mathrm{Alg}(G_n) .
\]
by
\[  \Phi^+(\pi)=\mathrm{ind}_{M_nV_n}^{M_{n+1}} \pi\boxtimes \psi, \quad \Psi^+(\pi)=\mathrm{ind}_{G_nV_n}^{M_{n+1}} \pi \boxtimes 1 ,
\]
\[  \Phi^-(\pi)=\pi_{V_n, \psi}, \quad \Psi^-(\pi)=\pi_{V_n, 1} .
\]

In particular, $\Psi^+$ is just an inflation of representations. Some major properties of the functors \cite[Proposition 3.2]{BZ77}:
\begin{enumerate}
\item All the above functors are exact. 
\item $\Phi^-$ is left-adjoint to $\Phi^+$ and $\Psi^-$ is left-adjoint to $\Phi^+$. 
\item $\Phi^-\Psi^+=0$ and $\Psi^- \Phi^+=0$
\item There is an exact sequence: 
\[ 0 \rightarrow \Phi^+\Phi^- \rightarrow \mathrm{Id} \rightarrow \Psi^+\Psi^- \rightarrow 0  \]
\item All the irreducible representations of $M_n$ are isomorphic to $(\Phi^+)^{k-1}\Psi^+(\pi)$ for some $k$ and some irreducible smooth $G_{n-k}$-representation. 
\item\cite[5.18]{BZ76} For any cuspidal representation $\sigma$ of $G_n$, $\sigma|_{M_n} \cong (\Phi^+)^{n-1}(1)$. Here $1$ is the $1$-dimensional representation of $M_1$.
\end{enumerate}

%The following special case is well-known and particularly useful later, which can be deduced that the Jacquet functor (for a proper parabolic subgroup) on a cuspdial representation is always zero:

Denote, the Gelfand-Graev representation,
\begin{align}
 \Pi_n :=(\Phi^+)^{n-1}(1) \in \mathrm{Alg}(M_n) .
\end{align}

Let $\nu=\nu_n: G_n\rightarrow \mathbb{C}$ be a character given by $\nu(g)=|\mathrm{det}(g)|_F$. For $\pi \in \mathrm{Alg}(G_n)$, the $k$-th right and left derivatives of $\pi$ are respectively defined as:
\[  \pi^{(k)} =\Psi^-(\Phi^-)^{k-1}(\pi|_{M_n}) , \quad {}^{(k)}\pi=\theta(\theta(\pi)^{(k)}) .
\]
and the $k$-th shifted right and left derivatives of $\pi$ are defined as:
\[  \pi^{[k]}=\nu^{1/2}\cdot \pi^{(k)} , \quad {}^{[k]}\pi=\nu^{-1/2} \cdot {}^{(k)}\pi .
\]

Let $k^*$ be the largest integer such that $\pi^{(k^*)} \neq 0$. We shall call $\pi^{(k^*)}$ to be the highest derivative of $\pi$, and $k^*$ to be the level of $\pi$. We also set $\pi^-=\pi^{[k^*]}$.

\subsection{Parabolic induction and Jacquet functors} \label{ss para induction}

Let $U_n$ be the subgroup of $G_n$ containing all unipotent upper triangular matrices. Let $N_i$ be the unipotent subgroup of $G_n$ containing matrices of the form $\begin{pmatrix} I_{n-i} & u \\ & I_i \end{pmatrix}$ for any $(n-i)\times i$ matrices $u$ over $F$. We regard $G_{n-i}\times G_i$ as a subgroup of $G_n$ via the embedding $(g_1,g_2)\mapsto \mathrm{diag}(g_1,g_2)$. Let $P_i$ be the parabolic subgroup $(G_{n-i}\times G_i )N_i$.

 For $\pi_1 \in \mathrm{Alg}(G_{n-i})$ and $\pi_2 \in \mathrm{Alg}(G_i)$, define the product of $\pi_1$ and $\pi_2$ as
\[  \pi_1 \times \pi_2 = \mathrm{Ind}_{(G_{n-i}\times G_i)\ltimes N_i}^{G_n} \pi_1 \boxtimes \pi_2 \boxtimes 1 .
\]
For a family of representations $\pi_i \in \mathrm{Alg}(G_{n_i})$ ($i=1,\ldots, k$), define
\[  \pi_1 \times \ldots \times \pi_k:=\pi_1\times (\ldots \times (\pi_{k-1}\times \pi_k)\ldots ).
\]
The parabolic induction is an exact functor \cite{BZ76}. For more properties for parabolic inductions, see \cite{LM16}.

Let $N_i^-=N_i^t$ be the opposite unipotent subgroup. For $\pi \in \mathrm{Alg}(G_n)$, we shall denote by $\pi_{N_i}$ and $\pi_{N_i^-}$ be the corresponding normalized Jacquet modules, as $G_{n-i}\times G_i$-representations. They are also exact functors. Since the parabolic induction has usual and opposite Jacquet functors as left and right adjoint functors respectively, parabolic induction also preserves injective and projective objects. 

For an irreducible representation $\pi$ of $G_k$, there is a unique set (with multiplicities) of cuspidal representations $\rho_1, \ldots, \rho_r$ such that $\pi$ is a composition factor of $\rho_1 \times \ldots \times \rho_r$, and we denote the multiset $\mathrm{cupp}(\pi)=\left\{ \rho_1, \ldots, \rho_r \right\}$, and denote the set $\mathrm{cupp}_{\mathbb{Z}}(\pi)=\left\{ \nu^i \rho_j \right\}_{i\in \mathbb Z, j=1,\ldots, r}$.

\subsection{Bernstein-Zelevinsky filtrations} \label{ss BZ filtration}

Since Bernstein-Zelevinsky filtration \cite[Section 3.5]{BZ77} (and its variations) is a main tool in this article, we recall in this section. 

Let $\pi$ be in $\mathrm{Alg}(G_{n+1})$. Then, $\pi|_{G_n}$ admits a filtration:
\[ 0= \pi_{n+1} \subset \pi_n \subset \ldots \pi_1 \subset \pi_0 = \pi|_{G_n} 
\]
such that
\[   \pi_{k-1}/\pi_{k} \cong  \pi^{[k]} \times \mathrm{ind}_{U_{k-1}}^{G_{k-1}} \psi_k ,
\]
where $\psi_k$ is a non-degenerate character on $U_{k-1}$. Note that $\mathrm{ind}_{U_{k-1}}^{G_{k-1}}\psi_k \cong \Pi_{k+1}|_{G_k}$. There is a 'left' version of the filtration (see \cite{CS18} and \cite{Ch21}), while we do not need this in this article.

\subsection{Speh representations and Zelevinsky segments} \label{sec zel segments}

Let $\rho$ be an irreducible cuspidal representation of $G_m$. For any $a,b \in \mathbb{C}$ with $b-a \in \mathbb{Z}_{\geq 0}$, a Zelevinsky segment $\Delta=[\nu^a\rho, \nu^b\rho]$ is the set $\left\{ \nu^a\rho, \nu^{a+1}\rho, \ldots , \nu^b\rho \right\}$, and we denote $a(\Delta)=\nu^a \rho$ and $b(\Delta)=\nu^b \rho$. Denote by $\langle \Delta \rangle$ (resp. $\mathrm{St}(\Delta)$) the unique submodule (resp. quotient) of $\nu^a\rho \times \ldots \times \nu^b\rho$.

 A Zelevinsky multisegment is a multiset of Zelevinsky segments. For a Zelevinsky multisegment $\mathfrak m=\left\{ \Delta_1, \ldots , \Delta_r \right\}$, denote by $\langle \mathfrak m \rangle$ the unique irreducible subrepresentation  of $\langle \Delta_1 \rangle \times \ldots \times \langle \Delta_r \rangle$, and denote by $\mathrm{St}(\mathfrak m)$ the unique irreducible quotient of $\mathrm{St}(\Delta_1)\times \ldots \times \mathrm{St}(\Delta_r)$,  where $\Delta_1, \ldots, \Delta_r$ are ordered in the way as in \cite[Theorem 6.1]{Ze80}.  We also denote the parabolic induction $\langle \Delta_1 \rangle \times \ldots \times \langle \Delta_r \rangle$ by $\zeta(\mathfrak m)$.

Let $\mathrm{Irr}^c(G_k)$ be the set of all (isomorphism classes of) irreducible cuspidal representations of $G_k$, and let $\mathrm{Irr}^c=\sqcup_{k \geq 0}  \mathrm{Irr}^c(G_k)$. Let $\mathrm{Irr}^{u,c}(G_k)$ be the set of irreducible unitarizable cuspidal representations of $G_k$. Let $\mathrm{Irr}^{u,c} =\sqcup_{k\geq 0}\mathrm{Irr}^{u,c}(G_k)$.

Let $\rho \in \mathrm{Irr}^{c}(G_m)$. For a positive integer $d$, define 
\[  \Delta_{\rho}( d)=[\nu^{-(d-1)/2}\rho, \nu^{(d-1)/2}\rho ] .
\]
For a positive integer $m$, define
\[  u_{\rho}(m,d) = \langle \left\{ \nu^{-(m-1)/2}\Delta_{\rho}(d), \ldots, \nu^{(m-1)/2}\Delta_{\rho}(d) \right\} \rangle .
\]
When $\rho$ is unitarizable, we shall call those representations to be Speh representations, and they are unitarizable \cite[Section 8]{Be84} (see \cite{Ta86}). 

In Section \ref{ss rep the reform}, we also introduce the notion $v_{\rho}(m,d)$. The two notions coincide (and here we do not assume $\rho$ to be unitarizable):

\begin{lemma} \cite[Theorem A10]{Ta86} \label{lem st ze transfer} For any $\rho \in \mathrm{Irr}^{c}(G_n)$, any $d,m \geq 1$, 
\[  v_{\rho}(m,d) \cong u_{\rho}(m,d) .
\]
\end{lemma}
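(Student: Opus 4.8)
The plan is to identify each of $u_\rho(m,d)$ and $v_\rho(m,d)$ with one and the same irreducible representation of $\mathrm{GL}_N(F)$ (for the appropriate $N$) described combinatorially --- the first through the Zelevinsky classification and the second through the Langlands classification --- and then to match the two combinatorial parameters by taking the transpose of a rectangular multisegment.

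In detail, I would first unwind the definitions. By construction $u_\rho(m,d) = \langle \mathfrak m \rangle$, where $\mathfrak m$ is the rectangular multisegment
\[ \mathfrak m = \bigl\{\, \nu^{-(m-1)/2}\Delta_\rho(d),\ \nu^{-(m-3)/2}\Delta_\rho(d),\ \ldots,\ \nu^{(m-1)/2}\Delta_\rho(d) \,\bigr\}, \]
so that $u_\rho(m,d)$ is the unique irreducible submodule of $\zeta(\mathfrak m)$. On the other hand, by its definition in Section \ref{ss rep the reform}, $v_\rho(m,d)$ is the unique irreducible quotient of the standard module $\mathrm{St}(\nu^{(d-1)/2}\Delta_\rho(m)) \times \cdots \times \mathrm{St}(\nu^{-(d-1)/2}\Delta_\rho(m))$; since the segments $\nu^i\Delta_\rho(m)$, $i = (d-1)/2, (d-3)/2, \ldots, -(d-1)/2$, occur here with strictly decreasing exponents, this standard module is exactly the one attached by the Langlands classification to the multisegment $\mathfrak n = \{\nu^i\Delta_\rho(m) : i = -(d-1)/2, \ldots, (d-1)/2\}$, and $v_\rho(m,d)$ is its Langlands quotient. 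Next I would observe that $\mathfrak n = \mathfrak m^t$: both multisegments have cuspidal support the multiset $\{\nu^{i+j}\rho : |i| \leq (d-1)/2,\ |j| \leq (m-1)/2\}$, which we picture as an $m \times d$ rectangular array; $\mathfrak m$ reads off its $m$ rows (fix $j$, let $i$ vary) while $\mathfrak n$ reads off its $d$ columns (fix $i$, let $j$ vary), and this is precisely the transpose operation on multisegments in the sense of \cite{Ze80}. The lemma then follows from the dictionary between the Zelevinsky and the Langlands classifications of $\mathrm{Irr}(\mathrm{GL}_N(F))$ \cite{Ze80} --- equivalently, from the combinatorial description of the Zelevinsky involution --- which for any multisegment $\mathfrak a$ gives $\langle \mathfrak a \rangle \cong L(\mathfrak a^t)$, $L(\cdot)$ denoting the Langlands quotient; applying this with $\mathfrak a = \mathfrak m$ yields $u_\rho(m,d) = \langle \mathfrak m \rangle \cong L(\mathfrak m^t) = L(\mathfrak n) = v_\rho(m,d)$.

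I expect the only genuine difficulty to be bookkeeping with conventions rather than anything substantial: one must be careful about the order of the factors in a product of segments (increasing versus decreasing exponents), about whether $\langle \cdot \rangle$ and $\mathrm{St}(\cdot)$ denote a submodule or a quotient, and one must check that, for the rectangular $\mathfrak m$ at hand, the ordering used to define $v_\rho(m,d)$ is a legitimate Langlands ordering, so that the corresponding standard module genuinely has $L(\mathfrak n)$ as its unique irreducible quotient. If one preferred to avoid invoking the Zelevinsky--Langlands dictionary, one could instead give a hands-on argument comparing the two irreducible representations directly through their maximal Jacquet modules or highest derivatives, but that is the more laborious route.
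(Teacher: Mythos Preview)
The paper does not prove this lemma at all: it is stated with a citation to \cite[Theorem A10]{Ta86}, followed only by the remark that it ``can also be deduced from M\oe glin--Waldspurger algorithm.'' Your outline is in the spirit of that remark, and the identification of $u_\rho(m,d)=\langle\mathfrak m\rangle$ and $v_\rho(m,d)=\mathrm{St}(\mathfrak n)$ with $\mathfrak n$ the transposed rectangle is correct.

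There is, however, a genuine gap in the step you call ``the dictionary between the Zelevinsky and the Langlands classifications \ldots which for any multisegment $\mathfrak a$ gives $\langle\mathfrak a\rangle\cong L(\mathfrak a^t)$.'' This assertion is false in general. The map $\mathfrak a\mapsto\mathfrak a^{\sharp}$ satisfying $\langle\mathfrak a\rangle\cong\mathrm{St}(\mathfrak a^{\sharp})$ exists, but it is the M\oe glin--Waldspurger algorithm, not the naive transpose; already for a three-segment multisegment such as $\{[1],[\nu],[1,\nu]\}$ the two disagree. What is true is that for a \emph{ladder} multisegment---and in particular for the rectangular $\mathfrak m$ at hand---the M\oe glin--Waldspurger output coincides with the transpose. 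So your argument is salvageable, but only once you replace the false general principle by either (i) a direct citation of Tadi\'c's result, as the paper does, or (ii) an explicit run of the M\oe glin--Waldspurger algorithm on the rectangular multisegment, checking that it peels off the columns one at a time. Either route is short; the point is simply that the ``transpose dictionary'' you invoke is not a theorem in the generality you state it.
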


The above result can also be deduced from M\oe glin-Waldspurger algorithm.

Explicit derivatives of a Speh representation are particularly simple to describe, and one refers to \cite{LM14} (also see \cite[Section 7]{CS19}). We collect some useful information for our study:

\begin{lemma} \cite[Theorem 14]{LM14} \label{lem speh restrict}
Let $\pi=u_{\rho}(m,d)$ be a Speh representation.
\begin{enumerate}
\item The level of $\pi$ is $n_{\rho}m$. 
\item If $k$ is not the level of $\pi$ and $\pi^{[k]}\neq 0$, then the cuspidal support of $\pi^{[k]}$ contains $\nu^{(d+m-2)/2+1/2}\rho$. 
\item If $k$ is the level of $\pi$, then $\pi^-= \pi^{[k]} \cong u_{\rho}(m,d-1)$ and $\pi^{(k)} \cong \nu^{-1/2}u_{\rho}(m,d-1)$. 
\end{enumerate}
\end{lemma}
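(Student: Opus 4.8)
The plan is to compute all shifted derivatives $\pi^{[k]}$ of the Speh representation $\pi=u_\rho(m,d)=\langle\mathfrak m\rangle$, where $\mathfrak m=\{\nu^{(m-1)/2}\Delta_\rho(d),\ldots,\nu^{-(m-1)/2}\Delta_\rho(d)\}$ is a ``rectangular'' multisegment, using Bernstein--Zelevinsky theory, and to read off (1)--(3) from this computation. Two elementary inputs are needed. First, for a single segment $\Delta=[\nu^a\rho,\nu^b\rho]$ the only nonzero derivatives of $\langle\Delta\rangle$ occur in degrees $0$ and $n_\rho$, and $\langle\Delta\rangle^{[n_\rho]}\cong\langle[\nu^{a+1/2}\rho,\nu^{b-1/2}\rho]\rangle$ (the segment shrunk by $1/2$ at both ends); this is the standard derivative of $\langle\Delta\rangle$ \cite{Ze80}, and one reproves it by a direct computation of $\Psi^-$ together with $\Phi^-\Psi^+=0$. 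In particular $\langle\nu^e\Delta_\rho(d)\rangle^{[n_\rho]}=\langle\nu^e\Delta_\rho(d-1)\rangle$ for every $e$. Second, the Leibniz filtration for derivatives of products \cite{BZ77}: $(\sigma_1\times\sigma_2)^{(k)}$ admits a filtration with subquotients $\sigma_1^{(i)}\times\sigma_2^{(k-i)}$, $0\le i\le k$.

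Since $\mathrm{cupp}(\pi)$ lies in the $\rho$-line, $\pi^{(k)}=0$ unless $n_\rho\mid k$; write $k=jn_\rho$. Write $\mathfrak m=\{\Delta_1,\ldots,\Delta_m\}$ with $\Delta_i=\nu^{(m+1)/2-i}\Delta_\rho(d)$, so $\pi=\langle\mathfrak m\rangle$ embeds in $\zeta(\mathfrak m)=\langle\Delta_1\rangle\times\cdots\times\langle\Delta_m\rangle$. Applying the Leibniz filtration and the single-segment fact, $\zeta(\mathfrak m)^{(jn_\rho)}$ is filtered by the representations $\prod_i\langle\Delta_i\rangle^{(\varepsilon_in_\rho)}$ with $\varepsilon_i\in\{0,1\}$ and $\sum_i\varepsilon_i=j$; in particular $\zeta(\mathfrak m)^{(jn_\rho)}=0$ for $j>m$, so, derivatives being exact on $\langle\mathfrak m\rangle\hookrightarrow\zeta(\mathfrak m)$, the level of $\pi$ is at most $n_\rho m$. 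At the top degree $j=m$ the unique constituent is $\prod_i\langle\Delta_i\rangle^{(n_\rho)}=\nu^{-1/2}\prod_i\langle\nu^{(m+1)/2-i}\Delta_\rho(d-1)\rangle=\nu^{-1/2}\zeta(\mathfrak m')$, where $\mathfrak m'$ is exactly the rectangular multisegment of $u_\rho(m,d-1)$; hence $\pi^{(n_\rho m)}\hookrightarrow\nu^{-1/2}\zeta(\mathfrak m')$. Identifying this submodule with $\nu^{-1/2}\langle\mathfrak m'\rangle=\nu^{-1/2}u_\rho(m,d-1)$ --- and in particular showing it is nonzero --- then gives (1), $\pi^-=\pi^{[n_\rho m]}=u_\rho(m,d-1)$ and $\pi^{(n_\rho m)}=\nu^{-1/2}u_\rho(m,d-1)$, i.e.\ (3). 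It is convenient to run this identification by induction on $m$ via the ladder embedding $u_\rho(m,d)\hookrightarrow\langle\nu^{(m-1)/2}\Delta_\rho(d)\rangle\times\nu^{-1/2}u_\rho(m-1,d)$ and the exactness of $\Phi^-,\Psi^-$.

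For (2), let $0<k=jn_\rho<n_\rho m$ with $\pi^{[k]}\neq 0$. Every nonzero Leibniz constituent of $\zeta(\mathfrak m)^{(jn_\rho)}$ has $\sum_i\varepsilon_i=j<m$, hence keeps at least one segment untruncated; among the constituents with $\varepsilon_1=0$ the largest retained cuspidal is $b(\Delta_1)=\nu^{(m+d-2)/2}\rho$. So it suffices to show that the constituent with $\varepsilon_1=0$ genuinely occurs in $\langle\mathfrak m\rangle^{(k)}$, not merely in $\zeta(\mathfrak m)^{(k)}$; granting this, $\nu^{(m+d-2)/2}\rho\in\mathrm{cupp}(\pi^{(k)})$, so $\nu^{(d+m-2)/2+1/2}\rho=\nu^{1/2}\cdot\nu^{(m+d-2)/2}\rho\in\mathrm{cupp}(\pi^{[k]})$, which is (2).

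The main obstacle is precisely this passage from the standard module $\zeta(\mathfrak m)$ to its irreducible socle $\langle\mathfrak m\rangle$: one must pin down which Leibniz subquotients survive in the derivatives of $\langle\mathfrak m\rangle$, and in particular establish irreducibility of the highest derivative in (3) and the ``$\varepsilon_1=0$ survives'' statement in (2). For the rectangular (indeed ladder) multisegment this is tractable by several routes --- the M\oe glin--Waldspurger algorithm computing $\langle\mathfrak m\rangle$ directly, the explicit derivative formulas for ladder representations \cite{LM14}, or transporting the question through the Zelevinsky involution to $\mathrm{St}(\mathfrak m^t)$ with $\mathfrak m^t$ the dual (a rectangle of complementary shape), whose derivatives are immediate from the single-segment case and the Leibniz rule --- and any of these completes the proof.
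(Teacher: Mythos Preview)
The paper does not prove this lemma; it simply records it as \cite[Theorem 14]{LM14}. Your proposal is therefore not a comparison against the paper's argument but an attempt at an independent proof.

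For (1) and (3) your outline is essentially correct and amounts to Zelevinsky's highest-derivative theorem \cite[\S8]{Ze80}: for any multisegment $\mathfrak m=\{\Delta_1,\ldots,\Delta_m\}$ the level of $\langle\mathfrak m\rangle$ is $\sum_i n_\rho$ and $\langle\mathfrak m\rangle^{(\mathrm{level})}=\langle\mathfrak m^-\rangle$ with $\mathfrak m^-=\{\Delta_1^-,\ldots,\Delta_m^-\}$. Applied to the rectangular $\mathfrak m$ this gives level $n_\rho m$ and $\pi^{(n_\rho m)}=\nu^{-1/2}u_\rho(m,d-1)$ directly. Your Leibniz--plus--induction route rederives this, so that part is fine (though you could shorten it by quoting Zelevinsky).

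For (2) there is a genuine gap. You reduce the claim to showing that some Leibniz constituent with $\varepsilon_1=0$ ``genuinely occurs'' in $\langle\mathfrak m\rangle^{(k)}$ rather than only in $\zeta(\mathfrak m)^{(k)}$, and then you do not prove this; you list three possible routes, one of which is \cite{LM14} itself, which is circular. But this survival statement \emph{is} the content of the cited theorem: for a ladder, $\langle\mathfrak m\rangle^{(jn_\rho)}$ is the sum of $\langle\mathfrak m'\rangle$ over those $\mathfrak m'$ obtained by truncating $j$ of the $\Delta_i$ so that the result is still a ladder, and for a rectangle the ladder condition forces the truncated segments to be $\Delta_{m-j+1},\ldots,\Delta_m$, hence $\Delta_1$ stays intact and $b(\Delta_1)=\nu^{(m+d-2)/2}\rho$ remains in the cuspidal support. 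None of the alternatives you name (M{\oe}glin--Waldspurger, Zelevinsky involution) is carried out, and each requires real work comparable to what \cite{LM14} does. There is also a smaller issue: as used later in the paper (e.g.\ in Lemma~\ref{lem speh rep filtration} together with Lemma~\ref{lem ext vanishing}), one needs that \emph{every} composition factor of $\pi^{[k]}$ has $\nu^{(d+m-1)/2}\rho$ in its support (equivalently, that $\pi^{[k]}$ is irreducible), not merely that one does; your ``it suffices'' claim establishes only the weaker statement even if completed.

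In short: (1) and (3) are handled, but your argument for (2) stops exactly at the point where the nontrivial input from \cite{LM14} is required, so the proposal does not give an independent proof of the lemma.
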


\subsection{Weakly relevant condition for general branching law}

For a segment $\Delta$, set ${}^{[0]}\Delta=\nu^{-1/2}\Delta$ and $\Delta^{[0]}=\nu^{1/2}\Delta$. For a segment $\Delta=[\nu^a\rho, \nu^b\rho]$, set $\Delta^-=[\nu^a\rho, \nu^{b-1}\rho]$ and ${}^-\Delta=[\nu^{a+1}\rho, \nu^b\rho]$; and set $\Delta^{[-]}=\nu^{1/2}\Delta^-$ and ${}^{[-]}\Delta=\nu^{-1/2}\cdot {}^-\Delta$. 

 Let $\mathfrak m$ and $\mathfrak n$ be two Zelevinsky multisegments. We say that $\mathfrak m$ and $\mathfrak n$ are {\it weakly relevant} if there exists segments 
\[ \Delta_{p,1}, \ldots , \Delta_{p,r}, \quad \Delta_{q,1}, \ldots, \Delta_{q,s} \]
and
\[  \Delta_{a,1},\ldots, \Delta_{a,k}, \quad \Delta_{b,1},\ldots, \Delta_{b,l} \] such that 
\[  \mathfrak m = \left\{ \Delta_{p,1}, \ldots, \Delta_{p,r}, \Delta_{q,1}^{[-]}, \ldots, \Delta_{q,s}^{[-]}\right\} \cup \left\{ \Delta_{a,1},\ldots, \Delta_{a,k}, {}^{[0]}\Delta_{b,1}, \ldots {}^{[0]}\Delta_{b,l} \right\},\]
and
\[ \mathfrak n = \left\{ {}^{[-]}\Delta_{p,1}, \ldots, {}^{[-]}\Delta_{p,r}, \Delta_{q,1}, \ldots, \Delta_{q,s} \right\} \cup \left\{ {}^{[0]}\Delta_{a,1},\ldots, {}^{[0]}\Delta_{a,k}, \Delta_{b,1},\ldots, \Delta_{b,l} \right\}.\]

While we do not need the following result, it gives one guiding principle in general smooth branching law. One may also compare with an Archimedean result \cite{GS20} in terms of wavefront sets. We remark that the converse is not true in general (for example, see the quotient branching law for the Steinberg representation in \cite{CS18}).

\begin{proposition}
Let $\pi_1, \pi_2$ be irreducible smooth representations of $G_{n+1}$ and $G_n$ respectively. Let $\mathfrak m$ and $\mathfrak n$ be their associated Zelevinsky multisegments. If $\mathrm{Hom}_{G_n}(\pi_1, \pi_2) \neq 0$, then $\mathfrak m$ and $\mathfrak n$ are weakly relevant.
\end{proposition}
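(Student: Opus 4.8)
The plan is to prove the contrapositive-free version by extracting, from a nonzero element of $\mathrm{Hom}_{G_n}(\pi_1,\pi_2)$, enough structural information about the cuspidal/derivative data of $\pi_1$ and $\pi_2$ to exhibit the decomposition in the definition of weak relevance. First I would invoke the Bernstein-Zelevinsky filtration of $\pi_1|_{M_{n+1}}$, and then of $\pi_1|_{G_n}$: using the functors $\Phi^\pm,\Psi^\pm$ and property (4) of Section \ref{ss bz functor}, $\pi_1|_{G_n}$ has a finite filtration whose successive quotients are of the form $\nu^{1/2}\bigl((\Phi^+)^{i-1}\Psi^+(\pi_1^{(i)})\bigr)|_{G_n}$, and by Frobenius reciprocity any such layer contributing a nonzero $G_n$-map to the irreducible $\pi_2$ must have $\pi_1^{(i)}$ related to a derivative of $\pi_2$ (more precisely, $\pi_2$ must be a quotient of $\nu^{1/2}\pi_1^{(i)}\times(\text{Gelfand--Graev induced piece})$, so a derivative $\pi_2^{(j)}$ and $\pi_1^{(i)}$ must share compatible cuspidal support after the $\nu^{\pm1/2}$ shifts). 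The key point is that a single such layer must be nonzero in $\mathrm{Hom}$, pinning down one index $i$; the shifts $\nu^{1/2}$ on the $\pi_1$ side and $\nu^{-1/2}$ on the $\pi_2$ side are exactly the $\Delta\mapsto\Delta^{[-]}$, $\Delta\mapsto{}^{[-]}\Delta$, $\Delta\mapsto{}^{[0]}\Delta$ operations appearing in the definition of weakly relevant.

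The second ingredient I would use is the combinatorial description of derivatives of $\langle\mathfrak m\rangle$ in terms of the multisegment $\mathfrak m$ (Zelevinsky's algorithm, or the Lapid--M\'inguez/M\oe glin--Waldspurger description), which says that every derivative of $\pi_1=\langle\mathfrak m\rangle$ is again of the form $\langle\mathfrak m'\rangle$ where $\mathfrak m'$ is obtained from $\mathfrak m$ by removing the initial points $b(\Delta)$ of a suitable subcollection of segments (and similarly for $\pi_2=\langle\mathfrak n\rangle$). Matching the surviving cuspidal supports of the derivative of $\pi_1$ with that of $\pi_2$ then forces a pairing between segments of $\mathfrak m$ and segments of $\mathfrak n$: each segment of $\mathfrak m$ either has its top removed and matches a segment of $\mathfrak n$ via ${}^{[-]}(-)$ (these are the $\Delta_{p,\bullet}$), or survives untouched and matches a segment of $\mathfrak n$ that lost its top and was shifted (these are the $\Delta_{q,\bullet}^{[-]}$), or is matched through the untouched/${}^{[0]}$-shifted families ($\Delta_{a,\bullet}$, $\Delta_{b,\bullet}$). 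I would organize this as a bookkeeping lemma: a nonzero $G_n$-map between $\langle\mathfrak m\rangle$ and $\langle\mathfrak n\rangle$ lands in exactly one BZ layer, and that layer's nonvanishing, combined with the derivative algorithm applied to both sides, yields precisely the four families of segments with the stated relations.

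The step I expect to be the main obstacle is controlling the \emph{non-cuspidal tail} in the Gelfand--Graev factor of a BZ layer, i.e. making rigorous the claim that $\mathrm{Hom}_{G_n}\bigl(\nu^{1/2}\pi_1^{(i)}\times((\Phi^+)^{i-1}(1)|_{G_{n-i}}\text{-type piece}),\pi_2\bigr)\neq0$ forces a clean cuspidal-support matching. The representation $(\Phi^+)^{i-1}(1)$ is highly reducible as an $M$-representation, so one cannot simply read off its cuspidal support; instead one must argue that its restriction to the relevant $G$-level, after further applying $\Psi^-$ and derivatives to land in $\mathrm{Alg}(G)$, only contributes cuspidal lines $\nu^{\mathbb Z}\rho$ that are already constrained — this is where the hypothesis that $\pi_2$ is \emph{irreducible} (hence has a single well-defined cuspidal support) does the real work. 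I would handle this by an induction on $n$: restrict the problem to a smaller group via one BZ layer, apply the inductive hypothesis to the pair $(\pi_1^{(i)},\pi_2^{(j)})$ of smaller-rank representations (whose multisegments are the "tops removed" versions of $\mathfrak m,\mathfrak n$), and then lift the resulting weakly-relevant decomposition back up by reattaching the removed cuspidal points, checking that the reattachment is consistent with one of the four families. The base case $n=0$ (or $n=1$) is trivial, and the inductive step is where all the shift-operators must be tracked carefully; I expect no conceptual obstruction beyond careful combinatorics once the "single layer" and "derivative algorithm" facts are in place.
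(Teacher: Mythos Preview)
Your approach has a genuine gap in the second ingredient. The claim that ``every derivative of $\pi_1=\langle\mathfrak m\rangle$ is again of the form $\langle\mathfrak m'\rangle$ where $\mathfrak m'$ is obtained from $\mathfrak m$ by removing endpoints of a subcollection of segments'' is false: the derivative $\langle\mathfrak m\rangle^{(i)}$ is in general neither irreducible nor of the form $\langle\mathfrak m'\rangle$, except for the highest derivative. Zelevinsky showed its composition factors lie among such $\langle\mathfrak m'\rangle$, but which ones occur and with what multiplicity is a Kazhdan--Lusztig-type problem with no usable closed form. This breaks your induction: in the inductive step you want to apply the hypothesis to the pair $(\pi_1^{(i)},\pi_2^{(j)})$, but $\pi_1^{(i)}$ is not irreducible, so the statement (formulated for irreducible $\pi_1,\pi_2$) does not apply. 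Matching only cuspidal supports, as you also suggest, is too weak to recover weak relevance: distinct multisegments can share a cuspidal support, and weak relevance is a statement about segments, not about their unions.

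The paper circumvents this entirely by replacing the irreducibles with standard modules before running the Bernstein--Zelevinsky filtration. Since $\langle\mathfrak m\rangle$ is a quotient of $\theta(\zeta(\mathfrak m))^\vee$ and $\langle\mathfrak n\rangle$ is a submodule of $\zeta(\mathfrak n)$, the nonvanishing of $\mathrm{Hom}_{G_n}(\pi_1,\pi_2)$ forces $\mathrm{Hom}_{G_n}(\theta(\zeta(\mathfrak m))^\vee,\zeta(\mathfrak n))\neq 0$. The BZ filtration then gives $\mathrm{Hom}(\zeta_1^{[i+1]},{}^{(i)}\zeta_2)\neq 0$ for some $i$, and the key point is that derivatives of \emph{standard} modules are filtered by standard modules $\zeta(\mathfrak p)$, $\zeta(\mathfrak q)$ with $\mathfrak p,\mathfrak q$ obtained from $\mathfrak m,\mathfrak n$ by the explicit segment operations $\Delta\mapsto\Delta^-$ or $\Delta\mapsto\Delta$ (geometric lemma, not any deep derivative algorithm for irreducibles). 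A nonzero Hom between two such layers forces $\mathfrak p=\mathfrak q$ as multisegments, and unwinding the four possibilities $\nu^{1/2}\Delta^{\#}={}^{\#}\widetilde\Delta$ yields exactly the four families in the definition of weak relevance. No induction on $n$ is needed, and the hard combinatorics of derivatives of irreducibles never enters.
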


\begin{proof}
Since $\mathrm{Hom}_{G_n}(\pi_1, \pi_2) \neq 0$, we have that $\mathrm{Hom}_{G_n}(\theta(\zeta(\mathfrak m))^{\vee}, \zeta(\mathfrak n)) \neq 0$. (We remark that $\theta(\zeta(\mathfrak m))^{\vee}$ has a quotient of $\langle \mathfrak m \rangle$ as $\theta(\langle \mathfrak m \rangle)^{\vee}\cong \langle \mathfrak m \rangle$.) Let $\zeta_1=\theta(\zeta(\mathfrak m))^{\vee}$ and $\zeta_2=\zeta(\mathfrak n)$.  Now using Bernstein-Zelevinsky filtration (Section \ref{ss BZ filtration}, also see \cite[Lemma 2.4]{CS18} and \cite[Lemma 2.1]{Ch19}), we obtain that, for some $i$,
\[  \mathrm{Hom}_{G_{n+1-i}}(\zeta_1^{[i+1]}, {}^{(i)}\zeta_2) \neq 0 .
\]
Write $\mathfrak m=\left\{ \Delta_1, \ldots, \Delta_k \right\}$ and $\mathfrak n=\left\{ \widetilde{\Delta}_1, \ldots, \widetilde{\Delta}_l \right\}$. For convenience, we also set the notion $\Delta^0=\Delta$ and ${}^0\Delta=\Delta$ (i.e. no effect on $\Delta$). Using geometric lemma on $\zeta_1$ (with suitable arrangement of segments, see for example the proof of \cite[Lemma 6.3]{Ch19}), we obtain a filtration on $\zeta_1^{[i+1]}$ whose successive quotients are $\theta(\zeta(\mathfrak p)^{\vee})$ with $\mathfrak p$ taking the form:
\[ \mathfrak p = \left\{ \nu^{1/2}\Delta_1^{\#}, \ldots,\nu^{1/2} \Delta_k^{\#} \right\}
\]
and similarly a filtration on ${}^{(i)}\zeta_2$ whose successive quotients are $\zeta(\mathfrak q)$ with $\mathfrak q$ taking the form:
\[ \mathfrak q = \left\{ {}^{\#}\widetilde{\Delta}_1, \ldots, {}^{\#}\widetilde{\Delta}_l \right\},
\]
where each $\#=-$ or $=0$. Now the previous non-vanishing Hom implies that 
\[  \mathrm{Hom}_{G_n}(\theta(\zeta(\mathfrak p)^{\vee}), \zeta(\mathfrak q)) \neq 0 
\]
for some $\mathfrak p$ and $\mathfrak q$ of the form. The ordering \cite[Theorem 7.1]{Ze80} of Zelevinsky classification implies that a non-zero map in $\mathrm{Hom}_{G_n}(\theta(\zeta(\mathfrak p)^{\vee}), \zeta(\mathfrak q))$ factors through the Zelevinsky submodule of $\zeta(\mathfrak q)$ and hence $\mathfrak p=\mathfrak q$ for some $\mathfrak p$ and $\mathfrak q$ taking the above form. In other words, we have that for each $j$, $\nu^{1/2}\Delta_j^{-} = \widetilde{\Delta}_{i_j}$ or $\nu^{1/2}\Delta_j^-=1$ (as $G_0$-representation), or $\Delta_j=\nu^{-1/2}\cdot {}^-\widetilde{\Delta}_{i_j}$, or $\nu^{1/2}\Delta_j=\widetilde{\Delta}_{i_j}$, or $\nu^{1/2}\Delta_j^-={}^-\widetilde{\Delta}_{i_j}$ for some $i_j$. These conditions give the weakly relevant condition on the pair $(\mathfrak m, \mathfrak n)$.

\end{proof}

\subsection{Ext-vanishing on cuspidal supports}

The following result is standard. Since we shall frequently use the following result, we give a proof on it.

\begin{lemma} \label{lem ext vanishing}
Let $\pi_1$ be an irreducible $G_{n-i}$-representation and let $\pi_2 \in \mathrm{Alg}(G_i)$ (not necessarily admissible). Let $\pi$ be an admissible $G_n$-representation. Suppose that for any simple composition factor $\tau$ of $\pi$, $\mathrm{cupp}(\tau) \cap \mathrm{cupp}(\pi_1) =\emptyset$. Then, for all $i$,
\[  \mathrm{Ext}^i_{G_n}(\pi_1\times \pi_2, \pi)=0 \]
\end{lemma}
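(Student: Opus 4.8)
The plan is to reduce the Ext-vanishing to an adjunction computation together with a cuspidal-support obstruction. First I would use the fact that parabolic induction $\pi_1 \times \pi_2 = \mathrm{Ind}_{P}^{G_n}(\pi_1 \boxtimes \pi_2)$ has the (opposite) Jacquet functor as a left adjoint, and moreover, since parabolic induction sends projectives to projectives and is exact, one has a Frobenius-type reciprocity at the level of all Ext-groups: $\mathrm{Ext}^i_{G_n}(\pi_1 \times \pi_2, \pi) \cong \mathrm{Ext}^i_{G_{n-i_0}\times G_{i_0}}(\pi_1 \boxtimes \pi_2, \pi_{N^-})$, where I write $i_0$ for the Levi block size to avoid clashing with the Ext-index $i$, and $\pi_{N^-}$ is the Jacquet module along the opposite parabolic. (Concretely: take a projective resolution $P_\bullet \to \pi_1 \boxtimes \pi_2$; then $\mathrm{Ind}_P^{G_n} P_\bullet \to \pi_1\times \pi_2$ is a projective resolution, and $\mathrm{Hom}_{G_n}(\mathrm{Ind}_P^{G_n} P_\bullet, \pi) \cong \mathrm{Hom}_{\mathrm{Levi}}(P_\bullet, \pi_{N^-})$ by second adjunction, so taking cohomology gives the claimed Ext-isomorphism.)

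Next I would analyze the cuspidal support. The Jacquet module $\pi_{N^-}$ is an admissible representation of $G_{n-i_0}\times G_{i_0}$, and every composition factor of $\pi_{N^-}$ is of the form $\sigma_1 \boxtimes \sigma_2$ where $\mathrm{cupp}(\sigma_1) \cup \mathrm{cupp}(\sigma_2)$, as a multiset, is contained in $\mathrm{cupp}(\tau)$ for some composition factor $\tau$ of $\pi$ (this is the standard compatibility of Jacquet functors with cuspidal support — transitivity of parabolic induction/restriction). By hypothesis $\mathrm{cupp}(\tau) \cap \mathrm{cupp}(\pi_1) = \emptyset$ for every such $\tau$, so in particular $\mathrm{cupp}(\sigma_1) \cap \mathrm{cupp}(\pi_1) = \emptyset$, hence $\sigma_1 \not\cong \pi_1$ and indeed $\sigma_1$ lies in a different Bernstein block from $\pi_1$.

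Then I would conclude by a Bernstein-decomposition argument: $\mathrm{Ext}^i_{G_{n-i_0}\times G_{i_0}}(\pi_1 \boxtimes \pi_2, -)$ is computed in the category $\mathrm{Alg}(G_{n-i_0}\times G_{i_0})$, which decomposes as a product over Bernstein components; since $\pi_1$ is irreducible with a fixed cuspidal support, $\mathrm{Ext}^i_{G_{n-i_0}}(\pi_1, \sigma_1) = 0$ whenever $\mathrm{cupp}(\sigma_1)$ is disjoint from (equivalently, not an unramified twist of anything in) $\mathrm{cupp}(\pi_1)$, because $\pi_1$ and $\sigma_1$ then lie in orthogonal blocks. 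Using the Künneth-type spectral sequence (or simply that $\mathrm{Ext}^i_{G\times H}(A\boxtimes B, C)$ is filtered by $\mathrm{Ext}^{p}_G(A, -)\otimes \mathrm{Ext}^q_H(B,-)$ applied to composition factors of $\pi_{N^-}$), the vanishing of the $G_{n-i_0}$-factor forces the whole thing to vanish. Since $\pi$ is admissible, $\pi_{N^-}$ has finite length and this dévissage over composition factors is legitimate.

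The main obstacle I expect is bookkeeping around the second-adjunction / Ext-reciprocity step: making sure the isomorphism $\mathrm{Ext}^i_{G_n}(\pi_1\times\pi_2, \pi) \cong \mathrm{Ext}^i_{\mathrm{Levi}}(\pi_1\boxtimes\pi_2, \pi_{N^-})$ is valid in the generality stated — in particular that $\pi_2$ need not be admissible. This is fine because the key inputs (exactness of $\mathrm{Ind}_P^{G_n}$, its sending projectives to projectives via exactness of the left-adjoint Jacquet functor, and second adjunction of Bernstein) hold without admissibility hypotheses on $\pi_2$; admissibility of $\pi$ is only used at the end to ensure $\pi_{N^-}$ has finite length so that the composition-factor reduction and the block-orthogonality argument apply. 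A secondary point to handle carefully is that "disjoint cuspidal supports" should be upgraded to "lie in different Bernstein blocks of $G_{n-i_0}$", which is immediate since the multiset $\mathrm{cupp}$ is a complete invariant of the block up to unramified twist and the hypothesis rules out even unramified twists appearing (as $\mathrm{cupp}_{\mathbb{Z}}$ is what controls the inertial class — here one should note the hypothesis is phrased with $\mathrm{cupp}$, not $\mathrm{cupp}_{\mathbb{Z}}$, so strictly one gets orthogonality of the specific representations rather than whole blocks, but that still suffices for $\mathrm{Ext}^i$-vanishing between the fixed irreducible $\pi_1$ and composition factors of $\pi_{N^-}$ by looking at central characters of the relevant cuspidal supports).
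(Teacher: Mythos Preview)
Your proposal is correct and follows essentially the same route as the paper: second-adjointness Frobenius reciprocity to pass to $\mathrm{Ext}^i_{G_{n-i_0}\times G_{i_0}}(\pi_1\boxtimes\pi_2,\pi_{N^-})$, d\'evissage over composition factors (using admissibility), and then vanishing of $\mathrm{Ext}^*_{G_{n-i_0}}(\pi_1,\sigma_1)$ via the Bernstein center (the paper phrases it as ``an element in the Bernstein center which acts by a different scalar on $\pi_1$ and $\tau_a$'', which is exactly the ``central characters of the relevant cuspidal supports'' you invoke at the end) together with K\"unneth. Your self-correction about $\mathrm{cupp}$ versus $\mathrm{cupp}_{\mathbb Z}$ is on point: disjoint cuspidal supports need not put $\pi_1$ and $\sigma_1$ in different Bernstein blocks, but the infinitesimal-character separation still kills all $\mathrm{Ext}^i$.
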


\begin{proof}
It suffices to prove for $\pi$ to be irreducible. One first applies Frobenius reciprocity, for any $i$,
\[  \mathrm{Ext}^i_{G_n}(\pi_1 \times \pi_2, \pi) \cong \mathrm{Ext}^i_{G_{n-i}\times G_i}(\pi_1\boxtimes \pi_2, \pi_{N_i^-}) .\]
Since $\pi_{N_i^-}$ is admissible, it suffices to check the Ext-vanishing for each simple composition factor $\tau'$ of $\pi_{N_i^-}$. Now we write $\tau'=\tau_a \boxtimes \tau_b$ for simple $G_{n-i}$ and $G_i$ representations $\tau_a$ and $\tau_b$ respectively. Now we have that $\mathrm{Ext}^i_{G_i}(\pi_1, \tau_a)=0$ since, by using the cuspidal support condition, one can find an element in the Bernstein center which acts by a different scalar on $\pi_1$ and $\tau_a$. Now we conclude $\mathrm{Ext}^i_{G_n}(\pi_1 \times \pi_2, \tau')=0$ by K\"unneth formula.
\end{proof}

%Now, for each simple composition factor $\tau_1 \boxtimes \tau_2$ in $\pi_{N_i^-}$, $\tau_1$ has different cuspidal support from $\pi_1$, and thus $\mathrm{Ext}^i_{G_{n-i}}(\pi_1, \tau_1)=0$. Now the vanishing Ext follows from Kunneth formula.
%\end{proof}
%%%%%%%%%%

%\subsection{Product of Speh representation and its shift}

%\begin{lemma}
%Let $\pi_{p,1}, \ldots, \pi_{p,r}$ and $\pi_{q,1}, \ldots, \pi_{q,s}$ be Speh rerpesentations. Then 
%\begin{enumerate}
%\item $\pi_{p,1}\times \ldots \times \pi_{p,r} \times \pi_{q,1}^-\times \ldots \times \pi_{q,s}^-$ is irreducible.
%\item Suppose $\pi_{p,1}', \ldots, \pi_{p,u}'$ and 
%\end{enumerate}

%\end{lemma}

\section{Mirabolic Induction} \label{sec mir ind}

In this section, we discuss inductions involving mirabolic subgroups, which will be used in Sections \ref{sec proof conj}, \ref{sec general} and \ref{sec fj bz theory}.

\subsection{Mirabolic induction} \label{ss mir ind}

Let $\tau \in \mathrm{Alg}(M_m)$ and let $\pi \in \mathrm{Alg}(G_n)$. Define two types of mirabolic inductions, similar to \cite[4.12]{BZ77}.

\begin{enumerate}
\item  Type 1: Let $Q = P_m \cap M_{n+m} \subset G_{n+m}$ i.e. 
\[  Q = \left\{ \begin{pmatrix} g & u \\ & m \end{pmatrix} :  g \in G_n, m \in M_m, u \in Mat_{n\times m}       \right\} .
\]
Let $\epsilon: Q\rightarrow \mathbb{C}$ be the identity. 
\item Type 2: Let $Q= P_m^t \cap M_{n+m} \subset G_{n+m}$ i.e. 
\[  Q= \left\{ \begin{pmatrix} g & & \\ u & h & v \\  & & 1 \end{pmatrix}  :  g \in G_n, u \in Mat_{m-1, n}, h \in G_{m-1}, v \in F^{m-1}   \right\} .
\]
Let $\epsilon: Q: \rightarrow \mathbb{C}$ given by $\epsilon=\nu^{-1/2}$. 
%Extend $\pi \boxtimes \tau$ trivially to $Q_{n,m}'$. Define $\tau \times \pi$ to be the space of smooth functions $f: M_{n+m} \rightarrow \pi \boxtimes \tau$ satisfying $f(qg)=\nu^{-1/2}(q)\delta(q)^{1/2} q.f(g)$ for any $q \in Q_{n,m}'$ and any $g \in M_{n+m}$, and is compactly-supported modulo $Q_{n,m}'$ where $\delta$ is the modular character of $Q_{n,m}'$. 
\end{enumerate}

For type 1 (resp. type 2), extend $\pi \boxtimes \tau$ trivially to $Q$. Define the $M_{n+m}$-representation $\pi \bar{\times} \tau$ (resp. $\tau \bar{\times} \pi$) to be the space of smooth functions $f: M_{n+m} \rightarrow \pi \boxtimes \tau$ satisfying $f(qg)=\epsilon(q)\delta(q)^{1/2} q.f(g)$ for any $q \in Q$ and $g \in M_{n+m}$, and $f$ is compactly-supported modulo $Q$, where $\delta$ is the modulus function of $Q$. 

In type 1, when restricting to $G_{n+m-1}$, we have 
\begin{align} \label{eqn restriction to g first factor}
 (\pi \bar{\times} \tau)|_{G_{n+m-1}} \cong (\nu^{1/2}\pi) \times (\tau|_{G_{m-1}} ),
\end{align}
where the isomorphism is given by $f \mapsto (g\mapsto f(\mathrm{diag}(g,1)))$. Here we naturally identify $\pi \boxtimes \tau$ and $(\nu^{1/2}\pi)\boxtimes (\tau|_{G_{m-1}})$. We may also sometimes simply write $\times$ for $\bar{\times}$.

\subsection{Associative property}
%We have the following associativity of inductions:
The following lemma follows from an inspection. We omit the details.

\begin{lemma} \label{lem associative} \cite{Ze80}
 Let $\pi_1 \in \mathrm{Alg}(G_{n_1})$. Let $\pi_2 \in \mathrm{Alg}(G_{n_2})$. Let $\tau \in \mathrm{Alg}(M_r)$. Then 
\begin{enumerate}
\item $(\pi_1 \bar{\times} \tau) \bar{\times} \pi_2 \cong \pi_1 \bar{\times} (\tau \bar{\times} \pi_2)$; and
\item $(\pi_1 \times \pi_2) \bar{\times} \tau \cong \pi_1 \bar{\times} (\pi_2 \bar{\times} \tau)$;
\item $(\tau \bar{\times} \pi_1) \bar{\times} \pi_2 \cong \tau \bar{\times} (\pi_1 \times \pi_2)$.
\end{enumerate}
\end{lemma}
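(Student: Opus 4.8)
\textbf{Proof plan for Lemma~\ref{l associative}.}
The plan is to reduce each of the three associativity isomorphisms to a statement about double coset spaces and transitivity of (unnormalized) induction, exactly in the spirit of \cite[4.12]{BZ77}. The key mechanism is the following: both sides of each identity are spaces of functions on $M_{N}$ (with $N=n_1+n_2+r$ or the relevant total) transforming by a character-twisted modulus factor under a parabolic-type subgroup, and one shows that the two subgroups involved are nested, so that transitivity of $\mathrm{ind}$ (together with the compatibility of the $\epsilon$- and $\delta$-factors) gives the result. Concretely, I would first record, for each of (1)--(3), the subgroup $Q$ of the big mirabolic group through which the inner mirabolic induction is built, and the subgroup $Q'$ through which the outer one is built, and observe that $Q' \supset$ (a parabolic-type subgroup of) the inner datum, so that $\mathrm{ind}_{Q}^{M_N}$ factors as $\mathrm{ind}_{Q'}^{M_N}\circ \mathrm{ind}_{Q\cap \text{(Levi-type part)}}^{Q'}$.

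First I would handle (1). Here $\pi_1\bar\times\tau$ is a type-1 mirabolic induction from $Q_1 = P_{n_2+r}\cap M_{N}$ with $\pi_1\boxtimes(\tau\text{-part})$, and then $(\pi_1\bar\times\tau)\bar\times\pi_2$ — wait, more carefully: in $(\pi_1\bar\times\tau)\bar\times\pi_2$ the object $\pi_1\bar\times\tau \in \mathrm{Alg}(M_{n_1+r})$ is mirabolically (type~1) inducted against $\pi_2\in\mathrm{Alg}(G_{n_2})$, but with the roles arranged as in $\sigma\bar\times\pi_2$, i.e. a type-1 induction with the $G$-factor in the bottom-right. Unwinding both sides, each becomes $\mathrm{ind}$ from the block-upper-triangular subgroup with diagonal blocks $G_{n_1}, M_r$ (restricted appropriately), $G_{n_2}$ placed in the mirabolic position, against $\nu^{?}(\pi_1)\boxtimes(\dots)\boxtimes(\dots)$; the $\nu^{1/2}$ shifts coming from the type-1 normalization (cf.~(\ref{eqn restriction to g first factor})) have to be matched on both sides, and this is precisely the routine bookkeeping I would not grind through. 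The same template handles (2) and (3): in (2) the two inner inductions are both type-1 / parabolic so the nesting is of two standard parabolic-type subgroups of $M_{N}$; in (3) one has a type-1 ($\tau\bar\times\pi_1$, with $\tau$ a genuine $M_r$-representation giving a type-1 induction) followed by a type-1 against $\pi_2$, and on the right a single type-1 induction of $\tau$ against the ordinary product $\pi_1\times\pi_2$, so one additionally invokes the definition of $\times$ as $\mathrm{Ind}$ from $P$ and transitivity through $P\cap M_N$.

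The main obstacle I anticipate is purely the modulus-character/normalization bookkeeping: one must check that the product of the $\epsilon$-twists ($\nu^{-1/2}$ for type-2, trivial for type-1) and the square roots of the modulus functions $\delta_{Q}$ of the various subgroups multiply up correctly so that the composite induction on one side has exactly the same $\epsilon\,\delta^{1/2}$ weight as the single induction presenting the other side. Since all groups here are (mirabolic intersections of) parabolics in $\mathrm{GL}$, these modulus functions are explicit products of $|\det|$-powers on the diagonal blocks, and the identity reduces to an elementary additive identity among exponents; once that is verified, transitivity of $\mathrm{ind}$/$\mathrm{Ind}$ \cite[Proposition 2.25]{BZ76} and the identification of the compactly-supported subspaces (the compact-support-mod-$Q$ condition is stable under the nesting) finish each case. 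This is exactly why the statement is asserted ``by inspection'', and I would present it as such, spelling out only the subgroup nesting for one representative case.
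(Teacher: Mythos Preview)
Your plan is correct and matches the paper's approach, which literally reads ``The following lemma follows from an inspection. We omit the details.'' In other words, the paper offers no argument at all; your outline (transitivity of compact induction along nested subgroups of $M_N$, plus matching the $\epsilon\cdot\delta^{1/2}$ factors) is exactly the computation being suppressed.

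One small correction worth flagging before you write it up: you have the type-1/type-2 labels reversed in a couple of places. In the paper's conventions, $\pi\bar\times\tau$ with $\pi\in\mathrm{Alg}(G_n)$ and $\tau\in\mathrm{Alg}(M_m)$ is type~1 (upper-triangular $Q$, $\epsilon=1$), while $\tau\bar\times\pi$ with the $M$-representation first is type~2 (lower-triangular $Q$, $\epsilon=\nu^{-1/2}$). So in (1) the outer induction $(\pi_1\bar\times\tau)\bar\times\pi_2$ is type~2, not type~1, and likewise $\tau\bar\times\pi_1$ in (3) is type~2. This matters precisely for the $\nu^{-1/2}$ bookkeeping you flagged as the main obstacle, so be sure to track it when you verify the exponents; otherwise the strategy is sound.
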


\subsection{From parabolic to mirabolic induction}

The appearance of mirabolic inductions comes from the study of parabolic inductions when restricting to the mirabolic subgroup via Mackey theory. The following lemma will be used several times.

\begin{lemma} \cite[Proposition 4.13]{BZ77} \label{lem short exact seq}
Let $\pi_1$ and $\pi_2$ be $G_{n_1}$ and $G_{n_2}$-representations. Then $(\pi_1 \times \pi_2)|_M$ admits a short exact sequence:
\[ 0 \rightarrow \pi_1|_M \bar{\times} \pi_2 \rightarrow  (\pi_1 \times \pi_2)|_M \rightarrow \pi_1 \bar{\times} (\pi_2|_M) \rightarrow 0
\]
\end{lemma}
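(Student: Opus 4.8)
The plan is to realize the product as a parabolic induction and restrict to the mirabolic subgroup via Mackey theory for the geometry of the relevant flag variety. Write $n=n_1+n_2$ and $\pi_1\times\pi_2=\mathrm{Ind}_{P}^{G_n}(\pi_1\boxtimes\pi_2\boxtimes 1)$, where $P=(G_{n_1}\times G_{n_2})N_{n_2}=P_{n_2}$ is the standard parabolic stabilizing the subspace $W_0=\langle e_1,\ldots,e_{n_1}\rangle$ of $F^n$. Since $X:=P\backslash G_n$ is the Grassmannian of $n_1$-planes in $F^n$, hence compact, one has $\mathrm{Ind}_{P}^{G_n}=\mathrm{ind}_{P}^{G_n}$, so $(\pi_1\times\pi_2)|_{M_n}$ is the space of global sections of an $M_n$-equivariant $l$-sheaf on $X$. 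I would then decompose $X$ into $M_n$-orbits and invoke the exact sequence of sections attached to an $M_n$-stable open–closed decomposition of $X$.

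The key geometric input is that $M_n$ acts on $X$ with exactly two orbits. Since $M_n$ fixes the last coordinate on $F^n$, it preserves the hyperplane $H=\langle e_1,\ldots,e_{n-1}\rangle$, so $X=\mathcal{O}_{\mathrm{op}}\sqcup\mathcal{O}_{\mathrm{cl}}$ with $\mathcal{O}_{\mathrm{cl}}=\{W\in X: W\subseteq H\}$ closed and $\mathcal{O}_{\mathrm{op}}=\{W: W\not\subseteq H\}$ its open complement. Each is a single $M_n$-orbit: $M_n$ surjects onto $G_{n-1}$, which acts transitively on $\mathrm{Gr}(n_1,H)\cong\mathcal{O}_{\mathrm{cl}}$, while a direct matrix computation shows transitivity on $\mathcal{O}_{\mathrm{op}}$ as well (consistent with the dimension count $\dim\mathcal{O}_{\mathrm{op}}=n_1n_2=\dim X$, whereas $\mathcal{O}_{\mathrm{cl}}$ has codimension $n_1$). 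Sections supported on the open orbit form a subrepresentation and restriction to the closed orbit is the quotient, giving a short exact sequence $0\to\mathcal{S}_{\mathrm{op}}\to(\pi_1\times\pi_2)|_{M_n}\to\mathcal{S}_{\mathrm{cl}}\to 0$ (the degenerate cases $n_1=0$ or $n_2=0$ being trivial).

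The final step is to identify $\mathcal{S}_{\mathrm{op}}$ and $\mathcal{S}_{\mathrm{cl}}$ with the mirabolic inductions of Section \ref{ss mir ind}. For the closed orbit, $\mathrm{Stab}_{M_n}(W_0)=P\cap M_n$ is exactly the Type-1 subgroup $Q=P_{n_2}\cap M_n$, and inducing the restriction of $\pi_1\boxtimes\pi_2$ — with $\pi_2$ now regarded on $M_{n_2}$ — from $Q$ to $M_n$ should give $\mathcal{S}_{\mathrm{cl}}\cong\pi_1\bar{\times}(\pi_2|_{M_{n_2}})$. For the open orbit, taking the base point $W_1=\langle e_{n_2+1},\ldots,e_n\rangle$ one finds $\mathrm{Stab}_{M_n}(W_1)=P_{n_1}^t\cap M_n$, the Type-2 subgroup, and one expects $\mathcal{S}_{\mathrm{op}}\cong(\pi_1|_{M_{n_1}})\bar{\times}\pi_2$; the twist $\epsilon=\nu^{-1/2}$ hard-wired into the Type-2 mirabolic induction is precisely the correction coming from the modulus characters. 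I expect the main obstacle to be exactly this last bookkeeping: one must verify that the normalizing factor $\delta^{1/2}$ in $\mathrm{Ind}$, the normalizing factor $\delta^{1/2}$ in the definition of $\bar{\times}$, the characters $\epsilon$ (trivial for Type 1, $\nu^{-1/2}$ for Type 2), and the discrepancy between $\delta_P$ and the modulus function of $P\cap M_n$ all cancel, so that $\mathcal{S}_{\mathrm{op}}$ and $\mathcal{S}_{\mathrm{cl}}$ come out as the normalized mirabolic inductions with no residual character — a finite but delicate computation with the explicit block forms of these stabilizers and their modulus functions.
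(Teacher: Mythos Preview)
Your proposal is correct and is precisely the Mackey-theory argument of \cite[Proposition 4.13]{BZ77}: decompose $P\backslash G_n$ into the two $M_n$-orbits determined by whether the $n_1$-plane lies in the hyperplane $H$, and identify the open- and closed-orbit contributions with the Type-2 and Type-1 mirabolic inductions respectively. The paper does not supply its own proof of this lemma, simply citing Bernstein--Zelevinsky, so there is nothing further to compare; the modulus bookkeeping you flag is exactly the content of the normalization choices in \cite[4.12--4.13]{BZ77}.
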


\subsection{Connection to Bernstein-Zelevinsky functors}

\begin{lemma} \cite[Proposition 4.13]{BZ77} \label{lem mirabolic decomp}
Let $\pi \in \mathrm{Alg}(G_{n})$. Let $\tau \in \mathrm{Alg}(M_k)$. Then 
\begin{enumerate}
\item $\Psi^-(\tau \bar{\times} \pi) \cong \Psi^-(\tau) \times \pi$
\item $ 0 \rightarrow \Phi^-(\tau) \bar{\times} \pi \rightarrow \Phi^-(\tau \bar{\times} \pi)\rightarrow \Psi^-(\tau) \bar{\times} (\pi|_M) \rightarrow 0$ 
\end{enumerate}
\end{lemma}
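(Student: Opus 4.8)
\noindent\emph{Proof plan.} Both assertions are instances of computing the twisted Jacquet functors $\Psi^- = (-)_{V,1}$ and $\Phi^- = (-)_{V,\psi}$, where $V$ denotes the unipotent radical of $M_{n+k}$, applied to $\tau\bar\times\pi$, which by definition (type $2$) is the compactly induced representation $\mathrm{ind}_Q^{M_{n+k}}\big(\epsilon\,\delta_Q^{1/2}\cdot(\pi\boxtimes\tau)\big)$ from the subgroup $Q$ with $\epsilon=\nu^{-1/2}$. Since twisted coinvariants is the functor $H_0(V,\,-\otimes\lambda^{-1})$, the plan is to apply Mackey theory / the Bernstein--Zelevinsky geometric lemma: restrict $\mathrm{ind}_Q^{M_{n+k}}$ to $V$, decompose it according to the $V$-orbits on $Q\backslash M_{n+k}$, and then take $\lambda$-coinvariants orbit by orbit. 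By Shapiro's lemma the contribution of an orbit through $x$ is a $\lambda$-twisted coinvariant of $\epsilon\,\delta_Q^{1/2}(\pi\boxtimes\tau)^x$ along the stabilizer $Q^x\cap V$, and it vanishes unless $\lambda$ restricted to $Q^x\cap V$ agrees with the character that this representation defines there.

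\noindent The geometric input is that, writing $M_{n+k}=V\rtimes G_{n+k-1}$ and $Q=(Q\cap V)\rtimes \bar P$ with $Q\cap V$ the copy of the unipotent radical $V_{k-1}$ of $M_k$ inside $Q$ and $\bar P\subset G_{n+k-1}$ the parabolic with Levi $G_n\times G_{k-1}$, only two orbits carry a non-zero $\lambda$-twisted contribution: a ``closed'' one on which $V\cap Q^x=Q\cap V=V_{k-1}$ and the complementary $F^n$-worth of $V$ is killed, and an ``open'' one on which $Q\cap V$ has been conjugated away and the surviving stabilizer is the copy of the mirabolic $V_n$ of $M_n$ acting on the $\pi$-factor. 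For $\lambda=\psi$: on the closed orbit $\psi$ restricts to the defining character of $V_{k-1}\subset M_k$, so the contribution is $\tau_{V_{k-1},\psi}=\Phi^-(\tau)$ reinduced together with $\pi$, which unwinds (using Lemma~\ref{lem associative} and transitivity of $\mathrm{ind}$) to $\Phi^-(\tau)\bar\times\pi$ as an $M_{n+k-1}$-representation; on the open orbit $\psi$ becomes trivial on the $\tau$-directions, so $\tau$ is replaced by $\Psi^-(\tau)$, while the character is now carried on the $\pi$-side, giving $\Psi^-(\tau)\bar\times(\pi|_M)$ (type $1$). The closure relation makes the closed-orbit term a subobject, producing exactly the short exact sequence in $(2)$, once the modulus characters of $Q$ and $V$, the twist $\epsilon=\nu^{-1/2}$ and the normalizations $\delta^{1/2}$ in the definitions of $\bar\times$ and of $\Phi^\pm,\Psi^\pm$ are checked to cancel into the normalized functors. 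For $\lambda=1$ the character-matching condition is vacuous, the two contributions merge, and the remaining $F^n$-part of $V$ together with the unipotent $N$ coming from $\mathrm{ind}_Q$ assemble by transitivity of induction into the full parabolic induction, so one reads off $\Psi^-(\tau\bar\times\pi)\cong\Psi^-(\tau)\times\pi$; here (\ref{eqn restriction to g first factor}) is the basic computation in the background.

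\noindent Part $(1)$ admits a softer alternative by d\'evissage: apply the exact functor $(-\bar\times\pi)$ to the exact sequence $0\to\Phi^+\Phi^-\tau\to\tau\to\Psi^+\Psi^-\tau\to 0$ and then the exact functor $\Psi^-$. Using $\Psi^-\Phi^+=0$ together with the fact that $(\Phi^+\rho)\bar\times\pi$ is again of the form $\Phi^+(\,\cdot\,)$ (transitivity of $\mathrm{ind}$ and Lemma~\ref{lem associative}), the left-hand term vanishes; using the standard identity $\Psi^-\Psi^+\cong\mathrm{Id}$ and the same transitivity argument, the right-hand term becomes $\Psi^-(\tau)\times\pi$. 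This bypasses the orbit count but still relies on the matrix-level compatibilities packaged in Lemma~\ref{lem associative}.

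\noindent The main obstacle is the second step above: honestly verifying that precisely two $V$-orbits on $Q\backslash M_{n+k}$ are relevant, and matching the two resulting subquotients, at the level of matrices, with $\Phi^-(\tau)\bar\times\pi$ and $\Psi^-(\tau)\bar\times(\pi|_M)$ (resp. with $\Psi^-(\tau)\times\pi$ in the untwisted case) — in particular getting every modulus and $\nu^{-1/2}$-normalization factor correct so that no spurious unramified twist appears and the connecting maps in the short exact sequence are the natural ones. After unwinding all definitions this is exactly \cite[Proposition 4.13]{BZ77}.
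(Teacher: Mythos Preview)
The paper does not supply its own proof of this lemma; it is quoted verbatim as \cite[Proposition 4.13]{BZ77} and used as a black box. Your proposal is therefore not being compared against an argument in the paper but against the original Bernstein--Zelevinsky proof, and your Mackey/orbit outline is precisely that argument: restrict the induced representation to the unipotent radical $V$, stratify by $V$-orbits on $Q\backslash M_{n+k}$, and identify the two surviving pieces via Shapiro. Your description of the closed and open orbit contributions for $\lambda=\psi$ is correct, and you are right that the only real work is the bookkeeping of modulus characters and the $\nu^{-1/2}$ twist.

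One small imprecision: in your treatment of $\lambda=1$ the phrase ``the two contributions merge'' is misleading. For trivial coinvariants the open-orbit piece actually vanishes (the $F^n$-part of $V$ acts by a nontrivial family of characters there, so its trivial coinvariants are zero), and only the closed orbit survives, yielding $\Psi^-(\tau)\times\pi$ directly. Your alternative d\'evissage argument for part~(1) is cleaner and avoids this issue, though to run it you also need the identity $(\Psi^+\sigma)\bar\times\pi\cong\Psi^+(\sigma\times\pi)$ alongside the $\Phi^+$ compatibility you mention; both follow from induction in stages as in Lemma~\ref{lem associative}.
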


The following result is standard. We omit the details.

\begin{lemma} \label{lem standard transfer to product}
For $\pi \in \mathrm{Alg}(G_r)$,
\[  (\Phi^+)^k\Psi^+(\pi) \cong \pi \bar{\times} \Pi_{k+1}
\]

\end{lemma}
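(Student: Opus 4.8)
The plan is to induct on $k$, with the case $k=0$ handled directly from the definitions and the inductive step reduced to a single commutation identity for $\Phi^+$ and type-$1$ mirabolic induction.

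For $k=0$ we must identify $\Psi^+(\pi)$ with $\pi\,\bar{\times}\,\Pi_1 = \pi\,\bar{\times}\,\mathbf{1}$, where $\mathbf{1}$ is the trivial representation of $M_1=\{1\}$. Unwinding the type-$1$ mirabolic induction of Section \ref{ss mir ind}: the group $Q=P_1\cap M_{r+1}$ is precisely $G_r\ltimes V_r$ (because $M_1$ is trivial), the twisting character $\epsilon$ is trivial, and $\pi\boxtimes\mathbf{1}$ extended trivially to $Q$ is just $\pi\boxtimes 1$. Hence $\pi\,\bar{\times}\,\mathbf{1}$ is the normalized compact induction from $G_rV_r$ to $M_{r+1}$ of $\pi\boxtimes 1$, which is by definition $\Psi^+(\pi)$; one only checks that the modulus character of $G_rV_r$ inside $M_{r+1}$ used in the two definitions is the same, which is immediate.

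For the inductive step, assume $(\Phi^+)^{k-1}\Psi^+(\pi)\cong \pi\,\bar{\times}\,\Pi_k$. Applying $\Phi^+$ and using $\Pi_{k+1}=\Phi^+(\Pi_k)$, it suffices to prove the commutation
\[ \Phi^+(\pi\,\bar{\times}\,\tau)\ \cong\ \pi\,\bar{\times}\,\Phi^+(\tau)\qquad\text{for }\pi\in\mathrm{Alg}(G_r),\ \tau\in\mathrm{Alg}(M_k),\]
for type-$1$ mirabolic induction. Both sides are normalized compact inductions to $M_{r+k+1}$. On the left, $\Phi^+(\pi\,\bar{\times}\,\tau)=\mathrm{ind}_{M_{r+k}V_{r+k}}^{M_{r+k+1}}\big((\pi\,\bar{\times}\,\tau)\boxtimes\psi\big)$; inserting the definition of $\pi\,\bar{\times}\,\tau$ as an induction from $P_k\cap M_{r+k}$ and using transitivity of $\mathrm{ind}$ realizes it as $\mathrm{ind}$ from the subgroup of $M_{r+k+1}$ generated by $P_k\cap M_{r+k}$ (in the $M_{r+k}$-block) together with $V_{r+k}$, of $\pi\boxtimes\tau$ tensored with the non-degenerate character coming from $\psi$ on the relevant root groups. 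On the right, expanding $\pi\,\bar{\times}\,\Phi^+(\tau)$ as an induction from $P_{k+1}\cap M_{r+k+1}$, then inserting $\Phi^+(\tau)=\mathrm{ind}_{M_kV_k}^{M_{k+1}}\big(\tau\boxtimes\psi\big)$ and applying transitivity again, produces $\mathrm{ind}$ from the same subgroup with the same inducing datum. A direct matrix check shows the two towers of parabolic/unipotent subgroups inside $M_{r+k+1}$ coincide, and the half-modulus factors accumulated along each tower multiply to the same character, so the two normalized inductions agree.

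The only real work is this last bookkeeping: matching the two chains of subgroups inside $M_{r+k+1}$ and verifying that the generic characters and the modulus normalizations line up exactly; everything else is formal transitivity of induction. (Alternatively, one can bypass the $k$-induction by first showing, in one step, that $(\Phi^+)^k\sigma$ equals $\mathrm{ind}$ of $\sigma$ times a fixed non-degenerate character from $M_N$ times a "staircase" unipotent of $M_{N+k}$, then applying this to $\sigma=\Psi^+(\pi)$ and to $\sigma=\mathbf 1$ and comparing; the substance is the identical matrix computation. I would present the $k$-induction, since the $\Phi^+$-commutation it isolates is reused elsewhere.)
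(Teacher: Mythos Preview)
The paper omits the proof entirely, declaring the result standard. Your induction on $k$, with the inductive step reduced to the commutation $\Phi^+(\pi\,\bar{\times}\,\tau)\cong\pi\,\bar{\times}\,\Phi^+(\tau)$ established by transitivity of compact induction, is correct and is precisely the standard argument (this commutation is in the circle of identities around \cite[Proposition~4.13]{BZ77}). A minor indexing quibble: in the paper's conventions the unipotent radical of $M_{r+1}$ is $V_{r+1}$ (so $Q=P_1\cap M_{r+1}=G_r\ltimes V_{r+1}=M_{r+1}$, making the base-case induction literally trivial), and likewise $\Phi^+$ on $\mathrm{Alg}(M_{r+k})$ induces from $M_{r+k}V_{r+k+1}$; once these indices are aligned, the subgroup-matching and character-matching you describe go through exactly as stated.
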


%\begin{proof}

%Using induction by stages, the space $(\Phi^+)^k\Psi^+(\pi)$ is isomorphic to the space of smooth compactly supported functions from $M_{m+k+1}$ to $\pi$ satisfying 
%\begin{align} \label{eqn induction property}  f(um)= \varphi(u') g.f(m) ,
% \end{align}
%where  $u =\begin{pmatrix} g & v \\ & u' \end{pmatrix}$, 
%where $u' \in U_{k+1}$, $v \in Mat_{m, k+1}$, and $g \in G_m$, and $\varphi(u')=\psi(u'_{1,2}+\ldots +u'_{n-1,n})$. The representation $\pi \times \Pi_{k+1}$ can again be realized as smooth compactly-supported functions from $M_{m+k+1}$ to $\pi$, which by induction by stages, satisfy (\ref{eqn induction property}).
%\end{proof}

It is also convenient to define another functor:
\[  \Lambda : \mathrm{Alg}(G_n) \rightarrow \mathrm{Alg}(M_{n+1}) 
\]
by 
\[\Lambda(\pi) = {}^u\mathrm{Ind}_{G_n}^{M_{n+1}}\nu^{-1/2} \pi .\]
 By definitions, $\Lambda(\pi) \cong 1|_{M_1}\bar{\times} \pi$. When $n=0$, $\Lambda$ defines an isomorphism between vector spaces.

\begin{proposition} \label{prop nontrivial bz}
Let $r\geq 0$. Let $\pi \in \mathrm{Alg}(G_r)$. For $s \geq 0$,
\[  \Pi_{s+1} \bar{\times} \pi \cong (\Phi^+)^s(\Lambda(\pi)) .
\]
\end{proposition}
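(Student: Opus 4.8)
The plan is to prove the identity $\Pi_{s+1} \bar{\times} \pi \cong (\Phi^+)^s(\Lambda(\pi))$ by induction on $s$, using the associativity of mirabolic induction (Lemma \ref{lem associative}) together with the characterization of $(\Phi^+)^k\Psi^+$ provided by Lemma \ref{lem standard transfer to product}. The base case $s=0$ is precisely the definition of $\Lambda$: indeed $\Lambda(\pi) \cong 1|_{M_1}\bar{\times}\pi$, and $\Pi_1 = (\Phi^+)^0(1) = 1|_{M_1}$ as an $M_1$-representation, so $\Pi_1 \bar{\times}\pi \cong \Lambda(\pi)$.

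For the inductive step, I would first recall from Lemma \ref{lem standard transfer to product} (with $\pi$ there taken to be the trivial representation of $G_0$, so $\Psi^+(1) = \Pi_1$) that $(\Phi^+)^s(\Pi_1) \cong (\Phi^+)^{s-1}\Psi^+(1) \bar{\times}$... more directly, $(\Phi^+)^s\Psi^+(1_{G_0}) \cong 1_{G_0}\bar{\times}\Pi_{s+1} = \Pi_{s+1}$; but the cleaner route is to observe that $(\Phi^+)^s$ commutes with $\bar{\times}\pi$ in the appropriate sense. Concretely, I would prove the auxiliary claim that for any $\tau \in \mathrm{Alg}(M_k)$ and $\pi \in \mathrm{Alg}(G_r)$ one has $\Phi^+(\tau \bar{\times}\pi)\cong \Phi^+(\tau)\bar{\times}\pi$. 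This follows by writing both sides as compactly-supported inductions and comparing: $\Phi^+(\tau) = \mathrm{ind}_{M_kV_k}^{M_{k+1}}\tau\boxtimes\psi$, and then $\Phi^+(\tau)\bar{\times}\pi$ unwinds (by transitivity of induction and an orbit computation for the relevant double cosets) to the same space as $\Phi^+(\tau\bar{\times}\pi)$. This is essentially a repackaging of the geometric content already implicit in Lemma \ref{lem mirabolic decomp}(1)–(2) and Lemma \ref{lem standard transfer to product}. Granting this claim, the induction closes immediately: $(\Phi^+)^s(\Lambda(\pi)) = \Phi^+\big((\Phi^+)^{s-1}(\Lambda(\pi))\big) \cong \Phi^+(\Pi_s\bar{\times}\pi) \cong \Phi^+(\Pi_s)\bar{\times}\pi = \Pi_{s+1}\bar{\times}\pi$, where the last equality uses $\Phi^+(\Pi_s) = \Phi^+((\Phi^+)^{s-1}(1)) = (\Phi^+)^s(1) = \Pi_{s+1}$ from the definition of $\Pi_n$.

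The main obstacle is the auxiliary commutation claim $\Phi^+(\tau\bar{\times}\pi) \cong \Phi^+(\tau)\bar{\times}\pi$: one must be careful that the modulus characters and the twist $\epsilon$ in the definition of Type 1 mirabolic induction match up correctly on both sides, and that the double-coset decomposition governing the restriction/induction interplay has a single relevant orbit (so that no extra short exact sequence appears, unlike in Lemma \ref{lem short exact seq} or Lemma \ref{lem mirabolic decomp}(2)). Alternatively, one can sidestep this by realizing $\Pi_{s+1}\bar{\times}\pi$ and $(\Phi^+)^s\Lambda(\pi)$ both as unnormalized compactly-supported inductions from the standard parabolic-type subgroup $Q$ of $M_{n+s+1}$ with inducing data $\nu^{-1/2}\pi\boxtimes(\text{Gelfand--Graev character on the unipotent part})$, and checking the two descriptions of the inducing subgroup and character coincide; this makes the identity a direct consequence of transitivity of $\mathrm{ind}$. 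I would carry out whichever of these two bookkeeping computations is shorter, most likely the second.
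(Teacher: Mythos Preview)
Your proposal is correct but takes a different route from the paper. The paper works on the adjoint side: it applies $\Phi^-$ and $\Psi^-$ to $\Pi_{s+1}\bar{\times}\pi$ using Lemma~\ref{lem mirabolic decomp}. Since $\Psi^-(\Pi_{s+1})=0$ for $s\geq 1$, Lemma~\ref{lem mirabolic decomp}(1) gives $\Psi^-(\Pi_{s+1}\bar\times\pi)=0$, and then the short exact sequence in Lemma~\ref{lem mirabolic decomp}(2) collapses to $\Phi^-(\Pi_{s+1}\bar\times\pi)\cong\Pi_s\bar\times\pi$. Feeding this into the Bernstein--Zelevinsky exact sequence $0\to\Phi^+\Phi^-\to\mathrm{Id}\to\Psi^+\Psi^-\to 0$ yields $\Pi_{s+1}\bar\times\pi\cong\Phi^+(\Pi_s\bar\times\pi)$, and iterating gives the result. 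By contrast, you build up via $\Phi^+$ directly, relying on the commutation $\Phi^+(\tau\bar\times\pi)\cong\Phi^+(\tau)\bar\times\pi$. That commutation is true (it is another clause of \cite[Proposition~4.13]{BZ77}, the same source as Lemma~\ref{lem mirabolic decomp}), so your argument goes through; but it is not among the statements recorded in this paper, so you would need to either cite it separately or carry out the single-orbit/induction-in-stages check you outline. The paper's approach has the advantage of using only the lemmas already on the page; your approach isolates a reusable general identity and avoids invoking the exact sequence $0\to\Phi^+\Phi^-\to\mathrm{Id}\to\Psi^+\Psi^-\to 0$.
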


\begin{proof}
Recall that $\Pi_{s+1}=(\Phi^+)^{s}(1)$ (and $1$ is the trivial representation of $M_1$) and so, by $\Psi^-\circ \Phi^+=0$, $\Psi^-(\Pi_{s+1-k})=0$ for $k=0,\ldots, s-1$. This with Lemma \ref{lem mirabolic decomp}(2) implies, for $k=0,\ldots, s-1$,  
\[ (*)\quad  \Phi^-(\Pi_{s+1-k} \bar{\times} \pi) \cong \Phi^-(\Pi_{s+1-k})\bar{\times} \pi \cong \Pi_{s-k} \bar{\times} \pi  .
\]
Here in the last isomorphism, we use $\Pi_{s+1-k}=(\Phi^+)^{s-k}(1)$ for any $k$ and $\Phi^-\circ \Phi^+\cong \mathrm{Id}$.

Now, for $0 \leq k \leq s-1$, 
\[ (**)\quad  (\Psi^-)(\Pi_{s+1-k}\bar{\times} \pi) =0, \]
where the equality follows from Lemma \ref{lem mirabolic decomp}(1) and above discussions.

Now repeatedly using Bernstein-Zelevinsky theory \cite[Proposition 3.2]{BZ77} (see property (4) of the functors in Section \ref{ss bz functor}) on $\Pi_{s+1-k}\bar{\times} \pi$ ($k=0,1,\ldots,s-1$) with (*) and (**), we have 
\[  \Pi_{s+1}\bar{\times} \pi \cong \Phi^+(\Pi_{s}\bar{\times}\pi) \cong \ldots \cong (\Phi^+)^s(\Pi_1\bar{\times} \pi) .
\]
The last isomorphism simply yeilds:
\[ \Pi_{s+1} \bar{\times} \pi \cong (\Phi^+)^s(\Lambda(\pi)) .\]
\end{proof}

%We shall explain the above proposition from the viewpoint of Fourier-transform in Remark \ref{rmk fourier transfor bz theory}. 

\subsection{A transfer lemma}

We shall need the following transfer or reduction:

\begin{lemma} \label{lem cuspidal supp}
Let $\pi_1 \in \mathrm{Alg}(G_k)$ and $\pi_2 \in \mathrm{Alg}(G_l)$. Let $\pi_3 \in \mathrm{Alg}(G_n)$ with $n \geq l+k$. Let $a=n+1-(k+l)$. Then, for any  $\sigma$ in $\mathrm{Irr}^c(G_{n+1-(k+l)})$ such that $\sigma \notin \mathrm{csupp}_{\mathbb Z}(\nu^{-1/2}\pi_3)$, and for any $i$,
\[ \mathrm{Ext}^i_{G_n}(\pi_1\times( (\sigma \times \pi_2)|_{G_{n-k}}), \pi_3) \cong \mathrm{Ext}^i_{G_n}(\pi_1 \times ((\Pi_a \bar{\times} \pi_2)|_{G_{n-k}}), \pi_3) .
\]
  
\end{lemma}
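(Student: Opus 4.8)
The plan is to compare the two inductions $\sigma \times \pi_2$ and $\Pi_a \bar{\times} \pi_2$ via a Bernstein-Zelevinsky-type filtration and then kill the error terms using the cuspidal-support hypothesis and Lemma \ref{lem ext vanishing}. First I would recall from \cite[5.18]{BZ76} (item (6) in the list of properties of the Bernstein-Zelevinsky functors) that for any irreducible cuspidal $\sigma$ of $G_a$ we have $\sigma|_{M_a} \cong (\Phi^+)^{a-1}(1) = \Pi_a$. Hence $\sigma \bar{\times} \pi_2$ and $\Pi_a \bar{\times} \pi_2$ have restrictions to $M$ (and to the relevant mirabolic subgroups) that are closely related; more precisely, applying Lemma \ref{lem associative} and \eqref{eqn restriction to g first factor} one sees that both $(\sigma \times \pi_2)|_{G_{n-k}}$ — appearing after the further product with $\pi_1$ — and $(\Pi_a \bar{\times}\pi_2)|_{G_{n-k}}$ arise from a common mirabolic induction, with the difference being exactly the "top layer'' of the Bernstein-Zelevinsky filtration of $\sigma$ restricted to $M_a$, which is built out of $\Psi^+\Psi^-$ applied to pieces whose cuspidal support is a $\nu$-shift of $\sigma$.

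Concretely, I would use Lemma \ref{lem short exact seq} (or rather its mirabolic analogue, together with Lemma \ref{lem mirabolic decomp}) to produce a finite filtration of $\pi_1 \times ((\Pi_a \bar{\times} \pi_2)|_{G_{n-k}})$ whose subquotients are $\pi_1 \times ((\sigma \times \pi_2)|_{G_{n-k}})$ together with terms each of which is a parabolic induction of the shape $(\nu^j\sigma \times (\text{something})) \times (\cdots)$ for various integers $j$ — i.e. terms in which a $\nu$-shift of $\sigma$ appears as an explicit inducing factor. For every such error term $E$, every composition factor $\tau$ of $\pi_3$ satisfies $\mathrm{cupp}(\tau) \subset \mathrm{cupp}(\pi_3)$, and since $\sigma \notin \mathrm{csupp}_{\mathbb{Z}}(\nu^{-1/2}\pi_3)$ we get $\nu^j\sigma \notin \mathrm{cupp}(\pi_3)$ for the relevant shifts $j$ (the shift by $\nu^{1/2}$ coming from the mirabolic normalization is precisely why the hypothesis is phrased with $\nu^{-1/2}\pi_3$). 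Then Lemma \ref{lem ext vanishing} gives $\mathrm{Ext}^i_{G_n}(E, \pi_3) = 0$ for all $i$. Feeding this into the long exact sequences associated to the filtration collapses everything and yields the asserted isomorphism.

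The main obstacle I anticipate is bookkeeping the precise $\nu$-shifts: one must track the modulus characters $\delta^{1/2}$ and the twist $\epsilon = \nu^{-1/2}$ built into Type 2 mirabolic induction, as well as the $\nu^{1/2}$ appearing in \eqref{eqn restriction to g first factor}, to be sure that the cuspidal support appearing in each error term is genuinely a $\nu$-integral shift of $\sigma$ and that the hypothesis $\sigma \notin \mathrm{csupp}_{\mathbb{Z}}(\nu^{-1/2}\pi_3)$ excludes all of them. A secondary point of care is that $\pi_2$ (and hence the error terms) need not be admissible, but Lemma \ref{lem ext vanishing} is stated exactly to allow $\pi_2 \in \mathrm{Alg}(G_l)$ not necessarily admissible with $\pi_3$ admissible, so this causes no trouble; one only needs the finiteness of the filtration, which holds because it is governed by the (finite) Bernstein-Zelevinsky filtration of the cuspidal $\sigma$ restricted to $M_a$, equivalently by $\Pi_a = (\Phi^+)^{a-1}(1)$. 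Finally, I would note that it suffices to treat $\pi_3$ irreducible and then the general admissible case follows by dévissage, exactly as in the proof of Lemma \ref{lem ext vanishing}.
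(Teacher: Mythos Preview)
Your approach is essentially the paper's, but you have over-engineered it and gotten one inclusion backwards. The paper's argument is a single short exact sequence, not a multi-step filtration: by Lemma \ref{lem short exact seq},
\[
0 \rightarrow \sigma|_{M}\bar{\times}\pi_2 \rightarrow (\sigma\times\pi_2)|_{M} \rightarrow \sigma\bar{\times}(\pi_2|_{M}) \rightarrow 0,
\]
and since $\sigma$ is cuspidal, $\sigma|_{M}\cong \Pi_a$ on the nose. So $(\Pi_a\bar{\times}\pi_2)|_{G_{n-k}}$ is a \emph{sub}object of $(\sigma\times\pi_2)|_{G_{n-k}}$, not a larger object admitting the latter as a subquotient; your filtration is stated in the wrong direction. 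After restricting to $G_{n-k}$ via \eqref{eqn restriction to g first factor} and producting with $\pi_1$, there is exactly \emph{one} error term, namely $\pi_1\times(\nu^{1/2}\sigma)\times(\pi_2|_{G_{l-1}})$, and a single application of Lemma \ref{lem ext vanishing} (with the cuspidal representation $\nu^{1/2}\sigma$, which is why the hypothesis reads $\sigma\notin\mathrm{csupp}_{\mathbb Z}(\nu^{-1/2}\pi_3)$) kills it in all degrees. There are no ``various integers $j$'': the Bernstein--Zelevinsky filtration of a cuspidal $\sigma$ restricted to $M_a$ has only the single layer $\Pi_a$, so the extra layers you worry about simply do not appear.

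Your remarks about admissibility of $\pi_2$ and the bookkeeping of the $\nu^{1/2}$ shift are correct and to the point; once you straighten the direction of the sequence and drop the superfluous layers, your write-up becomes the paper's proof.
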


\begin{proof}
Again Lemma \ref{lem short exact seq} gives a filtration on $(\sigma \times \pi_2)|_{M_{n+1-k}}$ as:
\begin{align} \label{eqn ses}
  0 \rightarrow  \sigma|_{M} \bar{\times} \pi_2 \rightarrow (\sigma \times \pi_2)|_{M} \rightarrow \sigma \bar{\times} (\pi_2|_{M}) \rightarrow 0 .
\end{align}
Restricting to $G_{n-k}$, this gives the filtration:
\[   0 \rightarrow  (\sigma|_{M} \bar{\times} \pi_2)|_{G_{n-k}} \rightarrow (\sigma \times \pi_2)|_{G_{n-k}} \rightarrow (\nu^{1/2}\sigma) \times (\pi_2|_{G_{l-1}}) \rightarrow 0 .
\]
With $\Pi_a=\sigma|_M$, producting with $\pi_1$ gives the exact sequence:
\begin{align} \label{eqn long exact three product}
 0 \rightarrow \pi_1 \times (( \Pi_a \bar{\times} \pi_2 )|_{G_{n-k}}) \rightarrow \pi_1  \times((\sigma \times \pi_2)|_{G_{n-k}}) \rightarrow \pi_1 \times (\nu^{1/2}\sigma) \times (\pi_2|_{G_{l-1}}) \rightarrow 0 .
\end{align}

The standard argument using second adjointness of Frobenius reciprocity and comparing cuspidal support at $\nu^{1/2}\sigma$ gives that, for all $i$, 
\[  \mathrm{Ext}^i_{G_{n}}(\pi_1 \times (\nu^{1/2}\sigma) \times (\pi_2|_{G_{l-1}}), \pi_3)= 0 .
\]
Thus long exact sequence from (\ref{eqn long exact three product}) gives that, for all $i$, 
\[  \mathrm{Ext}^i_{G_{n}}(\pi_1 \times ((\Pi_a \bar{\times} \pi_2)|_{G_{n-k}}), \pi_3) \cong \mathrm{Ext}^i_{G_{n}}(\pi_1 \times ((\sigma \times \pi_2)|_{G_{n-k}}) , \pi_3) .
\]  
\end{proof}

\subsection{A lemma on Speh representation}

\begin{lemma} \label{lem speh rep filtration}
Let $\pi=u_{\rho}(m,d)$ be a Speh representation.  Let $\pi'$ be in $\mathrm{Alg}(G_k)$. Let $n+1=n_{\pi}+k$. Let $\pi''$ be an irreducible representation of $G_{n}$ such that $\nu^{1/2}(\nu^{(m+d-2)/2}\rho)$ is not in $\mathrm{cupp}(\pi'')$. Then there exists a short exact sequence, as $G_{n}$-representations: 
\[ 0 \rightarrow K \rightarrow (\pi|_M \bar{\times} \pi')|_{G_{n}} \rightarrow Q \rightarrow 0 
\]
such that, for all $i$,
\[  \mathrm{Ext}^i_{G_{n}}(Q, \pi'') = 0,
\]
and, 
\[ K \cong  ((\nu^{-1/2}u_{\rho}(m,d-1)) \bar{\times} (\Pi_{p} \bar{\times} \pi'))|_{G_n} \cong u_{\rho}(m,d-1)\times ((\Pi_p  \bar{\times} \pi')|_{G_{k+p-1}})\] 
where  $p=n_{\rho}m$, and 
\[ \mathrm{Ext}^i_{G_{n}}(K, \pi'') \cong \mathrm{Ext}^i_{G_{n}}((\pi|_M \bar{\times} \pi')|_{G_{n}}, \pi'') .
\]
\end{lemma}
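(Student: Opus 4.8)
The plan is to feed the Bernstein–Zelevinsky filtration of $\pi|_M$ into the mirabolic induction and then restrict to $G_n$, keeping track of cuspidal supports. First I would recall, from Lemma \ref{lem speh restrict}, that $\pi = u_\rho(m,d)$ has level $p = n_\rho m$ and that $\pi^{(p)} \cong \nu^{-1/2} u_\rho(m,d-1)$, while every non-highest derivative $\pi^{[k]} \neq 0$ has $\nu^{(m+d-2)/2 + 1/2}\rho$ in its cuspidal support. Via the standard description of $\pi|_{M}$ in terms of the functors $\Phi^+, \Psi^+$ (property (5) in Section \ref{ss bz functor}), $\pi|_M$ carries a finite filtration whose successive quotients are $(\Phi^+)^{k-1}\Psi^+(\nu^{-1/2}\pi^{(k)})$ for $k=1,\dots,p$; the bottom (smallest) piece, corresponding to $k=p$, is $\Psi^+(\pi^{(p)}) = \Psi^+(\nu^{-1/2}u_\rho(m,d-1))$, and by Lemma \ref{lem standard transfer to product} the pieces for $k<p$ are of the form $\pi^{(k)}$-twisted $\bar\times \Pi_{k}$, hence after the shift have $\nu^{(m+d-1)/2}\rho$ in their cuspidal support.

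Next I would apply the exact functor $- \bar\times \pi'$ and then restrict to $G_n$ via \eqref{eqn restriction to g first factor} (both operations are exact, so the filtration is preserved). Define $K$ to be the image in $(\pi|_M \bar\times \pi')|_{G_n}$ of the bottom piece $\Psi^+(\nu^{-1/2}u_\rho(m,d-1)) \bar\times \pi'$, and $Q$ the quotient. For $K$: using Lemma \ref{lem standard transfer to product} one identifies $\Psi^+(\nu^{-1/2}u_\rho(m,d-1)) = (\nu^{-1/2}u_\rho(m,d-1)) \bar\times \Pi_1$, then the associativity Lemma \ref{lem associative} rearranges $((\nu^{-1/2}u_\rho(m,d-1)) \bar\times \Pi_1 \bar\times \pi')$; restricting to $G_n$ via \eqref{eqn restriction to g first factor} turns the leftmost mirabolic induction into an honest product with $\nu^{1/2}(\nu^{-1/2}u_\rho(m,d-1)) = u_\rho(m,d-1)$, giving $K \cong u_\rho(m,d-1) \times ((\Pi_p \bar\times \pi')|_{G_{k+p-1}})$, as claimed. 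For $Q$: each successive quotient of $Q$, after the $\nu^{1/2}$-shift coming from \eqref{eqn restriction to g first factor}, is a parabolic induction one of whose factors has cuspidal support containing $\nu^{1/2}(\nu^{(m+d-2)/2}\rho)$, which by hypothesis is disjoint from $\mathrm{cupp}(\pi'')$; hence $\mathrm{Ext}^i_{G_n}(-, \pi'') = 0$ on each quotient by Lemma \ref{lem ext vanishing}, and therefore on $Q$ by the long exact sequence. The final isomorphism $\mathrm{Ext}^i_{G_n}(K,\pi'') \cong \mathrm{Ext}^i_{G_n}((\pi|_M \bar\times \pi')|_{G_n}, \pi'')$ then follows from the short exact sequence $0 \to K \to (\pi|_M\bar\times\pi')|_{G_n} \to Q \to 0$ and the vanishing just established.

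The main obstacle I anticipate is bookkeeping the cuspidal supports precisely enough: one must check that every non-bottom graded piece of $\pi|_M \bar\times \pi'$, once restricted and shifted, genuinely contributes the cuspidal line $\nu^{1/2}\nu^{(m+d-2)/2}\rho$ — this requires invoking Lemma \ref{lem speh restrict}(2) for the $\pi^{(k)}$ with $k$ not the level, and separately accounting for the $\Pi_k$ tensor factors (which are built from $\Phi^+$ and so carry shifted copies of the trivial-type data on $M_\bullet$, not new cuspidal lines, so the relevant line always comes from $\pi^{(k)}$ itself). A secondary technical point is that $\pi'$ is merely in $\mathrm{Alg}(G_k)$, not admissible, so one should apply Lemma \ref{lem ext vanishing} with $\pi'$ in the non-admissible slot $\pi_2$ and $\pi''$ (which is admissible, being irreducible) in the $\pi$ slot — the hypotheses of that lemma are arranged exactly for this. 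Everything else is an application of exactness of the functors involved and the long exact sequence for $\mathrm{Ext}$.
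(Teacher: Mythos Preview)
Your approach is essentially the same as the paper's: feed the Bernstein--Zelevinsky filtration of $\pi|_M$ through $-\bar\times\pi'$ and restriction to $G_n$, isolate the bottom layer as $K$, and kill the rest against $\pi''$ via Lemma~\ref{lem ext vanishing} using the cuspidal support condition from Lemma~\ref{lem speh restrict}(2). One bookkeeping slip to fix: the bottom (submodule) piece of the BZ filtration of $\pi|_M$ is $(\Phi^+)^{p-1}\Psi^+(\pi^{(p)}) \cong \pi^{(p)}\bar\times\Pi_p$, not $\Psi^+(\pi^{(p)})\cong\pi^{(p)}\bar\times\Pi_1$ as you wrote in the middle of the argument --- your final formula for $K$ with $\Pi_p$ is correct, but the intermediate step with $\Pi_1$ does not lead to it.
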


\begin{proof}

From the bottom piece of Bernstein-Zelevinsky filtration (Lemma \ref{lem speh restrict}), $\pi|_M$ has the submodule (see Section \ref{ss bz functor} and Lemma \ref{lem standard transfer to product})
\[  K':=\nu^{-1/2}u_{\rho}(m,d-1) \bar{\times} \Pi_p
\]
and $(\pi|_M)/K'$ admits a $M$-filtration whose successive quotients isomorphic to $\pi^{(j)} \bar{\times} \Pi_j$ for $j <p$ (see similar discussions in Section \ref{ss BZ filtration}). Let $G=G_{n}$. Now taking mirabolic product is exact and so one would have, by a long exact sequence argument,
\[ \mathrm{Ext}^i_{G}( (\pi|_M \bar{\times} \pi')|_G, \pi'') \cong \mathrm{Ext}^i_G( (K' \bar{\times} \pi')|_G, \pi'') \]
if we can show that, for all $i$,
\[ \mathrm{Ext}^i_G(((\pi|_M)/K' \bar{\times} \pi'|_G, \pi'') =0
\]

To show the last Ext vanishing, it suffices to show that for each piece of Bernstein-Zelevinsky layer $\tau=\pi^{(j)}\times \Pi_j$ ($j<p$) appearing in $(\pi|_M)/K'$, 
\[\mathrm{Ext}^i_G((\tau \bar{\times} \pi')|_G, \pi'')=0\]
 for any $i$, which indeed follows from:
\begin{align*}
  \mathrm{Ext}^i_G( ((\pi^{(j)}\bar{\times} \Pi_j) \bar{\times} \pi')|_G, \pi'' ) 
\cong & \mathrm{Ext}^i_G( (\nu^{1/2}\pi^{(j)})\times ((\Pi_j \bar{\times} \pi')|_{G_{j+k-1}}), \pi'' ) \\
\cong & \mathrm{Ext}^i_{G_{n_{\pi}-j}\times G_{j+k-1}}((\nu^{1/2}\pi^{(j)})\boxtimes (\Pi_j \bar{\times} \pi'), (\pi'')_{N_{j+k-1}^-}) \\
\cong & 0,
\end{align*}
where the first isomorphism follows from Lemma \ref{lem associative}(1) and (\ref{eqn restriction to g first factor}), the second isomorphism follows from Frobenius reciprocity, and the last isomorphism follows from Lemma \ref{lem speh restrict}(2) with comparing cuspidal supports. The last isomorphism follows from Lemma \ref{lem ext vanishing}.
%(For more details on the last isomorphism, since $(\pi'')_{N_{j+k-1}^-}$ has only finite length, one first considers all simple composition factors of $(\pi'')_{N_{j+k-1}^-}$, and then uses K\"unneth formula and uses vanishing Ext's for disjointness of cuspidal supports.)
\end{proof}

\section{Proof of Conjecture \ref{conj ggp orig} (non-Archimedean)} \label{sec proof conj}

The main goal of this section is to prove Conjecture \ref{conj ggp orig} (non-Archimedean)  modulo Proposition \ref{prop bessel transfer} and Proposition \ref{prop preserving quo sp}. Roughly speaking, Lemmas \ref{lem cuspidal supp} and \ref{lem speh rep filtration} reduce to a bottom layer in a filtration and then  Lemma \ref{lem reduct main} reduces the computation of the bottom layer to an inductive case. One also needs a Gan-Gross-Prasad type reduction (Lemma \ref{lem first main reduction detail}) to transfer the study  to the inductive case.

\subsection{Dual restriction}

\begin{proposition} \label{prop bessel transfer}
Let $\pi_1$ and $\pi_2$ be irreducible representations of $G_{n+1}$ and $G_n$ respectively. For $\sigma \in \mathrm{Irr}^c(G_2)$ such that $\sigma$ is not in $\mathrm{cupp}_{\mathbb Z}(\nu^{-1/2}\pi_1^{\vee}) \cup \mathrm{cupp}_{\mathbb Z}(\pi_2)$, and for all $i$, 
\[   \mathrm{Ext}^i_{G_n}(\pi_1|_{G_n}, \pi_2^{\vee}) \cong \mathrm{Ext}^i_{G_{n+1}}(( \pi_2 \times \sigma)|_{G_{n+1}} ,  \pi_1^{\vee}) .
\]
\end{proposition}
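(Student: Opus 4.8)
The plan is to transform the right-hand side step by step into the left-hand side by peeling off a Bernstein--Zelevinsky layer with vanishing $\mathrm{Ext}$ and then recognising what remains as an honest induced module. Since $\sigma$ is cuspidal and $\sigma\notin\mathrm{cupp}_{\mathbb Z}(\pi_2)$, the product $\pi_2\times\sigma$ is irreducible and isomorphic to $\sigma\times\pi_2$, so I would work with $(\sigma\times\pi_2)|_{G_{n+1}}$. Applying Lemma \ref{lem short exact seq} to $(\sigma\times\pi_2)|_{M_{n+2}}$ and restricting to $G_{n+1}$ yields a short exact sequence whose subobject is $(\sigma|_M\bar\times\pi_2)|_{G_{n+1}}=(\Pi_2\bar\times\pi_2)|_{G_{n+1}}$ (using $\sigma|_M\cong\Pi_2$ for cuspidal $\sigma$) and whose quotient, by (\ref{eqn restriction to g first factor}), is $\nu^{1/2}\sigma\times(\pi_2|_{G_{n-1}})$. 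The hypothesis $\sigma\notin\mathrm{cupp}_{\mathbb Z}(\nu^{-1/2}\pi_1^\vee)$ means exactly $\nu^{1/2}\sigma\notin\mathrm{cupp}_{\mathbb Z}(\pi_1^\vee)$, so in particular $\mathrm{cupp}(\nu^{1/2}\sigma)\cap\mathrm{cupp}(\pi_1^\vee)=\emptyset$ and Lemma \ref{lem ext vanishing} gives $\mathrm{Ext}^i_{G_{n+1}}(\nu^{1/2}\sigma\times(\pi_2|_{G_{n-1}}),\pi_1^\vee)=0$ for all $i$. The long exact sequence then collapses to $\mathrm{Ext}^i_{G_{n+1}}((\pi_2\times\sigma)|_{G_{n+1}},\pi_1^\vee)\cong\mathrm{Ext}^i_{G_{n+1}}((\Pi_2\bar\times\pi_2)|_{G_{n+1}},\pi_1^\vee)$.

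Next I would compute $(\Pi_2\bar\times\pi_2)|_{G_{n+1}}$. By Proposition \ref{prop nontrivial bz} (equivalently, from Lemma \ref{lem mirabolic decomp} together with $\Psi^-(\Pi_2)=0$ and $\Phi^-(\Pi_2)=\Pi_1$) one has $\Pi_2\bar\times\pi_2\cong\Phi^+(\Lambda(\pi_2))$, where $\Lambda(\pi_2)=\Pi_1\bar\times\pi_2\cong 1|_{M_1}\bar\times\pi_2$ is a compactly induced module from $G_n$ to $M_{n+1}$. A direct Mackey computation shows that $\Phi^+(\tau)|_{G_{n+1}}\cong\mathrm{ind}_{M_{n+1}}^{G_{n+1}}(\tau)$ for $\tau\in\mathrm{Alg}(M_{n+1})$: the relevant double coset $G_{n+1}\backslash M_{n+2}/(M_{n+1}V)$ is a single point whose stabiliser is $M_{n+1}$, on which the Whittaker character is trivial. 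Transitivity of (normalised) compact induction, together with the cancellation of the modulus and $\nu$-twists coming from the $\nu^{-1/2}$ in $\Lambda$, the $\epsilon=\nu^{-1/2}$ in the mirabolic induction and the $\delta^{1/2}$ corrections in the Mackey step, then gives $(\Pi_2\bar\times\pi_2)|_{G_{n+1}}\cong\mathrm{ind}_{M_{n+1}}^{G_{n+1}}\,\mathrm{ind}_{G_n}^{M_{n+1}}(\nu^{-1/2}\pi_2)\cong\mathrm{ind}_{G_n}^{G_{n+1}}\pi_2$, with no residual twist on $\pi_2$.

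Finally I would pass back over $G_{n+1}$. Using $(\mathrm{ind}_{G_n}^{G_{n+1}}\pi_2)^\vee\cong\mathrm{Ind}_{G_n}^{G_{n+1}}\pi_2^\vee$ and the admissibility (hence reflexivity) of $\pi_1^\vee$, the contragredient pairing identifies $\mathrm{Ext}^i_{G_{n+1}}(\mathrm{ind}_{G_n}^{G_{n+1}}\pi_2,\pi_1^\vee)\cong\mathrm{Ext}^i_{G_{n+1}}(\pi_1,\mathrm{Ind}_{G_n}^{G_{n+1}}\pi_2^\vee)$; the $\mathrm{Hom}$-level statement is immediate, and it upgrades to $\mathrm{Ext}$ because $(-)^\vee$ is exact on $\mathrm{Alg}(G_{n+1})$ and smooth duals of projectives are injective. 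On the other hand $\mathrm{Ext}^i_{G_{n+1}}(\pi_1,\mathrm{Ind}_{G_n}^{G_{n+1}}\pi_2^\vee)\cong\mathrm{Ext}^i_{G_n}(\pi_1|_{G_n},\pi_2^\vee)$ by Frobenius reciprocity: $\mathrm{Ind}_{G_n}^{G_{n+1}}$ is exact and preserves injectives, while a projective resolution of $\pi_1$ restricts to a projective resolution of $\pi_1|_{G_n}$, the restriction of $C_c^\infty(G_{n+1})$ to $G_n$ being a direct sum of copies of $C_c^\infty(G_n)$. Concatenating these isomorphisms proves the proposition.

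The step I expect to be the main obstacle is the bookkeeping around the two non-admissible modules $\pi_1|_{G_n}$ and $\mathrm{ind}_{G_n}^{G_{n+1}}\pi_2$: making precise that the evident $\mathrm{Hom}$-level adjunctions and contragredient duality persist as $\mathrm{Ext}$-level isomorphisms, and verifying that every normalisation constant introduced by the mirabolic induction, by $\Lambda$, and by the two successive (normalised) inductions cancels, so that exactly $\mathrm{ind}_{G_n}^{G_{n+1}}\pi_2$ appears in the identification above. By contrast, the $\mathrm{Ext}$-vanishing that drives the peeling in the first paragraph is a routine application of Lemma \ref{lem ext vanishing} once the ordering $\sigma\times\pi_2$ is adopted, and this is precisely why the hypothesis on $\sigma$ involves both $\mathrm{cupp}_{\mathbb Z}(\pi_2)$ (to commute $\sigma$ past $\pi_2$) and $\mathrm{cupp}_{\mathbb Z}(\nu^{-1/2}\pi_1^\vee)$ (to kill the residual layer $\nu^{1/2}\sigma\times(\pi_2|_{G_{n-1}})$).
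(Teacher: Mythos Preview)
Your proposal is correct and follows essentially the same route as the paper: the paper deduces Proposition~\ref{prop bessel transfer} as the special case $m_1=m_2=0$ of Proposition~\ref{prop transfer}, whose proof likewise peels off the layer $\nu^{1/2}\sigma\times(\pi_2|_{G_{n-1}})$ via Lemma~\ref{lem short exact seq} and the cuspidal-support condition, identifies $(\Pi_2\bar\times\pi_2)|_{G_{n+1}}$ with ${}^u\mathrm{ind}_{G_n}^{G_{n+1}}\pi_2$ (your Mackey step is exactly what Proposition~\ref{prop bessel bz realize} packages), and then applies duality plus Frobenius reciprocity. Your extra care about the initial commutation $\pi_2\times\sigma\cong\sigma\times\pi_2$ and about the normalisation bookkeeping is accurate and fills in details the paper leaves implicit.
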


The proof of Proposition \ref{prop bessel transfer} will be postponed to Proposition \ref{prop transfer}, where we will prove a more general statement. Note that the additional cuspidal support condition $\sigma \notin \mathrm{cupp}_{\mathbb Z}(\pi_2)$ (c.f. Proposition \ref{prop transfer}) guarantees that $\sigma \times \pi_2 \cong \pi_2 \times \sigma$, while it is not critical in the proof of the GGP conjecture.

% The above form is particularly useful in the sense that both sides remain in the problem of restricting from $G_{m+1}$ to $G_m$.

\subsection{ Product preserving quotients }

\begin{proposition} \label{prop preserving quo sp}
Let $\rho \in \mathrm{Irr}^{u,c}$. Fix $m,d$. Let $\pi_1$ be a (not necessarily admissible) representation of $G_{n}$. Let $p=n_{\rho}md$. Let $\pi_2$ be an irreducible representation of $G_{n+p}$ such that  any cuspidal representation in $\mathrm{cupp}(\pi_2)$ is either 
\begin{enumerate}
\item lying in $\mathrm{cupp}(u_{\rho}(m,d))=\left\{ \nu^{-(m+d-2)/2}\rho, \ldots, \nu^{(m+d-2)/2}\rho \right\}$; or
\item not lying in $\left\{ \nu^n \nu^{(m+d)/2} \rho \right\}_{n \in \mathbb Z}$. 
\end{enumerate}
 Then if
\[  \mathrm{Hom}_{G_{n+p}}(u_{\rho}(m,d) \times \pi_1, \pi_2) \neq 0 ,
\]
then there exists a non-zero irreducible quotient $\omega$ of $\pi_1$ such that $\pi_2 \cong u_{\rho}(m,d)\times \omega$, moreover, if $\pi_2$ is an irreducible Arthur type representation, then such $\omega$ is also an irreducible Arthur type representation.
\end{proposition}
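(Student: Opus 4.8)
The plan is to remove the (possibly non-admissible) representation $\pi_1$ at the outset by adjunction, reducing everything to a statement about $\pi_2$ and $u_\rho(m,d)$ alone. Writing $u_\rho(m,d)\times\pi_1=\mathrm{Ind}_P^{G_{n+p}}(u_\rho(m,d)\boxtimes\pi_1)$ for the standard parabolic $P$ with Levi $G_p\times G_n$ and letting $N^-$ be the opposite unipotent radical, the second adjointness of Frobenius reciprocity gives
\[ \mathrm{Hom}_{G_{n+p}}(u_\rho(m,d)\times\pi_1,\pi_2)\ \cong\ \mathrm{Hom}_{G_n}(\pi_1, X),\qquad X:=\mathrm{Hom}_{G_p}\bigl(u_\rho(m,d),(\pi_2)_{N^-}\bigr), \]
where $X$ is a finite length $G_n$-representation because $(\pi_2)_{N^-}$ is admissible of finite length. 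The counit of the adjunction furnishes a canonical nonzero --- hence, $\pi_2$ being irreducible, surjective --- morphism $u_\rho(m,d)\times X\twoheadrightarrow\pi_2$. Therefore it suffices to prove the following two statements, which involve only $\pi_2$:
\[ (\star)\ \ X\text{ is irreducible;}\qquad (\star\star)\ \ u_\rho(m,d)\times X\cong\pi_2. \]
Granting these, set $\omega:=X$; then $\mathrm{Hom}_{G_n}(\pi_1,\omega)\neq 0$ with $\omega$ irreducible shows $\omega$ is a quotient of $\pi_1$, and $\pi_2\cong u_\rho(m,d)\times\omega$ by $(\star\star)$. Moreover the Arthur-type claim will be separate bookkeeping.

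Next I would reduce $(\star)$ and $(\star\star)$ to the case of a single cuspidal line. The cuspidal support of $u_\rho(m,d)$ lies entirely in the $\mathbb Z$-line $\{\nu^k\rho\}_{k\in\mathbb Z}$, and by hypotheses (1)--(2) the part of $\mathrm{cupp}(\pi_2)$ on that line is contained in $\mathrm{cupp}(u_\rho(m,d))$; in particular $\nu^{(m+d)/2}\rho,\nu^{-(m+d)/2}\rho\notin\mathrm{cupp}(\pi_2)$. Splitting $\pi_2\cong\pi_2^{\circ}\times\pi_2^{\sharp}$ according to whether cuspidal constituents lie on the $\mathbb Z$-line of $\rho$ --- an irreducible product, since supports on distinct $\mathbb Z$-lines are unlinked --- the functor $\mathrm{Hom}_{G_p}(u_\rho(m,d),-)$ applied to $(\pi_2)_{N^-}$ only detects the $\pi_2^{\circ}$-part, so $X\cong X^{\circ}\times\pi_2^{\sharp}$ and $u_\rho(m,d)\times X\cong(u_\rho(m,d)\times X^{\circ})\times\pi_2^{\sharp}$. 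Hence we may assume $\mathrm{cupp}(\pi_2)\subseteq\mathrm{cupp}(u_\rho(m,d))=\{\nu^i\rho:|i|\le (m+d-2)/2\}$.

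The heart of the matter is $(\star)$ and $(\star\star)$ under this single-line hypothesis, and this is where conditions (1)--(2) are genuinely used. One examines how $u_\rho(m,d)$ can occur as a $G_p$-factor inside $\pi_2$ --- via a Bernstein--Zelevinsky / geometric-lemma analysis of $(\pi_2)_{N^-}$ --- and shows that the hypothesis $\nu^{\pm(m+d)/2}\rho\notin\mathrm{cupp}(\pi_2)$, combined with Lemma \ref{lem speh restrict} (which controls the cuspidal supports of the non-maximal shifted derivatives of $u_\rho(m,d)$), forces all but the ``open-cell'' occurrence to vanish. One then reads off that $X$ has a single composition factor $\omega$, appearing once since $\mathrm{Hom}_{G_p}(u_\rho(m,d),u_\rho(m,d))\cong\mathbb C$, so $X\cong\omega$ is irreducible; and that $u_\rho(m,d)\times\omega$ is irreducible --- this last uses the square-irreducibility of Speh representations together with the Lapid--M\'inguez irreducibility criterion \cite{LM16}, whose hypothesis is exactly what conditions (1)--(2) supply (equivalently, the constraint they impose on $\mathrm{cupp}(\omega)=\mathrm{cupp}(\pi_2)-\mathrm{cupp}(u_\rho(m,d))$). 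With the surjection $u_\rho(m,d)\times X\twoheadrightarrow\pi_2$ from the first paragraph, irreducibility of $u_\rho(m,d)\times\omega$ forces $u_\rho(m,d)\times\omega\cong\pi_2$. This step --- pinning down the Jacquet structure of $\pi_2$ relative to $u_\rho(m,d)$ and establishing these two irreducibility statements --- is the main obstacle; the rest is formal.

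Finally, for the refinement to Arthur type, suppose $\pi_2$ is an irreducible Arthur type representation and write $\pi_2\cong\sigma_1\times\cdots\times\sigma_t$ with each $\sigma_j$ a Speh representation. The multiset $\{\sigma_1,\ldots,\sigma_t\}$ is an invariant of $\pi_2$ (by the classification of the unitary dual and the rigidity coming from cuspidal supports), and one may cancel the square-irreducible factor $u_\rho(m,d)$ from $u_\rho(m,d)\times\omega\cong\sigma_1\times\cdots\times\sigma_t$: matching cuspidal supports --- condition (2) ruling out any mixing --- gives $u_\rho(m,d)\cong\sigma_{j_0}$ for some $j_0$ and $\omega\cong\prod_{j\neq j_0}\sigma_j$, again a product of Speh representations, hence of Arthur type.
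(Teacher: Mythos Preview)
Your overall architecture matches the paper's: write $u=u_\rho(m,d)$, use second adjointness to pass to $X=R_u(\pi_2):=\mathrm{Hom}_{G_p}(u,(\pi_2)_{N^-})$, and exploit the counit $u\times X\to\pi_2$. The paper proves the proposition as a special case of Corollary~\ref{cor stronger}, and that corollary follows exactly your adjunction outline. So the skeleton is right.

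The gap is at your step $(\star)$. You assert that ``a Bernstein--Zelevinsky / geometric-lemma analysis of $(\pi_2)_{N^-}$'' shows $u\boxtimes\omega$ occurs exactly once, so $X$ is irreducible. But $\pi_2$ is an \emph{arbitrary} irreducible with the stated support condition; it is not handed to you as a parabolic induction, so there is no geometric lemma to apply. One can first use an irreducible submodule $\omega\hookrightarrow X$ together with \cite{LM16} to get $\pi_2\cong u\times\omega$, and \emph{then} try the geometric lemma on $u\times\omega$; but the statement that $u\boxtimes(\text{something})$ has multiplicity one in $(u\times\omega)_{N^-}$ for every $\omega\in\mathrm{Alg}_{\mathcal C}(G_n)$ is precisely the content of $R_u(u\times\omega)\cong\omega$ (Corollary~\ref{cor adjoint}), which the paper obtains only as a consequence of the fully-faithfulness of $\times_{u,\mathcal C}$ (Theorem~\ref{thm fully faith product}). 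That theorem in turn rests on Theorem~\ref{thm preserve extension} --- that $u\times$ preserves indecomposability of length-two objects in $\mathrm{Alg}_{\mathcal C}$ --- proved by a nontrivial downward induction (Appendix~B) passing through ``large'' Speh representations $\widetilde u_\rho(m,d,k)$ and truncating. None of this is formal, and nothing in Lemma~\ref{lem speh restrict} or a direct support count replaces it: the non-open cells of the geometric lemma for $(u\times\omega)_{N^-}$ can a priori contain $u$ in their first factor when $\mathrm{cupp}(\omega)$ overlaps $\mathrm{cupp}(u)$, and ruling this out is exactly the work you have skipped. Without $(\star)$ you also cannot conclude that your candidate $\omega$ (an irreducible \emph{submodule} of $X$) is a \emph{quotient} of $\pi_1$, which is what the proposition demands.

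Your Arthur-type paragraph is also incomplete: you try to cancel $u$ from $u\times\omega\cong\sigma_1\times\cdots\times\sigma_t$ by ``matching cuspidal supports,'' but you have not shown $u\cong\sigma_{j_0}$ for some $j_0$ --- this is not a support statement, and the conditions (1)--(2) do not by themselves force it. The paper's route is different and cleaner: from $u\times\omega\cong\pi_2$ with $u,\pi_2$ Hermitian self-dual one gets $\bar\omega^\vee\times u\cong\omega\times u$, hence $\bar\omega^\vee\cong\omega$ by Lemma~\ref{lem speh irred}(2); then Bernstein's unitarizability criterion \cite{Be84} gives $\omega$ unitarizable, and Tadi\'c's classification with unique factorization finishes.
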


Proposition \ref{prop preserving quo sp} will be proved as a special case of Corollary \ref{cor stronger}. Proposition \ref{prop preserving quo sp} is only needed for the only if direction.

\subsection{Proof of non-tempered GGP} \label{ss proof GGP}

Recall that $\mathrm{Irr}^{u,c}(G_k)$ is the set of irreducible unitarizable cuspidal representations of $G_k$.

The following two lemmas are the keys for reductions to an inductive case.

\begin{lemma} \label{lem first main reduction detail}
Let $\pi_p$ and $\pi_q$ be Arthur type representations of $G_{n+1}$ and $G_n$ respectively. Write 
\[  \pi_p=\pi_{p,1}\times \ldots \times \pi_{p,r}, \quad \pi_q=\pi_{q,1}\times \ldots \times \pi_{q,s} 
\]
for some Speh representations $\pi_{p,i}, \pi_{q,j}$. Write $\pi_{p,i}=u_{\rho_i}(m_i,d_i)$ and $\pi_{q,j}=u_{\sigma_j}(l_j,e_j)$. Suppose $m_1+d_1\geq m_i+d_i$ and $m_1+d_1 \geq l_j+e_j$ for all $i,j$. Then 
\[ \mathrm{Hom}_{G_n}(\pi_p, \pi_q) \neq 0 \]
 if and only if for any $\widetilde{\sigma} \in \mathrm{Irr}^{u,c}(G_{n_{\rho_1m_1}})$ such that $\widetilde{\sigma} \notin \mathrm{cupp}_{\mathbb{Z}}(\pi_p)\cup \mathrm{cupp}_{\mathbb{Z}}(\nu^{-1/2}\pi_q)$, 
\[   \mathrm{Hom}_{G_n}(u_{\rho_1}(m_1,d_1-1)\times ((\widetilde{\sigma} \times \pi_p')|_{G_a}), \pi_q) \neq 0,
\]
where $\pi_p'=\pi_{p,2}\times \ldots \times \pi_{p,r}$ and $a=n-n_{\rho_1}m_1(d_1-1)$.
\end{lemma}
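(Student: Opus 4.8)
The plan is to strip off the largest Speh factor $\pi_{p,1}=u_{\rho_1}(m_1,d_1)$ of $\pi_p$ by a Bernstein--Zelevinsky filtration of $\pi_p|_{G_n}$, to show that the ``wrong'' filtration layer makes no contribution to $\mathrm{Hom}_{G_n}(-,\pi_q)$ or $\mathrm{Ext}^1_{G_n}(-,\pi_q)$, and then to rewrite the surviving layer by means of Lemma~\ref{lem speh rep filtration} and Lemma~\ref{lem cuspidal supp}. First I would write $\pi_p=\pi_{p,1}\times\pi_p'$ with $\pi_p'=\pi_{p,2}\times\cdots\times\pi_{p,r}$. Applying Lemma~\ref{lem short exact seq} to $\pi_p|_{M_{n+1}}$ and then restricting to $G_n$ gives a short exact sequence
\[
0\to (\pi_{p,1}|_M\,\bar{\times}\,\pi_p')|_{G_n}\to \pi_p|_{G_n}\to (\pi_{p,1}\,\bar{\times}\,(\pi_p'|_M))|_{G_n}\to 0 ,
\]
and by (\ref{eqn restriction to g first factor}) the right-hand term is isomorphic to $(\nu^{1/2}\pi_{p,1})\times(\pi_p'|_{G_{n_{\pi_p'}-1}})$.

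The crucial point is that $\mathrm{Ext}^i_{G_n}\big((\nu^{1/2}\pi_{p,1})\times(\pi_p'|_{G_{n_{\pi_p'}-1}}),\pi_q\big)=0$ for all $i$, and this is where the maximality of $m_1+d_1$ is used. By second adjointness it suffices that $\mathrm{Ext}^\bullet_{G_{n_{\rho_1}m_1d_1}}(\nu^{1/2}\pi_{p,1},\tau_a)=0$ for every constituent $\tau_a\boxtimes\tau_b$ of the Jacquet module of $\pi_q$ along the parabolic with Levi $G_{n_{\rho_1}m_1d_1}\times G_{n_{\pi_p'}-1}$; such a $\tau_a$ has $\mathrm{cupp}(\tau_a)\subseteq\mathrm{cupp}(\pi_q)$. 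Since $\pi_q$ is a product of unitarisable Speh representations with $l_j+e_j\le m_1+d_1$, the part of $\mathrm{cupp}(\pi_q)$ lying in the inertial class of $\rho_1$ is symmetric about the unitary axis and involves at most $m_1+d_1-1$ distinct cuspidal representations, whereas $\nu^{1/2}\pi_{p,1}\cong u_{\nu^{1/2}\rho_1}(m_1,d_1)$ has cuspidal support involving exactly $m_1+d_1-1$ distinct cuspidals, with centre shifted by $\nu^{1/2}$ off the unitary axis. Hence either $\tau_a$ lies in a Bernstein block different from that of $\nu^{1/2}\pi_{p,1}$, whence all Ext-groups vanish; or it lies in the same block, which forces $\mathrm{cupp}(\tau_a)$ to be a maximal-width ``trapezoid'' inside $\mathrm{cupp}(\pi_q)$, and by the width bound together with the symmetry such a trapezoid must be centred on the unitary axis, so $\tau_a$ has a unitary central character, necessarily distinct from the non-unitary central character of $\nu^{1/2}\pi_{p,1}$, and again all Ext-groups vanish. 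Feeding this into the long exact sequence collapses it to $\mathrm{Hom}_{G_n}(\pi_p|_{G_n},\pi_q)\cong\mathrm{Hom}_{G_n}\big((\pi_{p,1}|_M\,\bar{\times}\,\pi_p')|_{G_n},\pi_q\big)$.

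For the surviving layer, the same maximality yields $\nu^{1/2}(\nu^{(m_1+d_1-2)/2}\rho_1)=\nu^{(m_1+d_1-1)/2}\rho_1\notin\mathrm{cupp}(\pi_q)$, so Lemma~\ref{lem speh rep filtration} applies with $\pi=\pi_{p,1}$, $\pi'=\pi_p'$, $\pi''=\pi_q$ and produces a submodule
\[
K\cong u_{\rho_1}(m_1,d_1-1)\times\big((\Pi_p\,\bar{\times}\,\pi_p')|_{G_a}\big),\qquad p=n_{\rho_1}m_1,\ \ a=n-n_{\rho_1}m_1(d_1-1),
\]
with $\mathrm{Ext}^i_{G_n}(K,\pi_q)\cong\mathrm{Ext}^i_{G_n}\big((\pi_{p,1}|_M\,\bar{\times}\,\pi_p')|_{G_n},\pi_q\big)$ for all $i$. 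Next I would apply Lemma~\ref{lem cuspidal supp} with $\pi_1=u_{\rho_1}(m_1,d_1-1)$, $\pi_2=\pi_p'$, $\pi_3=\pi_q$: the rank bookkeeping forces its parameter to be $p=n_{\rho_1}m_1$, so the lemma replaces $\Pi_p$ by a unitarisable cuspidal $\widetilde\sigma\in\mathrm{Irr}^{u,c}(G_{n_{\rho_1}m_1})$ satisfying $\widetilde\sigma\notin\mathrm{cupp}_{\mathbb{Z}}(\pi_p)\cup\mathrm{cupp}_{\mathbb{Z}}(\nu^{-1/2}\pi_q)$ (for the lemma only $\widetilde\sigma\notin\mathrm{cupp}_{\mathbb{Z}}(\nu^{-1/2}\pi_q)$ is needed, but the stronger condition is harmless), and gives $\mathrm{Ext}^i_{G_n}(K,\pi_q)\cong\mathrm{Ext}^i_{G_n}\big(u_{\rho_1}(m_1,d_1-1)\times((\widetilde\sigma\times\pi_p')|_{G_a}),\pi_q\big)$. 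Chaining the isomorphisms at $i=0$, and using that the last $\mathrm{Hom}$-space is independent of the particular admissible choice of $\widetilde\sigma$, gives the claimed equivalence.

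The hard part will be the Ext-vanishing in the second paragraph: everything else is bookkeeping around the cited lemmas, whereas that vanishing is precisely the point where one must exploit that $m_1+d_1$ is maximal among the parameters of all Speh factors of both $\pi_p$ and $\pi_q$, through the cuspidal-support/central-character dichotomy described there. (Should the maximum occur only among the factors of $\pi_q$, so that this hypothesis is unavailable, one first passes to the dual restriction problem of Proposition~\ref{prop bessel transfer} to reinstate it.)
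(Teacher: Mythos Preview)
Your overall strategy matches the paper's proof: apply the Mackey/Bernstein--Zelevinsky filtration of Lemma~\ref{lem short exact seq} to $\pi_p|_M$, kill the ``wrong'' layer $(\nu^{1/2}\pi_{p,1})\times(\pi_p'|_{G})$ in $\mathrm{Ext}^i_{G_n}(-,\pi_q)$, then rewrite the surviving layer via Lemma~\ref{lem speh rep filtration} and Lemma~\ref{lem cuspidal supp}. The chaining of isomorphisms at $i=0$ is exactly what the paper does.

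The only place your argument diverges from the paper is the Ext-vanishing for the top layer, and here your reasoning is more elaborate than needed and, as written, has a gap. You argue by a dichotomy: either $\tau_a$ lies in a different Bernstein block, or in the same block, in which case you claim $\mathrm{cupp}(\tau_a)$ must be a ``maximal-width trapezoid centred on the unitary axis'', hence $\tau_a$ has unitary central character. But ``same Bernstein block'' only constrains the inertial support, not the actual cuspidal support as a multiset, so it does not force $\tau_a$ to be centred or to have unitary central character; pieces of the Jacquet module of a product of Speh representations can have quite lopsided cuspidal support. What one actually needs is that $\nu^{1/2}\pi_{p,1}$ and $\tau_a$ have different cuspidal supports \emph{as multisets}, so that the Bernstein centre separates them. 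The paper does this in one line: $\nu^{(m_1+d_1-1)/2}\rho_1=\nu^{1/2}\cdot\nu^{(m_1+d_1-2)/2}\rho_1$ lies in $\mathrm{cupp}(\nu^{1/2}\pi_{p,1})$, but the hypothesis $l_j+e_j\le m_1+d_1$ guarantees it does not lie in $\mathrm{cupp}(\pi_q)\supseteq\mathrm{cupp}(\tau_a)$. You already observe this fact when checking the hypothesis of Lemma~\ref{lem speh rep filtration}; using it here as well gives the vanishing immediately via Lemma~\ref{lem ext vanishing}, without any trapezoid or central-character discussion.

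Finally, your closing parenthetical about passing to the dual restriction problem when the maximum is realised only among the $\pi_{q,j}$ is not part of the proof of this lemma (whose hypothesis already assumes $m_1+d_1\ge l_j+e_j$); it belongs to the proof of the theorem that invokes the lemma.
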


\begin{proof}

By Lemma \ref{lem short exact seq},
\begin{align} \label{eqn ses mirabolic} 0 \rightarrow \pi_{p,1}|_M \bar{\times} \pi_p'  \rightarrow \pi_p|_M \rightarrow  \pi_{p,1} \bar{\times} (\pi_p'|_M)\rightarrow 0.
\end{align}
%We have that
%\[ (\pi_{p,1}\times (\pi_p'|_M))|_{G_n} \cong (\nu^{1/2}\pi_{p,1})\times \pi_p'|_{G_m} ,\]

Let $n_1=n_{\rho_1}d_1m_1$ and $n'=n-n_1$. Now 
\begin{align*}
\mathrm{Ext}^i_{G_n}((\pi_{p,1}\bar{\times} (\pi_p'|_M))|_{G_n}, \pi_q) & \cong \mathrm{Ext}^i_{G_n}((\nu^{1/2}\pi_{p,1})\times (\pi_p'|_{G_{n'}}), \pi_q) \\
&\cong \mathrm{Ext}^i_{G_{n_1}\times G_{n-n_1}}((\nu^{1/2}\pi_{p,1} )\boxtimes (\pi_p'|_{G_{n'}}), (\pi_q)_{N_{n-n_1}^-}) \\
&=0 ,
\end{align*}
where the first isomorphism follows from (\ref{eqn restriction to g first factor}) and Lemma \ref{lem associative}(1) and the second isomorphism follows from second adjointness of Frobenius reciprocity and the third isomorphism follows by comparing cuspidal support at $\nu^{1/2}\nu^{(d_1+m_1-2)/2}\rho_1$.

Thus long exact sequence argument on (\ref{eqn ses mirabolic}) gives that, for all $i$,
\begin{align} \label{eqn ext transfer 1} \mathrm{Ext}^i_{G_n}((\pi_{p,1}|_M \bar{\times} \pi_p')|_{G_n}, \pi_q ) \cong \mathrm{Ext}^i_{G_n}(\pi_p|_{G_n}, \pi_q ).
\end{align}

Set $u' =\pi_{p,1}^-\cong u_{\rho_1}(m_1,d_1-1)$ and $u''=\nu^{-1/2}u'$. Now Lemma \ref{lem speh rep filtration} gives that
\begin{align} \label{eqn ext transfer 2}  \mathrm{Ext}^i_{G_n}((( u''\bar{\times} \Pi ) \times \pi_p')|_{G_n} , \pi_q ) \cong \mathrm{Ext}^i_{G_n}( (\pi_{p,1}|_M \bar{\times} \pi_p')|_{G_n}, \pi_q) ,
\end{align}
where $\Pi=\Pi_{n_{\rho_1}m_1}$. 

For any $\widetilde{\sigma} \in \mathrm{Irr}^{u,c}(G_{n_{\rho_1}m_1})$ not appearing in $\mathrm{cupp}_{\mathbb Z}(\pi_p) \cup \mathrm{cupp}_{\mathbb Z}(\nu^{-1/2}\pi_q)$,
\begin{align} \label{eqn ext transfer 3}
 \mathrm{Ext}^i_{G_n}(u'  \times ((\widetilde{\sigma} \times \pi_p')|_{G_t}), \pi_q ) \cong \mathrm{Ext}^i_{G_n}(u'  \times ((\Pi \bar{\times} \pi_p')|_{G_t}), \pi_q) \cong  \mathrm{Ext}^i_{G_n}((( u''\bar{\times} \Pi ) \times \pi_p')|_{G_n} , \pi_q ),
\end{align}
where $t=n'+n_{\rho_1}m_1$. Here the first isomorphism follows from Lemma \ref{lem cuspidal supp} and the second isomorphism follows from Lemma \ref{lem associative}(1) and (\ref{eqn restriction to g first factor}),

 By equations (\ref{eqn ext transfer 1}), (\ref{eqn ext transfer 2}) and (\ref{eqn ext transfer 3}) at the case that $i=0$, we obtain the following equivalent statements:
\begin{enumerate}
\item $\mathrm{Hom}_{G_n}( \pi_p|_{G_n}, \pi_q ) \neq 0$;
\item $\mathrm{Hom}_{G_n}(u_{\rho_1}(m_1,d_1-1)  \times ((\widetilde{\sigma} \times \pi_p')|_{G_t}), \pi_q ) \neq 0$ for any $\widetilde{\sigma} \in \mathrm{Irr}^{u,c}(G_{n_{\rho_1}m_1})$ not appearing in $\mathrm{cupp}_{\mathbb Z}(\pi_p) \cup \mathrm{cupp}_{\mathbb Z}(\nu^{-1/2}\pi_q)$.
\end{enumerate}

\end{proof}

\begin{lemma} \label{lem reduct main}
We keep using notations in the previous lemma. We still assume that $m_1+d_1\geq m_i+d_i$ and $m_1+d_1 \geq l_j+e_j$ for all $i,j$. Then 
\[\mathrm{Hom}_{G_n}(\pi_p|_{G_n}, \pi_q)\neq 0\]
if and only if there exists $k$ such that 
\[ \pi_{q,k} \cong u_{\rho_1}(m_1, d_1-1), 
\]
and for any $\widetilde{\sigma} \in \mathrm{Irr}^{u,c}(G_{n_{\rho_1}m_1})$ with $\widetilde{\sigma} \notin \mathrm{cupp}_{\mathbb Z}(\pi_p) \cup \mathrm{cupp}_{\mathbb{Z}}(\nu^{-1/2}\pi_q)$,
\[ \mathrm{Hom}_{G_{n'}}( (\widetilde{\sigma}\times \pi_p')|_{G_{n'}} ,\pi_q') \neq 0,
\]
where $n'=n-n_{\rho_1}m_1(d_1-1)$ and $\pi_q'=\pi_{q,1}\times \ldots \pi_{q,k-1}\times \pi_{q,k+1}\times \ldots \times \pi_{q,s}$.
\end{lemma}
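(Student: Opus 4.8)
\noindent\emph{Proof sketch.}
Set $u'=u_{\rho_1}(m_1,d_1-1)$ and observe that in the numerology of Lemma~\ref{lem first main reduction detail} one has $a=n-n_{\rho_1}m_1(d_1-1)=n'$, so that $u'\times((\widetilde\sigma\times\pi_p')|_{G_{n'}})$ is a $G_n$-representation. Fix a $\widetilde\sigma\in\mathrm{Irr}^{u,c}(G_{n_{\rho_1}m_1})$ not in $\mathrm{cupp}_{\mathbb Z}(\pi_p)\cup\mathrm{cupp}_{\mathbb Z}(\nu^{-1/2}\pi_q)$ (the excluded set meets only finitely many cuspidal lines, so such $\widetilde\sigma$ exist). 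By Lemma~\ref{lem first main reduction detail},
\[ \mathrm{Hom}_{G_n}(\pi_p|_{G_n},\pi_q)\neq 0 \iff \mathrm{Hom}_{G_n}\bigl(u'\times((\widetilde\sigma\times\pi_p')|_{G_{n'}}),\,\pi_q\bigr)\neq 0 , \]
so it remains to match the right-hand condition with the one in the statement. The ``if'' direction is then formal: if $\pi_{q,k}\cong u'$ for some $k$, then $\pi_q\cong u'\times\pi_q'$ (products of Speh representations are irreducible and independent of the ordering), and any surjection $(\widetilde\sigma\times\pi_p')|_{G_{n'}}\twoheadrightarrow\pi_q'$ supplied by the hypothesis induces, by exactness of parabolic induction, a surjection $u'\times((\widetilde\sigma\times\pi_p')|_{G_{n'}})\twoheadrightarrow u'\times\pi_q'\cong\pi_q$, whence the right-hand Hom is nonzero.

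For the ``only if'' direction I would feed the right-hand Hom into Proposition~\ref{prop preserving quo sp}, applied to the Speh representation $u'=u_{\rho_1}(m_1,d_1-1)$ with $\pi_1=(\widetilde\sigma\times\pi_p')|_{G_{n'}}$ and $\pi_2=\pi_q$. The one point requiring work, and the only place where the maximality hypothesis is used, is the verification of the cuspidal support condition on $\pi_q$: writing $\pi_{q,j}=u_{\sigma_j}(l_j,e_j)$ and taking a constituent $\nu^k\sigma_j$ of $\mathrm{cupp}(\pi_q)$ (so $|k|\le (l_j+e_j-2)/2$), one checks that if $\sigma_j$ is not an unramified twist of $\rho_1$ then alternative~(2) of Proposition~\ref{prop preserving quo sp} holds automatically, while if $\sigma_j$ is such a twist, unitarity forces $\sigma_j\cong\rho_1$ and $k\in(m_1+d_1-1)/2+\mathbb Z$, and then $|k|\le (l_j+e_j-2)/2\le (m_1+d_1-2)/2$ together with an integrality/parity check (this is where $l_j+e_j\le m_1+d_1$ enters) yields $|k|\le (m_1+d_1-3)/2$, i.e.\ $\nu^k\rho_1\in\mathrm{cupp}(u_{\rho_1}(m_1,d_1-1))$, which is alternative~(1). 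Granting this, Proposition~\ref{prop preserving quo sp} produces a nonzero irreducible quotient $\omega$ of $(\widetilde\sigma\times\pi_p')|_{G_{n'}}$ with $\pi_q\cong u'\times\omega$, and $\omega$ again of Arthur type.

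To finish, I would compare the two factorizations $\pi_{q,1}\times\cdots\times\pi_{q,s}\cong\pi_q\cong u'\times\omega$ of $\pi_q$ into Speh representations, expanding $\omega$ as such a product. Since the Arthur parameter of a product of Speh representations is the direct sum of the parameters of the individual factors, each of which is an irreducible $W_F\times\mathrm{SL}_2\times\mathrm{SL}_2$-representation, and the local Langlands correspondence for $\mathrm{GL}$ is a bijection, the multiset of Speh factors of an Arthur type representation is well-defined; hence there is a $k$ with $\pi_{q,k}\cong u'$ and $\omega\cong\pi_q'$. Then $(\widetilde\sigma\times\pi_p')|_{G_{n'}}$ surjects onto $\omega\cong\pi_q'$, so $\mathrm{Hom}_{G_{n'}}((\widetilde\sigma\times\pi_p')|_{G_{n'}},\pi_q')\neq 0$; and since the argument runs verbatim for every admissible $\widetilde\sigma$, the stated condition follows. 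The main obstacle is the cuspidal support verification feeding Proposition~\ref{prop preserving quo sp}; apart from that and the (standard) uniqueness of Speh factorizations, the proof is a formal consequence of Lemma~\ref{lem first main reduction detail}, Proposition~\ref{prop preserving quo sp} and exactness of parabolic induction.
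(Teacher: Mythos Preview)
Your proof is correct and follows essentially the same route as the paper: reduce via Lemma~\ref{lem first main reduction detail}, handle the ``if'' direction by exactness of parabolic induction, and for ``only if'' apply Proposition~\ref{prop preserving quo sp} followed by uniqueness of Speh factorization. Your explicit parity argument for the cuspidal support hypothesis (noting that failure of alternative~(2) forces $m_1+d_1$ and $l_j+e_j$ to have opposite parity, whence $l_j+e_j\le m_1+d_1-1$ and alternative~(1) holds) spells out what the paper compresses into ``one checks the conditions''; the paper's proof is otherwise identical in structure.
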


\begin{proof}

We first consider the if direction. Let  $\widetilde{\sigma} \in \mathrm{Irr}^{u,c}(G_{n_{\rho_1}m_1})$ not appear in $\mathrm{cupp}_{\mathbb Z}(\pi_p)\cup \mathrm{cupp}_{\mathbb Z}(\nu^{-1/2}\pi_q)$. By the hypothesis of if direction, $\widetilde{\sigma} \times \pi_p'$ has a quotient $\pi_q'$ (where $\pi_q'$ is defined as in the lemma). Hence, by exactness of parabolic induction, 
\[ u_{\rho_1}(m_1,d_1-1) \times (\widetilde{\sigma} \times \pi_p') \]
 has a quotient 
\[\pi_{q,k}\times \pi_q' \cong u_{\rho_1}(m_1, d_1-1) \times \pi_q' \cong \pi_q . \] Thus, by the if part of Lemma \ref{lem first main reduction detail}, we obtain 
\[ \mathrm{Hom}_{G_n}(\pi_p|_{G_n}, \pi_q) \neq 0 .\]

 %Now $\sigma' \times \pi_q'$ has a quotient $\pi_p'$, where $\pi_q'$ is defined in the claim and $\sigma'$ is some cuspidal representation not in $\mathrm{cupp}_{\mathbb Z}(\pi_p) \cup \mathrm{cupp}_{\mathbb Z}(\nu^{-1/2}\pi_q)$. Since parabolic induction is an exact functor, 
%\[ u_{\rho_1}(m_1,d_1-1)\times (( \sigma' \times \pi_q')|_{G_t}) \]
% has a quotient 
%\[u_{\rho_1}(m_1,d_1-1)\times \pi_q' \cong \pi_q .\] 
% Thus we have the if direction for Claim (*).

We now consider the only if direction. Suppose $\mathrm{Hom}_{G_n}(\pi_p|_{G_n}, \pi_q)\neq 0$. By using the only if part of Lemma \ref{lem first main reduction detail}, we have that:
\[  \mathrm{Hom}_{G_n}(u_{\rho_1}(m_1,d_1-1)  \times ((\widetilde{\sigma} \times \pi_p')|_{G_t}), \pi_q ) \neq 0
\]
for some $\widetilde{\sigma} \in \mathrm{Irr}^{u,c}(G_{n_{\rho_1m_1}})$ not in $\mathrm{cupp}_{\mathbb Z}(\pi_p) \cup \mathrm{cupp}_{\mathbb Z}(\nu^{-1/2}\pi_q)$. Here $t=n-n_{\rho_1}m_1(d_1-1)$.

% Recall that we are assuming $m_1+d_1 \geq l_1+e_1 \geq l_i+e_i$ for any $i$. For any $\tau \in \mathrm{cupp}(\pi_{q,i})$,
%\begin{enumerate}
%\item if $\frac{1}{2}(m_1+(d_1 -1)-2) \geq \frac{1}{2}(l_i+e_i-2)$, then $\tau$ is either not in $\mathrm{cupp}_{\mathbb Z}(\nu^{(m-d)/2}\rho_1)$ or 
%\[\tau \in \left\{ \nu^{-(d_1-1+m_1-2)/2}\rho_1, \ldots, \nu^{(d_1-1+m_1-2)/2}\rho_1 \right\} ; \] or 
%\item if $\frac{1}{2}(m_1+(d_1-1)-2) < \frac{1}{2}(l_i+e_i-2) = \frac{1}{2}(m_1+d_1-2)$, then $\tau$ is not in $\mathrm{cupp}_{\mathbb Z}(\nu^{(m-d)/2}\rho_1)$.
%\end{enumerate}
From the condition on $m_1+d_1$, one checks the conditions in Proposition \ref{prop preserving quo sp} and so we can apply it to obtain that
\[ \pi_2 \cong u_{\rho_1}(m_1,d_1-1) \times \omega
\]
for some irreducible Arthur type quotient $\omega$ of $(\widetilde{\sigma} \times \pi_p')|_{G_t}$. Now by uniqueness of factorization of Arthur type representations in terms of Speh representations, there exists some $k^*$ such that 
\[  \pi_{q,k^*} \cong u_{\rho_1}(m_1,d_1-1), \quad \pi_{q,1}\times \ldots \times \pi_{q,k^*-1}\times \pi_{q,k^*+1} \times \ldots \times \pi_{q,s} \cong \omega .
\]
This proves the only if direction.
\end{proof}

\begin{theorem} \label{thm conj ggp}
Conjecture \ref{conj ggp orig} holds for non-Archimedean field $F$. 
\end{theorem}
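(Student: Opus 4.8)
The plan is to prove the reformulated statement by induction on the total number $N$ of non-cuspidal Speh factors among the $\pi_{p,i}$ and $\pi_{q,j}$, using the machinery assembled in Sections~\ref{sec mir ind} and \ref{sec proof conj}. First I would dispose of the base case $N=0$: here every factor is cuspidal, so $\pi_M$ and $\pi_N$ are irreducible generic representations, the pair $(M_A,N_A)$ is automatically relevant, and the non-vanishing of $\mathrm{Hom}_{G_n}(\pi_M,\pi_N)$ is the classical generic Gan--Gross--Prasad statement for $\mathrm{GL}$ (Rankin--Selberg, or \cite{JPSS83,Pr93,CS18}). Then for the inductive step I would, using the commutativity and associativity of products of Speh representations (from \cite{Be84,Ta86}), reorder the factors so that $m_1+d_1$ is maximal among all $m_i+d_i$ and $l_j+e_j$ appearing in $\pi_M=\pi_p$ and $\pi_N=\pi_q$; write $\pi_{p,1}=u_{\rho_1}(m_1,d_1)$ and $\pi_p'=\pi_{p,2}\times\cdots\times\pi_{p,r}$.

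The core of the argument is Lemma~\ref{lem reduct main}: with the maximality arrangement in force, $\mathrm{Hom}_{G_n}(\pi_p|_{G_n},\pi_q)\neq 0$ if and only if $\pi_{q,k}\cong u_{\rho_1}(m_1,d_1-1)$ for some index $k$ and $\mathrm{Hom}_{G_{n'}}((\widetilde{\sigma}\times\pi_p')|_{G_{n'}},\pi_q')\neq 0$, where $\pi_q'$ drops the $k$-th factor of $\pi_q$ and $\widetilde{\sigma}$ is any unitarizable cuspidal representation of $G_{n_{\rho_1}m_1}$ avoiding the relevant cuspidal supports. The point is that $\widetilde{\sigma}\times\pi_p'$ is again an irreducible Arthur type representation (a product of Speh representations, one of which, $\widetilde{\sigma}$, is cuspidal), and it has strictly fewer non-cuspidal Speh factors than $\pi_p$ — namely $N$ decreases — so the inductive hypothesis applies to the pair $(\widetilde{\sigma}\times\pi_p',\,\pi_q')$. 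I would then check that the combinatorial translation matches: on the reformulated side, $\mathrm{Hom}_{G_n}(\pi_M,\pi_N)\neq 0$ means $\pi_M\cong\pi_{p,1}\times\cdots\times\pi_{p,r}\times\pi_{q,1}^-\times\cdots\times\pi_{q,s}^-$ and $\pi_N\cong\pi_{p,1}^-\times\cdots\times\pi_{p,r}^-\times\pi_{q,1}\times\cdots\times\pi_{q,s}$ for suitable Speh representations; stripping off the factor $\pi_{p,1}$ from $\pi_M$ (and its derivative $\pi_{p,1}^-=u_{\rho_1}(m_1,d_1-1)$ from $\pi_N$, which is exactly the factor $\pi_{q,k}$ produced by Lemma~\ref{lem reduct main}) and inserting the dummy cuspidal $\widetilde{\sigma}$ — which equals its own derivative up to twist and so pairs with itself on both sides — is precisely the bookkeeping that turns a relevant pair of size $N$ into a relevant pair of size $N-1$, and conversely. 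One must verify that $\widetilde{\sigma}$ can always be chosen (there are infinitely many unitarizable cuspidals in each $\mathrm{GL}_k$, finitely many constraints), and that the new pair $(\widetilde{\sigma}\times\pi_p',\pi_q')$ is relevant if and only if the old one is; this is a direct unwinding of Definition~\ref{def relevant pair}.

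For the two directions: the \emph{if} direction of the inductive step uses only the \emph{if} half of Lemma~\ref{lem reduct main} (exactness of parabolic induction propagates a quotient), while the \emph{only if} direction uses the \emph{only if} half, which in turn rests on Proposition~\ref{prop preserving quo sp} — that producting with the Speh representation $u_{\rho_1}(m_1,d_1-1)$ preserves irreducible Arthur type quotients under the stated cuspidal support hypotheses — and on the uniqueness of factorization of Arthur type representations into Speh factors. I expect the main obstacle to be verifying the hypotheses of Proposition~\ref{prop preserving quo sp} and of Lemmas~\ref{lem first main reduction detail} and \ref{lem reduct main} in the reduced situation, i.e.\ that the maximality condition $m_1+d_1\geq m_i+d_i,\ l_j+e_j$ genuinely guarantees the cuspidal-support separation needed (so that the "$\omega$" quotient is forced to absorb exactly the factor $u_{\rho_1}(m_1,d_1-1)$ rather than splitting across several factors), together with the careful matching of multisets: checking that the factor $\pi_{q,k}$ isolated by Lemma~\ref{lem reduct main} corresponds, under the reformulation, to the $\pi_{p,1}^-$ slot and not to a $\pi_{q,j}$ slot, so that the relevant-pair structure is correctly reconstituted at the end. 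Once these bookkeeping points are checked, the induction closes and Conjecture~\ref{conj ggp orig} follows for non-Archimedean $F$, completing the proof of Theorem~\ref{thm conj ggp}, modulo Propositions~\ref{prop bessel transfer} and \ref{prop preserving quo sp}.
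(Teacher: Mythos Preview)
Your proposal follows the paper's inductive scheme closely, but there is a genuine gap. You write that by commutativity of products of Speh representations one may ``reorder the factors so that $m_1+d_1$ is maximal among all $m_i+d_i$ and $l_j+e_j$''. This is not possible in general: commutativity only lets you reorder factors \emph{within} $\pi_M$ and \emph{within} $\pi_N$, not move a factor from $\pi_N$ to $\pi_M$. If the maximal value of $m+d$ (resp.\ $l+e$) occurs only among the $\pi_{q,j}$, then the hypothesis $m_1+d_1\geq l_j+e_j$ of Lemma~\ref{lem reduct main} is simply false, and the whole reduction breaks down: the cuspidal support $\nu^{1/2}\nu^{(m_1+d_1-2)/2}\rho_1$ will appear in $\pi_N$, so the Ext-vanishing underlying Lemmas~\ref{lem first main reduction detail} and \ref{lem speh rep filtration} fails.

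The paper handles exactly this situation as its Case~2: when $l_1+e_1>m_1+d_1$ one invokes Proposition~\ref{prop bessel transfer} (the dual restriction) to replace the problem $\mathrm{Hom}_{G_n}(\pi_p,\pi_q)$ by $\mathrm{Hom}_{G_{n+1}}(\sigma\times\pi_q,\pi_p)$ for a suitable cuspidal $\sigma\in\mathrm{Irr}^{u,c}(G_2)$, using Hermitian self-duality of Arthur type representations to pass through $\pi_p^{\vee},\pi_q^{\vee}$. After this swap the largest factor sits in the first argument and one is back in Case~1. You cite Proposition~\ref{prop bessel transfer} at the very end as something the proof is ``modulo'', but you never invoke it; in the paper it is not an external input but an essential step of the induction itself. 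Without it your argument only covers the case where the Speh factor with largest $m+d$ already lives in $\pi_M$.
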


\begin{proof}
We shall prove the reformulated problem in Section \ref{ss rep the reform}. Let $\pi_p$ and $\pi_q$ be Arthur type representations of $G_{n+1}$ and $G_n$ respectively. We can write as the product of Speh representations i.e.
\[  \pi_p=\pi_{p,1}\times \ldots \times \pi_{p,r} \quad  \mbox{ and } \quad  \pi_q=\pi_{q,1} \times \ldots \times \pi_{q,s} 
\]
such that each $\pi_{p,i}$ (resp. $\pi_{q,j}$) is an (irreducible unitarizable) Speh representation $u_{\rho_i}(m_i, d_i)$ (resp. $u_{\sigma_j}(l_j,e_j)$). Let $N(\pi_p, \pi_q)$ be the total number of factors $\pi_{p,i}$ and $\pi_{p,j}$ which are not cuspidal representation. The basic case is that all $\pi_{p,i}$ and $\pi_{q,j}$ are cuspidal representations i.e. $N(\pi_p,\pi_q)=0$, and so $\pi_p$ and $\pi_q$ are generic. In that case, it is well-known from \cite{JPSS83, GGP12}.

By \cite[Theorem 7.1]{Ta86}, we may and shall assume that for $1 \leq i \leq r$, $1 \leq j \leq s$,
\[ m_1+ d_1 \geq m_i+ d_i  \quad \mbox{ and } \quad l_1+e_1 \geq l_j+e_j . \]
We may also assume that $m_1+d_1 > 2$ or $l_1+e_1 > 2$, and so either $\pi_{p,1}$ or $\pi_{q,1}$ is not cuspidal. Otherwise, it is the basic case.

We now consider two cases:

\noindent
{\bf Case 1:} $m_1+d_1 \geq l_1+e_1$, which implies $\frac{m_1+d_1-2}{2}+\frac{1}{2} >\frac{l_i+e_i-2}{2}$ for all $i$, and so $\nu^{1/2} \nu^{(d_1+m_1-2)/2}\rho_1$ is in $\mathrm{cupp}(\nu^{1/2}\pi_{p,1})$, but is not in the cuspidal support of any $\pi_{q,i}$.  \\

Let 
\[ \pi_p' = \pi_{p,2}\times \ldots \times \pi_{p,r} .
\]

% We now claim the following reduction:\\
%\noindent
%{\it Claim (*):}
%\begin{align} \label{eqn ind1}  \mathrm{Hom}_{G_n}( \pi_p|_{G_n}, \pi_q ) \neq 0
%\end{align}
%if and only if there exists $a$ such that
%\begin{align} \label{eqn ind2}   \pi_{q,a}  \cong u_{\rho_1}(m_1,d_1-1) , \quad \mbox{ and }  \mathrm{Hom}_{G_{n'}}((\sigma \times \pi_p')|_{G_{n'}} , \pi_q') \neq 0 ,
%\end{align}
%where $\pi_q'=\pi_{q,1}\times \ldots \times \pi_{q,a-1}\times \pi_{q, a+1}\times \ldots \times \pi_{q,s}$, and $\sigma$ is some irreducible unitarizable cuspidal representation of $G_{n_{\rho_1}m_1}$ which is not in $\mathrm{cupp}_{\mathbb Z}(\pi_p) \cup \mathrm{cupp}_{\mathbb Z}(\nu^{-1/2}\pi_q)$, and $n'=n-n_{\rho_1}m_1(d_1-1)$. \\

 Let $u=\pi_{p,1}\cong u_{\rho_1}(m_1,d_1)$. We first prove the only if direction and assume that $\mathrm{Hom}_{G_n}(\pi_p, \pi_q) \neq 0$. Using Lemma \ref{lem reduct main}, there exists $\sigma \in \mathrm{Irr}^{u,c}(G_{n_{\rho_1}m_1})$ with $\sigma \notin \mathrm{cupp}_{\mathbb{Z}}(\nu^{-1/2}\pi_q)$ and $k^*$ such that 
\[  \pi_{q,k^*} =u^-, \quad \mbox{ and } \quad \mathrm{Hom}_{G_t}(\sigma \times \pi_p', \pi_q') \neq 0 ,
 \]
where $\pi_q'=\pi_{q,1}\times \ldots \times \pi_{q,k^*-1}\times \pi_{q,k^*+1}\times \ldots \times \pi_{q,s}$ and $t=n-n_{\rho_1}m_1(d_1-1)$. Since $\sigma \times \pi_p'$ is also an Arthur type representation with 
\[ N(\sigma\times \pi_p', \pi_q')=N(\pi_p', \pi_q') <N(\pi_p,\pi_q), \]
 we can apply inductive hypothesis to obtain that
\[ \sigma\times  \pi_p' \cong  \tau_{p,1} \times \ldots \times \tau_{p,k} \times \tau_{q,1}^-\times \ldots \times \tau_{q,l}^-
\]
\[  \pi_q' \cong \tau_{p,1}^-\times \ldots \times \tau_{p,k}^-\times \tau_{q,1} \times \ldots \times \tau_{q,l}
\]
for some Speh representations $\tau_{p,1}, \ldots, \tau_{p,k}, \tau_{q,1}, \ldots, \tau_{q,l}$. Since the product is uniquely determined by the factors of those Speh representations \cite{Ta86} and $\sigma \notin \mathrm{cupp}_{\mathbb Z}(\nu^{-1/2}\pi_q')$, we must have $\tau_{p,i^*} \cong \sigma$ for some $i^*$. Since the products between Speh representations commute, we may simply set $i^*=1$. With $\tau_{p,1}^-=1$, now we have
\[\pi_p \cong u \times \pi_p' \cong u \times \tau_{p,2}\times \ldots \times \tau_{p,k} \times \tau_{q,1}^- \times \ldots \times \tau_{q,l}^-, 
\]
\[ \pi_q \cong u^- \times \pi_q' \cong u^-\times \tau_{p,2}^-\times \ldots \times \tau_{p,k}^- \times \tau_{q,1} \times \ldots \times \tau_{q,l} \]
 as desired.

Now we prove the if direction and so we consider
\[  \pi_p \cong \tau_{p,1} \times \ldots \times \tau_{p,k} \times \tau_{q,1}^-\times \ldots \times \tau_{q,l}^-
\]
and 
\[ \pi_q \cong \tau_{p,1}^-\times \ldots \times \tau_{p,k}^-\times \tau_{q,1}\times \ldots \times \tau_{q,l} 
\]
for some Speh representations $\tau_{p,1}, \ldots , \tau_{p,k}, \tau_{q,1}, \ldots, \tau_{q,l}$. From our choice of $\pi_{p,1}$ and the assumption for Case 1, we must have that, by reindexing if necessary,
\[  \tau_{p,1} \cong \pi_{p,1} .
\]
Then $\tau_{p,1}^- \cong u_{\rho_1}(m_1,d_1-1)$. This implies that 
\[ \tau_{p,2} \times \ldots \times \tau_{p,k}\times \tau_{q,1}^-\times \ldots \times \tau_{q,l}^- \cong \pi_{p,2}\times \ldots \times \pi_{p,r}=\pi_p' ,\]
by unique factorization of Speh representations \cite{Ta86}. Since 
\[ N(\sigma \times \pi_p', \pi_q'')= N(\pi_p', \pi_q'')< N(\pi_p, \pi_q'')\leq N(\pi_p,\pi_q) , \]
 induction gives that for any $\sigma$ of $\mathrm{Irr}^{u,c}(G_{n_{\rho_1}m_1})$, 
\[  \mathrm{Hom}_{G_{n'}}(\sigma \times \pi_p', \pi_q'') \neq 0 ,
\] 
where $\pi_q'' \cong \sigma^-\times \tau_{p,2}^-\times \ldots \times \tau_{p,k}^- \times \tau_{q,1} \times \ldots \times \tau_{q,l}$.
%where 
%\[\pi_q''= \tau_{p,2}^-\times \ldots \times \tau_{p,k}^-\times \tau_{q,1}\times \ldots \times \tau_{q,l}=\sigma^-\times \tau_{p,2}^-\times \ldots \times \tau_{p,k}^-\times \tau_{q,1}\times \ldots \times \tau_{q,l} .\] 
Lemma \ref{lem reduct main} implies that $\mathrm{Hom}_{G_n}(\pi_p|_{G_n}, \pi_q) \neq 0$ as desired. \\

% Thus,  $\pi_{q,k} \cong u_{\rho_1}(m_1,d_1-1) \cong \pi_{p,1}^-$. Now 
%\[ \pi_{q,1}\times \ldots \times \pi_{q,k-1}\times \pi_{q,k+1}\times \ldots \times \pi_{q,s}\]
%and 
%\[  \pi_{p,2}\times \ldots \times \pi_{p,r} \cong \tau_{p,1}^-\times \ldots \times \tau_{p,k}^-\times \tau_{q,1}\times \ldots \times \tau_{q,l} \]
%are relevant Arthur parameters. Now the claim implies that $\mathrm{Hom}_{G_n}(\pi_p, \pi_q) \neq 0$. 

%Note that $\sigma$ cannot contribute the Arthur parameter of $\pi_q'$ as $\sigma \notin \mathrm{cupp}_{\mathbb{Z}}(\nu^{1/2}\pi_1) \cup \mathrm{cupp}_{\mathbb{Z}}(\pi_2)$. Now combining with (\ref{eqn ind1}) and (\ref{eqn ind2}), $\mathrm{Hom}_{G_n}(\pi_p, \pi_q) \neq 0$ if and only if
%\begin{enumerate}
%\item the Arthur parameters of $\pi_p'$ and $\pi_q'$ are relevant;
%\item the Arthur parameter of $\pi_q$ is equal to $\sigma \otimes \mathrm{Sym}^{d-1}(\mathbb C^2) \otimes \mathrm{Sym}^m(\mathbb C^2)$ plus the Arthur parameter of $\pi_q'$, where $\sigma$ is the irreducible Weil representation corresponding to $\sigma$. 
%\end{enumerate}
%The combination of last two conditions is equivalent to saying that the Arthur parameters of $\pi_p$ and $\pi_q$ are relevant. \\

\noindent
{\bf Case 2:} $l_1+e_1 >m_1+d_1$, which implies $\frac{l_1+e_1-2}{2}+\frac{1}{2}> \frac{m_1+d_1-2}{2}$. There are infinitely many unitarizable cuspidal representations of $G_2$, and we can find one satisfying the hypothesis in Proposition \ref{prop bessel transfer}  so that
\begin{align*}
 \mathrm{Hom}_{G_{n+1}}(\pi_q \times \sigma|_{G_{n+1}}, \pi_p) \neq 0 &  \Longleftrightarrow \quad \mathrm{Hom}_{G_n}(\pi_p^{\vee} |_{G_n}, \pi_q^{\vee}) \neq 0 \\
&  \Longleftrightarrow \quad \mathrm{Hom}_{G_{n+1}}(\overline{\pi}_p|_{G_n} ,  \overline{\pi}_q) \neq 0 \\
 & \Longleftrightarrow \quad \mathrm{Hom}_{G_{n+1}}( \pi_p |_{G_{n}}, \pi_q) \neq 0  
.
\end{align*}
Here $\overline{\pi}_p, \overline{\pi}_q$ are complex conjugate representations of $\pi_p, \pi_q$ respectively, and so the last 'if and only if' implication is immediate. The first 'if and only if' implication follows from Proposition \ref{prop bessel transfer} and the second one follows from that $\pi_p, \pi_q$  are unitarizable and so Hermitian self-dual.

We also have that $\pi_q \times \sigma$ is still an Arthur type representation. Note that $N(\pi_q\times \sigma, \pi_p)=N(\pi_p, \pi_q)$. We now use the argument in Case 1 and inductive hypothesis to prove this case, where the role of $\pi_{q,1}$ replaces the one of $\pi_{p,1}$.

%Note that by our assumption $l_1+e_1>m_1+d_1 \geq 1$, $\pi_{q,1}$ is a $G_k$-representation for some $k\geq 2$.
%Using the same argument as Case 1, we have that 
%\[  \pi_{q,1}^- \cong \pi_{p,l}\]
%for some $l$, and $\pi_q'' \times \sigma$ and $\pi_p''$ are relevant, where 
%\[ \pi_q'' =\pi_{q,2}\times \ldots \times \pi_{q,r}\]
%and
%\[ \pi_p'' =\pi_{p,1} \times \ldots \times \pi_{p,l-1}\times \pi_{p,l+1}\times \ldots \times \pi_{p,s} .\]
%Note that $b(\pi_{q,1}) \geq 1$. By induction hypothesis,
%\[  \mathrm{Hom}_{G_m}( \sigma \times \pi_q''|_{G_m}, \pi_p'') \neq 0\]
%if and only if $\pi_q''$ and $\pi_p''$ are relevant. 

%Now again combining with , we have that $\pi_a$ and $\pi_b$ are relevant.
\end{proof}

%\begin{remark}
%The Jacquet modules of Speh representations (as a specific case of ladder representations) are computed in \cite{KL12}. With the use of Frobenius reciprocity and geometric lemma, one can replace some arguments to give another (and shorter) proof of the only if direction without using Proposition \ref{prop preserving quo sp} in Lemma \ref{lem reduct main}. In any case, Proposition \ref{prop preserving quo sp} gives a more precise structure and for example, one can also deduce multiplicities in restriction (which is known to be at most one in this case by \cite{AGRS10}). 
%\end{remark}

%\begin{remark}
%We give an heuristic explanation on why the above argument treats the case of \cite[Remark 5.6]{GGP}. The original computation in the left hand side (\ref{eqn ext transfer 2}) has less flexibility in studying the structure. However, when we transfer to the one in (\ref{eqn ext transfer 3}), one can obtain more on the restricted structure, since for example, one has the commutation product for the particular choice of cuspidal representation $\sigma$. For another explanation on such structure, see Example \ref{ex rank 2 comput}.
%\end{remark}

\section{General cases: Bessel, Fourier-Jacobi and Rankin-Selberg models} \label{sec general}

 In this section, we shall generalize the non-tempered GGP to other models of general linear groups. We study some connections between models, which will be continued in Section \ref{sec fj bz theory}. We also improve some previous multiplicity results for Bessel and Fourier-Jacobi models to the Ext versions.

\subsection{Equal rank Fourier-Jacobi models} \label{ss corank 0}
Let $S(F^n)$ be the space of Bruhat-Schwartz functions on $F^n$. For a character $\mu$ of $G_n$, let $\omega_{\mu,0}$ (resp. $\widehat{\omega}_{\mu,0}$) be a $G_n$-representation with underlying space $S(F^n)$ and the $G_n$-action given by
\[   (g.f)(v)=\mu(g)f(g^{-1}v), \quad ( \mathrm{resp.}\quad  (g.f)(v) = \mu(g)f(g^tv) ) .\]

Let $\pi \in \mathrm{Alg}(G_n)$. Since $G_n \setminus M_{n+1} \cong F^n$ as topological spaces, and $\omega_{\mu \nu^{-1/2},0}\otimes \pi$ can be viewed as the space of smooth compactly-supported functions $f: F^n \rightarrow \mu\nu^{-1/2}\pi$ with $G_n$ acting by $(g.f)(v)=g.f(g^{-1}v)$, we have:
\[  \mu \otimes \Lambda(\pi)|_{G_n} \cong \omega_{\mu \nu^{-1/2},0} \otimes \pi
\]
via the natural map for $f \in  \Lambda(\pi)$,
\[  f \mapsto \left(v \mapsto  f(\begin{pmatrix} I_n& v\\ 0 & 1 \end{pmatrix}) \right) .
\]
Set $\zeta^F=\omega_{\nu^{-1/2},0}$ and set $\widehat{\zeta}^F=\widehat{\omega}_{\nu^{1/2},0}$.

\begin{proposition} \label{prop FJ0}
Let $\pi, \pi' \in \mathrm{Alg}(G_n)$. Then there exists a character $\chi$ of $F^{\times}$  such that $\chi \notin  \mathrm{cupp}_{\mathbb Z}(\nu^{-1/2}\pi')$
and, for all $i$,
\[ \mathrm{Ext}^i_{G_n}((\chi \times \pi)|_{G_n}, \pi') \cong \mathrm{Ext}^i_{G_n}( \pi\otimes \zeta^F, \pi') .  
\]
The assertion also holds if we replace for $\zeta^F$ by $\widehat{\zeta}^F$.
\end{proposition}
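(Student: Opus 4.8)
The plan is to transfer the $\mathrm{Ext}$-groups of $(\chi\times\pi)|_{G_n}$ to those of $\zeta^F\otimes\pi$ by means of the Mackey short exact sequence of Lemma \ref{lem short exact seq}, using the identity $\Lambda(\pi)|_{G_n}\cong\zeta^F\otimes\pi$ recorded just before the statement (the case $\mu=1$). First I would take $\chi$ to be a character of $F^{\times}=G_1$ and apply Lemma \ref{lem short exact seq} to $\chi\times\pi\in\mathrm{Alg}(G_{n+1})$, obtaining a short exact sequence of $M_{n+1}$-representations
\[ 0\rightarrow\chi|_{M_1}\bar{\times}\pi\rightarrow(\chi\times\pi)|_{M_{n+1}}\rightarrow\chi\bar{\times}(\pi|_{M_n})\rightarrow0. \]
Since $M_1=G_0$ is the trivial group, $\chi|_{M_1}\cong 1|_{M_1}$, so the submodule is $1|_{M_1}\bar{\times}\pi=\Lambda(\pi)$. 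Restricting this sequence to $G_n\subset M_{n+1}$ (restriction is exact), identifying $\Lambda(\pi)|_{G_n}\cong\zeta^F\otimes\pi$, and using (\ref{eqn restriction to g first factor}) to identify $(\chi\bar{\times}(\pi|_{M_n}))|_{G_n}\cong(\nu^{1/2}\chi)\times(\pi|_{G_{n-1}})$, I arrive at a short exact sequence of $G_n$-representations
\[ 0\rightarrow\zeta^F\otimes\pi\rightarrow(\chi\times\pi)|_{G_n}\rightarrow(\nu^{1/2}\chi)\times(\pi|_{G_{n-1}})\rightarrow0. \]

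Second, I would choose $\chi$. The set $\mathrm{cupp}_{\mathbb Z}(\pi)\cup\mathrm{cupp}_{\mathbb Z}(\nu^{-1/2}\pi')$ is a union of finitely many $\nu^{\mathbb Z}$-orbits of cuspidal representations, while $F^{\times}$ carries infinitely many $\nu^{\mathbb Z}$-orbits of characters, so a $\chi$ avoiding it exists. Since $\mathrm{cupp}_{\mathbb Z}$ is stable under twisting by $\nu^{\mathbb Z}$, the condition $\chi\notin\mathrm{cupp}_{\mathbb Z}(\nu^{-1/2}\pi')$ is equivalent to $\nu^{1/2}\chi\notin\mathrm{cupp}_{\mathbb Z}(\pi')$, hence $\nu^{1/2}\chi\notin\mathrm{cupp}(\tau)$ for every composition factor $\tau$ of $\pi'$. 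Lemma \ref{lem ext vanishing}, applied with the irreducible cuspidal $G_1$-representation $\pi_1=\nu^{1/2}\chi$ and with $\pi_2=\pi|_{G_{n-1}}$, then gives $\mathrm{Ext}^i_{G_n}((\nu^{1/2}\chi)\times(\pi|_{G_{n-1}}),\pi')=0$ for all $i$. Feeding the second displayed short exact sequence into the long exact sequence of $\mathrm{Ext}_{G_n}(-,\pi')$ and using this vanishing produces $\mathrm{Ext}^i_{G_n}((\chi\times\pi)|_{G_n},\pi')\cong\mathrm{Ext}^i_{G_n}(\zeta^F\otimes\pi,\pi')$ for all $i$, which is the assertion for $\zeta^F$. (The case $n=0$ is trivial, both sides being $\mathrm{Ext}$ of vector spaces.)

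For $\widehat{\zeta}^F$, I would note that the Fourier transform on $S(F^n)$ for the fixed additive character is a $G_n$-equivariant isomorphism $\omega_{\mu,0}\cong\widehat{\omega}_{\mu\nu,0}$; in particular $\zeta^F=\omega_{\nu^{-1/2},0}\cong\widehat{\omega}_{\nu^{1/2},0}=\widehat{\zeta}^F$, so the statement for $\widehat{\zeta}^F$ follows from the one for $\zeta^F$ with the same $\chi$. Alternatively one may rerun the Mackey argument with $\pi\times\chi$ in place of $\chi\times\pi$, where Lemma \ref{lem short exact seq} and Lemma \ref{lem standard transfer to product} identify the relevant layers.

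The only genuinely delicate point is the choice of $\chi$ together with the invocation of Lemma \ref{lem ext vanishing}: one must know that the forbidden cuspidal data attached to $\pi$ and $\pi'$ occupy few enough $\nu^{\mathbb Z}$-orbits of characters that $\chi$ can be placed outside them, and then one relies on the Bernstein-center argument packaged inside Lemma \ref{lem ext vanishing} to annihilate every $\mathrm{Ext}$-group of the complementary quotient $(\nu^{1/2}\chi)\times(\pi|_{G_{n-1}})$. Everything else — the Mackey sequence of Lemma \ref{lem short exact seq}, the identity $\Lambda(\pi)|_{G_n}\cong\zeta^F\otimes\pi$, and (\ref{eqn restriction to g first factor}) — is already available, so once $\chi$ is fixed the remainder is a routine long exact sequence chase.
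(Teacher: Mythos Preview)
Your argument is correct and matches the paper's proof essentially step for step: the Mackey sequence from Lemma \ref{lem short exact seq}, the identification $\chi|_{M_1}\bar{\times}\pi\cong\Lambda(\pi)$ and $\Lambda(\pi)|_{G_n}\cong\pi\otimes\zeta^F$, the cuspidal-support vanishing for the quotient layer, and the long exact sequence are exactly what the paper does. The only cosmetic difference is that for $\widehat{\zeta}^F$ the paper simply says ``similar'' (meaning a parallel Mackey computation), whereas you invoke the Fourier transform isomorphism $\zeta^F\cong\widehat{\zeta}^F$; this is the content of the later Proposition \ref{cor iso fj dual}, so your shortcut is legitimate but anticipates that result.
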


\begin{proof}
By Lemma \ref{lem short exact seq},
\begin{align} \label{eqn ses fj0}
 0 \rightarrow  \chi|_{M_1} \bar{\times} \pi  \rightarrow (\chi \times \pi)|_M \rightarrow \chi \bar{\times} (\pi|_M) \rightarrow 0 .
\end{align}
Then $\chi|_{M_1} \bar{\times} \pi \cong \Lambda(\pi)$ by the definition of mirabolic induction. By using the above identification, we have 
\begin{align} \label{eqn iso fj0}
\chi|_{M_1}\bar{\times} \pi \cong  \pi \otimes \zeta^F   .
\end{align} 
On the other hand, via Frobenius reciprocity, due to the condition that $\chi \notin \mathrm{cupp}_{\mathbb Z}(\nu^{-1/2}\pi') $, Lemma \ref{lem ext vanishing} implies that for all $i$,
\begin{align} \label{eqn ext fj0}
\mathrm{Ext}^i_{G_n}((\chi \bar{\times} (\pi|_M))|_{G_n} ,\pi')\cong  \mathrm{Ext}^i_{G_n}((\nu^{1/2}\chi) \times (\pi|_{G_{n-1}}) , \pi')=0.
\end{align}
Now standard long exact sequence argument on (\ref{eqn ses fj0}) with (\ref{eqn iso fj0}) and (\ref{eqn ext fj0}) gives, for all $i$, 
\[ \mathrm{Ext}^i_{G_n}(\chi \times \pi, \pi') \cong \mathrm{Ext}^i_{G_n}((\chi|_{M_1}\bar{\times} \pi)|_{G_n} , \pi') \cong \mathrm{Ext}^i_{G_n}(\pi\otimes \zeta^F, \pi').  
\]
The proof for $\widehat{\zeta}^F$ is similar.
\end{proof}

\begin{remark}
From Proposition \ref{prop FJ0}, one can deduce explicit restriction for equal rank Fourier-Jacobi model from the basic restriction from $G_{n+1}$ to $G_n$.  One may also compare with the method using theta correspondence to deduce Fourier-Jacobi models from Bessel models in \cite{GI16} and \cite{At18}.

\end{remark}

\subsection{Bessel, Rankin-Selberg and mixed models} \label{ss bessel}

Let $m_1,m_2, r\geq 0$. Recall that $\bar{\psi}$ is a choice of a non-degenerate character on $F$. Let 
\[  H=\left\{ \begin{pmatrix}  u_1 & x & y \\  & h & z \\  &  & u_2 \end{pmatrix}: \begin{array}{c} u_1 \in U_{m_1}, u_2 \in U_{m_2},  h \in \widetilde{G}_{r+1}, x \in Mat_{m_1\times (r+1)}, \\
 z \in Mat_{(r+1) \times m_2}, y \in Mat_{m_1\times m_2} \end{array} \right\}  \subset G_{m_1+m_2+r+1},
\]
and
\[  \widetilde{G}_{r+1} =\left\{ \mathrm{diag}(1, g)  : g \in G_{r} \right\} .
\]
We shall also write $H^B$ or $H^B_{m_1,m_2,r}$ for $H$.

Let $\varphi_n: U_n\rightarrow \mathbb{C}$ be a non-degenerate character on $U_n$. For example, one may take 
\[  \varphi_n(u)= \bar{\psi}(u_{1,2}+\ldots +u_{n-1,n}) .
\]
 Let $\zeta: H \rightarrow \mathbb{C}$ such that 
\[  \zeta(\begin{pmatrix} u_1 & x & y \\ & g & z \\ & & u_2 \end{pmatrix}) =\varphi_{m_1}(u_1)\varphi_{m_2}(u_2) \bar{\psi}(x_{m_1,1})\bar{\psi}(z_{1,1}) \nu(g)^{-(m_2-m_1)/2},
\]
where $x_{m_1,1}$ (resp. $z_{1,1}$) is the $(m_1,1)$- (resp. $(1,1)$-) coordinate of $x$ (resp. $z$). We shall also sometimes write $\zeta^B$ for $\zeta$. Note that $\nu^{m_2-m_1}$ is the modulus function of $H$ (i.e. a normalizing factor).

Let $U'$ be the unipotent radical of $H$. The orbit by the conjugation action of $(T_{m_1+1} \times G_r \times T_{m_2})U'$ on $\phi$ is the unique dense orbit on the character space of $U'$, where $T_{m_1+1}$ (resp. $T_{m_2}$) be the subgroup of diagonal matrices of $G_{m_1+1}$ (resp. $G_{m_2}$), and as subgroup of $H$ via embedding to the upper (resp. lower) corner.

\begin{remark}
 %\cite[Sections 12 and 13]{GGP12} considers the space  $F^r \times F^r$ equipped with the Hermitian form: $\langle (x_1,y_1), (x_2,y_2) \rangle=(y_2^tx_1, y_1^tx_2) \in F \times F$. Then the isometric subgroup of $G_n \times G_n$ on $\langle , \rangle$ is isomorphic to $G_n$ via projecting to the first factor. In analog to orthogonal group case, 
The Bessel subgroup defined in \cite[Sections 12 and 13]{GGP12} is conjugate to $H^B_{m,m,r}$, where $r=n-2m$, for some $m$.

%hen any rank $m \leq n/2$ $F \times F$-isotropic space is $G_n \times G_n$-conjugate to the space spanned by 
%\[  (e_1,e_n), \ldots, (e_m, e_{n-m}) ,\]
%which defines a flag.
%is deduced from certain isotropic $F\times F$ spaces to form the Bessel subgroups. Such isotropic subspaces force the equal dimension $m_1=m_2$ case. Dropping the condition for $m_1=m_2$ also allows us to deduce Fourier-Jacobi cases from Bessel cases.

When $m_1=0$ or $m_2=0$, the model is sometimes called a Rankin-Selberg model \cite{CS13, GS20}. We shall also write $H^{R}_{m,r}=H^B_{0,m,r}$ and $\zeta^{R}=\zeta^B$. (The matrix $H^R_{m,r}$ is conjugate to the one in Section \ref{ss intro generalize}.) When $r=0$, the model is Whittaker \cite{Sh74}, and when $m_1=m_2=0$, it is related to the restriction from $G_{n+1}$ to $G_n$ in \cite{AGRS10}.

%Let $n=m_1+m_2+r+1$. Even when $m_1+m_2=m_1'+m_2'$, $H^B_{m_1,m_2,r}$ is not conjugate to $H^B_{m_1',m_2',r}$ by an element in $G_{n}$ unless $m_1=m_1'$ and $m_2=m_2'$; or $r=0$. When $m_1=m_2=m$, the transpose $(H^B_{m,m,r})^t$ is conjugate by an element in $G_n$ to $H^B_{m,m,r}$. In general, $(H^B_{m_1,m_2,r})^t$ is conjugate by an element in $G_n$ to $H^B_{m_2,m_1,r}$. 

%This explains why the isomorphisms in Lemma \ref{lem bessel bz realize} below may be not trivial. 
%One may also compare the isomorphisms in Lemma \ref{lem bessel bz realize} with with the proof of Proposition \ref{prop fourier transform} and Remark \ref{rmk fourier transfor bz theory} below to see some other relations with transpose.
\end{remark}

There is another formulation of Bessel models, using Bernstein-Zelevinsky functors.  

\begin{proposition} \label{prop bessel bz realize}
 Let $\pi$ be a $G_r$-representation, which extends to a $H$-representation trivially. Let $n=m_1+m_2+r+1$. Then there exist natural isomorphisms:
\begin{align*}
  {}^u\mathrm{ind}_{H^B_{m_1,m_2,r}}^{G_n} \pi \otimes \zeta^{B} \otimes \nu^{(m_2-m_1)} \cong & (\Phi^+)^{m_2+1}(\Pi_{m_1+1} \bar{\times} \pi  )|_{G_n} \\
	\cong & (\Phi^+)^{(m_1+m_2+1)}(\Lambda(\pi))|_{G_n} \\
	\cong & {}^u\mathrm{ind}_{H^R_{m_1+m_2,r}}^{G_n} \pi \otimes \zeta^R \otimes \nu^{m_1+m_2} 
\end{align*}

\end{proposition}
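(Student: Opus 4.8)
The plan is to prove the chain of isomorphisms in Proposition \ref{prop bessel bz realize} by unwinding the definition of the mirabolic/Bernstein--Zelevinsky functors and recognizing the Bessel (resp. Rankin--Selberg) inductions as iterated applications of $\Phi^+$ and $\Lambda$. The reduction is essentially a bookkeeping exercise with unipotent subgroups, characters, and modulus functions, but it has to be carried out carefully because the normalizing $\nu$-twists have to match up exactly.

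First I would record the identity $\Pi_{s+1}\bar{\times}\pi\cong(\Phi^+)^s(\Lambda(\pi))$ from Proposition \ref{prop nontrivial bz}, which already gives the equivalence of the second and third expressions once one checks that $(\Phi^+)^{m_2+1}$ applied to $\Pi_{m_1+1}\bar\times\pi$ agrees with $(\Phi^+)^{m_1+m_2+1}\Lambda(\pi)$: indeed $\Pi_{m_1+1}\bar\times\pi\cong(\Phi^+)^{m_1}(\Lambda(\pi))$ by Proposition \ref{prop nontrivial bz} (with $s=m_1$), and then applying $(\Phi^+)^{m_2+1}$ gives $(\Phi^+)^{m_1+m_2+1}\Lambda(\pi)$. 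So the middle isomorphism is formal. The substance is the outer two identifications, namely identifying $(\Phi^+)^{m_2+1}(\Pi_{m_1+1}\bar\times\pi)|_{G_n}$ with the unnormalized induction from $H^B_{m_1,m_2,r}$, and $(\Phi^+)^{m_1+m_2+1}(\Lambda(\pi))|_{G_n}$ with the unnormalized induction from $H^R_{m_1+m_2,r}$.

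For the first outer identification I would proceed by transitivity of induction. Unwinding $\Lambda(\pi)={}^u\mathrm{Ind}_{G_r}^{M_{r+1}}\nu^{-1/2}\pi$ realized as $\mathrm{ind}$ (finiteness of support comes out of the compact-support conditions built into the $\Phi^+$ and mirabolic functors), and then $\Phi^+(\tau)=\mathrm{ind}_{M_nV_n}^{M_{n+1}}\tau\boxtimes\psi$, an iterate $(\Phi^+)^k$ becomes an induction ${}^u\mathrm{ind}$ from a subgroup of $M_{r+1+k}$ whose unipotent part carries the character obtained by stringing together the characters $\psi$ at each step; by the explicit description of $\psi$ (last coordinate via $\bar\psi$) this string of characters is exactly the restriction of $\zeta^B$ (resp. $\zeta^R$) to the relevant unipotent radical — this is where one must check that the ``off-diagonal" $\bar\psi(x_{m_1,1})$ and $\bar\psi(z_{1,1})$ terms and the Whittaker characters $\varphi_{m_1},\varphi_{m_2}$ come out correctly, and that $\Pi_{m_1+1}=(\Phi^+)^{m_1}(1)$ contributes the $\varphi_{m_1}(u_1)\bar\psi(x_{m_1,1})$ part. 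The mirabolic induction $\Pi_{m_1+1}\bar\times\pi$, being $\mathrm{ind}_Q^{M_{\cdot}}$ for the appropriate $Q$, is what assembles the $\widetilde G_{r+1}$-block together with the upper-left Whittaker block; composing with $(\Phi^+)^{m_2+1}$ stacks the lower Whittaker block $u_2$ on. Restricting from $M_n$ to $G_n$ just drops the last coordinate condition, matching $H^B\subset G_n$. The whole computation is then a comparison of modulus functions: normalized induction from $H$ versus the $\delta^{1/2}$-twists inside $\mathrm{ind}$, which accounts for the $\nu^{m_2-m_1}$ and $\nu^{m_1+m_2}$ factors; here I would invoke that $\nu^{m_2-m_1}$ is the modulus function of $H^B$ as noted in the text, and similarly $\nu^{m_1+m_2}$ for $H^R$.

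The main obstacle I expect is exactly this bookkeeping of modulus functions and the $\nu$-twists: each $\Phi^+$ carries its own $\delta^{1/2}$, $\Lambda$ carries an explicit $\nu^{-1/2}$, the functor $\bar\times$ of Type 1 involves $\delta^{1/2}$ with no extra twist while the Rankin--Selberg side implicitly uses Type 2 with an $\epsilon=\nu^{-1/2}$, and the target unnormalized inductions are adjusted by $\nu^{m_2-m_1}$ resp. $\nu^{m_1+m_2}$. Getting all of these to cancel so that the two unnormalized inductions from $H^B_{m_1,m_2,r}$ and $H^R_{m_1+m_2,r}$ genuinely coincide (rather than differing by a character) is the one place where an error would be easy to make; I would handle it by computing the total unipotent character and the total $\nu$-power on a common refinement, namely by pushing everything down to ${}^u\mathrm{ind}$ from the smallest common subgroup $U'\rtimes\widetilde G_{r+1}$-type group and comparing. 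Once the characters and twists are checked to agree, transitivity of ${}^u\mathrm{ind}$ closes the argument.
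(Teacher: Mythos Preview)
Your outline is the paper's approach: Proposition~\ref{prop nontrivial bz} gives the middle isomorphism, and the outer ones come from unwinding the iterated inductions via induction in stages. The gap is that you have the block layout of the Type~2 mirabolic induction reversed. In $\Pi_{m_1+1}\bar\times\pi$ the $G_r$-block carrying $\pi$ sits in the \emph{upper-left} corner of $Q$, with the Whittaker data from $\Pi_{m_1+1}$ below it (see the explicit matrix form of $Q$ in Section~\ref{ss mir ind}, Type~2). So after applying $(\Phi^+)^{m_2+1}$ and restricting to $G_n$, the subgroup $Q'$ you are actually inducing from has diagonal block sizes $(r,\,m_1,\,1,\,m_2)$, not $(m_1,\,r+1,\,m_2)$ as in $H^B_{m_1,m_2,r}$. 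These two subgroups are only conjugate, by the permutation matrix
\[
w=\mathrm{diag}\!\left(\begin{pmatrix}0&I_r\\ I_{m_1+1}&0\end{pmatrix},\,I_{m_2+1}\right),
\]
and the paper realizes the isomorphism as $f\mapsto\bigl(g\mapsto f(w\cdot\mathrm{diag}(g,1))\bigr)$. Even then the generic characters on the unipotent radical agree only up to the Borel orbit, so a further correction by an element $b$ of the torus-times-unipotent is needed. Your sentence ``restricting from $M_n$ to $G_n$ just drops the last coordinate condition, matching $H^B\subset G_n$'' is therefore where the argument would break: you match $Q'$, not $H^B$, and the conjugation is the missing step. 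With that inserted, your $\nu$-bookkeeping finishes the proof exactly as in the paper.
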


\begin{proof}
The second isomorphism follows from Proposition \ref{prop nontrivial bz}. Note that the last isomorphism is a special case of the first isomorphism. It remains to prove the first isomorphism. Let 
\[ w= \mathrm{diag}(\begin{pmatrix} 0 & I_{r} \\ I_{m_1+1} & 0 \end{pmatrix}, I_{m_2+1}) .
\]
Using induction in stages, the subgroup from which 
\[(\Phi^+)^{m_2+1}(\Pi_{m_1+1} \bar{\times} \pi ))|_{G_n}  \]
 is induced, takes the form:
\[ Q' = \begin{pmatrix}  g &  & & * \\ * & m & * & * \\ & & 1 & * \\ & & &u              \end{pmatrix},
\]
where $g \in G_{r}$, $m \in G_{m_1}$ and $u \in U_{m_2}$, and so $w^{-1}Q'w=H_{m_1,m_2,r}^B$.

The conjugation by the element $w$ then defines a map $\Gamma$ from ${}^u\mathrm{ind}_H^{G_n} \pi \otimes \zeta^B \otimes \nu^{m_2-m_1}$ to $(\Phi^+)^{m_2+1}(\Pi_{m_1+1} \bar{\times} \pi ))|_{G_n}$, as vector spaces, given by
\[ f \mapsto \left( g \mapsto f(w\begin{pmatrix} g & \\ & 1 \end{pmatrix})   \right)
\]
Restricted to the unipotent subgroup $U'$ of $H^B$, $\Gamma(f)$ is copies of character $\zeta^{B}$, while a function in $(\Phi^+)^{m_2+1}(\Pi_{m_1+1}\bar{\times} \pi)$ restricted to $U'$ is copies of another character in the same $B'$-orbit as $\zeta^{B}$, where $B'$ contains matrices of the form $\mathrm{diag}( I_{r} , T_lU_{l})$, where $l=m_1+m_2+1$. Hence there exists $b \in B'$ such that the map $f \mapsto \left(g \mapsto f(bw\begin{pmatrix} g & \\  & 1 \end{pmatrix}) \right)$ is a $G_n$-isomorphism. 

We also remark that the character $\nu^{1/2}$ arisen when restricted to $G_n$ cancels with the character $\nu^{-1/2}$ arisen from the mirabolic induction in $\Pi_{m_1+1} \bar{\times} \pi $. 

\end{proof}

The following result is proved by a similar method as in \cite{GGP12}, also see \cite{CS13}. 

\begin{proposition} \label{prop transfer}
Let $\pi_1, \pi_2$ be representations of $G_{n}$ and $G_r$ respectively. Let $m_1+m_2+r+1=n$. For any irreducible cuspidal representation $\sigma$ of $G_{m_1+m_2+2}$ such that $\sigma \notin \mathrm{cupp}_{\mathbb Z}(\nu^{-1/2}\pi_1^{\vee})$, and for all $i$,
\[ \mathrm{Ext}^i_{H^B_{m_1,m_2,r}}( \pi_1 \otimes \zeta^B, \pi_2^{\vee} ) \cong \mathrm{Ext}^i_{G_n}(\sigma \times \pi_2, \pi_1^{\vee} ) .
\]
\end{proposition}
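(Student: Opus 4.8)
The plan is to follow the Gan-Gross-Prasad reduction philosophy, using the Bernstein-Zelevinsky realization of the Bessel model from Proposition \ref{prop bessel bz realize} to convert the left-hand side into a parabolic-induction restriction problem, and then to ``unfold'' that restriction using the Bernstein-Zelevinsky filtration of Lemma \ref{lem short exact seq}, discarding all but one layer via a cuspidal-support vanishing argument.

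First I would rewrite the left-hand side. Using the definition of ${}^u\mathrm{ind}$ versus $\mathrm{ind}$ and second adjointness (Frobenius reciprocity), the Ext-group $\mathrm{Ext}^i_{H^B_{m_1,m_2,r}}(\pi_1\otimes \zeta^B,\pi_2^\vee)$ is isomorphic to $\mathrm{Ext}^i_{G_n}(\pi_1, {}^u\mathrm{ind}_{H^B_{m_1,m_2,r}}^{G_n}\pi_2^\vee\otimes\zeta^B\otimes\nu^{m_2-m_1})$ up to the bookkeeping of modulus characters and the Gelfand-Kazhdan involution $\theta$; concretely I would pass to contragredients and use $\theta(\pi_1)\cong\pi_1^\vee$ so as to land on a statement about $\mathrm{Hom}/\mathrm{Ext}$ from $\pi_1^\vee$ into the induced module, matching the shape of Proposition \ref{prop bessel bz realize}. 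Then Proposition \ref{prop bessel bz realize} identifies that induced module with $(\Phi^+)^{m_1+m_2+1}(\Lambda(\pi_2^\vee))|_{G_n}$, and by Proposition \ref{prop nontrivial bz} with $(\Pi_{m_1+m_2+1}\bar\times \pi_2^\vee)|_{G_n}$, i.e. up to normalization with $(\Pi_{m_1+m_2+2}\bar\times \pi_2^\vee)|_{G_n}$ after absorbing one more $\Phi^+$ using Lemma \ref{lem standard transfer to product}. The key move, exactly as in Lemma \ref{lem cuspidal supp}, is that $\Pi_{m_1+m_2+2}\cong\sigma|_M$ for any cuspidal $\sigma$ of $G_{m_1+m_2+2}$.

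Next I would run the two-step exact sequence of Lemma \ref{lem short exact seq} on $(\sigma\times\pi_2^\vee)|_M$, namely $0\to \sigma|_M\bar\times\pi_2^\vee\to(\sigma\times\pi_2^\vee)|_M\to\sigma\bar\times(\pi_2^\vee|_M)\to 0$, restrict to $G_{n}$ (or the appropriate $G_k$), and observe via \eqref{eqn restriction to g first factor} that the quotient term is $(\nu^{1/2}\sigma)\times(\pi_2^\vee|_{G_{r-1}})$, whose every composition factor has $\nu^{1/2}\sigma$ in its cuspidal support. Since $\sigma\notin\mathrm{cupp}_{\mathbb Z}(\nu^{-1/2}\pi_1^\vee)$ — equivalently $\nu^{1/2}\sigma\notin\mathrm{cupp}_{\mathbb Z}(\pi_1^\vee)$ — Lemma \ref{lem ext vanishing} kills all $\mathrm{Ext}^i_{G_n}$ from $(\nu^{1/2}\sigma)\times(\pi_2^\vee|_{G_{r-1}})$ into $\pi_1^\vee$. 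The long exact sequence then gives $\mathrm{Ext}^i_{G_n}((\sigma\times\pi_2^\vee)|_{G_n},\pi_1^\vee)\cong\mathrm{Ext}^i_{G_n}((\sigma|_M\bar\times\pi_2^\vee)|_{G_n},\pi_1^\vee)=\mathrm{Ext}^i_{G_n}((\Pi_{m_1+m_2+2}\bar\times\pi_2^\vee)|_{G_n},\pi_1^\vee)$, which by the first paragraph's identification is the left-hand side of the proposition. Passing back through $\theta$ and $(-)^\vee$ to return $\pi_1$ to $\pi_1^\vee$ and $\pi_2^\vee$ to $\pi_2$ finishes the chain, giving $\mathrm{Ext}^i_{H^B_{m_1,m_2,r}}(\pi_1\otimes\zeta^B,\pi_2^\vee)\cong\mathrm{Ext}^i_{G_n}(\sigma\times\pi_2,\pi_1^\vee)$.

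I expect the main obstacle to be purely bookkeeping rather than conceptual: tracking the modulus characters (the $\nu^{(m_2-m_1)/2}$ and $\nu^{m_1+m_2}$ normalizing factors in $\zeta^B$ and in Proposition \ref{prop bessel bz realize}), the $\nu^{\pm 1/2}$ twists produced by $\Lambda$, by the mirabolic inductions $\bar\times$, and by passing between ${}^u\mathrm{ind}$ and $\mathrm{ind}$, and making sure the Gelfand-Kazhdan involution is applied consistently on both sides so that the cuspidal-support hypothesis $\sigma\notin\mathrm{cupp}_{\mathbb Z}(\nu^{-1/2}\pi_1^\vee)\cup\mathrm{cupp}_{\mathbb Z}(\pi_2)$ is exactly what is needed for the two vanishing steps (one for the BZ-layer quotient, one implicit in the Frobenius/adjointness identification). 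A secondary point to be careful about is that $\pi_1,\pi_2$ are only assumed to be representations (not necessarily admissible or irreducible), so the vanishing arguments must be run through composition factors of the \emph{admissible} Jacquet modules rather than of $\pi_1$ or $\pi_2$ themselves, exactly as in the proof of Lemma \ref{lem ext vanishing}; since $(\pi_1^\vee)_{N^-}$ is admissible this causes no trouble.
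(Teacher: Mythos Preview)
Your approach is essentially the same as the paper's: both use the Bernstein--Zelevinsky short exact sequence for $(\sigma\times\pi_2)|_M$, the identification $\sigma|_M\cong\Pi_{m_1+m_2+2}$, Proposition~\ref{prop bessel bz realize} to recognize the submodule as the Bessel-induced representation, a cuspidal-support vanishing to kill the quotient layer, and Frobenius reciprocity plus a duality step to pass between $\mathrm{Ext}_H$ and $\mathrm{Ext}_{G_n}$.

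The only substantive difference is organizational. The paper runs the argument from the $G_n$-side to the $H$-side: it starts with $\mathrm{Ext}^i_{G_n}(\sigma\times\pi_2,\pi_1^\vee)$, applies the BZ filtration with $\pi_2$ (not $\pi_2^\vee$), identifies the surviving piece as ${}^u\mathrm{ind}_{H}^{G_n}\pi_2\otimes\zeta^B\otimes\nu^{m_2-m_1}$, and only then performs a single duality step $\mathrm{Ext}^i_{G_n}(A,\pi_1^\vee)\cong\mathrm{Ext}^i_{G_n}(\pi_1,A^\vee)$ followed by Frobenius reciprocity for ${}^u\mathrm{Ind}$. Your route instead starts from the $H$-side, which forces you to carry $\pi_2^\vee$ through the BZ filtration and then undo it at the end via $\theta$; this is where your bookkeeping gets tangled (e.g.\ Proposition~\ref{prop bessel bz realize} is stated for ${}^u\mathrm{ind}$, not ${}^u\mathrm{Ind}$, and the final ``pass back through $\theta$ and $(-)^\vee$'' does not literally convert $\mathrm{Ext}^i(\sigma\times\pi_2^\vee,\pi_1^\vee)$ into $\mathrm{Ext}^i(\sigma\times\pi_2,\pi_1^\vee)$ without also replacing $\sigma$ by $\sigma^\vee$ and $\pi_1^\vee$ by $\pi_1$). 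None of this is fatal---your own diagnosis that the obstacle is bookkeeping rather than conceptual is correct---but the paper's ordering avoids these complications entirely by keeping $\pi_2$ undualized until the last two lines.
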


\begin{remark}
Proposition \ref{prop bessel transfer} is a particular case of Proposition \ref{prop transfer} for $m_1=0$, $m_2=0$ and $r=n-1$.
\end{remark}

\begin{proof}
By Lemma \ref{lem short exact seq} again,
\[   0 \rightarrow \sigma|_M \bar{\times} \pi_2   \rightarrow (\sigma \times \pi_2)|_M \rightarrow \sigma \bar{\times} (\pi_2|_M) \rightarrow 0 
\] 
Since $\sigma$ is cuspidal, $\sigma|_M \cong \Pi_{m_1+m_2+2}$. Now with Propositions \ref{prop nontrivial bz} and \ref{prop bessel bz realize}, 
\[  {}^u\mathrm{ind}_H^{G} \pi_2\otimes \zeta^B = (\sigma|_M \bar{\times} \pi_2)|_{G_n}
\]
Again the cuspidal condition guarantees that, for all $i$,
\[  \mathrm{Ext}^i_{G_n}((\nu^{1/2}\sigma) \times (\pi_2|_{G_{r-1}}), \pi_1^{\vee}) =0 .
\]
Now similar argument with the proof of Proposition \ref{prop FJ0}, one reduces to, for all $i$,
\begin{align*}
\mathrm{Ext}^i_{G_n}(\sigma \times \pi_2, \pi_1^{\vee} ) \cong & \mathrm{Ext}^i_{G_n}((\sigma|_M \bar{\times} \pi_2)|_{G_n}, \pi_1^{\vee})  \\
 \cong &  \mathrm{Ext}^i_{G_n}( {}^u\mathrm{ind}_H^{G_n} \pi_2\otimes \zeta^B \otimes \nu^{m_2-m_1} , \pi_1^{\vee} )  \\
 \cong & \mathrm{Ext}^i_{G_n}(\pi_1 , {}^u\mathrm{Ind}_H^{G_n} (\pi_2 \otimes \zeta^B)^{\vee}) \quad \mbox{ (taking duals) } \\
\cong & \mathrm{Ext}^i_H(\pi_1, (\pi_2 \otimes \zeta^B)^{\vee} ) \quad \mbox{ (Frobenius reciprocity)} \\
\cong & \mathrm{Ext}^i_H(\pi_1 \otimes \zeta^B, \pi_2^{\vee}) \quad \mbox{ (taking duals) }
\end{align*}
 For the last three isomorphism, also see \cite{Pr18}.

%The second last isomorphism also follows from that the exactness of restriction.
%The remaining argument is similar to the proof of Proposition \ref{prop FJ0}.
%By similar argument as before, we only have to consider 
%\[  \mathrm{Ext}^i_{G}( \sigma \times \pi_2^{\vee}|_G, \pi_1^{\vee}) \cong \mathrm{Ext}^i_{G}( (\sigma|_M \times \pi_2^{\vee})|_G , \pi_1^{\vee}) .\]

%Taking the duals and Frobenius reciprocity, we have
%\begin{align*} 
%  \mathrm{Ext}^i_G(\mathrm{ind}_H^G\pi_2^{\vee}\otimes \zeta, \pi_1^{\vee})
% \cong & \mathrm{Ext}^i_G(\pi_1, \mathrm{Ind}_H^G\pi_2\otimes \zeta^{\vee}) \\
%\cong & \mathrm{Ext}^i_H(\pi_1\otimes \pi_2^{\vee}, \zeta^{\vee}) 
%\end{align*}

%By using Lemma \ref{lem g to m ind} and $\Phi^+$ being exact, we have:
%\[ 0 \rightarrow (\Phi^+)^k(\Phi^+(\sigma_1|_M) \times \pi_1 ) \rightarrow (\Phi^+)^k(\Lambda(\omega)) \rightarrow (\Phi^+)^k(\pi_1 \times \Lambda(\sigma_2)) \rightarrow 0 \]
%By $\sigma_1|_M \cong \Pi_{n+1}$, we have that
%\[ (\Phi^+)^k(\Phi^+(\sigma_1|_M) \times \pi_1 ) \cong (\Phi^+)^k(\Pi_{n+1} \times \sigma_2).\]
%and we also have
%\[  \mathrm{Ext}^i_{G_n}((\Phi^+)^k(\pi_1 \times \Lambda(\sigma_2), \pi_2 ) =0 .\]
%Thus, by long exact sequence again, we have
%\[  \mathrm{Ext}^i_{G_n}((\Phi^+)^k(\Pi_{n+1} \times \sigma_2), \pi_2) \cong \mathrm{Ext}^i_{G_n}(\sigma_1 \times \pi_1 \times \sigma_2 , \pi_2) .\]
\end{proof}

%We obtain the multiplicity one for those modified models. The case that , that $m_1=m_2$ is in \cite{GGP12} and $m_2=0$ is in \cite{CS13}. It seems that the general case here has not been stated before.

%\begin{theorem} \label{thm bessel case}
%Let $(H,\nu)$ as described above. Let $\pi_a$ and $\pi_b$ be Arthur parameter representations of $\mathrm{GL}(V)$ and $\mathrm{GL}(W)$ respectively. Then 
%\[  \mathrm{Hom}_{H }(\pi_a \otimes \nu, \pi_b) \neq 0 \]
%if and only if $\pi_a$ and $\pi_b$ are relevant.

%\end{theorem}

%\begin{proof}
%We can choose $\tau_1, \tau_2$ in Proposition \ref{prop transfer} such that $I(\tau_1, \tau_2, \pi_b)$ is an Arthur parameter representation. Then the theorem follows from Proposition \ref{prop transfer} and Theorem \ref{thm conj true}.
%\end{proof}

\subsection{Fourier-Jacobi models} \label{ss fourier}

 Let $S(F^r)$ be the space of Bruhat-Schwartz functions on $F^r$. Let $W=F^r$ and let $K_r$ be the Heisenberg group i.e. $K_r$ is the group isomorphic to $F\oplus W \oplus W^{\vee}$ with the multiplication:
\[ (a, v, w) \cdot (a',v',w')= (a+a'+w^tv' , v+v', w+w') .
\]

Define
\[ H_r' = \left\{ \begin{pmatrix} 1 & w^t & a\\  & g & v \\  & & 1 \end{pmatrix} :  v,w \in F^r, a \in F, g \in G_r \right\} \]
and so $H_r' \cong G_r \ltimes K_r$. Here we identify $W$ and $W^{\vee}$ with $F^r$ so that $y(x)=y^tx$ for $x \in W$ and $y\in W^{\vee}$.

 Fix a character $\mu$ of $G_r$. Let $\lambda$ be a non-trivial character on $F$. The Weil representation $\omega_{\mu, \lambda}$ of $K_r$ associated to $\lambda$ is the representation with underlying space as $S(W)$ with the action of $K_r$ given by: for $f \in S(W) \cong S(F^r)$,
\[  ((a, v, w). f)(x) = \lambda(a-w^tx-w^tv)f(x+v) .
\]
and for $f \in S(W^{\vee}) \cong S(F^r)$, 
\[  ((a,v,w).f)(y)= \lambda(a+y^tv)f(y+w) .
\]

This extends $\omega_{\mu, \lambda}$ to a $H_r'$-representation $\widetilde{\omega}_{\mu, \lambda}$ (resp. $\widehat{\omega}_{\mu, \lambda}$) given by: for $g \in G_r$, and $f \in S(W)$ (resp. $f \in S(W^{\vee})$),
\[   (g.f)(x)=\mu(g) \cdot f(g^{-1}.x) , \quad \mbox{ (resp.  $(g.f)(y) = \mu(g) \cdot f(g^t.y)$) .}
\]

\begin{lemma} \label{lem identify 1} 
Let $\pi \in \mathrm{Alg}(G_r)$, extend trivaily to $H_r'$. Then 
\[ \pi \otimes \widehat{\omega}_{\mu, \bar{\psi}} \cong {}^u\mathrm{ind}_{H_{0,1,r}^B}^{H_r'}\mu \pi \otimes (\zeta^B \otimes \nu^{1/2})  .
\]

\end{lemma}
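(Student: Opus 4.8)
The plan is to unwind both sides of the claimed isomorphism into concrete models of Bruhat--Schwartz functions and match them by an explicit change of variables. On the left, $\widehat{\omega}_{\mu,\bar\psi}$ has underlying space $S(W^\vee)\cong S(F^r)$ with the $G_r$-action $(g.f)(y)=\mu(g)f(g^t y)$ and the Heisenberg action spelled out above; tensoring with $\pi$ (extended trivially to $H_r'$, so that $K_r$ acts only through $\widehat\omega$) gives a space of functions $F^r\to\mu\pi$ with a mixed $H_r'=G_r\ltimes K_r$ action. On the right, I would first describe $H^B_{0,1,r}\subset H_r'$ explicitly: it sits inside the Heisenberg-type subgroup as the locus where the upper-right block is upper-triangular unipotent in the appropriate sense, with $\zeta^B$ restricting to the character $\bar\psi$ on the relevant root subgroup of $K_r$ (this is exactly the character governing the Weil representation model on $S(W^\vee)$). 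Then $^u\mathrm{ind}_{H^B_{0,1,r}}^{H_r'}\mu\pi\otimes(\zeta^B\otimes\nu^{1/2})$ is, by Mackey theory / the fact that $H_r'/H^B_{0,1,r}\cong F^r$ as topological spaces, identified with the space of compactly supported smooth functions $F^r\to\mu\pi$ carrying a twisted action; the $\nu^{1/2}$ is a normalization factor that I expect to cancel against a $\delta^{1/2}$ coming from the non-normalized induction, mirroring the bookkeeping remark at the end of the proof of Proposition~\ref{prop bessel bz realize}.

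Concretely, the key steps are: (1) fix coordinates on $H_r'$ and identify the subgroup $H^B_{0,1,r}$ and the restriction of $\zeta^B$ to it, observing that the quotient $H^B_{0,1,r}\backslash H_r'$ is parametrized by the $W^\vee=F^r$ coordinate $w$; (2) write down the evaluation map $f\mapsto (y\mapsto f(\text{representative of }y))$ from $^u\mathrm{ind}_{H^B_{0,1,r}}^{H_r'}\mu\pi\otimes(\zeta^B\otimes\nu^{1/2})$ to functions $F^r\to\mu\pi$, and check it is a linear isomorphism onto $S(F^r)\otimes(\mu\pi)$ (compact support modulo $H^B$ becomes compact support in $y$); (3) transport the $H_r'$-action through this map and verify it agrees termwise with the action on $\pi\otimes\widehat\omega_{\mu,\bar\psi}$ — the $G_r$-part gives $(g.f)(y)=\mu(g)f(g^t y)$ because $\widetilde G_{r+1}$ in $H^B$ acts by $\mathrm{diag}(1,g)$ which produces the transpose under the coordinate identification, and the Heisenberg part produces the factors $\lambda(a+y^tv)$ from the cocycle built into $\zeta^B$ and the non-normalized induction; (4) confirm the modulus/normalization factors: the $\nu^{1/2}$ twist and the $\delta_{H^B}^{1/2}$ from $^u\mathrm{ind}$ combine to give exactly the trivial twist needed, so no stray character survives.

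The main obstacle I expect is step (3)–(4): getting the cocycle and the normalization exactly right so that the induced representation's action reproduces the Weil representation formula for $\widehat\omega$ (the $S(W^\vee)$-model, with the transpose $g^t$ rather than $g^{-1}$) on the nose, including the precise character $\bar\psi$ and the sign/placement of $a$ and $y^tv$ in the Heisenberg action. This is the kind of computation that is "an inspection" in spirit but requires care about which root subgroup $\bar\psi(z_{1,1})$ in $\zeta^B$ corresponds to which Heisenberg coordinate, and whether the relevant orbit representative needs a Weyl-type correction as in Proposition~\ref{prop bessel bz realize}. Everything else — exactness, the topological identification of the quotient, compact-support matching — is routine. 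I would therefore present the proof as: set up coordinates, exhibit the explicit map, and verify equivariance, relegating the normalization check to the observation that it matches the analogous cancellation in Proposition~\ref{prop bessel bz realize}.
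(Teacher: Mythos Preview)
Your approach is correct and essentially identical to the paper's: both use the topological identification $H^B_{0,1,r}\backslash H_r'\cong F^r$ and the explicit evaluation map $f\mapsto\bigl(y\mapsto f(\begin{smallmatrix}1&y^t&\\&I_r&\\&&1\end{smallmatrix})\bigr)$ to realize the induced representation as $S(F^r)$-valued functions into $\mu\pi$ with the desired $H_r'$-action. One small correction to your step~(4): since ${}^u\mathrm{ind}$ is unnormalized there is no $\delta^{1/2}$ to cancel against---the paper handles the bookkeeping by identifying $\nu^{-1/2}\pi\otimes\widehat\omega_{\mu,\bar\psi}$ directly with compactly supported functions $F^r\to\nu^{-1/2}\pi$, so the $\nu^{1/2}$ in the statement is absorbed into the target of the induced representation rather than arising from a modulus factor.
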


\begin{proof}
We can identify $ \nu^{-1/2} \pi \otimes \widehat{\omega}_{\mu, \bar{\psi}}$ with the space of smooth compactly supported functions $f: F^r \rightarrow \nu^{-1/2}\pi$ with the action given by $(g.f)(y)=g.f(g^ty)$. Since $H_{0,1,r}^B\setminus H_r' \cong F^r$ as topological spaces, the identification gives a map $\mathcal F:  \pi \otimes \widehat{\omega}_{\mu, \bar{\psi}} \rightarrow {}^u\mathrm{ind}_{H_{0,1,r}^B}^{H_r'}\mu \pi \otimes (\zeta^B \nu^{1/2})$ given by
\[  \mathcal F(f)(y) = f(\begin{pmatrix} 1 & y^t & \\ & I_r & \\ & & 1 \end{pmatrix}) 
\]
\end{proof}

Now we consider general Fourier-Jacobi models. Let $m_1, m_2 \geq 1$. Let $H$ (resp. $U_H$) be the subgroups of $G_{m_1+m_2+r}$ containing all elements of the form:
\[ \begin{pmatrix}  u_1 & x & y \\   & h & z \\ & & u_2 \end{pmatrix} \quad (\mbox{resp.} \begin{pmatrix} u_1 & x & y \\ & I_{r+2} & z \\ & & u_2 \end{pmatrix}) 
\]
with $u_1 \in U_{m_1-1}$, $u_2 \in U_{m_2-1}$, $h \in H'_r$, $x \in Mat_{m_1-1, r+2}$, $z \in Mat_{r+2, m_2-1}$ and $y \in Mat_{m_1-1, m_2-1}$. We shall also write $H^F_{m_1,m_2,r}$ or $H^F$ for $H$. Note that we have $H \cong H_r' \ltimes U_H$. In the case that $m_1=m_2=1$, it recovers the notion for $H_r'$. 

We now extend the representations $\omega_{\mu, \lambda}$ of $H_r'$ to be a representation of $H$, still denoted $\omega_{\mu, \lambda}$ by abuse of notation, whose underlying space is $S(F^r)$ with the action, for $f \in S(F^r)$,
\begin{align} \label{eqn action original}
  \begin{pmatrix}  u_1 & x & y \\   & h & z \\ & & u_2 \end{pmatrix}.f= \varphi_{m_1}(u_1) \varphi_{m_2}(u_2) (h.f)   .
\end{align}
We similarly define the representation $\widehat{\omega}_{\mu, \lambda}$

Set 
\[\zeta=\zeta^F_{m_1,m_2,r,\lambda}=\zeta^F =\nu^{(m_1-m_2)/2} \widetilde{\omega}_{\nu^{-1/2}, \lambda} ,\]
 and 
\[\widehat{\zeta}=\widehat{\zeta}^F_{m_1,m_2,r,\lambda}=\widehat{\zeta}^F=\nu^{(m_1-m_2)/2}\widehat{\omega}_{\nu^{1/2},\lambda} .\]
 Again when $m_1=m_2$, it is the original notion of Fourier-Jacobi model in \cite[Section 15]{GGP12}. The restriction problems involving $\zeta^F$ (and $\widehat{\zeta}^F$) (i.e. $\mathrm{Hom}_H(\pi_1\otimes \zeta^F, \pi_2)$) do not depend on a choice of $\lambda$.

\begin{proposition} \label{prop transfer 2}
Let $n=m_1+m_2+r$ with $m_1, m_2,r \geq 1$. Let $\pi \in \mathrm{Alg}(G_r)$.
\[  {}^u\mathrm{ind}_{H^B_{m_1-1,m_2,r}}^{G_n} \pi \otimes \zeta^B \otimes \nu^{m_2-m_1+1}  \cong {}^u\mathrm{ind}_{H^F_{m_1,m_2,r}}^{G_n} \pi \otimes \widehat{\zeta}^F \otimes \nu^{m_2-m_1}. 
\]
\end{proposition}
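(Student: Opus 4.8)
The plan is to identify both unnormalized inductions with the same Bernstein-Zelevinsky object, namely an iterated $\Phi^+$ applied to a mirabolic induction of $\pi$, and then match the normalizing characters. The starting point is Lemma \ref{lem identify 1}, which already handles the ``innermost'' case $m_1=1$: it gives $\pi \otimes \widehat{\omega}_{\mu,\bar\psi} \cong {}^u\mathrm{ind}_{H^B_{0,1,r}}^{H'_r}\mu\pi\otimes(\zeta^B\otimes\nu^{1/2})$. First I would bootstrap this to general $m_1,m_2$ using induction in stages. Concretely, $H^F_{m_1,m_2,r}\cong H'_r\ltimes U_H$, and the action \eqref{eqn action original} shows that $\widehat{\zeta}^F$ restricted to the extra unipotent directions is just the product of Whittaker characters $\varphi_{m_1-1}\otimes\varphi_{m_2-1}$. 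So ${}^u\mathrm{ind}_{H^F_{m_1,m_2,r}}^{G_n}\pi\otimes\widehat{\zeta}^F$ is obtained from ${}^u\mathrm{ind}_{H'_r}^{\,\cdot}(\pi\otimes\widehat{\omega})$ by inducing up along a chain of unipotent subgroups each carrying a nondegenerate character — which is exactly how the $\Phi^+$ functors are built (cf.\ the proof of Proposition \ref{prop bessel bz realize}).

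The second step is to recognize, via Lemma \ref{lem identify 1} plus Proposition \ref{prop bessel bz realize}, that ${}^u\mathrm{ind}_{H^B_{m_1-1,m_2,r}}^{G_n}\pi\otimes\zeta^B$ is (up to a power of $\nu$) $(\Phi^+)^{(m_1-1)+m_2+1}(\Lambda(\pi))|_{G_n} = (\Phi^+)^{m_1+m_2}(\Lambda(\pi))|_{G_n}$. The bootstrapped description of the Fourier-Jacobi side should give the same object: Lemma \ref{lem identify 1} identifies the $m_1=1$ Fourier-Jacobi induction with a Bessel induction of type $H^B_{0,1,r}$, i.e.\ with $\Pi_1\bar\times\pi = \Lambda(\pi)$ composed with one $\Phi^+$-type step; then the $m_1-1$ further Whittaker directions on the top block and the $m_2-1$ on the bottom block contribute $m_1-1$ and $m_2-1$ more $\Phi^+$'s, and the embedding $H'_r\hookrightarrow G_{m_1+m_2+r}$ already accounts for the remaining ones. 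Carefully counting, both sides should equal $(\Phi^+)^{m_1+m_2}(\Lambda(\pi))|_{G_n}$, or equivalently, by Proposition \ref{prop nontrivial bz}, $(\Pi_{m_1+m_2}\bar\times\pi)|_{G_n}$.

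The third step is purely bookkeeping of modulus and normalizing characters. The Bessel side carries $\nu^{m_2-m_1+1}$ (shifting $m_1\mapsto m_1-1$ in the formula $\nu^{m_2-m_1}$ of Proposition \ref{prop bessel bz realize}), the Fourier-Jacobi side carries $\widehat{\zeta}^F = \nu^{(m_1-m_2)/2}\widehat{\omega}_{\nu^{1/2},\lambda}$ together with the stated external twist $\nu^{m_2-m_1}$; the $\nu^{1/2}$ appearing in Lemma \ref{lem identify 1} and the $\nu^{\pm1/2}$ arising from restriction to $G_n$ versus mirabolic induction (as remarked at the end of the proof of Proposition \ref{prop bessel bz realize}) must be tracked so that everything cancels. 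I would perform this as an explicit computation of the modulus function of the relevant parabolic/unipotent subgroups.

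The main obstacle will be the bookkeeping in the second and third steps: making the induction-in-stages decomposition of $H^F_{m_1,m_2,r}$ precise enough to see that it really produces $(\Phi^+)^{m_1+m_2}$ and not an off-by-one count, and matching the half-integral powers of $\nu$ on the nose. The conceptual content is light — it is the same ``conjugate by a Weyl element and absorb a unipotent twist'' argument as in Proposition \ref{prop bessel bz realize} — but getting the normalization exactly right, especially the independence from the additive character $\lambda$ and the interplay between $\widehat{\omega}_{\mu,\lambda}$'s built-in $\mu$ and the external $\nu$-twists, is where care is needed.
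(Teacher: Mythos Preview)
Your proposal is correct in outline, but it takes a more circuitous route than the paper. The paper does not pass through the Bernstein--Zelevinsky description $(\Phi^+)^{m_1+m_2}(\Lambda(\pi))|_{G_n}$ at all. Instead it observes directly that $H^B_{m_1-1,m_2,r}\subset H^F_{m_1,m_2,r}$, and that on the extra unipotent block $U_H$ one has $\zeta^B|_{U_H}\cong \widehat{\zeta}^F|_{U_H}$ (the Whittaker characters $\varphi_{m_1-1},\varphi_{m_2-1}$ agree by construction, and the conjugation action of $H'_r$ on this restriction is trivial). Then Lemma~\ref{lem identify 1} lifts verbatim from an $H'_r$-isomorphism to an $H^F_{m_1,m_2,r}$-isomorphism
\[
\pi\otimes\bigl(\widehat{\zeta}^F\otimes\nu^{-1/2}\nu^{(m_2-m_1)/2}\bigr)\;\cong\;{}^u\mathrm{ind}_{H^B_{m_1-1,m_2,r}}^{H^F_{m_1,m_2,r}}\,\pi\otimes\bigl(\zeta^B\otimes\nu^{(m_2-m_1+1)/2}\bigr),
\]
and a single application of ${}^u\mathrm{ind}_{H^F}^{G_n}$ together with induction in stages finishes the proof. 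Your approach --- identifying each side separately with the same BZ object --- is exactly what the paper records later as a \emph{consequence} (see Remark~\ref{rmk fourier transfor bz theory}), not as the proof. It works, but the paper's argument is shorter and sidesteps the off-by-one and $\nu^{\pm 1/2}$ bookkeeping you flagged as the main obstacle, since the only normalization to track is the single passage from Lemma~\ref{lem identify 1} to its $H^F$-extension.
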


\begin{proof}
From constructions, $\zeta^B|_{U_H} \cong \widehat{\zeta}^F|_{U_H}$. Note that $H_r'$ normalizes $U_H$ and the conjugation action of $H_r'$ on $\widehat{\zeta}^F|_{U_H}$ is trivial.  One can extend the identification in Lemma \ref{lem identify 1} to, as $H^F$-representations, 
\[ \pi \otimes (\widehat{\zeta}^F \otimes \nu^{-1/2}\nu^{(m_2-m_1)/2}) \cong {}^u\mathrm{ind}_{H^B}^{H^F} \pi \otimes (\zeta^B \otimes \nu^{(m_2-m_1+1)/2}).
\]
Now applying induction from $H^F$ to $G$, an induction by stages gives the lemma.
\end{proof}
 
In view of Propositions \ref{prop nontrivial bz},  \ref{prop bessel bz realize} and \ref{prop transfer 2}, we can prove in a similar way as in the proof of Proposition \ref{prop transfer} (also similar to the proof of Proposition \ref{prop FJ0}). We omit the details.

\begin{proposition} \label{prop transfer fourier bessel}
Let $m_1, m_2,r \geq 1$. Let $n=m_1+m_2+r$. Let $\pi_1 \in \mathrm{Alg}(G_n)$, and let $\pi_2 \in \mathrm{Alg}(G_r)$. Then, for any cuspidal representation $\sigma$ of $G_{n+1-r}$ with $\sigma \notin \mathrm{cupp}_{\mathbb{Z}}(\pi_2) \cup \mathrm{cupp}_{\mathbb Z}(\nu^{-1/2}\pi_1^{\vee})$, and for any $i$,
\[  \mathrm{Ext}^i_{H_{m_1,m_2,r}^F}(\pi_1 \otimes \widehat{\zeta}^F, \pi_2^{\vee}) \cong \mathrm{Ext}^i_{G_n}(\sigma \times \pi_2, \pi_1^{\vee})
\]
\end{proposition}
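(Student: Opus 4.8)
The plan is to follow the proof of Proposition \ref{prop transfer} essentially line by line, inserting Proposition \ref{prop transfer 2} at the point where one passes from the Bessel integral to the Fourier-Jacobi integral. Put $G=G_n$ and note $\sigma\in\mathrm{Irr}^c(G_{n+1-r})=\mathrm{Irr}^c(G_{m_1+m_2+1})$, so that $\sigma\times\pi_2\in\mathrm{Alg}(G_{n+1})$. First I would apply Lemma \ref{lem short exact seq} to obtain the Mackey short exact sequence
\[ 0\rightarrow \sigma|_M\bar{\times}\pi_2\rightarrow (\sigma\times\pi_2)|_M\rightarrow \sigma\bar{\times}(\pi_2|_M)\rightarrow 0 \]
and restrict it to $G_n$. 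Since $\sigma$ is cuspidal, property (6) of the Bernstein-Zelevinsky functors gives $\sigma|_M\cong\Pi_{m_1+m_2+1}$, so the bottom layer is $(\Pi_{m_1+m_2+1}\bar{\times}\pi_2)|_{G_n}$, while (\ref{eqn restriction to g first factor}) identifies the top layer with $(\nu^{1/2}\sigma)\times(\pi_2|_{G_{r-1}})$.

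Next I would identify the bottom layer with the unnormalized Fourier-Jacobi integral. By Proposition \ref{prop nontrivial bz}, $\Pi_{m_1+m_2+1}\bar{\times}\pi_2\cong(\Phi^+)^{m_1+m_2}(\Lambda(\pi_2))$; applying Proposition \ref{prop bessel bz realize} to the group $H^B_{m_1-1,m_2,r}$ (of the correct size $(m_1-1)+m_2+r+1=n$) shows that, after restriction to $G_n$, this is isomorphic to ${}^u\mathrm{ind}_{H^B_{m_1-1,m_2,r}}^{G_n}\pi_2\otimes\zeta^B\otimes\nu^{m_2-m_1+1}$, and Proposition \ref{prop transfer 2} rewrites that as ${}^u\mathrm{ind}_{H^F_{m_1,m_2,r}}^{G_n}\pi_2\otimes\widehat{\zeta}^F\otimes\nu^{m_2-m_1}$. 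For the top layer, the hypothesis $\sigma\notin\mathrm{cupp}_{\mathbb{Z}}(\nu^{-1/2}\pi_1^{\vee})$ says exactly that $\nu^{1/2}\sigma$ has cuspidal support disjoint from that of $\pi_1^{\vee}$, so Lemma \ref{lem ext vanishing} gives $\mathrm{Ext}^i_{G_n}((\nu^{1/2}\sigma)\times(\pi_2|_{G_{r-1}}),\pi_1^{\vee})=0$ for all $i$. Feeding the restricted Mackey sequence into the long exact sequence of Ext then yields
\[ \mathrm{Ext}^i_{G_n}(\sigma\times\pi_2,\pi_1^{\vee})\cong \mathrm{Ext}^i_{G_n}\big({}^u\mathrm{ind}_{H^F_{m_1,m_2,r}}^{G_n}\pi_2\otimes\widehat{\zeta}^F\otimes\nu^{m_2-m_1},\,\pi_1^{\vee}\big). \]

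Finally I would run the duality chain from the end of the proof of Proposition \ref{prop transfer}: taking contragredients (the twist $\nu^{m_2-m_1}$ being precisely the normalizing factor making ${}^u\mathrm{ind}$ dualize into ${}^u\mathrm{Ind}$) gives $\mathrm{Ext}^i_{G_n}(\pi_1,{}^u\mathrm{Ind}_{H^F}^{G_n}(\pi_2\otimes\widehat{\zeta}^F)^{\vee})$, Frobenius reciprocity gives $\mathrm{Ext}^i_{H^F}(\pi_1,(\pi_2\otimes\widehat{\zeta}^F)^{\vee})$, and dualizing back gives $\mathrm{Ext}^i_{H^F}(\pi_1\otimes\widehat{\zeta}^F,\pi_2^{\vee})$. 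Compared with the character case of Proposition \ref{prop transfer}, the one genuinely new point is the contragredient of the Weil-representation factor inside $\widehat{\zeta}^F$: here one uses that $\widehat{\omega}_{\mu,\lambda}^{\vee}$ is again a Weil representation (with $\lambda$ replaced by $\lambda^{-1}$ and the $S(W)$/$S(W^{\vee})$ models interchanged), together with the remark, recorded after the definition of $\zeta^F$, that the Ext-spaces in question are independent of the choice of $\lambda$. I expect the main obstacle to be purely bookkeeping: correctly tracking the several $\nu$-power twists through Propositions \ref{prop bessel bz realize}, \ref{prop transfer 2} and the duality steps, and pinning down the Weil-representation contragredient identification; structurally the argument is identical to Propositions \ref{prop FJ0} and \ref{prop transfer}.
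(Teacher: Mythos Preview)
Your proposal is correct and follows exactly the route the paper indicates: the paper's proof simply says to combine Propositions \ref{prop nontrivial bz}, \ref{prop bessel bz realize} and \ref{prop transfer 2} and argue as in Propositions \ref{prop transfer} and \ref{prop FJ0}, omitting all details; you have faithfully supplied those details, including the correct identification $(\Pi_{m_1+m_2+1}\bar\times\pi_2)|_{G_n}\cong{}^u\mathrm{ind}_{H^F_{m_1,m_2,r}}^{G_n}\pi_2\otimes\widehat{\zeta}^F\otimes\nu^{m_2-m_1}$ and the final duality chain. Your flagging of the Weil-representation contragredient as the one point needing extra care beyond the character case is apt and handled appropriately via the $\lambda$-independence remark.
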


%\begin{proof}
%For all $i$,
%\begin{align*}
%\mathrm{Ext}^i_{G_n}(\sigma \times \pi_2, \pi_1^{\vee}) \cong & \mathrm{Ext}^i_{G_n}((\Pi_{n+1-r} \bar{\times} \pi_2)|_{G_n}, \pi_1^{\vee}) \\
%\cong & \mathrm{Ext}^i_{G_n}((\Phi^+)^{n-r}(\Lambda(\pi_2))|_{G_n}, \pi_1^{\vee}) \quad \mbox{(by Proposition \ref{prop nontrivial bz})} \\
%\cong & \mathrm{Ext}^i_{G_n}({}^u\mathrm{ind}_{H^B_{m_1-1,m_2,r}}^{G_n} \pi_2 \otimes \zeta^B \otimes \nu^{m_2-m_1+1}, \pi_1^{\vee}) \quad \mbox{(by Proposition \ref{prop bessel bz realize})}\\
%\cong & \mathrm{Ext}^i_{G_n}({}^u\mathrm{ind}_{H^F_{m_1,m_2,r}}^{G_n} \pi_2 \otimes \widehat{\zeta}^F \otimes \nu^{m_2-m_1}, \pi_1^{\vee} ) \quad \mbox{(by Proposition \ref{prop transfer 2})} \\
%\cong & \mathrm{Ext}^i_{G_n}(\pi_1, {}^u\mathrm{Ind}_{H^F_{m_1,m_2,r}}^{G_n} (\pi_2 \otimes \zeta^F)^{\vee}) \quad \mbox{(taking duals)}\\
%\cong & \mathrm{Ext}^i_{H_{m_1,m_2,r}^F}(\pi_1, (\pi_2 \otimes \zeta^F)^{\vee}) \quad \mbox{(by Frobenius reciprocity)}\\
%\cong & \mathrm{Ext}^i_{H_{m_1,m_2,r}^F}(\pi_2\otimes \widehat{\zeta}^F, \pi_1^{\vee}) 
%\end{align*}
%The first isomorphism follows from a standard argument as before. The last isomorphism is similar to the proof of Proposition \ref{prop transfer}.
%\end{proof}

Now we give a connection of the two notions $\zeta^F$ and $\widehat{\zeta}^F$.

\begin{proposition} \label{prop transfer dual}
Let $m_1,m_2,r \geq 1$ and let $n=m_1+m_2+r$. Let $\pi_1 \in \mathrm{Alg}(G_{n})$ and let $\pi_2 \in \mathrm{Alg}(G_{r})$. For all $i$,
\[  \mathrm{Ext}^i_H(\pi_1\otimes \zeta^F , \pi_2^{\vee} ) \cong \mathrm{Ext}^i_{\widetilde{H}}(\theta(\pi_1) \otimes \widehat{\zeta}^F, \theta(\pi_2)^{\vee} ),
\]
where $\widetilde{H} = H_{m_2,m_1,r}^F =w\theta(H)w^{-1}$. Here $w$ is the matrix with all $1$ in the antidiagonal and $0$ elsewhere.
\end{proposition}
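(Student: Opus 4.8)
The plan is to realise both sides as Ext-groups for the \emph{same} group, related by the Gelfand-Kazhdan involution, and then to identify the objects on the two sides. Put $\Theta=\mathrm{Ad}(w)\circ\theta_n$, where $\theta_n(g)=g^{-t}$ is the involution of Section~\ref{ss bz functor}; this is an automorphism of $G_n$, and by the identity $\widetilde H=w\theta(H)w^{-1}$ recorded in the statement it restricts to a group isomorphism $\Theta\colon H\xrightarrow{\ \sim\ }\widetilde H$. Transport of structure along $\Theta$ is then an exact equivalence $\mathrm{Alg}(H)\xrightarrow{\ \sim\ }\mathrm{Alg}(\widetilde H)$, so it induces isomorphisms on all Ext-groups and commutes with tensor products. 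Thus the statement reduces to identifying the images under this equivalence of the two $H$-representations $\pi_1|_H\otimes\zeta^F$ and $\pi_2^\vee$.

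For the two ``easy'' factors I would argue as follows. Since $\Theta$ differs from $\theta_n$ only by the inner automorphism $\mathrm{Ad}(w)$ of $G_n$, for any smooth $G_n$-representation $\sigma$ the transport of $\sigma|_H$ along $\Theta$ is isomorphic to $\theta_n(\sigma)|_{\widetilde H}$; with $\sigma=\pi_1$ this yields $\theta(\pi_1)|_{\widetilde H}$. The representation $\pi_2^\vee$ is inflated to $H$ along the projection $H\twoheadrightarrow G_r$ onto the middle $\mathrm{GL}_r$-block, and $\Theta$ intertwines this projection with the analogous projection $\widetilde H\twoheadrightarrow G_r$ up to the involution $\theta_r$ of $G_r$ and an inner automorphism; together with the elementary identity $\theta_r(\pi_2^\vee)=\theta_r(\pi_2)^\vee$ this shows that $\pi_2^\vee$ is carried to $\theta(\pi_2)^\vee$ inflated to $\widetilde H$. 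Both of these agree with what appears on the right-hand side of the proposition.

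The substantive step is to check that $\zeta^F$ is carried to $\widehat\zeta^F$ for $\widetilde H=H^F_{m_2,m_1,r}$. Here I would unwind the explicit formulas of Section~\ref{ss fourier}: the involution $g\mapsto g^{-t}$ interchanges the $W$-coordinates and the $W^\vee$-coordinates in the Heisenberg block, which is exactly the difference between the model $\widetilde\omega_{\mu,\lambda}$ on $S(W)$ and the model $\widehat\omega_{\mu,\lambda}$ on $S(W^\vee)$; it interchanges the two blocks $U_{m_1}$ and $U_{m_2}$ carrying the generic characters $\varphi_{m_1},\varphi_{m_2}$, which is precisely the passage from $H^F_{m_1,m_2,r}$ to $H^F_{m_2,m_1,r}$; and, since $\theta_n$ inverts $\nu$, it sends the twist $\nu^{-1/2}$ to $\nu^{1/2}$ and the normalising character $\nu^{(m_1-m_2)/2}$ to $\nu^{(m_2-m_1)/2}$. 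After folding into $\Theta$ a further inner automorphism by a diagonal element lying in the $\mathrm{GL}_{m_1}\times\mathrm{GL}_{m_2}$ part of $G_n$ (to bring the transported generic characters into standard form; such an element normalises $\widetilde H$ and replaces the remaining factors only by isomorphic representations), the image of $\zeta^F$ becomes $\nu^{(m_2-m_1)/2}\widehat\omega_{\nu^{1/2},\lambda'}=\widehat\zeta^F_{m_2,m_1,r,\lambda'}$ for some nontrivial additive character $\lambda'$; a last conjugation by a suitable diagonal element rescales $\lambda'$ to $\lambda$, again only replacing $\theta(\pi_1)$ and $\theta(\pi_2)$ by isomorphic representations (compare the independence on $\lambda$ noted in Section~\ref{ss fourier}).

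I expect the only real work to be in the third paragraph: tracking the cocycle defining the Weil representation through conjugation by $w$ and the auxiliary torus elements, and checking that the relevant additive character is altered at worst by a nonzero scalar in $F^{\times}$. Everything else is a formal manipulation of exact equivalences of categories. A useful sanity check on the bookkeeping is the equal-rank case $m_1=m_2$, where $\widetilde H=H$ and the identity reduces to the expected $\theta$-compatibility between the two models of the Fourier-Jacobi functional.
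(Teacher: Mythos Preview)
Your proposal is correct and follows essentially the same route as the paper: define $\Theta=\mathrm{Ad}(w)\circ\theta$, use it as a group isomorphism $H\to\widetilde H$ giving an exact equivalence of module categories, and then identify the three transported objects. The paper's proof records the key identification as $\theta^w(\zeta^F_\lambda)\cong\widehat{\zeta}^F_{\lambda^{-1}}$ in one line (relying implicitly on the independence of the restriction problem from $\lambda$ already noted in Section~\ref{ss fourier}), whereas you spell out the Heisenberg-block bookkeeping and the auxiliary torus conjugations; this is the same argument, just written with more care.
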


\begin{proof}
Let $\theta^w$ be the action of $\theta$ followed by the conjugation of $w$. We use the same $\theta^w$ for the induced map on representations. Note that $\theta^w(\pi_1) \cong \theta(\pi_1)$ as $G_n$-representations, $\theta^w(\pi_2^{\vee}) \cong \theta(\pi_2^{\vee})\cong \theta(\pi_2)^{\vee}$ as $G_r$-representation, and $\theta^w(\zeta^F_{\lambda}) \cong \widehat{\zeta}^F_{\lambda^{-1}}$.
\end{proof}

\subsection{Restrictions}

We state the multiplicity one and finiteness for the general cases (c.f. \cite{GGP12}):
\begin{corollary} 
Let $(H,\zeta)$ be any pair described in Sections \ref{ss corank 0}, \ref{ss bessel} and \ref{ss fourier}. Let $\pi_1$ and $\pi_2$ be irreducible representations of $G_n$ and $G_r$ respectively. Then
\[ \mathrm{dim}~\mathrm{Hom}_{H}(\pi_1\otimes \zeta, \pi_2)  \leq 1,
\]
and for all $i$,
\[ \mathrm{dim}~\mathrm{Ext}^i_{H}(\pi_1\otimes \zeta, \pi_2) <\infty
\]
\end{corollary}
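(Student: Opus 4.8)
The plan is to reduce the multiplicity one and finiteness statements for all three families of models uniformly to the corresponding statements for the basic restriction problem from $G_{n+1}$ to $G_n$, i.e. to the known bounds $\dim\mathrm{Hom}_{G_n}(\pi_1,\pi_2)\le 1$ of \cite{AGRS10} and the finiteness of $\mathrm{Ext}^i_{G_n}(\pi_1,\pi_2)$, via the transfer isomorphisms already established above. Concretely, for a Bessel or mixed model $(H^B_{m_1,m_2,r},\zeta^B)$, Proposition \ref{prop transfer} gives, for a suitable irreducible cuspidal $\sigma$ of $G_{m_1+m_2+2}$ with cuspidal support disjoint from that of $\nu^{-1/2}\pi_1^\vee$ and of $\pi_2$,
\[
\mathrm{Ext}^i_{H^B_{m_1,m_2,r}}(\pi_1\otimes\zeta^B,\pi_2^\vee)\cong \mathrm{Ext}^i_{G_n}(\sigma\times\pi_2,\pi_1^\vee).
\]
For a Fourier--Jacobi model one uses Proposition \ref{prop transfer fourier bessel} (together with Proposition \ref{prop transfer dual} to pass between $\zeta^F$ and $\widehat{\zeta}^F$), and for the equal rank Fourier--Jacobi case one uses Proposition \ref{prop FJ0}; in every case the right-hand side is an $\mathrm{Ext}$-group of the shape $\mathrm{Ext}^i_{G_N}((\sigma\times\pi_2)|_{G_{N}},\pi_1^\vee)$ or $\mathrm{Ext}^i_{G_N}(\chi\times\pi,\pi')$ for some $N$, i.e. exactly a restriction-type $\mathrm{Ext}$-group between a parabolically induced representation (restricted one step) and an irreducible representation.

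The next step is to observe that such restriction $\mathrm{Ext}$-groups satisfy the desired bounds. First I would note that $\sigma\times\pi_2$ is again an irreducible admissible representation of $G_N$ (it is a standard module, hence of finite length, and in fact irreducible since $\sigma$ is cuspidal with cuspidal support disjoint from $\pi_2$), and similarly $\chi\times\pi$ has finite length. Then I would invoke the finiteness theorem for $\mathrm{Ext}$-groups in the restriction (GGP) setting for $\mathrm{GL}$: for irreducible admissible $\pi$ of $G_{N}$ and $\tau$ of $G_{N-1}$, all $\mathrm{Ext}^i_{G_{N-1}}(\pi,\tau)$ are finite-dimensional. This is essentially the content of the Bernstein--Zelevinsky filtration analysis used repeatedly above (and appears in \cite{CS18}): the filtration on $\pi|_{M_N}$ has finitely many layers, each an induction from a mirabolic-type subgroup of a product involving derivatives, so Frobenius reciprocity reduces each layer's contribution to an $\mathrm{Ext}$-group of admissible representations over smaller groups, which is finite-dimensional; a long exact sequence then bounds $\mathrm{Ext}^i_{G_{N-1}}(\pi,\tau)$ by the sum of these. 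For the $i=0$ multiplicity one statement, one transfers to $\mathrm{Hom}_{G_N}((\sigma\times\pi_2)|_{G_{N-1}},\pi_1^\vee)$ and applies the local multiplicity one theorem of \cite{AGRS10}; the choice of $\sigma$ (respectively $\chi$) with generic-enough cuspidal support guarantees that no ``extra'' Hom arises from the other Bernstein-Zelevinsky layers — indeed in the transfer propositions the auxiliary layers were shown to be $\mathrm{Ext}$-acyclic against $\pi_1^\vee$ precisely by the cuspidal support disjointness — so the transfer is a genuine isomorphism and the bound $1$ is inherited.

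The main obstacle I expect is ensuring that a cuspidal $\sigma$ (or character $\chi$) with the required cuspidal support avoidance \emph{exists} in every case — this is harmless since $F$ is non-Archimedean and there are infinitely many unramified twists of any cuspidal, so one may always pick $\sigma$ with $\mathrm{cupp}_{\mathbb Z}(\sigma)$ disjoint from the finite sets $\mathrm{cupp}_{\mathbb Z}(\nu^{-1/2}\pi_1^\vee)$ and $\mathrm{cupp}_{\mathbb Z}(\pi_2)$ — and, more substantively, citing or assembling the finiteness of $\mathrm{Ext}^i$ for the basic restriction problem. The latter is not literally isolated as a numbered statement in the excerpt, but it follows from the same Bernstein--Zelevinsky bookkeeping underlying Lemmas \ref{lem ext vanishing}, \ref{lem speh rep filtration} and the transfer propositions, combined with admissibility of all the representations involved; I would package it as a short lemma proved by induction on $\dim$ using the filtration $0\to\Phi^+\Phi^-\to\mathrm{Id}\to\Psi^+\Psi^-\to 0$ and exactness of the Bernstein--Zelevinsky functors, and then the corollary follows immediately from the displayed transfer isomorphisms. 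The multiplicity one half is then just \cite{AGRS10} applied through the same transfers.
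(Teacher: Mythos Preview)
Your proposal is correct and follows essentially the same route as the paper: reduce each model to the basic $G_{n+1}\downarrow G_n$ restriction via the transfer propositions, then invoke \cite{AGRS10} for the Hom bound and the known finiteness of the higher Ext-groups. The paper's own proof is a two-line version of this, citing only Proposition \ref{prop transfer} and quoting \cite{Pr18, AS18} directly for the Ext finiteness rather than re-deriving it from the Bernstein--Zelevinsky filtration as you sketch; your more explicit enumeration of Propositions \ref{prop FJ0}, \ref{prop transfer fourier bessel}, \ref{prop transfer dual} for the Fourier--Jacobi cases and your remark on the existence of a suitable $\sigma$ are welcome clarifications but not a different argument.
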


\begin{proof}
Proposition \ref{prop transfer} reduces to the case that restricting from $G_{n+1}$ to $G_n$, which is proved in \cite{AGRS10} for Hom and follows from \cite{Pr18, AS18} for higher Ext. 
\end{proof}

\begin{theorem} \label{thm general cases}
Let $(H,\zeta)$ be any pair described in Sections \ref{ss corank 0}, \ref{ss bessel} and \ref{ss fourier}. Let $\pi_M$ and $\pi_N$ be Arthur type representations of $G_n$ and $G_r$ respectively. Then 
\[  \mathrm{Hom}_{H }(\pi_M \otimes \zeta, \pi_N) \neq 0 
\]
if and only if their associated Arthur parameters $M_A$ and $N_A$ are relevant.
\end{theorem}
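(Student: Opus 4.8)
The plan is to reduce the branching problem for every model in Sections~\ref{ss corank 0}, \ref{ss bessel} and~\ref{ss fourier} to the basic restriction problem from $G_{n+1}$ to $G_n$, and then to invoke Theorem~\ref{thm conj ggp}. The reduction machinery is already in place: Proposition~\ref{prop transfer} (Bessel and, as its specialization $m_1=0$, Rankin--Selberg), Proposition~\ref{prop transfer fourier bessel} (Fourier--Jacobi with $\widehat{\zeta}^F$), Proposition~\ref{prop transfer dual} (passing between $\zeta^F$ and $\widehat{\zeta}^F$ through the Gelfand--Kazhdan involution $\theta$), and Proposition~\ref{prop FJ0} (equal rank Fourier--Jacobi) each rewrite the relevant $\mathrm{Hom}$-space, after inducing from a mirabolic subgroup, as a restriction $\mathrm{Hom}$-space obtained by adjoining a single auxiliary cuspidal representation (or character). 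Two things must be supplied on top of these identities: that the representation produced on the restriction side is again of Arthur type, and that the relevance condition of Definition~\ref{def relevant pair} is preserved under the translation. The first is immediate, since a unitarizable cuspidal is a Speh representation $u_\sigma(1,1)$, so adjoining it to an Arthur type representation (or to its contragredient, which is again Arthur type with the dual parameter) produces a product of Speh representations, hence an irreducible Arthur type representation. The second is the combinatorial core.

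I will carry it out first for the Bessel case. Let $\pi_M$ of $G_n$ and $\pi_N$ of $G_r$ be Arthur type with parameters $M_A$ and $N_A$, where $n=m_1+m_2+r+1$. Choose a unitarizable irreducible cuspidal $\sigma$ of $G_{m_1+m_2+2}$ generic enough that no twist $\nu^a\sigma$ or $\nu^a\sigma^{\vee}$ with $a\in\tfrac12\mathbb{Z}$ meets $\mathrm{cupp}(\pi_M)\cup\mathrm{cupp}(\pi_N)$; this excludes only finitely many cuspidal lines, it implies the hypotheses of Proposition~\ref{prop transfer}, and it makes $\sigma\times\pi_N^{\vee}$ an irreducible Arthur type representation of $G_{n+1}$ with parameter $N_A^{\vee}\oplus[\sigma]$, where $[\sigma]$ is the bounded $W_F$-parameter of $\sigma$ placed in the $\mathrm{Sym}^0$-slot. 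Taking $i=0$ in Proposition~\ref{prop transfer} with $\pi_1=\pi_M$, $\pi_2=\pi_N^{\vee}$ yields
\[
\mathrm{Hom}_{H^B_{m_1,m_2,r}}(\pi_M\otimes\zeta^B,\pi_N)\;\cong\;\mathrm{Hom}_{G_n}\!\big((\sigma\times\pi_N^{\vee})|_{G_n},\,\pi_M^{\vee}\big),
\]
and Theorem~\ref{thm conj ggp} identifies the right-hand side with relevance of the pair $(N_A^{\vee}\oplus[\sigma],\,M_A^{\vee})$. It remains to show this is relevant iff $M_A$ and $N_A$ are, which rests on three features of Definition~\ref{def relevant pair}, each read off either the defining formulas or the Speh-representation reformulation of Section~\ref{ss rep the reform}: \emph{(i) symmetry}, $(X,Y)$ relevant iff $(Y,X)$ relevant (interchange the roles of $M_d^{+}$ and $M_d^{-}$); \emph{(ii) duality invariance}, $(X,Y)$ relevant iff $(X^{\vee},Y^{\vee})$ relevant (dualize each $M_d^{\pm}$, using that $\mathrm{Sym}^d(\mathbb{C}^2)$ is self-dual); \emph{(iii) stability under a generic cuspidal}, namely if no twist $\nu^a[\sigma]$ with $a\in\tfrac12\mathbb{Z}$ occurs in $X$ or in $Y$, then $(X\oplus[\sigma],Y)$ relevant iff $(X,Y)$ relevant. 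For (iii) one observes that in any relevance datum for $(X\oplus[\sigma],Y)$ the summand $[\sigma]$ can only enter a summand contributing in $\mathrm{Sym}$-degree zero to the first parameter, i.e.\ $M_0^{+}$ or $M_1^{-}$; the latter would introduce $\nu^{\pm1/2}\sigma$ into $Y$, against the hypothesis, so $[\sigma]\subseteq M_0^{+}$, which contributes to the first parameter only. Applying (iii), then (ii), then (i) gives
\[
(N_A^{\vee}\oplus[\sigma],\,M_A^{\vee})\text{ rel.}\iff(N_A^{\vee},M_A^{\vee})\text{ rel.}\iff(N_A,M_A)\text{ rel.}\iff(M_A,N_A)\text{ rel.},
\]
completing the Bessel case and, with it, the Rankin--Selberg case.

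The Fourier--Jacobi models with $\widehat{\zeta}^F$ are treated verbatim using Proposition~\ref{prop transfer fourier bessel} in place of Proposition~\ref{prop transfer} (with $\sigma$ a unitarizable cuspidal of $G_{n+1-r}$). The $\zeta^F$-version is first converted to the $\widehat{\zeta}^F$-model on $\widetilde{H}=H^F_{m_2,m_1,r}$ by Proposition~\ref{prop transfer dual}, which replaces $\pi_M,\pi_N$ by $\theta(\pi_M)\cong\pi_M^{\vee}$ and $\theta(\pi_N)\cong\pi_N^{\vee}$ (again Arthur type with dual parameters), the extra contragredient being absorbed by (ii); then the previous argument applies. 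For the equal rank Fourier--Jacobi model, Proposition~\ref{prop FJ0} gives, for a unitary character $\chi$ of $F^{\times}$ generic in the same sense, $\mathrm{Hom}_{G_n}(\pi_M\otimes\zeta^F,\pi_N)\cong\mathrm{Hom}_{G_n}((\chi\times\pi_M)|_{G_n},\pi_N)$ with $\chi\times\pi_M$ irreducible Arthur type of parameter $M_A\oplus[\chi]$, so Theorem~\ref{thm conj ggp} together with (iii) yields equivalence with relevance of $(M_A,N_A)$; the argument for $\widehat{\zeta}^F$ is identical.

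The main obstacle I anticipate is item (iii): verifying that adjoining the auxiliary cuspidal $\sigma$ (or $\chi$) leaves relevance unchanged. This is the parameter-level shadow of the classical Gan--Gross--Prasad reduction to the corank-one (``basic'') case, and since Definition~\ref{def relevant pair} is stated with no corank restriction it should come out cleanly; the genuine content is in choosing the auxiliary datum generically enough that $[\sigma]$ cannot be absorbed into any summand $M_d^{\pm}$ with $d\ge 1$, and in tracking $\theta$ and the contragredients through the reduction so that the relevance condition emerges in the symmetric form stated in the theorem. A secondary, routine point is that the genericity required for (iii) is compatible with the (weaker) conditions demanded by Propositions~\ref{prop transfer}, \ref{prop transfer fourier bessel}, \ref{prop transfer dual} and~\ref{prop FJ0}, since each excludes only finitely many cuspidal lines.
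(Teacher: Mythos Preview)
Your proposal is correct and follows essentially the same route as the paper's proof: reduce each model to the basic $G_{n+1}\!\downarrow\! G_n$ problem via Propositions~\ref{prop FJ0}, \ref{prop transfer}, \ref{prop transfer fourier bessel}, \ref{prop transfer dual} with a unitarizable cuspidal auxiliary, then invoke Theorem~\ref{thm conj ggp}. Your write-up is in fact more careful than the paper's, which leaves the combinatorial verifications you label (i)--(iii) (symmetry, duality invariance, and stability of relevance under adjoining a generic cuspidal) entirely implicit; your explicit argument for (iii), forcing $[\sigma]\subseteq M_0^+$, is exactly the point that makes the reduction go through, and it is good that you isolated it.
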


\begin{proof}
When $r=0$, the model is Whittaker and it is well-known. Assume $r \geq 1$. For the Bessel models, this follows from Proposition \ref{prop transfer} (in which we choose $\sigma$ to be a unitarizable cuspidal representation) and Theorem \ref{thm conj ggp}. For the Fourier-Jacobi models, using Propositions \ref{prop transfer fourier bessel} and \ref{prop transfer dual}, it is equivalent to show that $\theta(\pi_M)$ and $\theta(\pi_N)$ have relevant Arthur parameters. By the Gelfand-Kazhdan isomorphism \cite{BZ76}, $\theta(\pi_M)\cong \pi_M^{\vee}$ and $\theta(\pi_N)\cong \pi_N^{\vee}$. Thus now the statement follows from that $\pi_M,\pi_N$ have relevant Arthur parameter if and only if $\pi_M^{\vee}, \pi_N^{\vee}$ have relevant Arthur parameter.
\end{proof}

\subsection{A filtration on parabolically induced modules}

The notion of those models also provide a convenient way to state the following filtration, which can be regarded as a systematic tool for studying restriction of parabolically induced representations (e.g. \cite{Ch21}). For example, one may use it to replace some arguments in Lemmas \ref{lem cuspidal supp} and \ref{lem speh rep filtration}.

\begin{proposition} \label{cor filtration par ind mod}
Let $\pi_1 \in \mathrm{Alg}(G_{n_1})$ and let $\pi_2 \in \mathrm{Alg}(G_{n_2})$. Let $n_1+n_2=n+1$. Then there exists a filtration on $(\pi_1 \times \pi_2)|_{G_n}$:
\[  0 \subset \tau_n \subset \tau_{n-1} \subset \ldots \subset \tau_1 \subset \tau_0 = \pi_1 \times \pi_2
\]
such that 
\[ \tau_0/\tau_{1} \cong (\nu^{1/2}\pi_1) \times (\pi_2|_{G_{n_2-1}})
\]
and
\[  \tau_{1}/\tau_2 \cong  \pi_1^{[1]} \times (\pi_2 \otimes \zeta^F) ,
\]
and for $k \geq 2$,
\[  \tau_{k}/\tau_{k+1} \cong  \pi_1^{[k]} \times {}^u\mathrm{ind}_{H^R_{k-2, n_2}}^{G_{n_2+k-1}} \pi_2 \otimes \zeta^R \otimes \nu^{k-2}.
\]
\end{proposition}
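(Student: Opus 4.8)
The plan is to build the filtration in two stages: first the Mackey short exact sequence of Lemma~\ref{lem short exact seq}, which peels off the top layer $\tau_0/\tau_1$, and then the Bernstein--Zelevinsky filtration of $\pi_1|_M$, which produces the remaining layers $\tau_k/\tau_{k+1}$ for $k\ge 1$.

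First I would restrict $\pi_1\times\pi_2$ to $M_{n+1}$ and apply Lemma~\ref{lem short exact seq}, obtaining $0\to\pi_1|_M\bar{\times}\pi_2\to(\pi_1\times\pi_2)|_M\to\pi_1\bar{\times}(\pi_2|_M)\to 0$, and then restrict to $G_n$ (an exact operation). Setting $\tau_1:=(\pi_1|_M\bar{\times}\pi_2)|_{G_n}$, formula~(\ref{eqn restriction to g first factor}) identifies the quotient $\tau_0/\tau_1=(\pi_1\bar{\times}(\pi_2|_M))|_{G_n}$ with $(\nu^{1/2}\pi_1)\times(\pi_2|_{G_{n_2-1}})$, which is the asserted top layer.

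Next I would take the standard Bernstein--Zelevinsky filtration $0=F_{n_1}\subset\cdots\subset F_0=\pi_1|_{M_{n_1}}$, obtained by iterating the exact sequence $0\to\Phi^+\Phi^-\to\mathrm{Id}\to\Psi^+\Psi^-\to 0$; its subquotients are $F_{k-1}/F_k\cong(\Phi^+)^{k-1}\Psi^+(\pi_1^{(k)})$, which Lemma~\ref{lem standard transfer to product} rewrites as $\pi_1^{(k)}\bar{\times}\Pi_k$ (layers with $k$ above the level of $\pi_1$ are zero). Since $\tau\mapsto\tau\bar{\times}\pi_2$ and restriction to $G_n$ are exact, applying them to this filtration gives a filtration of $\tau_1$; I would set $\tau_{k+1}:=(F_k\bar{\times}\pi_2)|_{G_n}$ for $0\le k\le n_1$, and $\tau_k:=0$ for $k>n_1$. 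Using Lemma~\ref{lem associative}(1) to rewrite $(\pi_1^{(k)}\bar{\times}\Pi_k)\bar{\times}\pi_2\cong\pi_1^{(k)}\bar{\times}(\Pi_k\bar{\times}\pi_2)$ and then~(\ref{eqn restriction to g first factor}), the $k$-th subquotient becomes
\[ \tau_k/\tau_{k+1}\;\cong\;(\nu^{1/2}\pi_1^{(k)})\times\bigl((\Pi_k\bar{\times}\pi_2)|_{G_{k+n_2-1}}\bigr)\;=\;\pi_1^{[k]}\times\bigl((\Pi_k\bar{\times}\pi_2)|_{G_{k+n_2-1}}\bigr). \]

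Finally I would identify the second factor using $\Pi_k\bar{\times}\pi_2\cong(\Phi^+)^{k-1}(\Lambda(\pi_2))$ (Proposition~\ref{prop nontrivial bz}): for $k=1$ this is $\Lambda(\pi_2)$, whose restriction to $G_{n_2}$ is $\pi_2\otimes\zeta^F$ by the identification recorded at the start of Section~\ref{ss corank 0}, giving $\tau_1/\tau_2\cong\pi_1^{[1]}\times(\pi_2\otimes\zeta^F)$; for $k\ge 2$, Proposition~\ref{prop bessel bz realize} applied with $r=n_2$ and $m_1+m_2=k-2$ (so that its ambient group is $G_{k+n_2-1}$) identifies $(\Phi^+)^{k-1}(\Lambda(\pi_2))|_{G_{k+n_2-1}}$ with ${}^u\mathrm{ind}_{H^R_{k-2,n_2}}^{G_{n_2+k-1}}\pi_2\otimes\zeta^R\otimes\nu^{k-2}$, the claimed formula. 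The only genuinely delicate point is the bookkeeping: making sure the $\nu^{1/2}$ produced by~(\ref{eqn restriction to g first factor}), the $\nu^{-1/2}$ built into $\Lambda$, and the $\nu^{k-2}$ appearing in the Rankin--Selberg model combine exactly as in the statement, and checking that the filtration indices line up with the degenerate ends.
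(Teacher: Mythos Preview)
Your proposal is correct and follows essentially the same approach as the paper: the paper's proof is a one-line sketch citing Lemma~\ref{lem short exact seq}, the Bernstein--Zelevinsky filtration, and Proposition~\ref{prop nontrivial bz}, and you have spelled out exactly this argument in full detail, also making explicit the final identifications via Proposition~\ref{prop bessel bz realize} and the discussion in Section~\ref{ss corank 0}. The bookkeeping you flag (the $\nu$-twists and the indexing at the degenerate ends) is indeed the only thing to watch, and you have it right.
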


\begin{proof}
This is a consequence of Lemma \ref{lem short exact seq}, Bernstein-Zelevinsky filtrations (for some details, see Lemma \ref{lem speh rep filtration}), and Proposition  \ref{prop nontrivial bz}.

\end{proof}

\subsection{Consequnce on Ext-branching law}

We also deduce the Ext-analog result in \cite{CS18} for Bessel and Fourier-Jacobi models.

\begin{corollary}
Let $(H,\zeta)$ be any pair described in Sections \ref{ss corank 0}, \ref{ss bessel} and \ref{ss fourier}. Let $\pi_1$ and $\pi_2$ be irreducible generic representations of $G_{n}$ and $G_r$ respectively. Then, for all $i \geq 1$,
\[  \mathrm{Ext}^i_{H}(\pi_1 \otimes \zeta, \pi_2) =0 .
\]
\end{corollary}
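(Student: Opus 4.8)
The plan is to reduce the Ext-vanishing statement for Bessel and Fourier--Jacobi models to the known Ext-vanishing for the basic restriction problem from $G_{n+1}$ to $G_n$, exactly as the multiplicity-one corollary above was reduced. First I would recall that by \cite{CS18} (proving a conjecture of Prasad), for irreducible generic representations $\pi_1'$ of $G_{k+1}$ and $\pi_2'$ of $G_k$ one has $\mathrm{Ext}^i_{G_k}(\pi_1', \pi_2') = 0$ for all $i \geq 1$. Since the statement to be proved is about $\mathrm{Ext}^i_H$ with $i \geq 1$ and $H$ a Bessel, Rankin--Selberg, Fourier--Jacobi or equal-rank Fourier--Jacobi subgroup, the transfer propositions proved earlier do the work of moving between these models.

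The key steps, in order: (1) For a Bessel model $H = H^B_{m_1,m_2,r}$, apply Proposition \ref{prop transfer} with $\sigma$ chosen to be a unitarizable irreducible cuspidal representation of $G_{m_1+m_2+2}$ avoiding $\mathrm{cupp}_{\mathbb Z}(\nu^{-1/2}\pi_1^{\vee}) \cup \mathrm{cupp}_{\mathbb Z}(\pi_2)$ (such $\sigma$ exists since the cuspidal support conditions exclude only finitely many lines), obtaining
\[ \mathrm{Ext}^i_{H^B_{m_1,m_2,r}}(\pi_1 \otimes \zeta^B, \pi_2^{\vee}) \cong \mathrm{Ext}^i_{G_n}(\sigma \times \pi_2, \pi_1^{\vee}). \]
Here $\sigma \times \pi_2$ is an irreducible generic representation of $G_{n+1}$ (a product of generic representations is generic and, since $\sigma$ is cuspidal and disjoint in cuspidal support from $\pi_2$, the product is irreducible), and $\pi_1^{\vee}$ is an irreducible generic representation of $G_n$. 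So for $i \geq 1$ the right-hand side vanishes by \cite{CS18}, proving the claim for Bessel models; the Rankin--Selberg and equal-rank cases are the special cases $m_1 = 0$ (resp.\ $m_1 = m_2 = 0$, via Proposition \ref{prop FJ0}), and the argument is identical. (2) For a Fourier--Jacobi model $H = H^F_{m_1,m_2,r}$ with $m_1,m_2,r \geq 1$: use Proposition \ref{prop transfer dual} to rewrite $\mathrm{Ext}^i_H(\pi_1 \otimes \zeta^F, \pi_2^{\vee})$ as $\mathrm{Ext}^i_{\widetilde H}(\theta(\pi_1) \otimes \widehat{\zeta}^F, \theta(\pi_2)^{\vee})$, then apply Proposition \ref{prop transfer fourier bessel} to this to get $\mathrm{Ext}^i_{G_n}(\sigma \times \theta(\pi_2), \theta(\pi_1)^{\vee})$ for a suitable cuspidal $\sigma$; since $\theta(\pi_1) \cong \pi_1^{\vee}$ and $\theta(\pi_2) \cong \pi_2^{\vee}$ are again irreducible generic, this vanishes for $i \geq 1$ by \cite{CS18}. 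When $m_1 = m_2 = 1$ this includes the Heisenberg-type case $H_r'$, and the general-rank case follows the same way. (3) Assemble the cases, noting $r = 0$ (the Whittaker case) is separate but trivial since generic representations have a Whittaker model and there is nothing to prove for $i \geq 1$ by the projectivity of the Gelfand--Graev representation; for $r \geq 1$ everything reduces as above.

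The main obstacle — though a mild one — is verifying the hypotheses of the transfer propositions: one must check that the auxiliary cuspidal $\sigma$ can be chosen with the required cuspidal-support genericity and simultaneously so that $\sigma \times \pi_2$ (resp.\ $\sigma \times \theta(\pi_2)$) is irreducible and generic, which is where the disjointness of cuspidal supports is used. One also needs to confirm that genericity of $\pi_1, \pi_2$ is preserved under the operations $\pi \mapsto \pi^{\vee}$, $\pi \mapsto \theta(\pi)$, and $\pi \mapsto \sigma \times \pi$ — all standard. Once these are in place the proof is a short chain of the already-established isomorphisms followed by citing \cite{CS18}; no genuinely new computation is required.

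\begin{proof}
We may assume $r \geq 1$; when $r = 0$ the model is Whittaker and the statement is classical (the Gelfand--Graev representation is projective, so all higher Ext vanish against an irreducible quotient). Recall from \cite{CS18} that if $\pi_1'$ and $\pi_2'$ are irreducible generic representations of $G_{k+1}$ and $G_k$ respectively, then $\mathrm{Ext}^i_{G_k}(\pi_1', \pi_2') = 0$ for all $i \geq 1$.

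Consider first a Bessel, Rankin--Selberg or equal-rank model $H = H^B_{m_1,m_2,r}$ (the equal-rank Fourier--Jacobi case $m_1 = m_2 = 0$ being handled through Proposition \ref{prop FJ0} in the same fashion). Since the cuspidal-support conditions exclude only finitely many $\mathbb{Z}$-lines of cuspidal representations, we may choose an irreducible unitarizable cuspidal representation $\sigma$ of $G_{m_1+m_2+2}$ with $\sigma \notin \mathrm{cupp}_{\mathbb{Z}}(\nu^{-1/2}\pi_1^{\vee}) \cup \mathrm{cupp}_{\mathbb{Z}}(\pi_2)$. By Proposition \ref{prop transfer}, for all $i$,
\[ \mathrm{Ext}^i_{H^B_{m_1,m_2,r}}(\pi_1 \otimes \zeta^B, \pi_2^{\vee}) \cong \mathrm{Ext}^i_{G_n}(\sigma \times \pi_2, \pi_1^{\vee}). \]
Now $\sigma$ and $\pi_2$ are both generic and their cuspidal supports are disjoint, so $\sigma \times \pi_2$ is an irreducible generic representation of $G_{n+1}$; and $\pi_1^{\vee}$ is an irreducible generic representation of $G_n$. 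Hence for $i \geq 1$ the right-hand side vanishes by \cite{CS18}, which gives the claim.

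Next consider a Fourier--Jacobi model $H = H^F_{m_1,m_2,r}$ with $m_1, m_2, r \geq 1$. By Proposition \ref{prop transfer dual}, for all $i$,
\[ \mathrm{Ext}^i_H(\pi_1 \otimes \zeta^F, \pi_2^{\vee}) \cong \mathrm{Ext}^i_{\widetilde{H}}(\theta(\pi_1) \otimes \widehat{\zeta}^F, \theta(\pi_2)^{\vee}), \]
where $\widetilde{H} = H^F_{m_2,m_1,r}$. Choosing, as above, an irreducible cuspidal $\sigma$ of $G_{n+1-r}$ with $\sigma \notin \mathrm{cupp}_{\mathbb{Z}}(\theta(\pi_2)) \cup \mathrm{cupp}_{\mathbb{Z}}(\nu^{-1/2}\theta(\pi_1)^{\vee})$, Proposition \ref{prop transfer fourier bessel} gives, for all $i$,
\[ \mathrm{Ext}^i_{\widetilde{H}}(\theta(\pi_1) \otimes \widehat{\zeta}^F, \theta(\pi_2)^{\vee}) \cong \mathrm{Ext}^i_{G_n}(\sigma \times \theta(\pi_2), \theta(\pi_1)^{\vee}). \]
By the Gelfand--Kazhdan isomorphism \cite{BZ76}, $\theta(\pi_1) \cong \pi_1^{\vee}$ and $\theta(\pi_2) \cong \pi_2^{\vee}$ are again irreducible generic, so $\sigma \times \theta(\pi_2)$ is an irreducible generic representation of $G_{n+1}$ and $\theta(\pi_1)^{\vee}$ is an irreducible generic representation of $G_n$. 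Therefore the right-hand side vanishes for $i \geq 1$ by \cite{CS18}, completing the proof.
\end{proof}
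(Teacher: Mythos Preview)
Your proposal is correct and follows essentially the same route as the paper: reduce each model via Propositions \ref{prop FJ0}, \ref{prop transfer}, \ref{prop transfer fourier bessel}, \ref{prop transfer dual} to the basic case $\mathrm{Ext}^i_{G_n}(\sigma\times\pi_2,\pi_1^{\vee})$ with $\sigma\times\pi_2$ irreducible generic, then cite \cite{CS18}. Your write-up simply spells out what the paper compresses into one line; the only cosmetic slip is that in your displayed isomorphisms you track $\pi_2^{\vee}$ rather than $\pi_2$, which is harmless since genericity is preserved under contragredient.
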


\begin{proof}
The case of the Bessel model for $r=n-1$ is proved in \cite{CS18}. The general case now follows from the case in \cite{CS18} and Propositions \ref{prop FJ0}, \ref{prop transfer}, \ref{prop transfer fourier bessel} and \ref{prop transfer dual}. (We remark that for a suitable choice of $\sigma \in \mathrm{Irr}^{u,c}$, $\sigma \times \pi_1$ is still generic.)
\end{proof}

\section{Fourier-Jacobi models and Bernstein-Zelevinsky theory} \label{sec fj bz theory}

In Section \ref{sec general}, we apply Bernstein-Zelevinsky theory to obtain isomorphisms of models. In this section, we further investigate the isomorphisms, and a goal is to obtain Corollary \ref{cor all same models}.
\subsection{Fourier-Jacobi model and its dual}

Recall that $\zeta_F$ and $\widehat{\zeta}_F$ are defined in Sections \ref{ss corank 0} and \ref{ss fourier}. We first consider the equal rank case.

\begin{proposition} \label{cor iso fj dual}
In the equal rank case, $ \zeta^F \cong \widehat{\zeta}^F$ as $G_n$-representations.
\end{proposition}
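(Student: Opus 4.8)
The plan is to realize the Fourier transform as the intertwining map. In the equal rank case (Section~\ref{ss corank 0}) we have $\zeta^F=\omega_{\nu^{-1/2},0}$ and $\widehat{\zeta}^F=\widehat{\omega}_{\nu^{1/2},0}$, both with underlying space $S(F^n)$ and with $G_n$ acting respectively by $(g.f)(v)=\nu^{-1/2}(g)\,f(g^{-1}v)$ and $(g.f)(v)=\nu^{1/2}(g)\,f(g^{t}v)$. Fix a Haar measure on $F^n$ and let $\mathcal F\colon S(F^n)\to S(F^n)$ be the Fourier transform attached to the additive character $\bar{\psi}$, say $\mathcal F(f)(\xi)=\int_{F^n}f(v)\,\bar{\psi}(v^{t}\xi)\,\dd v$; this is a linear automorphism of $S(F^n)$.

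The first step is the elementary change of variables $v=gw$, which yields, for every $g\in G_n$ and $f\in S(F^n)$, the identity $\mathcal F\bigl(v\mapsto f(g^{-1}v)\bigr)(\xi)=\lvert\det g\rvert\,\mathcal F(f)(g^{t}\xi)$. Multiplying through by $\nu^{-1/2}(g)=\lvert\det g\rvert^{-1/2}$ and using $\lvert\det g\rvert^{-1/2}\,\lvert\det g\rvert=\lvert\det g\rvert^{1/2}=\nu^{1/2}(g)$, this says precisely that $\mathcal F$ carries the $\omega_{\nu^{-1/2},0}$-action on $f$ to the $\widehat{\omega}_{\nu^{1/2},0}$-action on $\mathcal F(f)$; being bijective, it is the sought $G_n$-isomorphism $\zeta^F\cong\widehat{\zeta}^F$. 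For the incarnation of Section~\ref{ss fourier} with $m_1=m_2$, where $\zeta^F$ and $\widehat{\zeta}^F$ are realized on $S(F^r)$ and additionally carry an action of $K_r$, one runs the same argument with $\mathcal F$ on $S(F^r)$: it interchanges the two Lagrangian models $\widetilde{\omega}_{\mu,\bar{\psi}}$ on $S(W)$ and $\widehat{\omega}_{\mu,\bar{\psi}}$ on $S(W^{\vee})$ of the oscillator representation of $K_r$ (the standard, suitably normalized, Fourier-transform intertwiner for the pairing $W\times W^{\vee}\to F$, which is $G_r$-invariant), it is $G_r$-equivariant by the displayed computation, and it commutes with the characters $\varphi_{m_1}=\varphi_{m_2}$ on the two unipotent blocks; hence it extends to an isomorphism over the whole group.

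I do not expect a genuine obstacle here. The one point requiring care is the normalization bookkeeping: that the Jacobian $\lvert\det g\rvert$ produced by the substitution is exactly what converts the twist $\nu^{-1/2}$ into $\nu^{1/2}$, and, in the Heisenberg case, the standard fact that the Schr\"odinger model of the oscillator representation and its dual are intertwined by a Fourier transform in a $G_r$-equivariant way. If one prefers not to single out $\bar{\psi}$, the independence of these models on the auxiliary additive character (noted just before the statement) lets one run the same argument with the Fourier transform attached to any nontrivial $\lambda$.
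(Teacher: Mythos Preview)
Your proof is correct and follows the same approach as the paper: both use the Fourier transform on $S(F^n)$ as the intertwiner, with the Jacobian $|\det g|$ from the substitution converting the $\nu^{-1/2}$ twist to $\nu^{1/2}$. You supply more of the computation than the paper's terse ``straightforward to check'', and your additional discussion of the Heisenberg model in fact anticipates the paper's next proposition (Proposition~\ref{prop fourier transform}), which treats that case separately.
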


\begin{proof}

Let $a \in F^{\times}$. For $f \in S(F^r)$, define the Fourier transform:
\begin{align} \label{eqn fourier transform}
 \widehat{f}(y) = \int_{F^r} \bar{\psi}(ay^tx) f(x) dx ,
\end{align}
which is still smooth and compactly supported, and so in $S(F^r)$, and we regard it as a map from $\zeta_F$ to $\widehat{\zeta}_F$. It is straightforward to check well-definedness of the map. One can define the inverse similarly. % follows from that, for $f \in \zeta_F$, 
%\begin{align*}
%\widehat{g.f}(y) =& \int_{F^r} \bar{\psi}(ay^tx)(g.f)(x) dx = \nu^{-1/2}(g) \int_{F^r} \bar{\psi}(ay^tx)f(g^{-1}x) dx \\
%   						 =& \nu^{1/2}(g) \int_{F^r} \bar{\psi}(ay^t(gx))f(x)dx	 = \nu^{1/2}(g)\int_{F^r}\bar{\psi}(a(g^ty)^tx)f(x) dx \\
%							=& (g.\widehat{f})(y) .
%\end{align*}

\end{proof}

The above proposition can also be proved by considering the Hecke algebra realization at each Bernstein component, and deduced from left and right filtrations in \cite{CS18, Ch19}.

\begin{proposition} \label{prop fourier transform}
We use the Fourier-Jacobi models in Section \ref{ss fourier} and the Fourier transform defined in (\ref{eqn fourier transform}). The map $\Omega: S(F^r)\rightarrow S(F^r)$ by  $f \mapsto (y \mapsto \widehat{f}(-a^{-1}y))$ defines a $H_r'$-map from $\zeta_{1,1,r, \bar{\psi}}^F$ to $\widehat{\zeta}_{1,1,r,\bar{\psi}}^F$.
\end{proposition}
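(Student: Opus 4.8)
The plan is to verify directly that the Fourier transform (composed with the sign-and-scale substitution on the variable) intertwines the $H_r'$-action on $\zeta^F_{1,1,r,\bar\psi}$ with the $H_r'$-action on $\widehat\zeta^F_{1,1,r,\bar\psi}$, by checking the intertwining property on each of the two natural sets of generators of $H_r' \cong G_r \ltimes K_r$: the Heisenberg group $K_r$, and the Levi factor $G_r$. Recall from Section \ref{ss fourier} that $\widetilde\omega_{\mu,\lambda}$ (underlying space $S(W)=S(F^r)$) has $K_r$ acting by $((a,v,w).f)(x)=\lambda(a-w^tx-w^tv)f(x+v)$ and $g\in G_r$ acting by $(g.f)(x)=\mu(g)f(g^{-1}.x)$; while $\widehat\omega_{\mu,\lambda}$ (underlying space $S(W^\vee)=S(F^r)$) has $K_r$ acting by $((a,v,w).f)(y)=\lambda(a+y^tv)f(y+w)$ and $g\in G_r$ by $(g.f)(y)=\mu(g)f(g^t.y)$. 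In the equal rank case $m_1=m_2=1$, both $\zeta^F$ and $\widehat\zeta^F$ carry only the central character $\nu^{(m_1-m_2)/2}=1$ twist, so it suffices to compare $\widetilde\omega_{\nu^{-1/2},\bar\psi}$ with $\widehat\omega_{\nu^{1/2},\bar\psi}$; note the normalizing characters differ by $\nu^{-1/2}$ versus $\nu^{1/2}$, and this discrepancy is exactly what the Jacobian of the Fourier transform will absorb.

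First I would record the effect of $\Omega$ precisely: $(\Omega f)(y) = \widehat f(-a^{-1}y) = \int_{F^r}\bar\psi(-y^t x)f(x)\,dx$ after the substitution, so that $\Omega$ is essentially the Fourier transform with the standard additive character $\bar\psi(-y^tx)$; this normalization is chosen so that translation-by-$v$ on the source turns into modulation-by-$v$ on the target with the correct sign, matching the formulas above. Then I would check the $K_r$-equivariance: for $(a,v,w)\in K_r$, apply $\Omega$ to $(a,v,w).f$ (which is $x\mapsto \bar\psi(a-w^tx-w^tv)f(x+v)$), change variables $x\mapsto x-v$ inside the integral, and use the standard Fourier identities (translation $\leftrightarrow$ modulation) to recognize the result as $\bar\psi(a+y^tv)(\Omega f)(y+w)$ up to bookkeeping of the $w$-shift; this is precisely the $\widehat\omega$-formula for the action of $(a,v,w)$. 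Second I would check the $G_r$-equivariance: for $g\in G_r$, $(g.f)(x)=\nu(g)^{-1/2}f(g^{-1}x)$, and the change of variables $x\mapsto gx$ in the Fourier integral produces a Jacobian $|\det g|_F = \nu(g)$, so the $\nu(g)^{-1/2}$ from the source combines with $\nu(g)$ from the Jacobian to give $\nu(g)^{1/2}$, which is exactly the twist $\mu=\nu^{1/2}$ appearing in $\widehat\omega_{\nu^{1/2},\bar\psi}$, and the transpose $g\mapsto g^t$ appears because Fourier transform turns the contragredient action on $W$ into the standard action on $W^\vee$. Finally I would note that $\Omega$ is a linear isomorphism of $S(F^r)$ with explicit inverse given by the inverse Fourier transform (with the reciprocal scaling), so it is an isomorphism of $H_r'$-representations, and by restriction/extension to $U_H$ (on which both $\zeta^F$ and $\widehat\zeta^F$ restrict to the same character by construction, and which $H_r'$ normalizes with trivial conjugation action — as noted in the proof of Proposition \ref{prop transfer 2}) the same map gives an $H^F_{1,1,r}$-isomorphism, though for the stated proposition only $H_r'$-equivariance is claimed.

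The step I expect to be the main (though still routine) obstacle is getting all the signs, the scaling constant $a$, and the normalization of the Fourier transform to line up simultaneously in the $K_r$-computation: the character $\bar\psi(a-w^tx-w^tv)$ mixes the central $F$-coordinate, a modulation in $x$, and a constant, and after the translation $x\mapsto x+v$ and the Fourier integral one must land on exactly $\bar\psi(a+y^tv)$ together with the shift $y\mapsto y+w$ — the role of the $-a^{-1}$ rescaling in the definition of $\Omega$ is to fix the sign in $y^tv$ and to make the $w$-modulation become a genuine translation $f(y+w)$. Everything else (exactness, $G_r$-Jacobian matching the $\nu^{\pm1/2}$ normalizing factors, invertibility) is bookkeeping, so I would present the $K_r$-check in detail and merely indicate the $G_r$-check and the inverse.
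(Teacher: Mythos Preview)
Your proposal is correct and is exactly the direct verification the paper has in mind: the paper's proof reads in full ``It follows from a straightforward computation as in previous proposition. We omit the details.'' Your outline supplies those details --- checking $K_r$- and $G_r$-equivariance separately, with the translation/modulation identity handling the Heisenberg part and the Jacobian $\nu(g)$ converting the $\nu^{-1/2}$ twist into $\nu^{1/2}$ --- and the computations go through as you describe (note incidentally that for $m_1=m_2=1$ the unipotent part $U_H$ is trivial, so $H^F_{1,1,r}=H_r'$ and your closing remark about extending across $U_H$ is vacuous here).
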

\begin{proof}
It follows from a straigthforward computation as in previous proposition. We omit the details.
%Let $h =\begin{pmatrix} 1 &  & c\\  & I_r & v \\ & &1 \end{pmatrix}$ and $h'=\begin{pmatrix} 1 & w & \\ & I_r & \\ & & 1 \end{pmatrix}$. With computation in Proposition \ref{cor iso fj dual}, it remains to check:
%\begin{align*}
%\Omega(h.f)(y) &= \int_{F^r} \bar{\psi}(-y^tx)(h.f)(x) dx = \int_{F^r} \bar{\psi}(c)  \psi(-y^tx)f(x+v) dx \\
%						& =\bar{\psi}(c)\bar{\psi}(y^tv)\int_{F^r}\bar{\psi}(-y^tx)f(x) dx =\bar{\psi}(c)\bar{\psi}(y^tv)\widehat{f}(-a^{-1}y) \\
%						&= \bar{\psi}(c)\bar{\psi}(y^tv)\Omega(f)(y) =(h.\Omega(f))(y)
%\end{align*}
%\begin{align*}
%\Omega(h'.f)(y) &= \int_{F^r} \bar{\psi}(-y^tx)(h'.f)(x) dx = \int_{F^r}  \bar{\psi}(-y^tx) \bar{\psi}(-w^tx) f(x) dx \\
%						& =\int_{F^r}\bar{\psi}(-(y+w)^tx)f(x) dx =\widehat{f}(-a^{-1}(y+w)) \\
%						&=\Omega(f)(y+w) =(h'.\Omega(f))(y)
%\end{align*}
%One can define inverse similarly.

%We can similarly check for elements of the form $ \begin{pmatrix} 1 & & \\ & g & \\ & & 1\end{pmatrix}$
%and we remark that the normalization on $\zeta_F$ and $\widehat{\zeta}_F$ is needed.
\end{proof}

We summarize the identifications as follow:

\begin{corollary} \label{cor all same models}
Let $\pi \in \mathrm{Alg}(G_r)$. For $m_1, m_2, r \geq 1$,
\begin{align*}
  {}^u\mathrm{ind}_{H_{m_1,m_2,r}^F}^{G_n} \pi \otimes \zeta^F \otimes \nu^{m_2-m_1}  \cong & {}^u\mathrm{ind}_{H_{m_1,m_2,r}^F}^{G_n} \pi \otimes \widehat{\zeta}^F \otimes \nu^{m_2-m_1} \\
	\cong & {}^u\mathrm{ind}_{H^B_{m_1-1,m_2,r}}^{G_n} \pi \otimes \zeta^B \otimes \nu^{m_2-m_1+1}  \\
	\cong & {}^u\mathrm{ind}_{H_{m_1-1+m_2, r}^{R}}^{G_n} \pi \otimes \zeta^{R} \otimes \nu^{m_1+m_2-1}
\end{align*}

\end{corollary}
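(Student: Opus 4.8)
The plan is to assemble Corollary~\ref{cor all same models} directly from the isomorphisms already established in Sections~\ref{ss bessel} and~\ref{ss fourier}, treating it as a bookkeeping consolidation rather than a fresh argument. The three displayed isomorphisms are, in order: (i) the identification $\zeta^F \cong \widehat{\zeta}^F$ after inducing; (ii) the Bessel--Fourier-Jacobi comparison; and (iii) the Bessel--Rankin-Selberg comparison. For (ii) I would simply invoke Proposition~\ref{prop transfer 2}, which gives exactly
\[
  {}^u\mathrm{ind}_{H^B_{m_1-1,m_2,r}}^{G_n} \pi \otimes \zeta^B \otimes \nu^{m_2-m_1+1}  \cong {}^u\mathrm{ind}_{H^F_{m_1,m_2,r}}^{G_n} \pi \otimes \widehat{\zeta}^F \otimes \nu^{m_2-m_1},
\]
so the second line of the corollary (read against the third term) is already done. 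For (iii), the chain of isomorphisms in Proposition~\ref{prop bessel bz realize} identifies ${}^u\mathrm{ind}_{H^B_{a,b,r}}^{G_n}\pi\otimes\zeta^B\otimes\nu^{b-a}$ with $(\Phi^+)^{a+b+1}(\Lambda(\pi))|_{G_n}$ and with ${}^u\mathrm{ind}_{H^R_{a+b,r}}^{G_n}\pi\otimes\zeta^R\otimes\nu^{a+b}$; applying this with $a=m_1-1$, $b=m_2$ gives the Rankin-Selberg line. The only genuinely new input is (i): passing from $\zeta^F$ to $\widehat{\zeta}^F$ inside the induced representation.

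For step (i), I would use Proposition~\ref{prop fourier transform}, which supplies an explicit $H_r'$-equivariant map $\Omega\colon \zeta^F_{1,1,r,\bar\psi}\to\widehat{\zeta}^F_{1,1,r,\bar\psi}$ via a normalized Fourier transform $f\mapsto(y\mapsto\widehat f(-a^{-1}y))$. The point is that $\zeta^F$ and $\widehat{\zeta}^F$ in the general $m_1,m_2\geq 1$ setting are defined (Section~\ref{ss fourier}, equation~(\ref{eqn action original})) by extending the $H_r'$-actions $\widetilde\omega_{\mu,\lambda}$, $\widehat\omega_{\mu,\lambda}$ to $H=H_r'\ltimes U_H$ through the same character $\varphi_{m_1}(u_1)\varphi_{m_2}(u_2)$ on the unipotent part $U_H$; since that extension is identical on both sides and $\Omega$ intertwines the $H_r'$-actions with the matching twists $\nu^{\pm 1/2}$, the map $\Omega$ (suitably twisted by $\nu^{(m_1-m_2)/2}$) is in fact an $H^F_{m_1,m_2,r}$-isomorphism $\zeta^F\cong\widehat{\zeta}^F$ as $H$-representations, hence tensoring with $\pi$ and applying the exact functor ${}^u\mathrm{ind}_{H^F}^{G_n}$ yields the first displayed isomorphism. (One should double-check that the Fourier transform is compatible with the $U_H$-action, but this is immediate because $U_H$ acts on $S(F^r)$ only through the scalar character $\varphi_{m_1}\varphi_{m_2}$, which the Fourier transform respects trivially.)

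With (i), (ii), (iii) in hand the corollary follows by concatenating the three isomorphisms and matching the $\nu$-twists, which is a routine power-of-$\nu$ accounting: the Bessel model $H^B_{m_1-1,m_2,r}$ carries $\nu^{m_2-(m_1-1)}=\nu^{m_2-m_1+1}$, and converting to the Rankin-Selberg index $H^R_{(m_1-1)+m_2,r}$ replaces this by $\nu^{(m_1-1)+m_2}=\nu^{m_1+m_2-1}$ per Proposition~\ref{prop bessel bz realize}. I expect the main (only) obstacle to be the verification in step (i) that the Fourier-transform map of Proposition~\ref{prop fourier transform}, which is stated only for the Heisenberg group $H_r'$ (the $m_1=m_2=1$ case), extends to an intertwiner for the larger group $H^F_{m_1,m_2,r}$; but since the enlargement $H^F = H_r'\ltimes U_H$ acts on the Weil-representation space purely through $H_r'$ together with a fixed character of $U_H$ shared by $\zeta^F$ and $\widehat\zeta^F$, this extension is essentially forced, and the remaining computations are the same straightforward ones already omitted in the proofs of Propositions~\ref{cor iso fj dual} and~\ref{prop fourier transform}. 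I would therefore state this briefly and refer back to those proofs for the bookkeeping.
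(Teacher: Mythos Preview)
Your proposal is correct and follows essentially the same approach as the paper: use Proposition~\ref{prop fourier transform} (extended from $H_r'$ to $H^F_{m_1,m_2,r}$ via the shared $U_H$-character) for the first isomorphism, then invoke the earlier Bessel/Fourier-Jacobi and Bessel/Rankin-Selberg comparisons for the rest. In fact you are slightly more precise than the paper's own proof, which cites only Proposition~\ref{prop bessel bz realize} for ``the remaining isomorphisms'' and leaves the role of Proposition~\ref{prop transfer 2} in the second line implicit; your explicit separation into steps (ii) and (iii) is the cleaner bookkeeping.
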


\begin{proof}
 Proposition \ref{prop fourier transform} implies that, as $H_{m_1,m_2,r}^F$-representations, $\pi \otimes \zeta^F \cong \pi \otimes \widehat{\zeta}^F$ and hence we obtain the isomorphism. Now the remaining isomorphisms follow from Proposition \ref{prop bessel bz realize}.
\end{proof}

\begin{remark} \label{rmk fourier transfor bz theory}
As we have seen, there is a more direct connection via (\ref{prop transfer 2}) and the first isomorphism of Proposition \ref{prop bessel bz realize}:
\begin{align} \label{eqn fj iso1}  {}^u\mathrm{ind}_{H_{m_1,m_2,r}^F}^{G_n} \pi \otimes \widehat{\zeta}^F \otimes \nu^{m_2-m_1} \cong (\Phi^+)^{m_2+1}(\Pi_{m_1} \bar{\times} \pi ))|_{G_n} ,
\end{align}
and similarly, we can obtain:
\begin{align} \label{eqn fj iso2} {}^u\mathrm{ind}_{H_{m_1,m_2,r}^F}^{G_n} \pi \otimes \zeta^F \otimes \nu^{m_2-m_1} \cong (\Phi^+)^{m_2}(\Pi_{m_1+1} \bar{\times} \pi ))|_{G_n} .
\end{align}
The LHS of (\ref{eqn fj iso1}) and (\ref{eqn fj iso2}) are connected via Fourier transform in Proposition \ref{prop fourier transform}, while the RHS of (\ref{eqn fj iso1}) and (\ref{eqn fj iso2}) can be directly connected via Bernstein-Zelevinsky theory (Proposition \ref{prop nontrivial bz}).
\end{remark}

\begin{corollary}
Let $m_1,m_2,r\geq 1$. Let $\pi_1 \in \mathrm{Alg}(G_{m_1+m_2+r})$ and let $\pi_2 \in \mathrm{Alg}(G_r)$. There are natural isomorphisms:
\begin{align*}
 \mathrm{Ext}^i_{H^F_{m_1,m_2,r}}(\pi_1 \otimes \zeta^F, \pi_2^{\vee}) & \cong \mathrm{Ext}^i_{H^F_{m_1,m_2,r}}(\pi_1 \otimes \widehat{\zeta}^F, \pi_2^{\vee}) \\
&\cong \mathrm{Ext}^i_{H^B_{m_1-1,m_2,r}}(\pi_1\otimes \zeta^B, \pi_2^{\vee}) \\
&\cong \mathrm{Ext}^i_{H^R_{m_1+m_2-1,r}}(\pi_1 \otimes \zeta^R, \pi_2^{\vee}) 
\end{align*}
\end{corollary}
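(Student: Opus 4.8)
The final statement to prove is the chain of natural isomorphisms
\[
 \mathrm{Ext}^i_{H^F_{m_1,m_2,r}}(\pi_1 \otimes \zeta^F, \pi_2^{\vee})
\cong \mathrm{Ext}^i_{H^F_{m_1,m_2,r}}(\pi_1 \otimes \widehat{\zeta}^F, \pi_2^{\vee})
\cong \mathrm{Ext}^i_{H^B_{m_1-1,m_2,r}}(\pi_1\otimes \zeta^B, \pi_2^{\vee})
\cong \mathrm{Ext}^i_{H^R_{m_1+m_2-1,r}}(\pi_1 \otimes \zeta^R, \pi_2^{\vee}).
\]

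\textbf{Approach.} The plan is to reduce everything to isomorphisms of the underlying representations together with a single Frobenius-reciprocity step, exactly as in the proofs of Propositions \ref{prop transfer} and \ref{prop transfer 2}. The key point is that all three right-hand sides are Ext-groups of $\pi_1$ against an unnormalized induction of $\pi_2^\vee$ twisted by the relevant character/Weil representation, and these induced modules have already been shown to be isomorphic as $G_n$-representations.

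\textbf{Key steps.} First I would invoke Corollary \ref{cor all same models}, which gives, as $G_n$-representations,
\[
{}^u\mathrm{ind}_{H_{m_1,m_2,r}^F}^{G_n} \pi_2 \otimes \zeta^F \otimes \nu^{m_2-m_1}
\cong {}^u\mathrm{ind}_{H_{m_1,m_2,r}^F}^{G_n} \pi_2 \otimes \widehat{\zeta}^F \otimes \nu^{m_2-m_1}
\cong {}^u\mathrm{ind}_{H^B_{m_1-1,m_2,r}}^{G_n} \pi_2 \otimes \zeta^B \otimes \nu^{m_2-m_1+1}
\cong {}^u\mathrm{ind}_{H_{m_1-1+m_2, r}^{R}}^{G_n} \pi_2 \otimes \zeta^{R} \otimes \nu^{m_1+m_2-1};
\]
applying this with $\pi_2$ replaced by $\pi_2^\vee$ and taking contragredients (or equivalently running the identification ${}^u\mathrm{Ind}$-side), all four smooth duals ${}^u\mathrm{Ind}_{H}^{G_n}(\pi_2 \otimes \zeta)^\vee$ coincide as $G_n$-representations. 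Second, for each of the pairs $(H,\zeta) \in \{(H^F,\zeta^F),(H^F,\widehat{\zeta}^F),(H^B_{m_1-1,m_2,r},\zeta^B),(H^R_{m_1+m_2-1,r},\zeta^R)\}$ I would apply the computation in the proof of Proposition \ref{prop transfer}: using that ${}^u\mathrm{ind}$ has ${}^u\mathrm{Ind}$ of the contragredient as an ``adjoint via duals'', and then Frobenius reciprocity for the closed embedding $H \hookrightarrow G_n$, one gets
\[
\mathrm{Ext}^i_{G_n}\bigl(\pi_1, {}^u\mathrm{Ind}_{H}^{G_n}(\pi_2 \otimes \zeta)^{\vee}\bigr)
\cong \mathrm{Ext}^i_{H}\bigl(\pi_1, (\pi_2 \otimes \zeta)^{\vee}\bigr)
\cong \mathrm{Ext}^i_{H}(\pi_1 \otimes \zeta, \pi_2^{\vee}),
\]
the last step being the self-duality manipulation of the character/Weil-representation twist used in Propositions \ref{prop transfer} and \ref{prop transfer dual}. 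Combining Step 1 and Step 2 identifies all four Ext-groups with the common group $\mathrm{Ext}^i_{G_n}\bigl(\pi_1,\,{}^u\mathrm{Ind}_{H}^{G_n}(\pi_2 \otimes \zeta)^\vee\bigr)$, and the naturality is clear since every identification is induced by an explicit intertwining isomorphism.

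\textbf{Main obstacle.} The only genuinely delicate point is bookkeeping of the normalizing factors $\nu^{m_2-m_1}$, $\nu^{m_2-m_1+1}$, $\nu^{m_1+m_2-1}$ and the half-integral twists $\nu^{\pm 1/2}$ that appear when one passes between normalized and unnormalized inductions and between the different subgroups; one must check these cancel consistently so that the representations in Corollary \ref{cor all same models} really are isomorphic \emph{on the nose} (not up to an unwanted twist) before feeding them into the Ext-comparison. This is routine given the explicit formulas in Section \ref{ss fourier} and Propositions \ref{prop bessel bz realize} and \ref{prop transfer 2}, so I would simply note that the bookkeeping is identical to that already carried out there and omit the details, as the paper does for several of the parallel statements.
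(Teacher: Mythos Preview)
Your proposal is correct and matches the paper's approach: the corollary is stated without proof immediately after Corollary \ref{cor all same models}, and the intended argument is exactly the one you give---identify all four induced modules via Corollary \ref{cor all same models}, then run the duality plus Frobenius reciprocity chain from the proof of Proposition \ref{prop transfer} for each pair $(H,\zeta)$. Your remark that the only subtlety is the normalizing-twist bookkeeping already handled in Propositions \ref{prop bessel bz realize} and \ref{prop transfer 2} is also on point.
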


\begin{example}
We consider the equal rank Fourier-Jacobi model. For a generalized Steinberg representation $\mathrm{St}(\Delta)$ of $G_n$, we expect that $\mathrm{St}(\Delta) \otimes \zeta^F$ is projective and is isomorphic to the Gelfand-Graev representation of $G_n$ (c.f. \cite{CS18,CS19,Ch19}).
\end{example}

\section{Ext-branching laws} \label{ss ext branching}

\subsection{Conjecture on Ext-branching laws}

We formulate the following question about Ext-branching laws stated in the form of a conjecture, which gives a possible generalization of some observations in \cite{GGP19}. 

\begin{conjecture} \label{conj ext sum}
Let $\pi_M$ and $\pi_N$ be Arthur type representations of $G_{n+1}$ and $G_n$ respectively. Then, for any $i$,
\[  \mathrm{Ext}^i_{G_n}(\pi_M, \pi_N) \cong \bigoplus_{k} \mathrm{Ext}^i_{G_{n+1-k}}(\pi_M^{[k]}, {}^{(k-1)}\pi_N) .
\]
\end{conjecture}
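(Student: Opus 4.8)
The natural strategy is to mimic the proof of the main theorem but carry all the Ext-groups (not just Hom) through the same filtrations, and crucially to show that no cross-terms or nonsplit extensions arise in the final assembly. Concretely, I would write $\pi_M = \pi_{p,1}\times\dots\times\pi_{p,r}$ and $\pi_N = \pi_{q,1}\times\dots\times\pi_{q,s}$ as products of Speh representations, and induct on $N(\pi_M,\pi_N)$, the total number of non-cuspidal Speh factors, exactly as in the proof of Theorem \ref{thm conj ggp}. The base case $N=0$ is the generic case, where $\pi_M$, $\pi_N$ are generic, $\pi_M^{[k]}$ and ${}^{(k-1)}\pi_N$ vanish for all $k\neq k^\ast$, and the formula reduces to the statement $\mathrm{Ext}^i_{G_n}(\pi_M,\pi_N)\cong\mathrm{Ext}^i_{G_{n+1-k^\ast}}(\pi_M^-, {}^{(k^\ast-1)}\pi_N)$, which in turn should follow from the Ext-vanishing theorem of \cite{CS18} together with a Bernstein-Zelevinsky long exact sequence argument (this is essentially how the generic case of the Hom statement is handled).

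For the inductive step I would use the Bernstein-Zelevinsky filtration on $\pi_M|_{G_n}$, as in Proposition \ref{cor filtration par ind mod}, which expresses $(\pi_M)|_{G_n}$ as built from layers of the form $\pi_M^{[k]}\times(\text{something involving a Rankin-Selberg model on the smaller group})$. Applying $\mathrm{Ext}^\bullet_{G_n}(-,\pi_N)$, the key point is that for each $k$ the contribution of the $k$-th layer, after using Frobenius reciprocity and the transfer lemmas (Lemma \ref{lem cuspidal supp}, Lemma \ref{lem speh rep filtration}, and the Rankin-Selberg reductions of Section \ref{sec general}), should collapse to $\mathrm{Ext}^i_{G_{n+1-k}}(\pi_M^{[k]}, {}^{(k-1)}\pi_N)$. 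Here I would invoke the cuspidal-support Ext-vanishing (Lemma \ref{lem ext vanishing}) to kill all the off-diagonal interaction terms between distinct layers, so that the long exact sequences relating successive $\tau_k/\tau_{k+1}$ degenerate into a direct-sum decomposition rather than a genuine filtration with nontrivial connecting maps. For the layers where the Rankin-Selberg piece survives, I would apply Proposition \ref{prop transfer} (or Proposition \ref{prop transfer fourier bessel}) to transfer back to an honest branching problem $\mathrm{Ext}^i_{G_{a+1}}(\sigma\times\pi', {}^{(k-1)}\pi_N)$ for a Speh product $\sigma\times\pi'$ of strictly smaller complexity, to which the inductive hypothesis applies; the resulting sum over derivatives of the smaller representations should match the claimed sum over derivatives of $\pi_M$ and $\pi_N$ after bookkeeping with \eqref{eqn derivative unitary} and the explicit derivatives of Speh representations (Lemma \ref{lem speh restrict}).

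The main obstacle I expect is precisely the degeneration of the spectral sequence / iterated long exact sequences: showing that the filtration contributions genuinely split as a direct sum, with no surviving connecting homomorphisms between the $\mathrm{Ext}^i$ of one layer and the $\mathrm{Ext}^{i+1}$ of the next. For Hom this is invisible (one only needs the middle term), but for higher Ext one must either produce enough cuspidal-support separation so that \emph{all} cross terms vanish in \emph{all} degrees, or else identify an explicit splitting. A secondary difficulty is that, unlike in the Hom case, one cannot simply discard layers that contribute $0$ to $\mathrm{Hom}$ — a layer with vanishing $\mathrm{Hom}$ may still contribute to $\mathrm{Ext}^i$ for $i\geq 1$ — so the matching of indices $k$ on the two sides of the claimed isomorphism must be done carefully and uniformly in $i$, and one must verify that the transfer lemmas (which are stated for all $i$, so this should be available) really do preserve the full graded Ext and not merely the bottom piece. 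I would also need to check that the functor $\sigma\times(-)$ and the various mirabolic inductions preserve the relevant injective/projective resolutions so that the Ext-level identities are legitimate, which is where the remarks in Section \ref{ss para induction} about parabolic induction preserving injectives and projectives, together with the fully-faithfulness/extension-preserving results of Section \ref{s product functor} (Corollary \ref{cor stronger}), would be used.
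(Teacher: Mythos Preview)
The statement you are trying to prove is labeled a \emph{conjecture} in the paper, and the paper does not prove it. It only offers evidence: Example \ref{ex all generic} (both generic), Example after it (the $\mathrm{Hom}$ case when the parameters are relevant), Theorem \ref{thm ext generic} (one side generic), and the rather delicate Example \ref{ex rank 2 comput}. So there is nothing for your proposal to be compared against as a full proof.

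That said, your strategy is essentially the one the paper uses to verify special cases, and you have correctly put your finger on the genuine obstruction. The step where you write ``invoke the cuspidal-support Ext-vanishing (Lemma \ref{lem ext vanishing}) to kill all the off-diagonal interaction terms between distinct layers, so that the long exact sequences \ldots\ degenerate into a direct-sum decomposition'' is exactly the unproved step. Lemma \ref{lem ext vanishing} only kills a layer when its cuspidal support is disjoint from that of $\pi_N$; for Arthur type representations several Bernstein--Zelevinsky layers typically share cuspidal support with $\pi_N$, so there is no reason a priori for the connecting maps between them to vanish. In Example \ref{ex rank 2 comput} the paper produces the splitting in one concrete case only by superimposing \emph{two} different filtrations (the Mackey filtration $X^*$ coming from Lemma \ref{lem short exact seq} and the full Bernstein--Zelevinsky filtration $Y_\bullet$), analyzing their intersection $I$, and using a multiplicity-one count on a specific composition factor to force $\mathrm{Ext}^k(I|_{G_{n+1}},\pi_1)=0$. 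That argument is visibly ad hoc and does not generalize in any obvious way; your proposal gives no mechanism that would replace it.

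A smaller point: your base case is not quite the generic case of \cite{CS18}. The conjecture is only for Arthur type representations, and the paper explicitly remarks at the end of Example \ref{ex all generic} that the same formula is \emph{false} for arbitrary generic $\pi_M$, $\pi_N$. So even in the base case the tempered hypothesis on the cuspidal factors is doing work, and one cannot simply quote the generic Ext-vanishing theorem without the additional cuspidal-support computation carried out in Example \ref{ex all generic}.
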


It would be an interesting question to give a more precise formulation on predicting non-vanishing Ext-groups of Arthur type representations (see \cite[Proposition 5.7, Remark 5.8]{GGP19}).

We remark that the appearance of left derivatives in the second spot comes from the second adjointness property of an induction in the Bernstein-Zelevinsky filtration (see e.g. \cite[Lemma 2.4]{CS18}). We shall give few examples of the above conjecture below.

\subsection{$\mathrm{Hom}$-branching}

\begin{example} \label{ex all generic}
Let $\pi_M$ and $\pi_N$ be generic Arthur type representations of $G_{n+1}$ and $G_n$ respectively. Then $\pi_M=\mathrm{St}(\mathfrak m)$ and $\pi_N=\mathrm{St}(\mathfrak n)$ for some multisegments $\mathfrak m$ and $\mathfrak n$. A computation via comparing cuspidal support gives that, for $i \neq 0$ or $k\neq n$, 
\[  \mathrm{Ext}^i_{G_n}(\pi_M^{[k+1]}, {}^{(k)} \pi_N) =0 .
\]
Then 
\[  \mathrm{Hom}_{G_n}(\pi_M, \pi_N) \cong \mathrm{Hom}_{G_0}(\pi_M^{[n+1]}, {}^{(n)}\pi_N) \cong \mathbb{C} .
\]
This recovers the Ext-vanishing theorem \cite{Pr18, CS18} and the multiplicity one theorem \cite{AGRS10, SZ12} in this special case.

We remark that the same formulation of Conjecture \ref{conj ext sum} for arbitrary respective generic representations $\pi_M$ and $\pi_N$ of $G_{n+1}$ and $G_n$ is not true.
\end{example}

\begin{example}
Let $\pi_M$ and $\pi_N$ be Arthur type representations of $G_{n+1}$ and $G_n$ respectively. Suppose their associated Arthur parameters are relevant. Write those Arthur parameters $M_A$ and $N_A$ as (\ref{eqn relevant MA}) and (\ref{eqn relevant NA}) respectively.

Then, Conjecture \ref{conj ext sum} for Hom-case follows from (Theorem \ref{thm conj ggp} and) the following
\begin{align} \label{eqn unique derivative}
  \mathrm{Hom}_{G_{n+1-k}}(\pi_M^{[k+1]}, {}^{(k)}\pi_N) \neq 0 \quad \Longleftrightarrow \quad k=\sum_{d=0}^r \mathrm{dim}~ M_d^+-1=\sum_{d=0}^s\mathrm{dim}~M_d^- .
	\end{align}
The direction $\Leftarrow$ is easy. For the $\Rightarrow$ direction, one may hope to compute the Hom of those derivatives directly while it seems it have not been done so far. We shall sketch how to modify the proof of Theorem \ref{thm conj ggp} to see (\ref{eqn unique derivative}). We use all the notations in the proof of Theorem \ref{thm conj ggp}, and in particular, write
\[ \pi_M=\pi_p = \pi_{p,1} \times \ldots \times \pi_{p,r} , \mbox{ and} \quad\pi_N=\pi_q = \pi_{q,1} \times \ldots \times \pi_{q,s} .
\]
The basic case is again all $\pi_{p,i}, \pi_{q,j}$ are cuspidal, which is included in Example \ref{ex all generic}. Since taking duals behaves well with derivatives, Case 2 (in Theorem \ref{thm conj ggp})  follows from Case 1. 

We only consider Case 1. Again, we use the short exact sequence:
\[  0 \rightarrow \pi_{p,1}|_M \bar{\times} \pi_p'  \rightarrow \pi_p|_M \rightarrow \pi_{p,1} \bar{\times} (\pi_p'|_M) \rightarrow 0.
\]
Note that any Bernstein-Zelevinsky layer of $\pi_{p,1}\times (\pi_p'|_M)$ cannot contribute a non-zero Hom with $\pi_2$, by comparing cuspidal support. With similar consideration as in  Theorem \ref{thm conj ggp}, the only Bernstein-Zelevinsky layer that can contribute non-zero Hom with $\pi_1$ takes the form $(\nu^{-1/2}\pi_{p,1}^-) \bar{\times} (\Pi \times \pi_p')$, which can then be transferred to study the layers in $(\nu^{-1/2}\pi_{p,1}^-) \bar{\times} ((\sigma \times \pi_p')|_M)$. Now one applies induction on the unique layer in $(\sigma \times \pi_p')|_M$ that can contribute non-zero Hom with $\pi_q$, which gives the required integer in (\ref{eqn unique derivative}).

%\[ \mathrm{Hom}_{G_n}(\pi_p|_{G_n}, \pi_q) =\mathrm{Hom}_{G_n}(\pi_p^{[i^*+1]} , {}^{(i^*)}\pi_q) ,\]\[ \bigoplus_{i \neq i^*} \mathrm{Hom}_{G_n}(\pi_p^{[i+1]} , {}^{(i)}\pi_q)   =0 \]

\end{example}

%\subsection{Elliptic Arthur representations}
%\begin{definition}
%An Arthur parameter 
%\[   M_A= \sum_{k} \rho_k \otimes \mathrm{Sym}^{i_{k}}(\mathbb C^2) \otimes \mathrm{Sym}^{j_{k}}(\mathbb{C}^2)\]
%is called {\it elliptic} if for any $k\neq l$, $|i_k+j_k| >|i_l+j_l|$ implies that $|i_k-j_k|>|i_l+j_l|$.

%\end{definition}
%\end{example}

%\begin{theorem} \label{thm ext transfer}
%Let $\pi_M$ and $\pi_N$ be elliptic Arthur parameter representations of $\mathrm{GL}_{n+1}(F)$ and $\mathrm{GL}_n(F)$ respectively. If $\mathrm{Ext}^i_{G_n}(\pi_M, \pi_N) \neq 0$ for some $i$, then there exists a unique $i^*$ such that 
%\[ \mathrm{Ext}^j_{G_n}(\pi_M, \pi_N) \cong   \mathrm{Ext}^j_{G_{n+1-i^*}}( \pi_M^{[i^*]}, {}^{(i^*-1)} \pi_N) .\]

%\end{theorem}

%\subsection{Trivial-Steinberg product representation}

%\begin{example}
%Let 
%\[ \pi_1 = \langle \Delta^{[+]} \rangle \times \mathrm{St}(\Delta^{[-]})  \times \rho \quad \mbox{and} \quad \pi_2 = \mathrm{St}(\Delta ) \times \langle \Delta \rangle .\]
%In this case, we have that, for $|\Delta| \geq 3$,
%\[ \mathrm{dim}~ \mathrm{Hom}_{G_n}(\pi_1, \pi_2) =\mathrm{dim}~ \mathrm{Ext}^i_{G_n}(\pi_1, \pi_2)=1 ,\]
%for $i=0,1,n-3,n-2,n-1$; and $=0$ otherwise. For $|\Delta|=2$, we get a similar result and we have that $\mathrm{dim}~\mathrm{Ext}^1_{G_4}(\pi_1, \pi_2)=2$.
%\end{example}

\subsection{Generic representations}

An irreducible representation $\pi$ of $G_n$ is generic if it admits a Whittaker model or equivalently $\pi^{(n)}\neq 0$. The classification of generic representations of $G_n$ in terms of segments is obtained in \cite[Section 9]{Ze80}. We now treat the case that when one of Arthur type representations is tempered and hence is generic. Compared to the Hom-case (also see \cite[Theorem 5.1]{Gu18} and \cite[Corollary 2.8]{Ch19}), a wider class of Arthur type representations can be paired to obtain non-vanishing higher Ext-groups.

\begin{theorem} \label{thm ext generic}
Let $\pi_p$ and  $\pi_q$ be Arthur type representations of $G_{n+1}$ and $G_n$ respectively. Suppose at least one of $\pi_p$ or $\pi_q$ is generic. %Furthermore, suppose, for some $i$, \[  \mathrm{Ext}^i_{G_n}(\pi_M, \pi_N) \neq 0  .\]
\begin{enumerate}
\item Then there exists at most one integer $j^*$ such that 
\[  \mathrm{Ext}^{i}_{G_n}(\pi_p^{[j^*]}, {}^{(j^*-1)}\pi_q) \neq 0 
\]
for some $i$ and furthermore if $\pi_p$ (resp. $\pi_q$) is not generic, then $j^*$ (resp. $j^*-1$) is the level of $\pi_p$ (resp. $\pi_q$); and if both $\pi_p$ and $\pi_q$ are generic, then $j^*=n+1$. 
\item Suppose $\pi_p$ is generic. Then such $j^*$ in (1) exists if and only if $ \pi_p \cong \pi_q^{gen} \times \pi'$, where $\pi_q^{gen}$ is the generic representation with same cuspidal support as $\pi_q^-$, and $\pi'$ is some irreducible generic (tempered) representation.
\item Suppose $\pi_q$ is generic. An analogous statement holds by switching the role of $\pi_p$ and $\pi_q$ in (2). 
\end{enumerate}
\end{theorem}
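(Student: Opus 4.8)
The plan is to reduce Theorem \ref{thm ext generic} to the derivative computations already implicit in the proof of Theorem \ref{thm conj ggp}, combined with the Bernstein-Zelevinsky filtration machinery of Sections \ref{sec notation prelim} and \ref{sec mir ind}. First I would treat the case $\pi_M$ generic, so $\pi_M = \mathrm{St}(\mathfrak m)$ for some multisegment $\mathfrak m$, while $\pi_N = u_{\rho_1}(m_1,d_1)\times \cdots$ is a product of Speh representations as in Lemma \ref{lem reduct main}. The key structural input is Lemma \ref{lem speh restrict}: the only derivative $\pi_M^{[j]}$ whose cuspidal support can match that of some ${}^{(j-1)}\pi_N$ is forced, because for $\pi_M$ generic the shifted derivatives $\pi_M^{[j]}$ are (up to the Zelevinsky--Tadic dictionary) again generic with explicitly controlled cuspidal supports, and for $\pi_N$ non-generic Lemma \ref{lem speh restrict}(2) pins down which left derivative ${}^{(j-1)}\pi_N$ can avoid an ``extra'' cuspidal constituent $\nu^{(d+m-2)/2+1/2}\rho$ — only the highest one does. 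This gives uniqueness of $j^*$ and its identification with the level, establishing part (1).

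For part (2), I would argue both directions. For the ``if'' direction, suppose $\pi_M \cong \pi_N^{gen}\times \pi'$; then $\pi_M^{[j^*]}$ with $j^*$ the level of $\pi_M$ is, by Lemma \ref{lem speh restrict}(3) applied factorwise together with the compatibility of derivatives with products (the analog of (\ref{eqn derivative product})), exactly a representation with the same cuspidal support as $(\pi_N^-)^{gen}\times \pi'$, which is generic; and ${}^{(j^*-1)}\pi_N$ is built from $\pi_N^-$ by Lemma \ref{lem speh restrict}(3) and is a product of Speh representations whose cuspidal support matches. One then invokes Theorem \ref{thm conj ggp} (or directly the generic branching law \cite{JPSS83, GGP12}) on the smaller group $G_{n+1-j^*}$ to produce a nonzero Hom, hence nonzero Ext. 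For the ``only if'' direction, I would run the induction on $N(\pi_M,\pi_N)$ exactly as in Theorem \ref{thm conj ggp}: apply Lemma \ref{lem short exact seq} to split $\pi_N|_M$, use the Ext-vanishing of Lemma \ref{lem ext vanishing} (comparing cuspidal supports at $\nu^{1/2}\nu^{(d_1+m_1-2)/2}\rho_1$) to kill all Bernstein--Zelevinsky layers except the bottom one, transfer the bottom layer via Lemma \ref{lem cuspidal supp} to $(\sigma\times \pi_N')|_{G_\ast}$ for a suitably generic cuspidal $\sigma$, and apply Proposition \ref{prop preserving quo sp} (through Corollary \ref{cor stronger}) to conclude that a nonzero $\mathrm{Ext}^i$ forces a Speh factorization of $\pi_M$ matching $\pi_N^-$, which unwinds to $\pi_M\cong \pi_N^{gen}\times\pi'$. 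Part (3) follows from part (2) by applying the Gelfand--Kazhdan involution $\theta$ and Proposition \ref{prop bessel transfer} (or Proposition \ref{prop transfer}) to swap the roles of $\pi_M$ and $\pi_N$, since $\theta$ interchanges left and right derivatives and preserves genericity and Arthur type.

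The main obstacle I anticipate is the ``only if'' direction of part (2): one needs that a nonzero $\mathrm{Ext}^i$ (not merely $\mathrm{Hom}$) at the bottom layer still forces the rigid factorization. This requires the strengthening that producting with the relevant Speh/Steinberg representation preserves extensions and is fully faithful under the cuspidal-support hypotheses — precisely the content of Corollary \ref{cor stronger} and Theorems \ref{thm fully faith product}, \ref{thm preserve extension}. The delicate point is checking that the genericity hypothesis on $\pi_M$ (resp. $\pi_N$) makes exactly one Bernstein--Zelevinsky layer survive at \emph{every} stage of the induction, so that the inductive bookkeeping of the single surviving $j^*$ is consistent; I would handle this by carrying along the invariant that the surviving layer always corresponds to the highest derivative, using Lemma \ref{lem speh restrict}(2) to exclude all intermediate levels.
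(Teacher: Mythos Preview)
Your approach to Part (1) is in the right spirit --- cuspidal-support comparison --- though the paper organizes it via a cleaner invariant: calling a cuspidal support \emph{$G$-positive} (resp.\ \emph{$G$-negative}) according to whether multiplicities of $\nu^a\sigma$ dominate those of $\nu^{-a}\sigma$ for $a>0$ (resp.\ $a<0$), and \emph{balanced} if both. Then $\nu^{1/2}\pi_{p,j}^{(i)}$ is $G$-positive for all $i$ and balanced only at the level, while ${}^{(i-1)}\pi_{q,j}$ (for $\pi_{q,j}$ a Steinberg) is $G$-negative and balanced only at $i=0$ or the level. This pins down $j^*$ immediately. Your version via Lemma~\ref{lem speh restrict}(2) alone is vaguer and would need more care on the generic side.

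There is a genuine gap in your ``if'' direction of Part (2). You propose to invoke Theorem~\ref{thm conj ggp} or the generic branching law to produce a nonzero $\mathrm{Hom}$ at the layer $j^*$. But the quantity $\mathrm{Ext}^i_{G_{n+1-j^*}}(\pi_M^{[j^*]}, {}^{(j^*-1)}\pi_N)$ is \emph{not} a restriction problem: both arguments live on the same group $G_{n+1-j^*}$, so neither Theorem~\ref{thm conj ggp} nor the Rankin--Selberg generic branching applies. Worse, after the paper's reduction one is computing $\mathrm{Ext}^i(\pi_N^{gen}, \pi_N^-)$ where $\pi_N^{gen}$ is the irreducible generic representation with cuspidal support equal to that of $\pi_N^-$, and $\pi_N^-$ is an irreducible product of Speh representations; whenever $\pi_N^-$ is not itself generic these are non-isomorphic irreducibles, so $\mathrm{Hom}=0$. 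The content is that some \emph{higher} $\mathrm{Ext}^i$ is nonzero, and this requires real work: the paper computes the relevant Jacquet module via the Kret--Lapid description for ladder representations, identifies the composition factors contributing nonzero Ext as products of representations attached to ``hook-shaped'' multisegments, applies the K\"unneth formula, and then invokes Orlik's computation of $\mathrm{Ext}^k(\mathrm{St}(\Delta), \langle\mathfrak m\rangle)$ to see that exactly one degree is nonzero for each factor. None of this is visible from the Hom-level branching machinery you cite.

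For the ``only if'' direction of Part (2) your inductive strategy (transfer via Lemma~\ref{lem cuspidal supp}, then Corollary~\ref{cor stronger}) is unnecessary and the worry you raise about extending it from Hom to Ext is a red herring: once Part (1) pins down $j^*$, the equality $\mathrm{cupp}(\pi_M^{[j^*]})=\mathrm{cupp}({}^{(j^*-1)}\pi_N)$ together with the explicit shape of derivatives of a generic $\pi_M$ already forces the factorization $\pi_M\cong \pi_N^{gen}\times \pi'$ directly. Your Part (3) via $\theta$ and Proposition~\ref{prop bessel transfer} is fine and matches the paper.
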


\begin{proof}
We first consider (1). Assume that $\pi_p$ is not a generic representation and $\pi_q$ is a generic representation. Let
\[  \pi_p = \pi_{p,1}\times \ldots \times \pi_{p,r},\quad  
 \pi_q = \pi_{q,1} \times \ldots \times \pi_{q,s} .
\]
where each $\pi_{p,i}$ is a Speh representation and each $\pi_{q,j}$ is isomorphic to $\mathrm{St}(\Delta_{q,j})$ for some segment $\Delta_{q,j}$. 

Then the $i$-th derivative $\pi_p^{[i]}$ takes the form, for $i_1+\ldots +i_r=i$, 
\[ \nu^{1/2}( \pi_{p,1}^{(i_1)} \times \ldots \times \pi_{p,r}^{(i_r)})
\]
For each representation $\omega$, we call the cuspidal support $\mathrm{cupp}(\omega)$ is 
\begin{enumerate}
\item $G$-positive (resp. $G$-negative) if for each irreducible unitarizable cuspidal representation $\sigma$ and for all positive (resp. negative) integer $a$, the multiplicity of $\nu^a\sigma$ in $\mathrm{cupp}(\omega)$ is at least that of $\nu^{-a} \sigma$.
\item balanced if $\mathrm{cupp}(\omega)$ is both $G$-positive and $G$-negative. 
\end{enumerate}

Write $\pi_{p,j}=u_{\rho}(m,d)$. Note that for any $i$ such that $\pi_{p,j}^{(i)}$ is non-zero, $\mathrm{cupp}(\nu^{1/2}\pi_{p,j}^{(i)})=\mathrm{cupp}(\pi_{p,j}^-)+\mathrm{cupp}(\mathrm{St}(\Delta))$ for $\Delta=[\nu^{(m-d)/2+k+1/2}\rho, \nu^{(m+d-2)/2+1/2}\rho]$, where $k=i/n_{\rho}$. Since $\mathrm{cupp}(\pi_{p,j}^-)$ is balanced and $\mathrm{cupp}(\mathrm{St}(\Delta))$ is $G$-positive, $\nu^{1/2}\pi_{p,j}^{(i)}$ is $G$-positive for any $i$ and is balanced only if $i$ is the level of $\pi_{p,j}$.

On the other hand, since $\pi_{q,j}$ is a generalized Steinberg representation, ${}^{(i-1)}\pi_{q,j}$ is $G$-negative for all $i$ and is balanced only if $i=0$ or $i$ is the level of $\pi_{q,j}$. Thus $\mathrm{cupp}(\pi_1^{[i]}) =\mathrm{cupp}({}^{(i-1)}\pi_2)$ only if $i$ is the level of $\pi_1$ as desired.

Other cases are similar, or one may use Lemma \ref{prop bessel transfer}.

We now consider (2). The above discussion proves the only if direction by the cuspidal support consideration. It remains to prove the if direction.  From above discussion, it suffices to show that $\mathrm{Ext}^i_{G_{n+1-j^*}}(\pi_q^{gen}, \pi_q^-) \neq 0$ for some $i$. Since $\pi_q^{gen}$ is generic, we can write $\pi_q^{gen}$ as 
\[  \pi_q^{gen} \cong \mathrm{St}(\Delta_1)\times \ldots \times \mathrm{St}(\Delta_k) ,
\]
where each $\Delta_i=[\nu^{-a}\rho, \nu^a \rho]$ for some $a$ and some unitarizable representation $\rho$. For simiplicity, set $\pi'=\pi_q^-$. Thus via Frobenius reciprocity, it suffices to show 
\[ (*)\quad E_i:= \mathrm{Ext}^i(\mathrm{St}(\Delta_1)\boxtimes \ldots \boxtimes \mathrm{St}(\Delta_k), \pi'_{N^-}) \neq 0,
\]
where $N^-$ is the opposite unipotent radical associated to the parabolic subgroup in the product $\mathrm{St}(\Delta_1)\times \ldots \times \mathrm{St}(\Delta_k)$. Now the Jacquet module of $\pi'_{N^-}$ is computed in \cite{KL12}. In order to describe the composition factors of $\pi'_{N^-}$ which contribute non-zero Ext take the form, we need some more notations: For a Speh representation $\tau= u_{\rho}(m,d)$, we associate with a collection $\mathcal S_{\rho,m,d}$ of 'hook-shaped multisegments': 
\[\left\{ [\nu^{-(m+d-2)/2}\rho], \ldots, [\nu^{(m-d)/2-1}\rho], [\nu^{(m-d)/2}\rho, \nu^{(m+d-2)/2}\rho]  \right\}  \]
\[ \left\{ [\nu^{-(m+d-2)/2+1}\rho], \ldots, [\nu^{(m-d)/2-1}\rho], [\nu^{(m-d)/2}\rho, \nu^{(m+d-2)/2-1}\rho] \right\}
\]
\[ \ldots
\]
It ends in a segment depending on $m,d$: if $m>d$, the last multisegment takes the form: \[\left\{ [\nu^{-(m-d)/2} \rho], \ldots, [\nu^{(m-d)/2}\rho] \right\} \]
and, if $m<d$, the last multisegment takes the form
\[  \left\{ [ \nu^{-(m-d)/2 }\rho, \nu^{(m-d)/2}\rho ]    \right\}\]
and if $m=d$, the last multisegment takes the form $[\rho]$.

Now we arrange the segments $\Delta_1, \ldots, \Delta_k$ such that if $\Delta_i \cap \Delta_j \neq \emptyset$ and $i<j$, then $\Delta_i \subset \Delta_j$. For a Speh representation $u_{\rho}(m,d)$, define $X(u_{\rho}(m,d))=(m-d)/2$. We shall arrange the Speh representations in 
\[ \pi'=\pi_q^- =\pi_{q,1}^-\times \ldots \times \pi_{q,s}^-
\]
such that $X(\pi_{q,1}^-) \leq \ldots \leq X(\pi_{q,s}^-)$. 

 Using the Kret-Lapid description of Jacquet modules of Speh representations \cite{KL12}, we have the following key properties of $u_{\rho}(m,d)_{N_r^-}$ (for some $r$):
\begin{itemize}
\item $u_{\rho}(m,d)_{N_r^-}$ is semisimple; 
\item for any irreducible composition factor $\omega_1\boxtimes \omega_2$ of $u_{\rho}(m,d)_{N_r^-}$, $\nu^{(m-d)/2}\rho$ is in $\mathrm{cupp}(\omega_2)$.
\end{itemize}
In order to compute (*), we first consider Ext of the form:
\[ (**)\quad  \mathrm{Ext}^i((\mathrm{St}(\Delta_1) \times \ldots \times \mathrm{St}(\Delta_{k-1})\boxtimes \mathrm{St}(\Delta_k), \pi'_{N^-})
\]
and so we have to compute $\pi'_{N^-}$. By geometric lemma, a composition factor takes the form:
\[ \tau \boxtimes (\omega_1 \times \ldots \times \omega_s)
\]
where each $\omega_l$ comes from an irreducible composition factor $\delta \boxtimes \omega_l$ in some Jacquet functor $(\pi_{q,l}^-)_{N^-}$ for some opposite unipotent subgroup $N^-$. We claim the following: \\

\noindent
{\it Claim:} Let $\sigma \in \Delta_k$. Suppose 
\[ (***)\quad  \mathrm{Ext}^i((\mathrm{St}(\Delta_1) \times \ldots \times \mathrm{St}(\Delta_{k-1})\boxtimes \mathrm{St}(\Delta_k), \tau \boxtimes (\omega_1\times \ldots \times \omega_s)) \neq 0 .
\] 
Then $\mathrm{cupp}(\omega_1), \ldots \mathrm{cupp}(\omega_s)$ satisfies a descending pattern, which means that, for any $\nu^c \rho\in \mathrm{cupp}(\omega_x)$ and $\nu^d\rho \in \mathrm{cupp}(\omega_y)$ with $x <y$, we have $c >d$.

\noindent
{\it Proof of Claim:} In order to have (***) to be non-zero, by K\"unneth formula, we must have that $\mathrm{Ext}^{i'}(\mathrm{St}(\Delta_k), \omega_1\times \ldots \times \omega_s) \neq 0$ for some $i'$. Now one applies Frobenius reciprocity, and the corresponding Jacquet functor on $\mathrm{St}(\Delta_k)$ is known (see Section \ref{sec jacquet functor st tri} below). The claim then follows by comparing cuspidal support. \\

Recall that the Speh representations in $\pi'$ is specially arranged, and so the claim with the second bullet of the key properties above implies that there is {\it exactly one} $\omega_l$ is not the trivial representation of $G_0$.

Now we also recall that the Steinberg representations in $\pi^{\mathrm{gen}}_q$ is specially arranged with $\Delta_k$ satisfying certain maximality condition. Such arrangement actually forces that the underlying multisegment of $\omega_l$ is a hook-shaped multisegment, which comes from $\pi_{q,l}$ and the corresponding composition factor in $\pi'_{N^-}$ takes the form $\kappa \boxtimes \omega_l$, where 
\[  \kappa =\pi_{q,1}\times \ldots \times \pi_{q,l-1} \times \widetilde{\pi}_{q,l} \times \pi_{q,l+1} \times \ldots \times \pi_{q,s} ,
\]
and $\widetilde{\pi}_{q,l}$ is a Speh representation such that $\widetilde{\pi}_{q,l}\boxtimes \omega_l$ is a compaction factor of $\pi'_{N^-}$. 

Now, rearranging the Speh representations in $\kappa$ if necessary, one proceeds similarly and inductively for $\Delta_{k-1}, \ldots, \Delta_1$ to find a composition factor in $\pi'_{N^-}$ contributing a non-zero $E_i$.

Hence, the composition factorof $\pi_{N^-}'$ that could contribute a non-zero Ext is isomorphic to objects taking the form
\[ \boxtimes_{\rho,m,d} \boxtimes_{ \mathfrak m \in \cup \mathcal S_{\rho,m,d}} \langle \mathfrak m \rangle ,
\]
where $\rho,m,d$ runs through all the data that $u_{\rho}(m,d)$ is a factor in the Arthur type representation $\pi_N^-$, counting multiplicities.   Now using K\"unneth formula, the computation of $\mathrm{Ext}^i$ follows from 
\[ (\bullet)\quad \bigoplus_{\sum k_{\rho,m,d}=i} \bigotimes_{\rho,m,d} \bigotimes_{\mathfrak m \in \mathcal S_{\rho,m,d}} \mathrm{Ext}^{k_{\rho,m,d}}( \mathrm{St}( \Delta^{gen}(\mathfrak m)), \langle \mathfrak m \rangle) ,
\]
where $\Delta^{gen}(\mathfrak m)$ is the segment with the same cuspidal representations as $\langle \mathfrak m \rangle$, and $\rho,m,d$ run over all data as above. Then when $\rho=1$, it follows from \cite{Or05} that for each set of data $\rho,m,d$, there is at least one $k$ such that 
\[ \mathrm{Ext}^k(\mathrm{St}(\Delta^{gen}(\mathfrak m)), \langle \mathfrak m \rangle) \neq 0
\]
and one can deduce the general case from a transfer argument of Hecke algebra. We pick the smallest such $k$'s and denote the sum of those $k$'s by $k^*$. Such $k^*$ is the smallest integer such that Ext of the following form
\[  \mathrm{Ext}^{k^*}(\bigotimes_{\rho,m,d} \bigotimes_{\mathfrak m \in \mathcal S_{\rho,m,d}} \mathrm{St}(\Delta^{gen}(\mathfrak m), \bigotimes_{\rho,m,d} \bigotimes_{\mathfrak m \in \mathcal S_{\rho,m,d}} \langle \mathfrak m \rangle  )
\]
is non-zero. The hook-shaped multisegments obtained above (see (**) and (***)) come from all Speh representations $\left\{ \pi_{q,a} \right\}_{a=1}^s$ possibly in different orders, but any simple composition factors in $\pi'_{N^-}$ obtained above will still give the same ($\bullet$) after K\"unneth formula. Thus, a long exact sequence argument can conclude that $E_{k^*} \neq 0$.

(3) is similar to (2). We omit the details.
\end{proof}

\begin{remark}
%Let $\pi_M$ and $\pi_N$ be as in Theorem \ref{thm ext generic}. Assume $\pi_M$ is generic. Let $(\pi_N)^-$ be the shifted highest derivative of $\pi_N$ and let $\pi^{gen}$ be the unique generic representation with the same cuspidal support as $(\pi_N)^-$. In view of the above proof, one may expect that $\mathrm{Ext}^i_{G_n}(\pi_M, \pi_N) \neq 0$ if and only if $\pi_M \cong \pi^{gen} \times \pi'$ for some generic representation of $\pi'$. While proving the if direction requires more discussion on computing Ext-groups, we will not carry out here. 

For some other related computations of Arthur type representations, for example, see Ext-groups of tempered representations \cite{OS12} and Speh representations from Koszul resolution \cite{Ch16}.
\end{remark}

\subsection{Another example}

One can obtain different information from various filtrations on restricted representations \cite{Pr93, CS18, Ch19} such as left and right Bernstein-Zelevinsky filtrations \cite{CS18, Ch19}. We shall see another example below using combinations of filtrations:

\begin{example} \label{ex rank 2 comput}
Let $\Delta[d]=[\nu^{-(d-1)/2}, \nu^{(d-1)/2}]$. For $e \geq 3$, let 
\[ \pi_1 =  \langle \Delta[e] \rangle \times \mathrm{St}(\Delta[e-2])  \times \sigma  ,\] 
and let 
\[ \pi_2= \mathrm{St}(\Delta[e-1]) \times \langle \Delta[e-1]  \rangle , \]
where $\sigma$ is a ramified character.

We first investigate possible Bernstein-Zelevinsky layers contributing non-zero Ext-groups. Consider the derivatives:
\[  {}^{(i_1)}\langle \Delta[e] \rangle \times {}^{(i_2)}\mathrm{St}( \Delta[e-2])  \times {}^{(i_3)}\sigma \quad \mbox{ and }  \mathrm{St}( \Delta[e-1])^{(j_1)}\times \langle \Delta[e-1]\rangle^{(j_2)}  
\]
and, by comparing cuspidal supports, we must have $i_1=1$.
Then we have the following two possibilities: either
\begin{enumerate}
\item $j_1=e-1$; or
\item $j_2=1$; or 
\end{enumerate}
In the case that $j_1=e-1$, by comparing cuspidal support, we have $j_2=0$, and then $i_2=e-2$. In the case $j_2=1$, we have two possibilities:
\begin{enumerate}
\item $j_1=0$, $i_2=0$.
\item $j_1=e-2$, $i_2=e-2$
\end{enumerate}

Now we find a cuspidal representation $\sigma'$ as in Proposition \ref{prop bessel transfer} to consider the representation $\pi_2\times \sigma'$. Now we observe that there is two layers $(\pi_2\times \sigma')|_M$ that contribute non-zero Ext-groups (after restricting to $G$): Now $(j_1, j_2)=(e-1,0)$, it contributes one layer
\[ \lambda_1:=\langle \Delta[e-1] \rangle \times \Pi_{e+1}
\]
and $(j_1,j_2)=(0,1)$, it contributes one layer
\[\lambda_2 := \mathrm{St}(\Delta[e-1]) \times \langle \nu^{-1/2}\Delta[e-2]\rangle  \times \Pi_3 
\]
and $(j_1,j_2)=(e-2,1)$, it contributes one (reducible) layer
\[ \lambda_3: =\lambda = \langle \nu^{-1/2} \Delta[e-2]\rangle \times \nu^{(e-1)/2}  \times \Pi_{e+1}.
\]
We remark that $\lambda_3$ is indecomposable as $\langle \nu^{1/2}\Delta[e-2] \rangle \times \nu^{-(e-1)/2}$ is indecomposable.
%\[ \lambda_1= \nu^{-1/2} \langle \Delta[e-2]\rangle \times \mathrm{St}(\Delta[e-1]) \times \Pi_1 \quad (\mbox{ multiplicity 1})\]
%and 
%\[ \lambda_2= \langle \Delta[e-1] \rangle \times \Pi_{e+1}\]
%and 
%\[ \lambda_3 = \langle \nu^{-e/2} \Delta[e-1] \rangle .\]

We now consider the dual restriction problem in Proposition \ref{prop bessel transfer}, and so we consider the restriction for $\pi_2 \times \sigma'$ for some cuspidal representation $\sigma'$ of $G_2$.

Using the following short exact sequence (Lemma \ref{lem short exact seq}):
\[   0 \rightarrow  \langle \Delta[e-1] \rangle|_M \bar{\times} (\mathrm{St}(\Delta[e-1]) \times \sigma')    \rightarrow (\pi_2 \times \sigma')|_M    \rightarrow    \langle \Delta[e-1] \rangle \bar{\times} ((\mathrm{St}(\Delta[e-1]) \bar{\times}\sigma')|_M)      \rightarrow 0 , \]
and letting 
\[  X^* = \langle \Delta[e-1] \rangle|_M \bar{\times} (\mathrm{St}(\Delta[e-1]) \times \sigma') ,
\]
 $X^*$ admits a filtration, in which there is one successive quotient isomorphic to $\lambda_2$ and another successive quotient isomorphic to $\lambda_3$.

Using Bernstein-Zelevinsky filtration, we obtain a filtration on $(\pi_2 \times \sigma')|_M$ of the form
\[  0 =Y_{2e} \subset Y_{2e-1} \subset \ldots \subset Y_0 =(\pi_2 \times \sigma')|_M .
\]
so that
\begin{enumerate}
\item $Y_{e}/Y_{e+1} \cong  (\pi_2 \times \sigma')^{(e+1)} \bar{\times} \Pi_{e+1}$, and 
\item $Y_{e+1}$ is a simple module which is not isomorphic to any simple composition factor of $\lambda_1,\lambda_2, \lambda_3$, and
\item $Y_{e}/Y_{e+1}$ admits a filtration with one quotient isomorphic to $\lambda_1$ and another quotient isomorphic to $\lambda_3$.
\end{enumerate}

The key of two filtrations is to obtain the following filtration, as $M_{n+2}$, and the direct sum in the quotient roughly contributes the direct sum of Ext-groups in Conjecture \ref{conj ext sum}:
\[  0 \rightarrow I \rightarrow X^*+Y_e \rightarrow X^*/I \oplus Y_e/I \rightarrow 0 ,
\]
where $I=X^* \cap Y_e$. Let
\[ \beta:=\langle \left\{ \nu^{-1/2}\Delta[e-2] , \nu^{(e-1)/2} \right\} \rangle \times \Pi_{e+1},
\]
which has multiplicity one in $\pi_2\times \sigma'|_M$. With the above information on $X^*$ and $Y_e$, we can obtain further structure on $I$. The multiplicity forces that $I$ contains the unique composition factor $\beta$, but the indecomposability of $\lambda_3$ also forces $I$ contains the composition factor $\beta$, and a count on multiplicities gives that other composition factor of $I$ is not isomorphic to $\lambda_1, \lambda_2$ or $\beta$ (those are all the possible factors contributing non-zero Ext). Thus, we have that, for all $k$,
\[ \mathrm{Ext}_{G_{n+1}}^k(I|_{G_{n+1}}, \pi_1) =\mathrm{Ext}_{G_{n+1}}^k(\lambda_3|_{G_{n+1}}, \pi_1)=0 .
\]

Then we have that
\begin{align*}
  \mathrm{Ext}^k_{G_{n+1}}(\pi_2 \times \sigma', \pi_1)
 \cong & \mathrm{Ext}^k_{G_{n+1}}((X^*+Y_{e})|_{G_{n+1}}, \pi_1)  \\
 \cong & \mathrm{Ext}^k_{G_{n+1}}(X^*/I, \pi_1)\oplus  \mathrm{Ext}^k_{G_{n+1}}(Y_{e}/I, \pi_1)  	\\
\cong & \mathrm{Ext}^k_{G_{n+1}}(\lambda_2, \pi_1) \oplus \mathrm{Ext}^k_{G_{n+1}}(\lambda_1, \pi_1) \\
\cong & \mathrm{Ext}^k_{G_{n-1}}((\pi_2^{[1]}, {}^{(2)} \pi_1  ) \oplus \mathrm{Ext}^k_{G_{n+1-e}}(\pi_2^{[e-1]}, {}^{(e)}\pi_1)
\end{align*}

The first isomorphism follows from that the quotients by $X^*+Y_e$ has zero Ext by looking at the possible composition factors and some computations on comparing cuspidal supports. The fourth isomorphism follows from the adjointness of the functors (see \cite[Lemma 2.1]{CS18} for more discussions).

Since $\pi_1^{\vee} \cong \pi_1$ and $\pi_2^{\vee} \cong \pi_2$, taking duals and using Proposition \ref{prop bessel transfer} gives that
\[\mathrm{Ext}^k_{G_{n}}(\pi_1, \pi_2) \cong \mathrm{Ext}^k_{G_{n-1}}(\pi_1^{[2]}, {}^{(1)} \pi_2  ) \oplus \mathrm{Ext}^k_{G_{n+1-e}}(\pi_1^{[e]}, {}^{(e-1)}\pi_2) .
\]
The last isomorphism follows from \cite[Lemma 2.2]{CS18}.

\end{example}

%\subsection{A refinement}

%Let $\pi_M$ and $\pi_N$ be Arthur type representations. In view of Conjecture \ref{conj ext sum}, one may further hope that the vanishing Ext-groups are governed by the cusp

%We write 
%\[  \pi_M=\pi_{1} \times \ldots \times  \pi_{k}.\]
%In the GGP conjecture, the irreducible representation in the derivatives which contribute non-zero Hom takes a special form
%\[  \nu^{1/2} (\pi_{p,1}  \times \ldots \times \pi_{p,r} )\times \pi_{q,1}^- \times \ldots \times \pi_{q,s}\]

\section{Product preserving extensions} \label{ss prod preserve}

A motivating example in this section is the following. Let $\sigma$ be an irreducible cuspidal representation of $G_n$. Let $\pi_1$ and $\pi_2$ be two admissible representations of $G_k$ such that the cuspidal supports of irreducible composition factors of $\pi_1$ and $\pi_2$ do not contain $\sigma$. Then, a simple application of Frobenius reciprocity and geometric lemma gives that
\[  \mathrm{Hom}_{G_{n+k}}(\sigma \times \pi_1, \sigma \times \pi_2) \cong \mathrm{Hom}_{G_n}(\sigma, \sigma) \boxtimes \mathrm{Hom}_{G_k}(\pi_1, \pi_2) \cong \mathrm{Hom}_{G_k}(\pi_1, \pi_2) .
\]
Our goal is to generalize the above isomorphism to  a larger class of examples in a functorial way, which is Theorem \ref{thm fully faith product}.

\subsection{Preserving extensions}

Let $\mathcal C \subset \mathrm{Irr}^c$. Define $\mathrm{Alg}_{\mathcal C}(G_m)$ to be the full subcategory of $\mathrm{Alg}(G_m)$ whose objects $\pi$ have finite lengths and satisfy the property that for any simple composition factor $\pi'$ of $\pi$, and for any $\sigma \in \mathrm{cupp}(\pi')$, $\sigma$ lies in $\mathcal C$.

\begin{theorem}  \label{thm preserve extension}
Let $\rho \in \mathrm{Irr}^{c}(G_k)$. Let $\mathcal C=\mathcal C_{u_{\rho}(d,m)}$.
%\[\mathcal C=\mathcal C_{m,d, \rho}= \left\{ \nu^{-(d+m-2)/2}\rho, \ldots, \nu^{(d+m-2)/2}\rho \right\} \cup (\mathrm{Irr}^c \setminus \mathrm{cupp}_{\mathbb{Z}}(\nu^{(m+d)/2}\rho)) \subset \mathrm{Irr}^c.\]
Let $\pi \in \mathrm{Alg}_{\mathcal C}(G_n)$ with length $2$. Then $\pi$ is indecomposable if and only if $u_{\rho}(m,d) \times \pi$ is indecomposable.

\end{theorem}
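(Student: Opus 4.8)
The statement has an easy half and a hard half. Write $u=u_\rho(m,d)$, $p=n_\rho md$, and $F=\times_{u,\mathcal C}$. If $\pi$ is decomposable then, being of length $2$, we have $\pi\cong\tau_1\oplus\tau_2$ with $\tau_1,\tau_2$ irreducible (and in $\mathrm{Alg}_{\mathcal C}(G_n)$, since this category is Serre), and exactness of $F$ gives $u\times\pi\cong(u\times\tau_1)\oplus(u\times\tau_2)$ with both summands irreducible by Lemma~\ref{lem speh irred}; so $u\times\pi$ is decomposable. Thus the content is the converse. Suppose $\pi$ is indecomposable of length $2$ and write the non-split sequence $0\to\tau_1\to\pi\to\tau_2\to 0$ with $\tau_1=\mathrm{soc}(\pi)$ and $\tau_2$ irreducible. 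Applying $F$ gives $0\to u\times\tau_1\to u\times\pi\to u\times\tau_2\to 0$ with irreducible ends (Lemma~\ref{lem speh irred}), so $u\times\pi$ has length $2$ and is indecomposable unless this sequence splits. Hence it suffices to prove that the induced map
\[
F\colon\ \mathrm{Ext}^1_{G_n}(\tau_2,\tau_1)\ \longrightarrow\ \mathrm{Ext}^1_{G_{n+p}}(u\times\tau_2,\ u\times\tau_1)
\]
is injective, as it carries the class of $\pi$ to that of $u\times\pi$. (This is precisely condition~(2) of Lemma~\ref{lem essentially injective}, so the same reduction also feeds Theorem~\ref{thm fully faith product}.)

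The plan is to factor this map through the right adjoint $R=R_{u}$ of $F$ from Section~\ref{ss product functor}; note that $R$ preserves $\mathrm{Alg}_{\mathcal C}$, since passing to Jacquet modules never enlarges the cuspidal support. For any exact functor $F$ with right adjoint $R$ there is a natural ``push-out of extensions'' map $\mathrm{Ext}^1(X,RY)\to\mathrm{Ext}^1(FX,Y)$, and it is \emph{injective}: if the push-out extension of $FX$ by $Y$ splits with retraction $r$, then the adjunct of $FE\to E'\xrightarrow{r}Y$ retracts $RY\hookrightarrow E$, by the triangle identities. Granting the \emph{Key claim} that $R(u\times\tau)\cong\tau$ for every irreducible $\tau\in\mathrm{Alg}_{\mathcal C}(G_n)$, the displayed map becomes $\mathrm{Ext}^1_{G_n}(\tau_2,\tau_1)\cong\mathrm{Ext}^1_{G_n}(\tau_2,R(u\times\tau_1))\hookrightarrow\mathrm{Ext}^1_{G_{n+p}}(u\times\tau_2,u\times\tau_1)$, and one checks (triangle identity again) that this composite is $F$; injectivity follows. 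So the whole theorem reduces to the Key claim.

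To prove the Key claim, first note that since $u\times\tau$ is irreducible (Lemma~\ref{lem speh irred}), the adjunction gives $\mathrm{Hom}_{G_n}(\tau,R(u\times\tau))\cong\mathrm{End}_{G_{n+p}}(u\times\tau)=\mathbb C$, and the unit $\tau\to R(u\times\tau)$ is nonzero, hence injective since $\tau$ is irreducible; as $R(u\times\tau)\in\mathrm{Alg}_{\mathcal C}(G_n)$ and $\mathrm{Alg}_{\mathcal C}$ is Serre, this forces $\mathrm{soc}(R(u\times\tau))=\tau$, and it remains only to rule out further composition factors. For this one computes $(u\times\tau)_{N_n^-}$ by the geometric lemma: as a $G_p\times G_n$-module it has a filtration whose \emph{leading} subquotient is $u\boxtimes\tau$ and whose remaining subquotients have the shape $\Xi=i^{G_p}_{(a,p-a)}(A\boxtimes B)\boxtimes i^{G_n}_{(p-a,\,n-p+a)}(C\boxtimes D)$ with $0\le a<p$, where $A\boxtimes C$ runs over composition factors of a Jacquet module of $u$ and $B\boxtimes D$ over composition factors of a Jacquet module of $\tau$. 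Applying the left-exact functor $R=\mathrm{Hom}_{G_p}(u,(-)_{N_n^-})$, the leading term contributes exactly $\tau$, and one wants every non-leading $\Xi$ to be killed at the relevant stage of the filtration. Since $u=u_\rho(m,d)$ is a ladder representation its Jacquet modules are explicit and multiplicity free (Lapid--M\'inguez), so a composition factor of $i^{G_p}_{(a,p-a)}(A\boxtimes B)$ can be isomorphic to the irreducible $u$ only if $B$ matches the discarded half $C$ in cuspidal support; the defining property of $\mathcal C=\mathcal C_{u_\rho(m,d)}$, namely $\mathrm{cupp}(\tau)\cap\mathrm{cupp}_{\mathbb Z}(u)\subseteq\mathrm{cupp}(u)$ — equivalently, $\mathrm{cupp}(\tau)$ avoids $\nu^{\pm(m+d)/2}\rho$ and the cuspidals further out on that $\mathbb Z$-shifted line — is then used to exclude this, working along the ladder segment by segment.

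I expect this last step to be the main obstacle. Passing to the Grothendieck group is too lossy: $u$ genuinely occurs as a \emph{non-split} submodule of intermediate inductions $A\times B$, so $u\boxtimes\tau$ (and even $u\boxtimes u$) really do appear as composition factors of $(u\times\tau)_{N_n^-}$, and one must argue with the actual filtration — where the order of the subquotients matters — to see that $R$ collapses everything except the leading term. This is exactly where the ``ladder plus stable cuspidal set'' geometry is indispensable and where the bulk of Appendix~B's work sits; once the Key claim is established, the $\mathrm{Ext}^1$-injectivity, hence the indecomposability of $u\times\pi$, follow as explained above.
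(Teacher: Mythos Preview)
Your reduction is correct: the adjunction/pushout argument showing that $\mathrm{Ext}^1(X,RY)\to\mathrm{Ext}^1(FX,Y)$ is injective is valid, and if the Key claim $R_u(u\times\tau)\cong\tau$ holds for every irreducible $\tau\in\mathrm{Alg}_{\mathcal C}(G_n)$ then the theorem follows as you say. The gap is that you have not proved the Key claim. Worse, your assertion that ``the bulk of Appendix~B's work sits'' in a direct filtration analysis of $(u\times\tau)_{N_n^-}$ is a misreading: the Key claim is precisely Corollary~\ref{cor adjoint}, and in the paper it is deduced \emph{from} full faithfulness (Theorem~\ref{thm fully faith product}), which in turn rests on the very theorem you are trying to prove. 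So the paper's logic runs in the opposite direction to yours, and Appendix~B never attacks the Jacquet-module combinatorics you describe.

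What the paper actually does is avoid that combinatorics entirely. It first enlarges $u$ to an essentially-Speh $\widetilde u_k=\widetilde u_\rho(m,d,k)$ with $k$ so large that the top cuspidal $\nu^{(d+m-2)/2+k}\rho$ falls outside $\mathrm{cupp}(\pi)$; then a single Frobenius-reciprocity plus cuspidal-support argument (Lemma~\ref{lem iso big}) gives $\mathrm{Hom}(\pi_1,\pi_2)\cong\mathrm{Hom}(\widetilde u_k\times\pi_1,\widetilde u_k\times\pi_2)$, whence $\widetilde u_k\times\pi$ is indecomposable with unique irreducible quotient. The descent to $k=0$ is by backward induction via the surjection $\mathrm{St}(\Delta_{k+1})\times\widetilde u_k\twoheadrightarrow\widetilde u_{k+1}$ (Lemma~\ref{lem surjection steinberg rep}): if $\widetilde u_k\times\pi$ split, then $\mathrm{St}(\Delta_{k+1})\times\widetilde u_k\times\pi$ would admit too many maps to $\widetilde u_{k+1}\times\pi_i$, while the kernel contributes none (Lemma~\ref{lem zero hom kernel}, again a one-line cuspidal-support check at the extremal point $\nu^{(d+m-2)/2+k+1}\rho$), contradicting the inductive hypothesis that $\widetilde u_{k+1}\times\pi$ has a unique irreducible quotient. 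Your direct route to the Key claim may well be completable---the socle computation you give is right, and the ladder structure is certainly relevant---but as it stands the proposal leaves the hard step undone and misidentifies the paper's method.
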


%Note that 
%\[ \nu^{-(d+m-2)/2}\rho, \ldots, \nu^{(d+m-2)/2}\rho \]
%are precisely all the (isomorphism classes of) cuspidal representations appearing in cuspidal support $\mathrm{cupp}(u_{\rho}(m,d))$. 

We will prove Theorem \ref{thm preserve extension} in Section \ref{ss proof of preserve ext}. We expect to prove a more general result elsewhere using some ideas from \cite{Ch18} (see Section \ref{ss generalize}) as well as the case mentioned here.

%One may also construct extensions between two irreducible representations, and use this to give another proof of Theorem \ref{thm preserve extension} for such case. In fact, that is also an original motivation for such formulation of Theorem \ref{thm preserve extension}. However, such approach will give a longer proof, but on the other hand, could possibly cover more cases (see Section \ref{ss generalize}). Some hints on those constructions can be found from the study of \cite{Ch18}.

The idea of the proof is to first prove for a large Speh representation (in the sense of Section \ref{ss fully-faith large Speh}). In such case, one can compute via some simpler computations of Jacquet modules and standard modules. The general case is deduced from 'truncating' large Speh representations to the desired one.

\begin{remark}

In general, a product does not preserve extensions even if it preserves irreducibility of composition factors. The standard example is that $\nu \times (1 \times \nu)$, which is of length $2$. In this case, $\nu \times \langle [1,\nu] \rangle$ and $\nu \times \mathrm{St}([1,\nu])$ are both irreducible, but $1\times \nu$ is indecomposable and $\nu\times (1 \times \nu)$ is semisimple.
\end{remark}

%\begin{remark} \label{rmk more general result}
%One may hope to improve the result from $\mathcal C_{m,d, \rho}$ to a larger set of isomorphism classes \[ \mathrm{Irr}^c\setminus \left\{ \nu^{-(d+m-2)/2-1} \rho, \nu^{(d+m-2)/2+1}\rho \right\} .\]
% Since we do not need such strong result for the application on branching laws, we will not prove that.
%\end{remark}

%%%%%%%%%%%%%%%%%%%%%%

%\subsection{Faithfulness of a product} \label{ss faith product}

%\begin{lemma} \label{lem faith product}
%Let $\pi \in \mathrm{Alg}(G_n)$. Let $\tau_1, \tau_2 \in \mathrm{Alg}(G_p)$. Then the functorial map (see Section \ref{ss product functor})
%\[ \mathrm{Hom}_{G_{p}}(\tau_1, \tau_2 ) \rightarrow \mathrm{Hom}_{G_{n+p}}(\pi \times \tau_1, \pi \times \tau_2) \]is injective.
%\end{lemma}

%\begin{proof}
%Let $f \in \mathrm{Hom}_{G_{p}}(\tau_1, \tau_2)$. Then there is a surjection from $\tau_1$ to $\mathrm{im}~f$. Since parabolic induction is exact, there is a surjection from $\pi \times \tau_1$ to $\pi \times \mathrm{im}~f$. Now the last surjection is zero $\Longleftrightarrow$ $\pi \times \mathrm{im}~f=0$ $\Longleftrightarrow$ $\mathrm{im}~f=0$.  
%\end{proof}
%%%%%%%%%%%%%%
\subsection{Jacquet functors} \label{sec jacquet functor st tri}
Recall that $N_p$ is the subgroup of $G_n$ containing all matrices $\begin{pmatrix} I_{n-p} & u \\ & I_p \end{pmatrix}$, where $u \in Mat_{n-p,p}$. 

Let $\Delta=[\nu^a \rho, \nu^b\rho]$ be a Zelevinsky segment. Let $m=n_{\rho}$. Then \cite[Propositions 3.4 and 9.5]{Ze80}, the Jacquet modules are:
\[  \langle \Delta \rangle_{N_{mi}}=[\nu^a\rho, \nu^{b-i}\rho] \boxtimes [\nu^{b-i+1}\rho, \nu^b \rho] .
\]
\[  \langle \Delta \rangle_{N_{mi}^-}=\langle [\nu^{a+i}\rho, \nu^b \rho]  \rangle \boxtimes \langle [\nu^a\rho, \nu^{a+i-1}\rho] \rangle
\]
\[  \mathrm{St}(\Delta)_{N_{mi}} =\mathrm{St}([\nu^{a+i}\rho, \nu^b\rho ]) \boxtimes \mathrm{St}([\nu^a\rho, \nu^{a+i-1}\rho])
\]
\[ \mathrm{St}(\Delta)_{N_{mi}^-} =\mathrm{St}([\nu^a\rho, \nu^{b-i}\rho] \boxtimes \mathrm{St}([\nu^{b-i+1}\rho, \nu^b\rho]) .
\]

Note that computing $\pi_{N_{i}^-}$ is equivalent to first computing $\pi_{N_{n-i}}$ to obtain a $G_{i}\times G_{n-i}$-representation, then twisting by the action by the element $\begin{pmatrix} 0 & I_i \\ I_{n-i} & 0 \end{pmatrix}$ to obtain a $G_{n-i} \times G_{i}$-representation.

\subsection{Fully-faith product for large Speh} \label{ss fully-faith large Speh}

%In this section, we prove the general case. As pointed out in Remark \ref{rmk general case}, the proof of Proposition \ref{prop indecomp isom new proof} can be modified to more general situation, but we decided to keep the previous section as it provide more concerte construction of extension. Describing extensions between isomorphic irreducible representations are more difficult and we will not do that here.

For $\rho \in \mathrm{Irr}^c(G_k)$, $d,m \in \mathbb{Z}_{\geq 1}$, let $\widetilde{\Delta}_{\rho}(d,k)=[\nu^{-(d-1)/2}\rho, \nu^{(d-1)/2+k}\rho ]$. We first consider 
\[ \widetilde{\mathfrak m}_{\rho}(m,d,k) =\left\{ \nu^{-(m-1)/2}\widetilde{\Delta}(d,k), \ldots , \nu^{(m-1)/2}\widetilde{\Delta}(d,k) \right\} .
\]
Let $\widetilde{u}_{\rho}(m,d,k)=\langle \widetilde{\mathfrak m}_{\rho}(m,d,k) \rangle$, which is sometimes called essentially Speh representation as it is a Speh representation twisted by a character. In particular, $\widetilde{u}_{\rho}(m,d,0)=u_{\rho}(m,d)$ if $\rho$ is unitarizable. 

%For simplicity, set $\widetilde{\Delta}_i=\nu^{-(d-1)/2+k+i-1} \Delta $.

\begin{lemma} \label{lem iso big}
Let $\pi_1, \pi_2$ be admissible representations of $G_n$. Fix $\rho \in \mathrm{Irr}^c$ and $d,m \in \mathbb{Z}_{\geq 1}$. For any $k \geq 0$, set $\widetilde{u}_k=\widetilde{u}_{\rho}(m,d,k)$. For $k$ large enough, we have a natural isomorphism:
\[  \mathrm{Hom}_{G_n}(\pi_1, \pi_2) \cong \mathrm{Hom}_{G_{n+p}}(\widetilde{u}_k \times \pi_1,  \widetilde{u}_k \times \pi_2) ,
\]
where $p=n_{\rho}m(d+k)$. Here naturality holds, when the isomorphism holds, for both $\pi_1$ and $\pi_2$ for the same $k$.
\end{lemma}

\begin{proof}
We set $k$ large enough such that $\nu^{(d-m)/2+k}\rho$ is not in the cuspidal supports of any irreducible representation of $\pi_1$ and $\pi_2$. 
%$k \geq (2m-2)/2+1$ so that $\nu^{-(m-1)/2}\widetilde{\Delta}(d,k)$ contains $\nu^{(d+m-2}/2+1}\rho$

Let $\mathfrak m=\widetilde{\mathfrak m}_{\rho}(m,d,k)$ and let $\widetilde u=\widetilde{u}_{\rho}(m,d,k)$. Using the injection:
\[ \widetilde u \times \pi_2= \langle \mathfrak m \rangle \times \pi_2 \hookrightarrow \zeta(\mathfrak m) \times \pi_2 ,
\]
the left exactness of $\mathrm{Hom}_{G_{n+p}}(\widetilde u \times \pi_1,.)$ gives
\begin{align} \label{eqn inject first}
  \mathrm{Hom}_{G_{n+p}}(\widetilde u \times \pi_1 ,\zeta(\mathfrak m) \times \pi_2) 
\hookleftarrow  & \mathrm{Hom}_{G_{n+p}} (\widetilde u \times \pi_1 ,  \widetilde u \times \pi_2 )
\end{align}

Let $\Delta=[\nu^{(-d+m)/2}\rho  , \nu^{(d+m-2)/2+k}\rho]$. Since $\zeta(\mathfrak m)=\langle \Delta \rangle \times \zeta(\mathfrak m\setminus \left\{ \Delta \right\})$, 
\[ \mathrm{Hom}_{G_{n+p}}(\langle \mathfrak m \rangle \times \pi_1, \zeta(\mathfrak m)\times \pi_2) \cong \mathrm{Hom}_{G_{n+p}}(\langle \mathfrak m \rangle \times \pi_1, \langle \Delta \rangle \times \pi' ),
\]
where $\pi'=\zeta( \mathfrak m \setminus \left\{\Delta \right\}) \times \pi_2$. 

Let $q=n_{\rho}m$. Now Frobenius reciprocity gives that
\[ \mathrm{Hom}_{G_{n+p}}(\langle \mathfrak m \rangle \times \pi_1, \langle \Delta \rangle \times \pi' )\cong \mathrm{Hom}_{G_q\times G_{n+p-q}}((\langle \mathfrak m \rangle \times \pi_1)_{N_{n+p-q}}, \langle \Delta \rangle \boxtimes \pi' ) .
\]
Note that $\nu^{(d+m-2)/2+k}\rho$ does not appear in the cuspidal support of irreducible factors of $\pi_1$. With some analysis on Jacquet module from the geometric lemma (see, for example the proof of Lemma \ref{lem taking jacquet form as st} below for more details), the only composition factor in $(\langle \mathfrak m \rangle \times \pi_1)_{N_{n+p-q}}$ that has the same cuspidal support as $\langle \Delta \rangle \boxtimes \pi'$ is 
\[   \langle \Delta \rangle \boxtimes \langle\mathfrak m \setminus \left\{ \Delta \right\} \rangle \times \pi_1 . 
\]
Thus we have
\[ \mathrm{Hom}(\langle \mathfrak m \rangle \times \pi_1, \langle \Delta \rangle \times \pi' )\cong \mathrm{Hom}(\langle \mathfrak m\setminus \left\{ \Delta \right\} \rangle \times \pi_1, \pi')=\mathrm{Hom}(\langle \mathfrak m' \rangle \times \pi_1, \zeta(\mathfrak m')\times \pi_2)  ,
\]
where $\mathfrak m'=\mathfrak m \setminus \left\{ \Delta \right\}$, and so
\[\mathrm{Hom}(\langle \mathfrak m \rangle \times \pi_1,\zeta(\mathfrak m)\times \pi_2)=\mathrm{Hom}(\langle \mathfrak m' \rangle \times \pi_1, \zeta(\mathfrak m')\times \pi_2)
\]  

Since $\nu^{(d+m-2)/2+k-1}\rho$ does not appear in the cuspidal support of $\pi'$ (when $k \geq 2$, otherwise we are done), we can repeat the similar process by replacing $\mathfrak m\setminus \left\{ \Delta \right\}$ with $\mathfrak m$. Inductively (which works by our choice of large $k$), we obtain
\[  \mathrm{Hom}_{G_{n+p}}(\langle \mathfrak m \rangle \times \pi_1, \zeta(\mathfrak m) \times \pi_2) \cong \mathrm{Hom}_{G_n}(\pi_1, \pi_2) 
\]
With (\ref{eqn inject first}),
\begin{align} \label{eqn inj second}
 \mathrm{Hom}_{G_n}(\pi_1, \pi_2) \hookleftarrow \mathrm{Hom}_{G_{n+p}} (\widetilde u \times \pi_1 ,  \widetilde u \times \pi_2 ) .
\end{align}

Viewing $\widetilde{u} \times $ as a functor and using the faithfulness of $\widetilde{u}\times$ (see Section \ref{ss product functor} below), we have that 
\begin{align} \label{eqn inj third} \mathrm{Hom}_{G_n}(\pi_1, \pi_2) \hookrightarrow \mathrm{Hom}_{G_{n+p}}( \widetilde{u}  \times \pi_1,  \widetilde{u}  \times \pi_2)
\end{align}
Since we are dealing with admissible representations, the injections in (\ref{eqn inj second}) and (\ref{eqn inj third}) must be isomorphisms. Hence, we have that:
\[  \mathrm{Hom}_{G_n}(\pi_1, \pi_2) \cong \mathrm{Hom}_{G_{n+p}}( \widetilde u \times \pi_1, \widetilde u \times \pi_2) . 
\]
\end{proof}

%We remark that the above lemma does not require $\pi_1$ and $\pi_2$ to be in $\mathrm{Alg}_{\mathcal C}(G_n)$. 

\begin{remark}
We remark that the above lemma does not require $\pi_1$ and $\pi_2$ to be in $\mathrm{Alg}_{\mathcal C}(G_n)$. In such case, $\widetilde{u}_{\rho}(m,d,k) \times \pi_1$ may have more complicated structure. For example, when $\pi_1$ has unique quotient, the cosocle of $\widetilde{u}_{\rho}(m,d,k) \times \pi$ may not be irreducible. We give an example here.

Let $\Delta=[\nu^{1/2}, \nu^k]$ for sufficiently large $k$. Let $\pi=\nu^{-1/2} \times \nu^{1/2}$, which is reducible with length $2$. Then 
\[  \langle \Delta \rangle \times \pi\]
has the quotient $\langle [\nu^{-1/2},\nu^k] \rangle \times \nu^{1/2}$ since $\langle \Delta \rangle \times \nu^{-1/2}$ has quotient $\langle [\nu^{-1/2},\nu^k] \rangle$, and has the quotient $\langle \Delta \rangle \times \mathrm{St}([\nu^{-1/2},\nu^{1/2}])$, which is irreducible (deduced from similar way as in \cite[Appendix]{Ch19}), since $\pi$ has the quotient $\mathrm{St}([\nu^{-1/2}, \nu^{1/2}])$.
\end{remark}

\subsection{Product for irreducibility}

We use the notations in the previous section.
\begin{lemma} \label{lem speh irred} \cite{LM16}
Fix $m,d$ and $\rho \in \mathrm{Irr}^c$. Let $\mathfrak m_1$ and $\mathfrak m_2$ be multisegments with each segment $\Delta$ satisfying that any cuspdial representation in $\Delta$ is in $\mathcal C_{u_{\rho}(m,d)}$. 
%\[ \Delta \subset  \left\{ \nu^{-(m+d-2)/2}\rho, \ldots , \nu^{(m+d-2)/2}\rho \right\} \cup (\mathrm{Irr}^c\setminus \mathrm{cupp}_{\mathbb{Z}}(\nu^{(m+d-2)/2}\rho)) .\]
Then, for any $k \geq 0$,
\begin{enumerate}
\item $\widetilde{u}_{\rho}(m,d,k) \times \langle \mathfrak m_i \rangle$ ($i=1,2$) is irreducible;
\item $\widetilde{u}_{\rho}(m,d,k) \times \langle \mathfrak m_1 \rangle \cong \widetilde{u}_{\rho}(m,d,k) \times \langle \mathfrak m_2 \rangle$ if and only if $\mathfrak m_1 =\mathfrak m_2$;
\item $\widetilde{u}_{\rho}(m,d,k) \times \langle \mathfrak m_i \rangle \cong \langle \mathfrak m_i \rangle \times \widetilde{u}_{\rho}(m,d,k)$, for $i=1,2$;
%\item Let $p=n_{\lambda}$ for $\lambda=\langle \mathfrak m_i\rangle$. Let $N=N_p$ and let $N^-=N_p^-$. Then there is a unique (up to isomorphism) irreducible smooth $G_p$-representation $\pi$ such that $\widetilde{u}_{\rho}(m,d,k) \boxtimes \pi$ 
\item Suppose $\omega$ be an irreducible representation of $G_{a+p}$. If $\widetilde{u}_{\rho}(m,d,k) \boxtimes \pi$ is an irreducible quotient of $\omega_{N}$, then $\omega \cong \widetilde{u}_{\rho}(m,d,k) \times \pi$. The statement also holds if we replace $\omega_N$ by $\omega_{N^-}$ as well as replace quotient by submodule. 
\end{enumerate} 
\end{lemma}

\begin{proof}
(1) and (2) follow from \cite[Corollary 6.7]{LM16}. We only sketch how to deduce from \cite[Appendix]{Ch19}. Using a modified version of a lemma in \cite[Appendix]{Ch19}, we have that 
\[  \theta(\zeta(\widetilde{\mathfrak m}_{\rho}(m,d,k)+\mathfrak m_i))^{\vee} \twoheadrightarrow u_{\rho}(m,d,k) \times \langle \mathfrak m_i \rangle  \hookrightarrow \zeta(\widetilde{\mathfrak m}_{\rho}(m,d,k)+\mathfrak m_i) ,\]
which forces that $\widetilde{u}_{\rho}(m,d,k)\times \langle \mathfrak m_i \rangle$ is the unique submodule of $\zeta(\widetilde{\mathfrak m}_{\rho}(m,d,k)+\mathfrak m_i)$.  (4) follows from Frobenius reciprocity and (1). (3) follows from the Gelfand-Kazhdan involution.

%Moreover, the Zelevinsky multisegment of $u_{\rho}(m,d) \times \langle \mathfrak m_i \rangle$ is equal to $\mathfrak m_{\rho}(m,d)+\mathfrak m_i$, which implies (1).
\end{proof}

\subsection{Proof of Theorem \ref{thm preserve extension}} \label{ss proof of preserve ext}

%\begin{proposition} \label{prop indecomp isom new proof}
%Let $\mathfrak m$ be a Zelevinsky multisegment for $n$. Let $\pi$ be an object in $\mathrm{Alg}_{\mathcal C}(G_n)$ with length $2$. Furthermore, suppose the two composition factors of $\pi$ are isomorphic to $\langle \mathfrak m \rangle$. Then $\pi$ is indecomposable if and only if $u_{\rho}(m,d) \times \pi$ is indecomposable.
%\end{proposition}

%\begin{proof}

We fix $\rho, d, m$. For simplicity, set $\widetilde{u}_k=u_{\rho}(m,d,k)$ for $k \geq 0$. Let $\Delta_{k+1}=[\nu^{{(m-d)/2+k+1}} \rho, \nu^{(m+d-2)/2+k+1}\rho]$. Let $\mathcal C$ be as in Theorem \ref{thm preserve extension} for such $\rho$, $d$ and $m$. 
%Let $n'=n+n_{\rho}m(d+k+1)$

\begin{lemma} \label{lem taking jacquet form as st}
 Let $p=n_{\rho}m$. Let $\pi'$ be an irreducible representation in $\mathrm{Alg}_{\mathcal C}(G_{n'})$. Let $n=n'+(d+k+1)mn_{\rho}$. There is a unique irreducible composition factor $\omega$ in
\[(\mathrm{St}(\Delta_{k+1}) \times \widetilde{u}_{k} \times \pi')_{N_{n-p}^-}\] which is isomorphic to $\mathrm{St}(\Delta_{k+1}) \boxtimes \tau$ for some irreducible $\tau$ of $G_{n-p}$, and moreover, 
\[\omega \cong \mathrm{St}(\Delta_{k+1}) \boxtimes (\widetilde{u}_k\times \pi'). \] 
\end{lemma}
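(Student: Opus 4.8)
The plan is to compute the Jacquet module directly from the geometric lemma, using that $b(\Delta_{k+1})$ is \emph{maximal} among the cuspidal constituents on the $\rho$-line occurring in $\mathrm{St}(\Delta_{k+1})\times\widetilde u_k\times\pi'$. Set $\rho_+:=\nu^{(m+d-2)/2+k+1}\rho$, so that $\rho_+=b(\Delta_{k+1})$. I would first record two preliminary facts. (i) The representation $\widetilde u_k\times\pi'$ is \emph{irreducible}: since $\pi'\in\mathrm{Alg}_{\mathcal C}(G_{n'})$ with $\mathcal C=\mathcal C_{u_\rho(m,d)}$, it is $\langle\mathfrak m\rangle$ for a multisegment $\mathfrak m$ all of whose segments have cuspidal support inside $\mathcal C$, so Lemma~\ref{lem speh irred}(1) applies (with $\widetilde u_k=\widetilde u_\rho(m,d,k)$). (ii) $\rho_+\notin\mathrm{cupp}(\widetilde u_k\times\pi')$: the essentially Speh representation $\widetilde u_k$ has cuspidal support on the $\rho$-line consisting of $\nu^j\rho$ with $j\le(m+d-2)/2+k$, while membership of $\pi'$ in $\mathrm{Alg}_{\mathcal C}(G_{n'})$ forces every element of $\mathrm{cupp}(\pi')$ on the $\rho$-line to be $\nu^j\rho$ with $|j|\le(m+d-2)/2$; since $\rho_+$ has exponent $(m+d-2)/2+k+1$, it occurs in $\mathrm{cupp}(\mathrm{St}(\Delta_{k+1})\times\widetilde u_k\times\pi')$ only through $\mathrm{St}(\Delta_{k+1})$, and there with multiplicity one.

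Next I would run the geometric lemma on $(\mathrm{St}(\Delta_{k+1})\times(\widetilde u_k\times\pi'))_{N_{n-p}^-}$, using the reinterpretation of $(-)_{N_i^-}$ in terms of $(-)_{N_{n-i}}$ recalled in Section~\ref{sec jacquet functor st tri}. This exhibits a filtration whose subquotients are, as $G_p\times G_{n-p}$-representations, of the form $[\,\sigma_1'\times\sigma_2'\,]\boxtimes[\,\sigma_1''\times\sigma_2''\,]$, where $\sigma_1'\boxtimes\sigma_1''$ is a subquotient of a Jacquet module of $\mathrm{St}(\Delta_{k+1})$ and $\sigma_2'\boxtimes\sigma_2''$ of the complementary Jacquet module of $\widetilde u_k\times\pi'$, and where — this is the point to get right — because of the opposite unipotent the piece $\sigma_1'$ of $\mathrm{St}(\Delta_{k+1})$ entering the $G_p$-slot is the Jacquet factor supported on an \emph{initial} subsegment. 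Concretely, by $\mathrm{St}(\Delta)_{N_{mi}}=\mathrm{St}([\nu^{a+i}\rho,\nu^b\rho])\boxtimes\mathrm{St}([\nu^a\rho,\nu^{a+i-1}\rho])$ from Section~\ref{sec jacquet functor st tri}, $\sigma_1'\cong\mathrm{St}(\Delta')$ for an initial subsegment $\Delta'$ of $\Delta_{k+1}$ (with $\Delta'=\Delta_{k+1}$ allowed); hence $b(\Delta_{k+1})=\rho_+\in\mathrm{cupp}(\sigma_1')$ holds only when $\Delta'=\Delta_{k+1}$, in which case the Jacquet module of $\mathrm{St}(\Delta_{k+1})$ being used is the trivial one.

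Now suppose a subquotient $\omega\cong\mathrm{St}(\Delta_{k+1})\boxtimes\tau$ occurs, for some irreducible $\tau$. Then $\mathrm{cupp}(\sigma_1'\times\sigma_2')=\mathrm{cupp}(\mathrm{St}(\Delta_{k+1}))$, so $\rho_+\in\mathrm{cupp}(\sigma_1')\cup\mathrm{cupp}(\sigma_2')$; since $\mathrm{cupp}(\sigma_2')\subseteq\mathrm{cupp}(\widetilde u_k\times\pi')$ does not contain $\rho_+$ by (ii), we get $\rho_+\in\mathrm{cupp}(\sigma_1')$, hence $\sigma_1'\cong\mathrm{St}(\Delta_{k+1})$ by the previous paragraph. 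For that index in the geometric lemma the remaining data is forced on sizes (both the Steinberg side and the $\widetilde u_k\times\pi'$ side must contribute their trivial Jacquet module into the other slot), so this single subquotient equals the irreducible representation $\mathrm{St}(\Delta_{k+1})\boxtimes(1\times(\widetilde u_k\times\pi'))\cong\mathrm{St}(\Delta_{k+1})\boxtimes(\widetilde u_k\times\pi')$, and it occurs with multiplicity one. This yields simultaneously existence, uniqueness, and the identification $\omega\cong\mathrm{St}(\Delta_{k+1})\boxtimes(\widetilde u_k\times\pi')$.

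The step requiring the most care is the geometric-lemma bookkeeping for the opposite unipotent $N_{n-p}^-$: one has to verify that the constituent of $\mathrm{St}(\Delta_{k+1})$ which can land in the $G_p$-slot is an \emph{initial} subsegment (equivalently, the Jacquet factor \emph{not} carrying $b(\Delta_{k+1})$, apart from the degenerate full segment), since it is precisely this, together with the maximality of the exponent of $\rho_+$, that kills every ``mixing'' term. The other place where the hypotheses enter essentially is fact (ii), namely the bound on $\mathrm{cupp}(\pi')$ coming from $\pi'\in\mathrm{Alg}_{\mathcal C}(G_{n'})$; without that condition on $\pi'$ the statement would fail.
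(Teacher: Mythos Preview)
Your proof is correct and follows essentially the same route as the paper's: both set $\lambda=\widetilde u_k\times\pi'$ (irreducible by Lemma~\ref{lem speh irred}), observe that $\nu^{(m+d-2)/2+k+1}\rho$ is absent from $\mathrm{cupp}(\lambda)$, compute $(\mathrm{St}(\Delta_{k+1})\times\lambda)_{N_{n-p}^-}$ via $(\cdot)_{N_p}$ and the twist, and use the geometric lemma together with the explicit Steinberg Jacquet modules of Section~\ref{sec jacquet functor st tri} to see that the only layer whose $G_p$-factor can carry the top exponent $\rho_+$ is the one with $l=b$, namely $\mathrm{St}(\Delta_{k+1})\boxtimes\lambda$. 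Your write-up is a bit more explicit about the ``initial subsegment'' bookkeeping for the opposite unipotent, but the argument is the same.
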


\begin{proof}

For simplicity, set $\lambda=\widetilde{u}_k \times \pi'$, which is irreducible by Lemma \ref{lem speh irred}. Note that $\nu^{(m+d-2)/2+k+1}\rho$ is not in the cuspidal support of $\widetilde{u}_{k}\times \pi'$. To compute $(\mathrm{St}(\Delta_{k+1}) \times \lambda)_{N_{n-p}^-}$, we first compute
\[  (  \mathrm{St}(\Delta_{k+1}) \times \lambda )_{N_p}
\]
(see discussions in Section \ref{sec jacquet functor st tri}), and then twisting the action by an element. Then geometric lemma on $( \mathrm{St}(\Delta_{k+1}) \times \lambda)_{N_p}$ yields a filtration successive quotients of the form 
\[   \mathrm{St}([\nu^{l+1}\rho, \nu^b\rho])\times \omega  \boxtimes  \mathrm{St}([\nu^a\rho, \nu^l]) \times \omega'  .
\]
and this gives a filtration on $(\widetilde{u}_k \times \mathrm{St}(\Delta_{k+1}))_{N_{n-p}^-}$ with successive quotients taking the form
\begin{align}\label{eqn form gl}   \mathrm{St}([\nu^a\rho, \nu^{l}\rho])  \times \omega' \boxtimes \mathrm{St}(\nu^{l+1}\rho, \nu^b\rho]) \times \omega .
\end{align}
Here $\omega$ and $\omega'$ are representations whose cuspidal supports do not contain $\nu^{(m+d-2)/2+k+1}\rho$. Thus an irreducible composition factor $\gamma$ of $(\mathrm{St}(\Delta_{k+1}) \times \lambda )_{N_{n-p}^-}$ can take the form $\mathrm{St}(\Delta_{k+1})\boxtimes \tau$ only if $l =b$ in (\ref{eqn form gl}). In such case, the successive quotient from geometric lemma is irreducible and is isomorphic to $\gamma\cong \mathrm{St}(\Delta_{k+1})\boxtimes \lambda$. 
\end{proof}

\begin{lemma} \label{lem surjection steinberg rep}
There exists a surjection from $\mathrm{St}(\Delta_{k+1}) \times \widetilde{u}_k$ to $\widetilde{u}_{k+1}$.
\end{lemma}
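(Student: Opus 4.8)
The plan is to realize $\widetilde{u}_k$ and $\widetilde{u}_{k+1}$ as the Langlands (equivalently Zelevinsky) quotients of two standard modules built from generalized Steinberg representations, arranged so that the standard module attached to $\widetilde{u}_{k+1}$ is obtained from the one attached to $\widetilde{u}_k$ simply by prepending the factor $\mathrm{St}(\Delta_{k+1})$; a short cosocle argument then yields the surjection.

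Concretely, I would first record the identification $\widetilde{u}_{\rho}(m,d,k) \cong \nu^{k/2} u_{\rho}(m,d+k)$, which is immediate by comparing the multisegments $\widetilde{\mathfrak m}_{\rho}(m,d,k)$ and $\{\nu^{j}\Delta_{\rho}(d+k)\}_j$ segment by segment. Combining this with Lemma \ref{lem st ze transfer} and the description of $v_{\rho}(m,e)$ in Section \ref{ss rep the reform}, one obtains that $\widetilde{u}_k$ is the unique irreducible quotient of the standard module
\[ \widetilde{S}_k := \mathrm{St}(\nu^{(d+2k-1)/2}\Delta_{\rho}(m)) \times \mathrm{St}(\nu^{(d+2k-3)/2}\Delta_{\rho}(m)) \times \cdots \times \mathrm{St}(\nu^{(1-d)/2}\Delta_{\rho}(m)), \]
a product of $d+k$ essentially square-integrable representations with strictly decreasing exponents, and likewise $\widetilde{u}_{k+1}$ is the unique irreducible quotient of the analogous product $\widetilde{S}_{k+1}$ with $d+k+1$ factors. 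A comparison of exponents then shows $\widetilde{S}_{k+1} \cong \mathrm{St}(\nu^{(d+2k+1)/2}\Delta_{\rho}(m)) \times \widetilde{S}_k$, and the prepended leading factor is exactly $\mathrm{St}(\Delta_{k+1})$.

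The surjection now comes from the standard manipulation with Langlands quotients. The quotient map $\widetilde{S}_k \twoheadrightarrow \widetilde{u}_k$, together with exactness of parabolic induction, produces a surjection $q \colon \widetilde{S}_{k+1} \cong \mathrm{St}(\Delta_{k+1}) \times \widetilde{S}_k \twoheadrightarrow \mathrm{St}(\Delta_{k+1}) \times \widetilde{u}_k$. Since $\widetilde{S}_{k+1}$ has $\widetilde{u}_{k+1}$ as its unique irreducible quotient, it has a unique maximal proper submodule, and $\ker q$, being a proper submodule, is contained in it. Hence the canonical surjection $\widetilde{S}_{k+1} \twoheadrightarrow \widetilde{u}_{k+1}$ factors through $q$, giving the desired surjection $\mathrm{St}(\Delta_{k+1}) \times \widetilde{u}_k \twoheadrightarrow \widetilde{u}_{k+1}$.

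The only step with any content is the first one — correctly matching the Zelevinsky data of $\widetilde{u}_k$, $\widetilde{u}_{k+1}$ and $\mathrm{St}(\Delta_{k+1})$ so that $\widetilde{S}_{k+1}$ really is obtained from $\widetilde{S}_k$ by prepending $\mathrm{St}(\Delta_{k+1})$; this is purely a bookkeeping check on the shifts of $\Delta_{\rho}(m)$, and I do not expect a genuine obstacle. The module-theoretic conclusion is just the familiar fact that the Langlands quotient of a standard module is a quotient of its leading factor tensored with the Langlands quotient of the remaining standard module, and Lemma \ref{lem taking jacquet form as st} plays no role here.
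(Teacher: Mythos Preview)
Your proposal is correct and follows essentially the same route as the paper: both realize $\widetilde{u}_k$ and $\widetilde{u}_{k+1}$ via Lemma \ref{lem st ze transfer} as the unique irreducible quotients of standard modules $\widetilde{S}_k$ and $\widetilde{S}_{k+1}=\mathrm{St}(\Delta_{k+1})\times \widetilde{S}_k$, and then use the uniqueness of the Langlands quotient to factor the surjection $\widetilde{S}_{k+1}\twoheadrightarrow \widetilde{u}_{k+1}$ through $\mathrm{St}(\Delta_{k+1})\times \widetilde{u}_k$. Your cosocle formulation is just a slightly more explicit phrasing of the paper's one-line observation that $\mathrm{St}(\Delta_{k+1})\times \widetilde{u}_k$ inherits the same unique irreducible quotient as $\widetilde{S}_{k+1}$.
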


\begin{proof}
Let $\Delta=\Delta_{k+1}$. It follows from Lemma \ref{lem st ze transfer} that there is a surjection
\[   \tau:= \mathrm{St}(\Delta) \times \mathrm{St}(\nu^{-1}\Delta) \times \ldots \times \mathrm{St}(\nu^{-(d+k)}\Delta) \rightarrow \widetilde{u}_{k+1},
\]
and similarly, $\tau' :=  \mathrm{St}(\nu^{-1}\Delta) \times \ldots \times \mathrm{St}(\nu^{-(d+k)}\Delta) \rightarrow \widetilde{u}_{k}$. By uniqueness of the irreducible quotient for $\tau$, we then also have that $\mathrm{St}(\Delta) \times \widetilde{u}_k$ has the same unique irreducible quotient as $\tau$. This gives surjections
\[  \tau = \mathrm{St}(\Delta)\times \tau' \twoheadrightarrow \mathrm{St}(\Delta) \times \widetilde{u}_k \twoheadrightarrow \widetilde{u}_{k+1}.
\]
\end{proof}

\begin{lemma} \label{lem zero hom kernel}
Let $K$ be the kernel of the surjection in Lemma \ref{lem surjection steinberg rep}. For any $\pi$ in $\mathrm{Alg}_{\mathcal C}(G_{n'})$ and any $\pi'$ in $\mathrm{Alg}_{\mathcal C}(G_{n'})$, 
\[  \mathrm{Hom}(K \times \pi, \widetilde{u}_{k+1} \times \pi') =0 .
\]
\end{lemma}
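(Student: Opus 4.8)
The plan is to peel off the two arguments to the irreducible case and then reduce everything to a multiplicity bound in the Grothendieck group.

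\emph{Reduction to irreducible $\pi,\pi'$.} Since $K$ is a subobject of $\mathrm{St}(\Delta_{k+1})\times\widetilde{u}_k$ it has finite length, so every $K\times\pi$ in sight has finite length and all Hom-spaces are finite dimensional. If $0\to\pi_1\to\pi\to\pi_2\to 0$ is exact, exactness of $(-)\times(-)$ in the second variable and left-exactness of $\mathrm{Hom}(-,\widetilde{u}_{k+1}\times\pi')$ give the vanishing for $\pi$ from that for $\pi_1,\pi_2$; likewise, using $0\to\pi_1'\to\pi'\to\pi_2'\to 0$, exactness of $\widetilde{u}_{k+1}\times(-)$ and left-exactness of $\mathrm{Hom}(K\times\pi,-)$ give it for $\pi'$ from $\pi_1',\pi_2'$. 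As $\mathrm{Alg}_{\mathcal C}$ is closed under subquotients, induction on lengths reduces to $\pi,\pi'$ irreducible in $\mathrm{Alg}_{\mathcal C}$.

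\emph{Reduction to a multiplicity bound.} Assume $\pi,\pi'$ irreducible in $\mathrm{Alg}_{\mathcal C}$. By \ref{lem speh irred}(1) both $\widetilde{u}_{k+1}\times\pi$ and $\widetilde{u}_{k+1}\times\pi'$ are irreducible, and by \ref{lem speh irred}(2) they are isomorphic iff $\pi\cong\pi'$. Put $E=\mathrm{St}(\Delta_{k+1})\times\widetilde{u}_k\times\pi$; tensoring the surjection of \ref{lem surjection steinberg rep} with $\pi$ gives $0\to K\times\pi\to E\to\widetilde{u}_{k+1}\times\pi\to 0$, hence $[K\times\pi]=[E]-[\widetilde{u}_{k+1}\times\pi]$ in the Grothendieck group. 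A nonzero map $K\times\pi\to\widetilde{u}_{k+1}\times\pi'$, the target being irreducible, is surjective, so $\widetilde{u}_{k+1}\times\pi'$ is then a composition factor of $K\times\pi$; since $\widetilde{u}_{k+1}\times\pi$ is irreducible, $[\widetilde{u}_{k+1}\times\pi:\widetilde{u}_{k+1}\times\pi']=\delta_{\pi,\pi'}$, so it is enough to prove
\[ [\,\mathrm{St}(\Delta_{k+1})\times\widetilde{u}_k\times\pi \,:\, \widetilde{u}_{k+1}\times\pi'\,]\ \le\ \delta_{\pi,\pi'} \qquad\text{for all irreducible }\pi'\text{ in }\mathrm{Alg}_{\mathcal C}. \]

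\emph{The multiplicity bound via Jacquet modules.} Let $P$ be the parabolic of $G_n$ with Levi $G_{\,n_\rho m(d+k+1)}\times G_{n'}$, $n'=n_\pi$, so that $\widetilde{u}_{k+1}\times\pi'=\mathrm{Ind}_P^{G_n}(\widetilde{u}_{k+1}\boxtimes\pi')$ and the first block already matches the $\mathrm{St}(\Delta_{k+1})\times\widetilde{u}_k$-block inside $E$. By Frobenius reciprocity $\mathrm{Hom}_{G_n}(-,\widetilde{u}_{k+1}\times\pi')\cong\mathrm{Hom}((-)_{N_P},\widetilde{u}_{k+1}\boxtimes\pi')$ on the Levi, and $(-)_{N_P}$ is exact; combined with \ref{lem speh irred}(4) (which identifies $\widetilde{u}_{k+1}\times\pi'$ as the only irreducible subquotient of $E$ whose $N_P$-Jacquet module surjects onto $\widetilde{u}_{k+1}\boxtimes\pi'$) the bound follows once one shows $[E_{N_P}:\widetilde{u}_{k+1}\boxtimes\pi']\le\delta_{\pi,\pi'}$. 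Applying the geometric lemma to $\mathrm{St}(\Delta_{k+1})\boxtimes\widetilde{u}_k\boxtimes\pi$, the big cell contributes $(\mathrm{St}(\Delta_{k+1})\times\widetilde{u}_k)\boxtimes\pi$, i.e.\ $[\mathrm{St}(\Delta_{k+1})\times\widetilde{u}_k:\widetilde{u}_{k+1}]\cdot[\pi:\pi']=\delta_{\pi,\pi'}$ (since $\widetilde{u}_{k+1}$ is the cosocle of $\mathrm{St}(\Delta_{k+1})\times\widetilde{u}_k$ with multiplicity one, and $\pi,\pi'$ are irreducible). For every other cell a cuspidal line of $\mathrm{cupp}(\mathrm{St}(\Delta_{k+1})\times\widetilde{u}_k)=\mathrm{cupp}(\widetilde{u}_{k+1})$ is pushed into the $G_{n'}$-block while a piece of $\mathrm{cupp}(\pi)$ is pulled up; as $\mathrm{cupp}(\pi),\mathrm{cupp}(\pi')\subset\mathcal C$ and $\mathcal C$ omits $\nu^a\rho$ for $|a|>\tfrac{m+d-2}{2}$ — in particular the endpoint $\nu^{(d+m-2)/2+k+1}\rho$ of $\Delta_{k+1}$ (and, for $k\ge1$, a further run $\nu^{(d+m-2)/2+k}\rho,\dots$) — a cuspidal-support count forces these extremal cuspidals to remain in, and to attach within, the first block in the unique way dictated by $\mathrm{St}(\Delta_{k+1})$; tracking this exactly as in the proof of \ref{lem taking jacquet form as st}, and using that $\widetilde{u}_{k+1}=\langle\widetilde{\mathfrak m}_{\rho}(m,d,k+1)\rangle$ is the maximally combined constituent of $\mathrm{St}(\Delta_{k+1})\times\widetilde{u}_k$, one sees no such cell can carry $\widetilde{u}_{k+1}$ in its first factor, giving $[E_{N_P}:\widetilde{u}_{k+1}\boxtimes\pi']=\delta_{\pi,\pi'}$.

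\emph{Main obstacle.} The delicate step is the last one: disentangling the non-big cells of the geometric lemma for $E_{N_P}$ and excluding, for each, a first-factor constituent isomorphic to $\widetilde{u}_{k+1}$. The extremal-cuspidal argument disposes of the top segment of $\widetilde{u}_{k+1}$ cleanly, but one must still control the remaining $m-1$ segments using the explicit Jacquet-module formulas for $\mathrm{St}(\cdot)$ and $\langle\cdot\rangle$ recalled in \ref{sec jacquet functor st tri} together with \ref{lem speh irred}(4); the borderline case $k=0$ (where $\rho_{\mathrm{top}}$ is the only out-of-$\mathcal C$ cuspidal in $\mathrm{cupp}(\widetilde{u}_1)$) needs a more careful comparison of multisegments, exploiting that every segment of $\pi$ lies in $\mathcal C$. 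A slightly more hands-on alternative is to replace $(-)_{N_P}$ by the iterated Jacquet functor that first strips off the $\mathrm{St}(\Delta_{k+1})$-block — precisely the functor analysed in \ref{lem taking jacquet form as st} — and to run the bookkeeping there, trading one large geometric-lemma computation for two smaller ones.
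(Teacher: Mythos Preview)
Your ``slightly more hands-on alternative'' at the very end is precisely the paper's proof, and it is considerably simpler than your primary route. The paper does not pass through the multiplicity bound $[E:\widetilde u_{k+1}\times\pi']\le\delta_{\pi,\pi'}$ at all; instead it takes the \emph{opposite} Jacquet functor $(-)_{N^-}$ that strips off only the $\mathrm{St}(\Delta_{k+1})$-block (size $n_\rho m$), not the whole $\widetilde u_{k+1}$-block. Lemma~\ref{lem taking jacquet form as st} then says that the \emph{unique} composition factor of $E_{N^-}$ of the form $\mathrm{St}(\Delta_{k+1})\boxtimes\tau$ is $\mathrm{St}(\Delta_{k+1})\boxtimes(\widetilde u_k\times\pi)$, and the second-adjointness map $\mathrm{St}(\Delta_{k+1})\boxtimes(\widetilde u_k\times\pi)\hookrightarrow(\widetilde u_{k+1}\times\pi)_{N^-}$ shows that this single factor survives in the quotient. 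Hence $(K\times\pi)_{N^-}$ has \emph{no} composition factor with first coordinate $\mathrm{St}(\Delta_{k+1})$, whereas $(\widetilde u_{k+1}\times\pi')_{N^-}$ always does (for $\pi'$ irreducible, by Frobenius reciprocity). A nonzero map $K\times\pi\to\widetilde u_{k+1}\times\pi'$ would be surjective and exactness of the Jacquet functor gives a contradiction.

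The reason this is so much cleaner than your $N_P$-route is exactly the point you isolate as the obstacle: the cuspidal $\nu^{(m+d-2)/2+k+1}\rho$ occurs in $E$ with multiplicity one and only inside $\mathrm{St}(\Delta_{k+1})$, so for the small block there is nothing to disentangle---the extremal-cuspidal argument already pins down the entire $\mathrm{St}(\Delta_{k+1})$ factor in one stroke. By contrast, your parabolic $P$ asks the first block to carry all of $\widetilde u_{k+1}$, whose cuspidal support overlaps heavily with $\mathcal C$ (only the top segment sticks out, and for $k=0$ only a single cuspidal); the non-big cells of the geometric lemma can then genuinely mix $\mathrm{cupp}(\pi)$ with $\mathrm{cupp}(\widetilde u_k)$, and excluding $\widetilde u_{k+1}$ from the first factor of each such piece requires nontrivial control of the Jacquet modules of $\widetilde u_k$ that the paper never needs. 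Your reduction to irreducible $\pi,\pi'$ and your use of Lemma~\ref{lem speh irred}(4) are fine, but note that (4) only controls \emph{quotients} of $\omega_{N_P}$, not arbitrary subquotients, so your inequality $[E:\widetilde u_{k+1}\times\pi']\le[E_{N_P}:\widetilde u_{k+1}\boxtimes\pi']$ can in principle be strict---another reason the small-block approach is preferable.
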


\begin{proof}
Let $\Delta=\Delta_{k+1}$. We have the following short exact sequence:
\[  0 \rightarrow K \rightarrow \mathrm{St}(\Delta) \times \widetilde{u}_k \rightarrow \widetilde{u}_{k+1} \rightarrow 0 ,
\]
which gives the short exact sequence:
\[ 0 \rightarrow K \times \pi \rightarrow \mathrm{St}(\Delta) \times \widetilde{u}_k \times \pi \rightarrow \widetilde{u}_{k+1} \times \pi \rightarrow 0 .
\]
Let $N^-=N^-_{n'+n_{\rho}m(d+k)}$. The Jacquet functor is exact and so we have another short exact sequence:
\begin{align} \label{eqn exact after jacquet }
  0 \rightarrow (K \times \pi)_{N^-} \rightarrow (\mathrm{St}(\Delta) \times \widetilde{u}_k \times \pi)_{N^-}  \rightarrow (\widetilde{u}_{k+1}\times \pi)_{N^-}  \rightarrow 0 .
\end{align}
Now,  by second adjointness of Frobenius reciprocity, we have a map 
\[   \mathrm{St}(\Delta) \boxtimes (\widetilde{u}_k \times \pi) \rightarrow (\widetilde{u}_{k+1}\times \pi)_{N^-} .
\]
The map is indeed injective. This follows first from the case that $\pi$ is irreducible by using irreducibility of $\widetilde{u}_k\times \pi$ (Lemma \ref{lem speh irred}), and then lift to the general case by an inductive argument using functoriality of Frobenius reciprocity. (One can also prove the map is injective by directly computing the composition factors of $(\widetilde{u}_{k+1}\times \pi)_{N^-}$ taking the form $\mathrm{St}(\Delta)\boxtimes \tau$, see the proof of Lemma \ref{lem taking jacquet form as st}.)

 Now by Lemma \ref{lem taking jacquet form as st} and counting on composition factors, all irreducible composition factors of the form $\mathrm{St}(\Delta)\boxtimes \tau$ in $(\mathrm{St}(\Delta) \times \widetilde{u}_k \times \pi)_{N^-}$ are mapped onto $(\widetilde{u}_{k+1}\times \pi)_{N^-}$ under the surjection map in (\ref{eqn exact after jacquet }). 

Thus there is no irreducible composition factor of $(K\times \pi)_{N^-}$ taking the form $\mathrm{St}(\Delta)\boxtimes \tau$. On the other hand, for any irreducible $\pi'$, $(\widetilde{u}_{k+1}\times \pi')_{N^-}$ has irreducible composition factor of the form $\mathrm{St}(\Delta) \boxtimes \tau$, which can be deduced by an argument using Frobenius reciprocity. Hence, following from the exactness of Jacquet functor (and Lemma \ref{lem speh irred} (1)), we must have
\[ \mathrm{Hom}(K \times \pi, \widetilde{u}_{k+1}\times \pi') =0 .
\]
\end{proof}

\noindent
{\it Proof of Theorem \ref{thm preserve extension}.} 
We keep using the above notations. Let $\pi \in \mathrm{Alg}_{\mathcal C}(G_n)$ of length $2$. The if direction is easy and so we now consider the only if direction. Suppose $\pi$ is indecomposable. We shall use backward induction to prove that, for any $k \geq 0$, $\widetilde{u}_k \times \pi$ is indecomposable, and moreover $\widetilde{u}_k \times \pi$ has unique irreducible quotient. When $k$ is sufficiently large, Lemma \ref{lem speh irred} implies that $\widetilde{u}_k\times \pi$ has length 2, and Lemma \ref{lem iso big} (and Lemma \ref{lem speh irred} (2)) imply the uniqueness of the quotient, which also then implies the indecomposability.

% the indecomposability follows from Lemma \ref{lem iso big}, and the unique quotient is proved by using Lemma \ref{lem speh irred}  and the length 2 condition.

%To this end, we consider the following short exact sequence:
%\[ 0 \rightarrow K  \rightarrow \mathrm{St}(\Delta) \times  \widetilde{u}_{k} \rightarrow \widetilde{u}_{k+1}  \rightarrow 0 ,\]
%where $K$ is the kernel. \\

Let $\pi_1$ and $\pi_2$ be the two irreducible composition factors of $\pi$. Let $\lambda_i=\widetilde{u}_k \times \pi_i$ ($i=1,2$). $\lambda_1$ and $\lambda_2$ are irreducible, and $\pi_1 \cong \pi_2$ $\Leftrightarrow$ $\lambda_1 \cong \lambda_2$  by Lemma \ref{lem speh irred}. 

Suppose $\widetilde{u}_{k} \times \pi$ is not indecomposable. Let $\Delta=\Delta_{k+1}$. This gives an isomorphism 
\[           \widetilde{u}_{k}\times \pi \cong \lambda_1 \oplus \lambda_2 .
\]
and so there exists surjections, by Lemma \ref{lem surjection steinberg rep},
\[  \mathrm{St}(\Delta) \times \widetilde{u}_{k}\times \pi \cong \mathrm{St}(\Delta) \times \lambda_1 \oplus \mathrm{St}(\Delta)\times \lambda_2 \rightarrow \widetilde{u}_{k+1} \times \pi_1 \oplus \widetilde{u}_{k+1} \times \pi_2
\]
This implies that:
\begin{enumerate}
\item if $\lambda_1 \not\cong \lambda_2$, then for {\it both} $i=1,2$, 
\[ \mathrm{Hom}_G( \mathrm{St}(\Delta) \times \widetilde{u}_{k}\times \pi, \widetilde{u}_{k+1}\times \pi_i) \neq 0 ;\]
\item if $\lambda_1 \cong \lambda_2$, then 
\[ \mathrm{dim}~\mathrm{Hom}_G(\mathrm{St}(\Delta) \times \widetilde{u}_k \times \pi, \widetilde{u}_{k+1}\times \pi_1) \geq 2 .
\]
\end{enumerate}

%Since $\mathrm{St}(\Delta) \times \lambda$ has a quotient $\widetilde{u}_k \times \langle \mathfrak m \rangle$, we have
%\[ \mathrm{dim}~ \mathrm{Hom}(\mathrm{St}(\Delta) \times \widetilde{u}_{k-1} \times \pi, \widetilde{u}_k \times \langle \mathfrak m \rangle) =2 .\]

On the other hand, we have the following short exact sequence from Lemma \ref{lem surjection steinberg rep}:
\[  0 \rightarrow K \times \pi \rightarrow       \mathrm{St}(\Delta) \times \widetilde{u}_{k} \times \pi \rightarrow \widetilde{u}_{k+1} \times \pi \rightarrow 0 .
\]
By Lemma \ref{lem zero hom kernel}, $\mathrm{Hom}(K\times \pi, \widetilde{u}_{k+1} \times \pi_i) =0$ for $i=1,2$. Hence we have
\[ \mathrm{Hom}( \widetilde{u}_{k+1}\times \pi, \widetilde{u}_{k+1}  \times \pi_i)\cong  \mathrm{Hom}(\mathrm{St}(\Delta)\times \widetilde{u}_{k} \times \pi, \widetilde{u}_{k+1} \times \pi_i) .
\]
However, by induction hypothesis and irreducibility of $\widetilde{u}_{k+1}\times \pi_i$, the former Hom has dimension one for both $i=1$ or $2$ if $\lambda_1 \cong \lambda_2$, and has dimension one for precisely one of $i=1,2$ if $\lambda_1 \not\cong \lambda_2$. This gives a contradiction to (1) or (2) above. Thus $\widetilde{u}_{k}\times \pi$ is indecomposable as desired, and since $\widetilde{u}_k\times \pi$ has length $2$, it also has unique irreducible quotient. This completes the proof.

%\begin{remark}
%The argument will not work to conclude the uniqueness of irreducible quotient if we do not consider $\pi$ to be in $\mathcal C$. See 
%\end{remark}

%\end{proof}

%\begin{remark} \label{rmk general case}
%Indeed, the above argument also works for other indecomposable representations of finite lengths, which can be used to give another proof of Theorem \ref{thm fully faith product}. We give another argument to deduce from length two case. It may hope that such approach can be more applicable to other situation.
%\end{remark}

\section{Product functor of a Speh representation} \label{s product functor}

%\subsection{Extensions}

%In this section, we recall some background on Yoneda extensions. Let $\pi_1$ and $\pi_2$ be in $\mathrm{Alg}(G_n)$. The Yoneda extension group $\mathrm{YExt}^1_G(\pi_1, \pi_2)$ is the set of equivalence classes of short exact sequences. It has the Baer sum as the multiplicative structure, and the splitting sequence as the zero element. 

%Elements in $\mathrm{Ext}^1_G(\pi_1, \pi_2)$ correspond to the short exact sequences in the Yoneda extension group. Thus in order to show the injectivity, it is equivalent to show that for a representation $\pi$ in $\mathcal C$ with exactly two composition factors $\pi_1$ and $\pi_2$, $\pi$ is indecomposable if and only if $u_{\rho}(d,m) \times \pi$ is indecomposable.

\subsection{Fully-faithful product} \label{ss product functor}

For an irreducible cuspidal representation $\rho$ of some $G_k$, define $\mathrm{cupp}_{\mathbb Z}(\rho)=\left\{ \nu^n\rho\right\}_{n\in \mathbb{Z}}$. 

Let $\pi \in \mathrm{Alg}_{\mathcal C}(G_p)$. Define the functor 
\[ \times_{\pi, \mathcal C}=\times_{\pi, \mathcal C, n}: \mathrm{Alg}_{\mathcal C}(G_n) \rightarrow \mathrm{Alg}_{\mathcal C}(G_{n+p})\]
 as:
\[   \times_{\pi,\mathcal C}(\omega) = \pi\times \omega ,
\]
and, for a map $\Omega: \omega_1 \rightarrow \omega_2$ in $\mathrm{Alg}_{\mathcal C}(G_n)$, 
\[\times_{\pi, \mathcal C}(\Omega)(f)(g)= (\mathrm{Id}_{\pi} \boxtimes \Omega)(f(g)) ,\]
where $f \in u_{\rho}(m,d) \times\omega_1$ is a smooth function $f: G_{n+p}\rightarrow u_{\rho}(m,d)\boxtimes \omega_1$ (Section \ref{ss para induction}). Note that since $\times_{\pi, \mathcal C}$ is exact and sends a non-zero object to a non-zero object, $\times_{\pi, \mathcal C}$ is faithful. We may sometimes simply write $\times_{\pi}$ for $\times_{\pi, \mathcal C}$.

For an irreducible representation $\pi$, we define a stable cuspidal set $\mathcal C_{\pi}$ of $\pi$ as
\begin{align} \label{eqn stable cuspidal set}
 \mathcal C_{\pi} = \mathrm{cupp}(\pi) \cup (\mathrm{Irr}^c \setminus \mathrm{cupp}_{\mathbb{Z}}(\pi)) .
\end{align}
(Here we regard $\mathrm{cupp}(\pi)$ as a set.) A motivation for the term stable cuspidal set is in the case that for $\pi =u_{\rho}(d,m)$, and for any $\rho \in \mathcal C_{\pi}$, $\rho \times \pi$ is irreducible. (However, this is not true for general $\pi$. We avoid some complications for the generality in our study for branching laws.)

%We may sometimes write $\times_{\pi}$ for $\times_{\pi,\mathcal C}$.

\begin{theorem} \label{thm fully faith product}
Let $d,m$ be positive integers, and let $\rho \in \mathrm{Irr}^{u,c}(G_k)$. Let 
\[ \mathcal C = \mathcal C_{u_{\rho}(m,d)} . \]
Then the functor $\times_{u_{\rho}(m,d), \mathcal C}$ is fully-faithful.
\end{theorem}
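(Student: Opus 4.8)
\emph{Plan.} Write $\pi = u_\rho(m,d)$ and $p=n_\rho md$. Since $\times_{\pi,\mathcal C}$ is already seen to be exact and faithful, it suffices to prove fullness: for $\omega_1,\omega_2\in\mathrm{Alg}_{\mathcal C}(G_n)$ the canonical map $\mathrm{Hom}_{G_n}(\omega_1,\omega_2)\to\mathrm{Hom}_{G_{n+p}}(\pi\times\omega_1,\pi\times\omega_2)$ is an isomorphism. First I would apply Bernstein's second adjointness to $\pi\times\omega_1=\mathrm{Ind}_{P}^{G_{n+p}}(\pi\boxtimes\omega_1)$, where $P=(G_p\times G_n)N$, to identify the target with $\mathrm{Hom}_{G_p\times G_n}(\pi\boxtimes\omega_1,(\pi\times\omega_2)_{N^-})$. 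The Bernstein--Zelevinsky geometric lemma then gives a finite filtration of $(\pi\times\omega_2)_{N^-}$ whose open-cell graded piece is a quotient isomorphic to $\pi\boxtimes\omega_2$, and whose remaining graded pieces $\Sigma$ are subquotients of $(\alpha\times\nu^{c}\gamma)\boxtimes(\nu^{-c'}\beta\times\delta)$, with $\alpha\boxtimes\beta$ ranging over composition factors of $(\pi)_{N_j}$ and $\gamma\boxtimes\delta$ over those of $(\omega_2)_{N_j^-}$ for $1\le j$, and with twists $c,c'$ determined by $j$ and the block sizes. Applying $\mathrm{Hom}_{G_p\times G_n}(\pi\boxtimes\omega_1,-)$ to the open piece already produces $\mathrm{End}_{G_p}(\pi)\otimes\mathrm{Hom}_{G_n}(\omega_1,\omega_2)\cong\mathrm{Hom}_{G_n}(\omega_1,\omega_2)$, by Schur's lemma and the K\"unneth formula.

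The real content is to control the non-open graded pieces $\Sigma$. It is \emph{not} true that $\mathrm{Hom}_{G_p\times G_n}(\pi\boxtimes\omega_1,\Sigma)=0$ for every such $\Sigma$ — when $\mathrm{cupp}(\omega_2)$ reaches the extremes of the window $\mathrm{cupp}(u_\rho(m,d))=\{\nu^j\rho:|j|\le (m+d-2)/2\}$ and the relevant geometric-lemma twist degenerates, a non-open piece can genuinely contribute. The plan is: (i) using that $\mathrm{cupp}(u_\rho(m,d))$ is an interval on the $\rho$-line and that, by the definition of $\mathcal C=\mathcal C_{u_\rho(m,d)}$, the $\rho$-line part of $\mathrm{cupp}(\omega_i)$ lies \emph{inside} that interval, together with the explicit ``hook-shaped'' structure of the Jacquet modules of Speh representations (Lemma~\ref{lem speh restrict} and the combinatorics recalled in \S\ref{ss ext branching}, or Lapid--M\'inguez), show that a non-open $\Sigma$ can have $\mathrm{Hom}_{G_p\times G_n}(\pi\boxtimes\omega_1,\Sigma)\neq 0$ only when $\mathrm{cupp}(\omega_1)=\mathrm{cupp}(\omega_2)$ (indeed the relations $\mathrm{cupp}(\alpha)\sqcup\mathrm{cupp}(\beta)=\mathrm{cupp}(\pi)=\mathrm{cupp}(\alpha)\sqcup\nu^c\mathrm{cupp}(\gamma)$ force $\mathrm{cupp}(\beta)=\nu^c\mathrm{cupp}(\gamma)$, and the window constraint pins down $c$ and $c'$); (ii) for all other $\Sigma$, deduce $\mathrm{Ext}^i_{G_p\times G_n}(\pi\boxtimes\omega_1,\Sigma)=0$ for all $i$ by separating the relevant points of the Bernstein centre of $G_p\times G_n$ as in Lemma~\ref{lem ext vanishing}; and (iii) in the surviving case $\mathrm{cupp}(\omega_1)=\mathrm{cupp}(\omega_2)$, run the long exact sequences of the filtration and show that the connecting homomorphisms are injective on exactly the over-counting contributions, so that the alternating sum of the contributions of all graded pieces collapses to $\dim\mathrm{Hom}_{G_n}(\omega_1,\omega_2)$; finally check the resulting isomorphism is the one induced by the functor. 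An alternative, perhaps cleaner route is an induction — on the length of $\omega_2$ (reducing to $\omega_2$ irreducible), or on $d$ via $u_\rho(m,d)^-\cong u_\rho(m,d-1)$ — carried out in tandem with the statement that $\times_{u_\rho(m,d),\mathcal C}$ also preserves $\mathrm{Ext}^1$ (Theorem~\ref{thm preserve extension}), which makes the bookkeeping in (iii) functorial.

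The main obstacle is step (iii): the non-open graded pieces of the geometric-lemma filtration do not uniformly vanish against $\pi\boxtimes\omega_1$, so one cannot finish with a single cuspidal-support comparison; instead one must track the connecting maps of the filtration and prove that the net Euler-characteristic contribution of the non-open part is zero. This is precisely where the shape of $\mathcal C$ matters — the ``window'' condition is exactly the non-linking of $\mathrm{cupp}(\omega_i)$ with the extremes of $\mathrm{cupp}(u_\rho(m,d))$ — and where the rigidity of the Jacquet modules of $u_\rho(m,d)$ is used in full force; this is the technical heart of Section~\ref{s product functor}.
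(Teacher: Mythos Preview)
Your primary route via the geometric lemma is genuinely different from the paper's, but it has a real gap: step~(iii) is asserted, not proved. You correctly observe that the non-open graded pieces of $(\pi\times\omega_2)_{N^-}$ do not uniformly have vanishing $\mathrm{Hom}$ against $\pi\boxtimes\omega_1$, and you propose to cancel their contributions against connecting maps of the filtration. But you give no mechanism for why those connecting maps are injective exactly where needed; in general a filtration with extra $\mathrm{Hom}$-contributing subquotients can genuinely enlarge the $\mathrm{Hom}$-space, and nothing in your outline rules that out here. Your step~(i) is also softer than it looks: the relation $\mathrm{cupp}(\beta)=\nu^{c}\,\mathrm{cupp}(\gamma)$ does not by itself force $\mathrm{cupp}(\omega_1)=\mathrm{cupp}(\omega_2)$ without a careful matching of the two modular twists $c,c'$ coming from the geometric lemma, which you have not carried out.

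The paper instead takes (a sharpened form of) your ``alternative'' route. It proves an abstract homological lemma (Lemma~\ref{lem essentially injective}): an exact additive functor $\mathcal F$ between finite-length Serre subcategories is fully faithful provided (a) it sends simples to simples, (b) it is injective on isomorphism classes of simples, and (c) the induced map $\mathrm{Ext}^1(X,Y)\to\mathrm{Ext}^1(\mathcal F X,\mathcal F Y)$ is injective for \emph{simple} $X,Y$. The lemma is proved by a five-lemma/d\'evissage induction on the lengths of $X$ and $Y$, which is exactly the bookkeeping you allude to but packaged once and for all. Conditions (a) and (b) for $\times_{u_\rho(m,d),\mathcal C}$ are Lapid--M\'inguez irreducibility (Lemma~\ref{lem speh irred}); condition (c) is precisely Theorem~\ref{thm preserve extension} (an indecomposable length-$2$ object stays indecomposable after producting with $u_\rho(m,d)$), which the paper proves separately by first handling a ``large'' Speh $\widetilde u_\rho(m,d,k)$ with $k\gg0$ directly (Lemma~\ref{lem iso big}) and then descending by backward induction on $k$ through the surjection $\mathrm{St}(\Delta_{k+1})\times\widetilde u_k\twoheadrightarrow\widetilde u_{k+1}$. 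So the efficient fix is to drop the direct geometric-lemma computation and develop the alternative you already mention: prove the $\mathrm{Ext}^1$-injectivity on simples and feed it into the abstract d\'evissage lemma.
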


\begin{proof}
It suffices to check the conditions in Lemma \ref{lem essentially injective} in Appendix A. It follows from definition that $\mathrm{Alg}_{\mathcal C}(G_k)$ is Serre.  Condition (1) is automatic. Condition (2) follows from Theorem \ref{thm preserve extension}. Conditions (3) and (4) follow from Lemma \ref{lem speh irred} \cite{LM16}.
\end{proof}

As mentioned in introduction, a key input for the above result is the irreducibility of parabolic induction due to Lapid-M\'inguz \cite{LM16}. It is possible to modify the proof of Theorem \ref{thm preserve extension} to give another proof of Theorem \ref{thm fully faith product} without deducing from the length $2$ case while the length $2$ case is simpler.

Let $p=n_{\rho}md$. For $\pi \in \mathrm{Alg}_{\mathcal C}(G_{n+p})$, define $R_{u_{\rho}(m,d)}(\pi)=\mathrm{Hom}_{G_p}(u_{\rho}(m,d), \pi_{N_n^-})$, which is regarded as a $G_n$-representation by $(g.f)(u)=\mathrm{diag}(1,g).(f(u))$, and is an object in $\mathrm{Alg}_{\mathcal C}(G_n)$. Using adjointness, one checks that $\times_{u_{\rho}(m,d)}$ is left adjoint to $R_{u_{\rho}(m,d)}$.
%Now
%\begin{align*}
% \mathrm{Hom}_{\mathrm{Alg}_{\mathcal C}(G_n)}(\pi', R_{u_{\rho}(m,d)}(\pi)) 
%= & \mathrm{Hom}_{G_n}(\pi', R_{u_{\rho}(m,d)}(\pi)) \\ \cong & \mathrm{Hom}_{G_p \times G_n }( u_{\rho}(m,d) \boxtimes \pi', \pi_{N^-}) \\
% \cong & \mathrm{Hom}_{G_{n+p}}(u_{\rho}(m,d)\times \pi', \pi) \\
% = & \mathrm{Hom}_{\mathrm{Alg}_{\mathcal C}(G_{n+p})}(\times_{u_{\rho}(m,d), \mathcal C}(\pi'), \pi))
%\end{align*} 

\begin{corollary} \label{cor adjoint}
Let $u=u_{\rho}(m,d)$. Let $\pi$ be in $\mathrm{Alg}_{\mathcal C}(G_n)$. Then 
\[\pi  \cong R_{u_{\rho}(m,d)}(u_{\rho}(m,d)\times \pi).\]
\end{corollary}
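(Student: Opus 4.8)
The plan is to obtain the isomorphism formally, as a consequence of the adjunction $\times_{u_\rho(m,d)} \dashv R_{u_\rho(m,d)}$ recalled just above the statement, together with the full faithfulness of $\times_{u_\rho(m,d),\mathcal C}$ proved in Theorem \ref{thm fully faith product}. Throughout write $u = u_\rho(m,d)$ and $p = n_\rho m d$, so that $\times_{u,\mathcal C}\colon \mathrm{Alg}_{\mathcal C}(G_n)\to \mathrm{Alg}_{\mathcal C}(G_{n+p})$ and $R_u\colon \mathrm{Alg}_{\mathcal C}(G_{n+p})\to\mathrm{Alg}_{\mathcal C}(G_n)$, where $\mathcal C = \mathcal C_{u_\rho(m,d)}$.

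First I would record that both functors really do land in the prescribed subcategories: for $\times_{u,\mathcal C}$ this is built into its definition, while for $R_u$ one notes that $R_u(\sigma)=\mathrm{Hom}_{G_p}(u,\sigma_{N_n^-})$ has composition factors whose cuspidal supports are sub-multisets of $\mathrm{cupp}(\sigma_{N_n^-})=\mathrm{cupp}(\sigma)$, hence still lie in $\mathcal C$. Consequently the adjunction isomorphism
\[
\mathrm{Hom}_{G_{n+p}}(u\times\omega_1,\omega_2)\;\cong\;\mathrm{Hom}_{G_n}\!\bigl(\omega_1,R_u(\omega_2)\bigr),
\]
natural in $\omega_1\in\mathrm{Alg}_{\mathcal C}(G_n)$ and $\omega_2\in\mathrm{Alg}_{\mathcal C}(G_{n+p})$, is an honest adjunction internal to these categories, with unit $\eta_\pi\colon \pi\to R_u(u\times\pi)$ the morphism corresponding under this bijection (with $\omega_1=\pi$, $\omega_2=u\times\pi$) to $\mathrm{id}_{u\times\pi}$. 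Next I would invoke the general categorical fact that a left adjoint is fully faithful if and only if its unit is a natural isomorphism. Since $\times_{u,\mathcal C}$ is fully faithful by Theorem \ref{thm fully faith product}, the unit $\eta_\pi\colon \pi\to R_u(u\times\pi)$ is an isomorphism for every $\pi\in\mathrm{Alg}_{\mathcal C}(G_n)$, which is exactly the asserted isomorphism. Equivalently, and more hands-on: Theorem \ref{thm fully faith product} gives $\mathrm{Hom}_{G_n}(\pi',\pi)\cong\mathrm{Hom}_{G_{n+p}}(u\times\pi',u\times\pi)\cong\mathrm{Hom}_{G_n}(\pi',R_u(u\times\pi))$ naturally in $\pi'\in\mathrm{Alg}_{\mathcal C}(G_n)$, and the Yoneda lemma converts this natural isomorphism into an isomorphism $\pi\xrightarrow{\sim} R_u(u\times\pi)$.

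I do not anticipate a real obstacle here: all of the difficulty sits in Theorem \ref{thm fully faith product}, and the remaining argument is purely formal. The only genuinely checkable points are (i) that $R_u$ preserves $\mathrm{Alg}_{\mathcal C}$, so that the adjunction is stated between the right categories, and (ii) that the composite Hom-isomorphism above is the Yoneda transform of the \emph{unit} map $\pi\to R_u(u\times\pi)$ rather than of some unrelated morphism — this is automatic once one writes the adjunction bijection explicitly (as post-composition with the counit of $\times_u\dashv R_u$) and tracks that $\mathrm{id}_\pi$ is sent to $\mathrm{id}_{u\times\pi}$ by full faithfulness and then to $\eta_\pi$ by the adjunction.
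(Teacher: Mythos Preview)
Your proposal is correct and follows essentially the same route as the paper: the paper's proof is a one-line appeal to the adjunction $\times_{u}\dashv R_u$ together with Theorem~\ref{thm fully faith product}, citing the general categorical fact (Stacks Project, Lemma 4.24.3) that a fully faithful left adjoint has unit a natural isomorphism, which is exactly what you spell out. Your added remarks on why $R_u$ lands in $\mathrm{Alg}_{\mathcal C}$ and on the Yoneda/unit identification are fine elaborations of points the paper takes for granted.
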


\begin{proof}
%This follows from that $R_{u_{\rho}(m,d)}$ is right adjoint to Since \times_{u_{\rho}(m,d)}$ and  e.g. \cite[Lemma 4.24.3]{St}.
Since $R_{u_{\rho}(m,d)}$ is right adjoint to $\times_{u_{\rho}(m,d)}$, Theorem \ref{thm fully faith product} implies that $R_{u_{\rho}(m,d)} \circ \times_{u_{\rho}(m,d)}$ is isomorphic to the identity functor (see e.g. \cite[Lemma 4.24.3]{St}). 
\end{proof}
%We have natural isomophisms:
%\begin{align*}
% \mathrm{Hom}_{G_{n-i}}(\pi_1, \pi_2) \cong & \mathrm{Hom}_{G_n}(u\times \pi_1, u \times \pi_2) \\
%\cong & \mathrm{Hom}_{G_i \times G_{n-i}}(u\boxtimes \pi_1, (u\times \pi_2)_{N_i^-}) \\
%\cong & \mathrm{Hom}_{G_{n-i}}(\pi_1, R_u(u \times \pi_2)) ,
%\end{align*}
%where the first isomorphism follows from Theorem \ref{thm fully faith product}, and the second isomorphism follows from the second adjointness, and the third isomorphism follows from the adjoint functor of tensor product and the definiton of $R_u$. If $R_u(u \times \pi_2)$ is not irreducible, then we can find $\pi_1$ such that the isomorphism disagree. 

%Now we choose $\pi_1=\pi_2$, we obtain a map $f$ from $\pi_2$ to $R_u(u \times \pi_2)$ corresponding to the identity element $\mathrm{Id}_{\pi_2}$ under the above isomorphisms. The map $f$ is an isomoprhism by Yoneda lemma.

Corollary \ref{cor adjoint} also gives the following:

\begin{corollary} \label{cor preserving quo sp}
Let $\pi'$ be in $\mathrm{Alg}_{\mathcal C}(G_n)$. Suppose $\pi$ is an irreducible quotient of $u_{\rho}(m,d)\times \pi'$. Then $\pi \cong u_{\rho}(m,d) \times \omega$ for an irreducible quotient $\omega$ of $\pi'$.
\end{corollary}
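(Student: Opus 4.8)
The plan is to reduce the statement to two ingredients that are available by the time this corollary is stated: the full faithfulness of $\times_{u_\rho(m,d),\mathcal C}$ (Theorem \ref{thm fully faith product}, built on Corollary \ref{cor adjoint}), and the fact that this functor carries irreducible objects of $\mathrm{Alg}_{\mathcal C}(G_n)$ to irreducible objects (Lemma \ref{lem speh irred}, which is precisely the part of the set-up in Section \ref{ss product functor} that uses $\mathcal C=\mathcal C_{u_\rho(m,d)}$). Write $u=u_\rho(m,d)$ and $p=n_\rho md$, and let $q\colon u\times\pi'\twoheadrightarrow\pi$ be the given surjection; since $\times_{u,\mathcal C}$ lands in $\mathrm{Alg}_{\mathcal C}(G_{n+p})$ and this category is Serre, $\pi$ also lies in $\mathrm{Alg}_{\mathcal C}(G_{n+p})$. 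The argument is then an induction on the (finite) length $\ell(\pi')$.

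For the inductive step I would pass through a maximal submodule. Choose a maximal submodule $\pi'_0\subsetneq\pi'$ and put $\omega_1=\pi'/\pi'_0$, an irreducible quotient of $\pi'$; exactness of parabolic induction gives a short exact sequence $0\to u\times\pi'_0\to u\times\pi'\xrightarrow{\,p_0\,}u\times\omega_1\to0$. Because $\pi$ is irreducible, $\ker q$ is a maximal submodule of $u\times\pi'$. If $u\times\pi'_0\subseteq\ker q$, then $q$ factors as a surjection $u\times\omega_1\twoheadrightarrow\pi$; but $u\times\omega_1$ is irreducible, so this map is an isomorphism and we are done with $\omega=\omega_1$ (this also covers the base case $\pi'$ irreducible, taking $\pi'_0=0$). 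Otherwise $u\times\pi'_0+\ker q=u\times\pi'$ by maximality of $\ker q$, so $q$ restricts to a surjection $u\times\pi'_0\twoheadrightarrow\pi$; since $\ell(\pi'_0)<\ell(\pi')$ and $\pi'_0\in\mathrm{Alg}_{\mathcal C}(G_n)$, the inductive hypothesis produces an irreducible quotient $\omega$ of $\pi'_0$ with $\pi\cong u\times\omega$.

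The remaining point — and the place where more than exactness is needed — is to upgrade $\omega$ from an irreducible quotient of the submodule $\pi'_0$ to an irreducible quotient of $\pi'$ itself. Here I would invoke full faithfulness: from $\pi\cong u\times\omega$ the hypothesis $\mathrm{Hom}_{G_{n+p}}(u\times\pi',\pi)\neq0$ becomes $\mathrm{Hom}_{G_{n+p}}(u\times\pi',u\times\omega)\neq0$, which by Theorem \ref{thm fully faith product} equals $\mathrm{Hom}_{G_n}(\pi',\omega)$; as $\omega$ is irreducible, any nonzero map $\pi'\to\omega$ is surjective, so $\omega$ is an irreducible quotient of $\pi'$ and the induction closes. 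The genuine obstacle is thus not a calculation but recognizing that exactness alone only yields a quotient of a submodule of $\pi'$, and that full faithfulness is exactly what repairs this; the single non-formal input underneath everything is that $\times_{u,\mathcal C}$ preserves irreducibility, which is where the condition $\mathcal C=\mathcal C_{u_\rho(m,d)}$ enters (via Lemma \ref{lem speh irred}). (Alternatively, if one grants that the essential image of $\times_{u,\mathcal C}$ is a Serre subcategory of $\mathrm{Alg}_{\mathcal C}(G_{n+p})$, the induction can be bypassed: $\pi$ is then a subquotient of $u\times\pi'$ lying in that image, so $\pi\cong u\times\omega$ for an irreducible $\omega$, and the same full-faithfulness computation shows $\omega$ is a quotient of $\pi'$.)
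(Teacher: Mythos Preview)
Your proof is correct. Both approaches rest on the same two inputs---full faithfulness of $\times_{u,\mathcal C}$ and preservation of irreducibles (Lemma \ref{lem speh irred})---but the paper's route is more direct: it simply invokes Corollary \ref{cor adjoint}, i.e.\ the identity $R_u(u\times\pi')\cong\pi'$ for the right adjoint $R_u$. Concretely, the surjection $q$ corresponds by adjunction to a map $f\colon\pi'\to R_u(\pi)$; since every composition factor of $u\times\pi'$ is of the form $u\times\sigma$ with $\sigma$ irreducible in $\mathrm{Alg}_{\mathcal C}(G_n)$ (by exactness and irreducibility preservation), the irreducible quotient $\pi$ is already $u\times\omega$ for some irreducible $\omega$, and then $R_u(\pi)\cong\omega$ by Corollary \ref{cor adjoint}, so $f$ exhibits $\omega$ as a quotient of $\pi'$. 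This is essentially your parenthetical alternative, and it needs less than ``the essential image is Serre'': one only needs that irreducible \emph{subquotients} of $u\times\pi'$ lie in the essential image, which is immediate. Your inductive argument via maximal submodules reaches the same conclusion but takes a detour; the step where you upgrade $\omega$ from a quotient of $\pi'_0$ to a quotient of $\pi'$ via full faithfulness is exactly the paper's one-line argument, so the induction is not really buying anything extra.
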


\begin{proof}
By Frobenius reciprocity, $ \omega' \hookrightarrow R_{u_{\rho}(m,d)}(\pi)$ for some irreducible composition factor $\omega'$ of $\pi'$. Since $\omega'$ is also in $\mathrm{Alg}_{\mathcal C}(G_n)$, we have $\pi \cong u_{\rho}(m,d) \times \omega'$ (Lemma \ref{lem speh irred}). Now, applying the Frobenius reciprocity (or adjointness) on the quotient map from $u_{\rho}(m,d)\times \pi'$ to $\pi\cong u_{\rho}(m,d)\times \omega'$ and Corollary \ref{cor adjoint}, we have a non-zero map from $\pi'$ to $\omega'$, as desired.
\end{proof}

We need a stronger variation for Corollary \ref{cor preserving quo sp}:

\begin{corollary} \label{cor stronger}
Let $\mathcal C$ be as in Theorem \ref{thm fully faith product}. Let $\pi_1$ be a (not necessarily admissible) representation of $G_{n}$. Let $\pi_2$ be in $\mathrm{Alg}_{\mathcal C}(G_{n+p})$, where $p=n_{\rho}md$. Then if $\pi_2$ is a quotient of $u_{\rho}(m,d)\times \pi_1$, 
then there exists a non-zero quotient $\omega$ of $\pi_1$ such that
\[ \pi_2 \cong  u_{\rho}(m,d) \times \omega . \]
In particular, if $\pi_2$ is irreducible, then $ \pi_2 \cong u_{\rho}(m,d) \times \omega $
for an irreducible quotient $\omega$ of $\pi_1$.  If $\pi_2$ is an irreducible Arthur type (resp. unitarizable) representation, then $\pi_2 \cong u_{\rho}(m,d)\times \omega$ for some irreducible Arthur type (resp. unitarizable) representation $\omega$.
\end{corollary}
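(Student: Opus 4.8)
The plan is to reduce the non-admissible statement to the admissible case handled by Corollary \ref{cor preserving quo sp} (equivalently, Corollary \ref{cor adjoint}), by exploiting the adjunction between $\times_{u_\rho(m,d)}$ and $R_{u_\rho(m,d)}$. First I would record that a surjection $u_\rho(m,d)\times\pi_1\twoheadrightarrow\pi_2$ in $\mathrm{Alg}(G_{n+p})$ corresponds, under adjunction, to a $G_n$-map $\pi_1\to R_{u_\rho(m,d)}(\pi_2)$; since the counit $u_\rho(m,d)\times R_{u_\rho(m,d)}(\pi_2)\to\pi_2$ composed with $\times_{u_\rho(m,d)}$ applied to this map recovers the original surjection, the image $\omega$ of $\pi_1$ inside $R_{u_\rho(m,d)}(\pi_2)$ is a quotient of $\pi_1$ for which $u_\rho(m,d)\times\omega\to\pi_2$ is still surjective (exactness of parabolic induction). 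This already produces the candidate $\omega$; the remaining point is to upgrade surjectivity to an isomorphism.

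Next I would argue the surjection $u_\rho(m,d)\times\omega\twoheadrightarrow\pi_2$ is an isomorphism. Here $\omega$ is a genuine object of $\mathrm{Alg}_{\mathcal C}(G_n)$ once we know it has finite length: indeed $\pi_2$ has finite length, so $R_{u_\rho(m,d)}(\pi_2)$ is admissible of finite length (it is a Hom-space out of a fixed representation into an admissible Jacquet module), hence so is its subobject $\omega$; and its cuspidal supports lie in $\mathcal C$ because those of $u_\rho(m,d)\times\omega$ do and $\pi_2$ witnesses a composition factor in the relevant component. Applying the right-exact functor $R_{u_\rho(m,d)}$ and using Corollary \ref{cor adjoint} ($R_{u_\rho(m,d)}\circ\times_{u_\rho(m,d)}\cong\mathrm{Id}$ on $\mathrm{Alg}_{\mathcal C}$) together with Theorem \ref{thm fully faith product}, the surjection $\omega=R_{u_\rho(m,d)}(u_\rho(m,d)\times\omega)\twoheadrightarrow R_{u_\rho(m,d)}(\pi_2)$ identifies $R_{u_\rho(m,d)}(\pi_2)$ as a quotient of $\omega$; but $\omega$ was defined as a subobject of $R_{u_\rho(m,d)}(\pi_2)$, and a comparison of (finite) lengths — using faithfulness and exactness of $\times_{u_\rho(m,d)}$, which give $\mathrm{length}(u_\rho(m,d)\times\omega)\geq\mathrm{length}(\pi_2)$ with equality iff the surjection is an isomorphism — forces $u_\rho(m,d)\times\omega\cong\pi_2$.

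For the last sentences: if $\pi_2$ is irreducible then $\omega$ must be irreducible, since a proper quotient $\omega\twoheadrightarrow\omega'$ would give a proper quotient $\pi_2\cong u_\rho(m,d)\times\omega\twoheadrightarrow u_\rho(m,d)\times\omega'$ by exactness, and $\times_{u_\rho(m,d)}$ sends nonzero objects to nonzero objects; so $\omega$ is an irreducible quotient of $\pi_1$. If moreover $\pi_2$ is of Arthur type, write $\pi_2$ as a product of Speh representations; by the uniqueness of the factorization of Arthur type representations into Speh factors \cite{Ta86} and the irreducibility (hence independence of ordering) of such products \cite{Be84}, the isomorphism $\pi_2\cong u_\rho(m,d)\times\omega$ forces $u_\rho(m,d)$ to be one of the Speh factors of $\pi_2$ and $\omega$ to be the product of the remaining ones, which is again Arthur type. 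If $\pi_2$ is unitarizable, one instead invokes the classification of the unitary dual of $\mathrm{GL}_n$ \cite{Ta86}: the factorization of a unitarizable representation as a product of Speh-type and complementary-series pieces is essentially unique, so cancelling the unitarizable Speh factor $u_\rho(m,d)$ leaves a unitarizable $\omega$. The main obstacle is the bookkeeping in the middle paragraph: ensuring $\omega$ genuinely lands in $\mathrm{Alg}_{\mathcal C}(G_n)$ (finite length and cuspidal-support control) so that Theorem \ref{thm fully faith product} and Corollary \ref{cor adjoint} apply, and then converting the two mutually inverse surjections between $\omega$ and $R_{u_\rho(m,d)}(\pi_2)$ into an honest isomorphism via a length count — everything else is formal adjunction and exactness of parabolic induction.
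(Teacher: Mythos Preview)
Your reduction to the finite-length case via adjunction is essentially the paper's own argument: take the image $\omega$ of $\pi_1\to R_{u}(\pi_2)$, observe $\omega\in\mathrm{Alg}_{\mathcal C}(G_n)$ since it sits inside $R_u(\pi_2)$, and compare lengths using $R_u(u\times\omega)\cong\omega$ from Corollary~\ref{cor adjoint}. That part is fine.

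The gap is in your last paragraph. You invoke ``uniqueness of factorization into Speh factors'' to conclude that in $\pi_2\cong u_\rho(m,d)\times\omega$ the factor $u_\rho(m,d)$ must coincide with one of the Speh factors of $\pi_2$ and $\omega$ with the product of the rest. But Tadi\'c's unique-factorization statement compares two expressions \emph{each already written as a product of Speh (or complementary-series) building blocks}; it is not a cancellation law allowing you to strip a Speh factor off an arbitrary irreducible product. Since at this point $\omega$ is merely an irreducible representation in $\mathrm{Alg}_{\mathcal C}(G_n)$, you cannot feed $u_\rho(m,d)\times\omega$ into that uniqueness theorem. The same objection applies verbatim to your unitarizable case.

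The paper closes this gap differently: from $\pi_2\cong u_\rho(m,d)\times\omega$ with both $\pi_2$ and $u_\rho(m,d)$ Hermitian self-dual, one gets $\bar\omega^{\vee}\times u_\rho(m,d)\cong\omega\times u_\rho(m,d)$, and then Lemma~\ref{lem speh irred}(2) (cancellation for the functor $\times_{u_\rho(m,d),\mathcal C}$) yields $\bar\omega^{\vee}\cong\omega$. Now $\omega$ is an irreducible Hermitian representation appearing as a subquotient of a unitarizable induced representation, so Bernstein's result \cite[Corollary~8.1]{Be84} forces $\omega$ to be unitarizable; finally the Tadi\'c classification, together with the fact that the cuspidal support of $\omega$ consists of unitary cuspidals up to integral shifts, pins $\omega$ down as Arthur type. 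This route avoids the unjustified cancellation step entirely.
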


\begin{proof}
Let $u=u_{\rho}(m,d)$. By adjointness, we have
\begin{align*}
0 \neq     &  \mathrm{Hom}_{G_{n+p}}(u \times \pi_1, \pi_2 ) \cong \mathrm{Hom}_{ G_n}( \pi_1, R_u(\pi_2))  ,
\end{align*}
and let $f$ be the map in $\mathrm{Hom}_{G_n}(\pi_1, R_u(\pi_2))$ corresponding to the surjection from $u_{\rho}(m,d)\times \pi_1$ to $\pi_2$. 

 Now using adjointness, we have the following commutative diagram:
\[ \xymatrix{ \mathrm{Hom}_{G_{n+p}}(u\times \omega, \pi_2)  \ar[d]^{\cong}  & \mathrm{Hom}_{G_{n+p}}(u \times \pi_1, \pi_2) \ar[l] \ar[d]^{\cong} & \mathrm{Hom}_{G_{n+p}}(u\times \tau, \pi_2) \ar[d]^{\cong} \ar[l] &\ar[l] 0 \\
 \mathrm{Hom}_{G_n}(\omega, R_u(\pi_2))  & \mathrm{Hom}_{G_n}(\pi_1, R_u(\pi_2)) \ar[l] & \mathrm{Hom}_{G_n}(\tau, R_u(\pi_2)) \ar[l] & \ar[l] 0}  ,        
\]
where the two horizontal rows are exact from the short exact sequence 
\[0 \rightarrow \omega=\mathrm{ker}~f \rightarrow \pi_1 \rightarrow \tau=\mathrm{im}~f \rightarrow 0 .\]
In particular, we have $\mathrm{im}~f$ embeds to $R_u(\pi_2)$.

The image of the embedding under the leftmost bottom horizontal map is zero by definition and by the commutative diagram, it comes from an element in $\mathrm{Hom}_{G_{n+p}}(u \times \pi_1, \pi_2)$ with zero image by the leftmost top horizontal map. Thus when adjointness back, we get back the surjective map
\[  u \times \tau \rightarrow \pi_2 ,
\]
and the injection
\[  \mathrm{im}~f\cong \tau \hookrightarrow R_u(\pi_2) .
\]
Since $\pi_2$ is in $\mathrm{Alg}_{\mathcal C}(G_{n+p})$, $\tau$ is also in $\mathrm{Alg}_{\mathcal C}(G_n)$. Thus the first surjection implies that the number of composition factors in $\pi_2$ is at most that of $\tau$ by Lemma \ref{lem speh irred}. By Corollary \ref{cor adjoint}, for each irreducible $\pi' \in \mathrm{Alg}_{\mathcal C}(G_{n+p})$, $R_u(\pi')$ is either irreducible or zero. Thus with the fact that $R_u$ is a left exact functor, the number of composition factors of $\pi_2$ is at least that of composition factors of $R_u(\pi_2)$. Hence, the second injection implies that the number of composition factors in $\pi_2$ is at least that of $\tau$. This implies the coincidence on the numbers and so the surjection must be an isomorphism i.e. $ u \times \tau \cong \pi_2$.

It remains to prove the last statement. Suppose $\pi_2 \cong u_{\rho}(m,d) \times \omega$ is an Arthur type representation. Then $\pi_2$ and $u_{\rho}(m,d)$ being Hermitian self-dual implies that 
\[ \bar{\omega}^{\vee} \times u_{\rho}(m,d) \cong \bar{\pi}_2^{\vee} \cong \pi_2 \cong u_{\rho}(m,d) \times \omega \cong \omega \times u_{\rho}(m,d) .
\]
This implies that $\bar{\omega}^{\vee} \cong \omega$ by Lemma \ref{lem speh irred} and so it is Hermitian self-dual. Thus $\omega$ is unitarizable by a result of Bernstein \cite[Corollary 8.2]{Be84}. Now the classification \cite{Ta86} of unitarizable representations and unique factorization give that $\omega$ is an Arthur type representation. The proof for the assertion for unitarizable representation is similar.

\end{proof}

\subsection{Generalizations} \label{ss generalize}

While our result of Theorem \ref{thm fully faith product} is for a special class of representations, one can generalize to a larger class of examples as long as an analogue of Theorem \ref{thm preserve extension} is established. 

In \cite[Proposition 5.1]{LM16}, it describes a criteria which $\pi \times \langle \Delta \rangle$ is irreducible for an irreducible representation $\pi$ and a segment $\Delta$. For a fixed irreducible representation $\pi$, let $\mathrm{Alg}_{\pi}(G_n)$ be the full subcategory of $\mathrm{Alg}(G_n)$ which contains objects of finite length with simple composition factors $\tau$ satisfying the property that $\pi \times \langle \Delta \rangle$ for any segment $\Delta$ in the associated multisegment $\mathfrak m$ of $\pi$. We expect that if $\tau$ is $\mathrm{Alg}_{\pi}(G_n)$ is an indecomposable representation of length $2$, then $\pi \times \tau$ is also indecomposable of length $2$.

\section{Appendix A: Some homological algebra} \label{sect homo alg}

%Let $\mathcal A$ and $\mathcal B$ be two module categories. We assume that both $\mathcal A$ and $\mathcal B$ have enough projective objects and injective objects.

Let $\mathcal A=\mathrm{Alg}(G_l)$. Let $\mathcal B=\mathrm{Alg}(G_n)$. Via Yoneda extension,  any element in $\mathrm{Ext}^1_{\mathcal A}(X,Y)$ corresponds to a short exact sequence in $\mathcal A$, and zero element corresponds to the split sequence. Then, for an additive exact functor $\mathcal F$, $\mathcal F$ sends a short exact sequence to a short exact sequence, and this defines a map from $\mathrm{Ext}^1_{\mathcal A}(X,Y)$ to $\mathrm{Ext}^1_{\mathcal B}(\mathcal F(X),\mathcal F(Y))$. 

%%%%%%%
%For a morphism $f:Y_1 \rightarrow Y_2$, it induces a map $\mathrm{Ext}_{\mathcal A}^1(X, Y_1) \rightarrow \mathrm{Ext}_{\mathcal B}^1(X, Y_2)$ given by the short exact sequence:
%\[ \xymatrix{  0 \ar[r] & Y_1 \ar[r]^{\iota} \ar[d]^f & Z \ar[r] \ar[d] &  X \ar[r] \ar@{=}[d] & 0 \\
% 0 \ar[r] &   Y_2 \ar[r] &   Z' \ar[r]  & X \ar[r]  & 0  } ,\]
%where $Z$ is the pushout of $f$ and $\iota$ i.e. 
%\[  Z' \cong  (Z\oplus Y_2)/ \left\{ (\iota(y), -f(y)) \right\}_{y \in Y_1} .\]
%%%%%%%%%%%%%%%

%We refer the Yoneda connection between $\mathrm{Ext}^1$ and short exact sequences to Weibel book.

%We do not assume $C$ has enough projective objects. We shall use Yoneda Ext-groups, denoted $\mathrm{YExt}$ instead the dervied Ext-groups. Since 

\begin{lemma} \label{lem essentially injective}
Let $\mathcal C$ be a full Serre subcategory of $\mathcal A=\mathrm{Alg}(G_l)$. Let $\mathcal B=\mathrm{Alg}(G_n)$ and let $\mathcal D$ be a Serre full subcategory of $\mathcal B$. Let $\mathcal F: \mathcal C \rightarrow \mathcal D$ be an exact additive functor. We also regard objects in $\mathcal C$ as objects in $\mathcal A$ via the inclusion. Assume that
\begin{enumerate}
\item any object in $\mathcal C$ is of finite length;
\item for any simple objects $X, Y$ in $\mathcal C$, the induced map of $\mathcal F$, from $\mathrm{Ext}^1_{\mathcal A}(X, Y)$ to $\mathrm{Ext}^1_{\mathcal B}(\mathcal F(X), \mathcal F(Y))$ is an injection and,
\item $\mathcal F(X)$ is a simple object in $\mathcal D$ if $X$is simple in $\mathcal C$; and
\item  for any simple objects $X$ and $Y$ in $\mathcal C$, $\mathcal F(X) \cong \mathcal F(Y)$ if and only if $X \cong Y$.
\end{enumerate}
 Then for any objects $X, Y$ in $\mathcal C$, the induced map from $\mathrm{Ext}^1_{\mathcal A}(X,Y)$ to $\mathrm{Ext}^1_{\mathcal B}(\mathcal F(X), \mathcal F(Y))$ is also injective, and $\mathcal F: \mathcal C \rightarrow \mathcal D$ is fully-faithful i.e. 
\[\mathrm{Hom}_{\mathcal B}(\mathcal F(X), \mathcal F(Y))\cong \mathrm{Hom}_{\mathcal D}(\mathcal F(X), \mathcal F(Y)) \cong \mathrm{Hom}_{\mathcal C}(X,Y) \cong \mathrm{Hom}_{\mathcal A}(X,Y) \]
 for any objects $X,Y$ in $\mathcal C$. 

\end{lemma}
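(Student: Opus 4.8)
The plan is to establish the two assertions simultaneously by induction on $\ell(X)+\ell(Y)$, where $\ell$ denotes length in $\mathcal C$ (finite by hypothesis (1)): namely that $\mathrm{Hom}_{\mathcal A}(X,Y)\to \mathrm{Hom}_{\mathcal B}(\mathcal F(X),\mathcal F(Y))$ is an isomorphism and that $\mathrm{Ext}^1_{\mathcal A}(X,Y)\to \mathrm{Ext}^1_{\mathcal B}(\mathcal F(X),\mathcal F(Y))$ is injective. Since $\mathcal C\subset\mathcal A$ and $\mathcal D\subset\mathcal B$ are full subcategories, $\mathrm{Hom}_{\mathcal A}(X,Y)=\mathrm{Hom}_{\mathcal C}(X,Y)$ and $\mathrm{Hom}_{\mathcal B}(\mathcal F(X),\mathcal F(Y))=\mathrm{Hom}_{\mathcal D}(\mathcal F(X),\mathcal F(Y))$, so the displayed chain of isomorphisms (i.e. full-faithfulness) is exactly the first assertion. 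Recall that $\mathrm{Alg}(G_l)$ and $\mathrm{Alg}(G_n)$ have enough projectives and injectives, so the groups $\mathrm{Ext}^j$ and their long exact sequences are available; since $\mathcal F$ is exact it carries a Yoneda $1$-extension in $\mathcal C$ (hence in $\mathcal A$) to one in $\mathcal D$ (hence in $\mathcal B$), which is the map in question, and this construction commutes with the pullback and pushout maps that define the connecting homomorphisms, so the $\mathcal F$-induced maps fit into a ladder of long exact sequences.

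For the base case $\ell(X)=\ell(Y)=1$ (both simple): if $X\not\cong Y$ then $\mathrm{Hom}_{\mathcal A}(X,Y)=0$ and, by (3) and (4), $\mathcal F(X),\mathcal F(Y)$ are non-isomorphic simple objects of $\mathcal D$, so $\mathrm{Hom}_{\mathcal B}(\mathcal F(X),\mathcal F(Y))=0$ and the Hom-map is trivially bijective; if $X\cong Y$, Schur's lemma for smooth irreducible representations of $\mathrm{GL}_l(F)$ gives $\mathrm{End}_{\mathcal A}(X)=\mathbb{C}$, and likewise $\mathrm{End}_{\mathcal B}(\mathcal F(X))=\mathbb{C}$ by (3), and the induced $\mathbb{C}$-algebra map sends $\mathrm{id}$ to $\mathrm{id}$, hence is an isomorphism. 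The injectivity on $\mathrm{Ext}^1$ for simple $X,Y$ is precisely hypothesis (2).

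For the inductive step assume $\ell(X)+\ell(Y)>2$, so one of $X$, $Y$ has length $\geq 2$; say $\ell(Y)\geq 2$ (the case $\ell(X)\geq 2$ is treated symmetrically, with $\mathrm{Hom}(-,Y)$, $\mathrm{Ext}^1(-,Y)$ in place of $\mathrm{Hom}(X,-)$, $\mathrm{Ext}^1(X,-)$, the long exact sequence now running over subquotients of $X$). Pick a short exact sequence $0\to Y'\to Y\to Y''\to 0$ in $\mathcal C$ (possible, $\mathcal C$ being Serre) with $0<\ell(Y'),\ell(Y'')<\ell(Y)$; applying $\mathcal F$ gives $0\to \mathcal F(Y')\to \mathcal F(Y)\to \mathcal F(Y'')\to 0$ in $\mathcal D$. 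By the inductive hypothesis the $\mathcal F$-maps on $\mathrm{Hom}(X,Y')$, $\mathrm{Hom}(X,Y'')$ are isomorphisms and those on $\mathrm{Ext}^1(X,Y')$, $\mathrm{Ext}^1(X,Y'')$ are injections. Now apply the four lemma to the appropriate four-term exact segments of the comparison ladder: using $0\to\mathrm{Hom}(X,Y')\to\mathrm{Hom}(X,Y)\to\mathrm{Hom}(X,Y'')$ one gets injectivity of the $\mathrm{Hom}(X,Y)$-map; using $\mathrm{Hom}(X,Y')\to\mathrm{Hom}(X,Y)\to\mathrm{Hom}(X,Y'')\xrightarrow{\delta}\mathrm{Ext}^1(X,Y')$ together with injectivity of the $\mathrm{Ext}^1(X,Y')$-map one gets surjectivity of the $\mathrm{Hom}(X,Y)$-map; and using $\mathrm{Hom}(X,Y'')\xrightarrow{\delta}\mathrm{Ext}^1(X,Y')\to\mathrm{Ext}^1(X,Y)\to\mathrm{Ext}^1(X,Y'')$ together with surjectivity of the $\mathrm{Hom}(X,Y'')$-map and injectivity of the $\mathrm{Ext}^1(X,Y')$- and $\mathrm{Ext}^1(X,Y'')$-maps one gets injectivity of the $\mathrm{Ext}^1(X,Y)$-map. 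This completes the induction.

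The genuinely routine steps are the compatibility of $\mathcal F$ with the connecting maps and the precise mono/epi bookkeeping in the four-lemma calls; I expect no real difficulty there. The only points that actually need care — and where the Serre hypothesis and (3)–(4) enter — are ensuring that every object occurring in the dévissage, together with its $\mathcal F$-image, again lies in $\mathcal C$ respectively $\mathcal D$ (with simple $\mathcal F$-image when the object is simple), so that the inductive hypothesis genuinely applies, and keeping track that on $\mathrm{Ext}^1$ one only ever claims \emph{injectivity}: the four lemma supplies exactly that, and that is all that is needed.
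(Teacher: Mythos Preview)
Your proposal is correct and takes essentially the same approach as the paper: a d\'evissage by induction on lengths, using the long exact sequences in $\mathrm{Hom}$ and $\mathrm{Ext}^1$ together with a diagram chase (what you name explicitly as the four lemma, and what the paper leaves as ``it is direct to check''). The only organizational difference is that you induct on $\ell(X)+\ell(Y)$ in one go, whereas the paper does a two-pass induction (fix a bound on $\ell(X)$ and induct on $\ell(Y)$, then swap roles); your scheme is slightly cleaner and your explicit identification of the three four-lemma calls is more transparent than the paper's presentation.
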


\begin{proof}
Let $X$ and $Y$ be objects in $\mathcal C$. When both lengths of $X$ and $Y$ are $1$ in $\mathcal C$, 
\[  \mathrm{Hom}_{\mathcal D}(\mathcal F(X), \mathcal F(Y)) \cong \mathrm{Hom}_{\mathcal C}(X,Y), \quad \mathrm{Ext}^1_{\mathcal A}(X,Y) \hookrightarrow \mathrm{Ext}^1_{\mathcal B}(\mathcal F(X), \mathcal F(Y))
\]
are guaranteed by (2), (3) and (4). We first fix the length of $X$ to be at most some $n$. We shall prove the statement for arbitrary $Y$ by induction on the length of $Y$.

For an object $Y$ in $\mathcal C$, let $Y_1$ be an irreducible quotient of $Y$. Then we have a short exact sequence:
\[  0 \rightarrow Y_2 \rightarrow Y \rightarrow Y_1 \rightarrow 0 .
\]
Since $\mathcal C$ is Serre, $Y_1$ and $Y_2$ are in $\mathcal C$.

Note that we have the following commutative diagram:
\[ \xymatrix{ \mathrm{Hom}_{\mathcal A}(X, Y_1) \ar[r] \ar[d] &  \mathrm{Ext}^1_{\mathcal A}(X, Y_2) \ar[r] \ar[d] & \mathrm{Ext}^1_{\mathcal A}(X, Y) \ar[r] \ar[d] & \mathrm{Ext}^1_{\mathcal A}(X, Y_1) \ar[d] \\
 \mathrm{Hom}_{\mathcal B}(\mathcal F(X), \mathcal F(Y_1)) \ar[r] & \mathrm{Ext}^1_{\mathcal B}(\mathcal F(X), \mathcal F(Y_2)) \ar[r] & \mathrm{Ext}^1_{\mathcal B}(\mathcal F(X), \mathcal F(Y)) \ar[r] & \mathrm{Ext}^1_{\mathcal B}(\mathcal F(X), \mathcal F(Y_1)) 
},
\]
where the horizontal maps come from long exact sequences, in which the connecting homomorphism is the Yoneda product, and vertical maps for $\mathrm{Ext}^1$ are described in the beginning of this section, and the vertical map for $\mathrm{Hom}$ is the map induced from the functor.

We have the first vertical arrow is isomorphism and the second and forth vertical arrows are injections by induction hypothesis. Then it is direct to check that the third vertical arrow is also an injection. 

Now we consider another commutative diagram:
\[ \xymatrix{ 0 \ar[r] & \mathrm{Hom}_{\mathcal A}(X,Y_1) \ar[r] \ar[d] & \mathrm{Hom}_{\mathcal A}(X, Y) \ar[r] \ar[d] & \mathrm{Hom}_{\mathcal A}(X,Y_2) \ar[r] \ar[d] & \mathrm{Ext}^1_{\mathcal A}(X,Y_1) \ar[d] \\
0 \ar[r] & \mathrm{Hom}_{\mathcal B}(\mathcal F(X),\mathcal F(Y_1)) \ar[r] & \mathrm{Hom}_{\mathcal B}(\mathcal F(X), \mathcal F(Y)) \ar[r] & \mathrm{Hom}_{\mathcal B}(\mathcal F(X), \mathcal F(Y_2)) \ar[r] & \mathrm{Ext}^1_{\mathcal B}(\mathcal F(X),\mathcal F(Y_1))  
}
\]
The first and third vertical arrows are isomorphisms by induction and the last vertical arrow is an injection by induction again. Thus we have that the second vertical arrow is an isomorphism. 

Now we switch the role of $X$ and $Y$, and use similar argument to prove that the assertion is true for $X$ and $Y$ of arbitrary finite length.

\end{proof}

\begin{remark}
The above lemma is also valid for arbitrary abelian categories $\mathcal A$ and $\mathcal B$ which are Schurian $k$-categories, where $k$ is a field i.e. 
\[  \mathrm{Hom}_{\mathcal A}(X,X) \cong k, \quad \mbox{ and } \mathrm{Hom}_{\mathcal B}(Y,Y) \cong k 
\] 
for any simple objects $X$ and $Y$ in $\mathcal A$ and $\mathcal B$ respectively.
\end{remark}


\begin{thebibliography}{AGRS}
\bibitem[AGRS10]{AGRS10} A. Aizenbud, D. Gourevitch, S. Rallis and G. Schiffmann, {\it Multiplicity one theorems}, Ann. of Math. (2) {\bf 172} (2010), no. 2, 1407-1434. 
\bibitem[AS18]{AS18} A. Aizenbud and E. Sayag, Homological multiplicities in representation theory of p-adic groups. Math. Z. (2019), https://doi.org/10.1007/s00209-019-02262-4.
\bibitem[Ar89]{Ar89}  J. Arthur, Unipotent automorphic representations: conjectures, Orbites unipotentes et representations, II. Asterisque No. 171-172 (1989), 13-71.
\bibitem[At18]{At18} H. Atobe, The local theta correspondence and the local Gan-Gross-Prasad conjecture for the symplectic-metaplectic case, Math. Ann. {\bf 371} (1-2), 225-295 (2018)
\bibitem[Be84]{Be84} J. Bernstein, P-invariant distributions on GL(N) and the classification of unitary representations of GL(N) (non-Archimedean case), in Lie Group Representations, II (College Park, Md., 1982/1983),Lecture Notes in Math.1041, Springer-Verlag, New York, 1984, pp. 50-102
\bibitem[BZ76]{BZ76} I. N. Bernstein and A. V. Zelevinsky, Representations of the group $\mathrm{GL}(n,F)$, where $F$ is a non-archimedean local field, Russian Math. Surveys 31:3 (1976), 1-68. 
\bibitem[BZ77]{BZ77} I. N. Bernstein and A. V. Zelevinsky, {\it Induced representations of reductive p-adic groups}, I, Ann. Sci. Ecole Norm. Sup. {\bf 10} (1977), 441-472.
\bibitem[BP14]{BP14}  R. Beuzart-Plessis, Endoscopie et conjecture locale raffinee de Gan-Gross-Prasad pour les groupes' unitaires, Compos. Math. {\bf 151} (2015), no. 7, 1309-1371.
\bibitem[BP15]{BP15} R. Beuzart-Plessis, A local trace formula for the Gan-Gross-Prasad conjecture for unitary groups: The archimedean case, arXiv:1506.01452 [math.RT] (2015).
\bibitem[Ch16]{Ch16} K.Y. Chan, Duality for Ext-groups and extensions of discrete series for graded Hecke algebras, Adv. Math. {\bf 294} (2016), 410-453.
\bibitem[Ch18]{Ch18} K.Y. Chan, Some methods of computing first extensions between modules of graded Hecke algebras, Algebra and Represent Theory {\bf 21}, 859-895 (2018). http://doi.org/10.1007/s10468-017-9742-8
%\bibitem[Ch2]{Ch2} K.Y. Chan, First extensions, arXiv (2016)
%\bibitem[Ch18]{Ch18} K.Y. Chan, A vanishing theorem for Dirac cohomology of standard modules, Advances in Mathematics {\bf 325} (2018), 274-311
\bibitem[Ch21]{Ch19}  Chan, K.Y. Homological branching law for (GLn+1(F),GLn(F)): projectivity and indecomposability. Invent. math. (2021). https://doi.org/10.1007/s00222-021-01033-5
\bibitem[Ch21+]{Ch21} K.Y. Chan, Ext-multiplicity theorem for standard representations of $(\mathrm{GL}_{n+1}, \mathrm{GL}_n)$, arXiv:2104.11528 
\bibitem[CS19]{CS19} K.Y. Chan and G. Savin, Bernstein-Zelevinsky derivatives: a Hecke algebra approach, IMRN (2019), 731-760, https://doi.org/10.1093/imrn/rnx138
\bibitem[CS18]{CS} K.Y. Chan and G. Savin, the Iwahori-component of the Gelfand-Graev representations, Math. Z. (2018), 288, 125-133, https://doi.org/10.1007.s00209-017-1882-3.
\bibitem[CS21]{CS18} K.Y. Chan and G. Savin, A vanishing Ext-branching theorem for $(\mathrm{GL}_{n+1}(F), \mathrm{GL}_n(F))$, Duke Math Journal, 2021, 170 (10), 2237-2261. https://doi.org/10.1215/00127094-2021-0028
\bibitem[ChSu15]{CS13} F. Chen and B. Sun: Uniqueness of Rankin-Selberg Periods, International Mathematics Research Notices, 14, pp 5849-5873 (2015).
%\bibitem[CS18c]{CS18b} K.Y. Chan and G. Savin, Iwahori component of Bessel models, arXiv:1806.04125.
%\bibitem[CM15]{CM15} D. Ciubotaru and A. Moy, Dirac cohomology of one $W$-type representations, Proc. Amer. Math. Soc. {\bf 143} (2015), no. 3, 1001-1013. 
%\bibitem[CP17]{CP} Cogdell J.W., Piatetski-Shapiro I.I. (2017) Derivatives and L-Functions for GLn. In: Cogdell J., Kim JL., Zhu CB. (eds) Representation Theory, Number Theory, and Invariant Theory. Progress in Mathematics, vol 323. Birkh\"auser

\bibitem[GP93]{GP93} B. Gross and D. Prasad, On the decomposition of a representation of $\mathrm{SO}_n$ when restricted to $\mathrm{SO}_{n-1}$. Canad. J. Math. 44 (1992), no. 5, 974-1002.
\bibitem[GGP12]{GGP12} W.T. Gan, B.H. Gross, and D. Prasad, Symplectic local root numbers, central critical L values, and restriction problems in the representation theory of classical groups. Sur les conjectures de Gross et Prasad. I., Asterisque No. 346 (2012), 1-109.
\bibitem[GGP20]{GGP19} W.T. Gan, B.H. Gross, and D. Prasad, Branching laws for classical groups: the non-tempered case, Compositio Mathematica, 156(11), 2298-2367. doi:10.1112/S0010437X20007496
\bibitem[GI16]{GI16} W.T. Gan and A. Ichino, The Gross-Prasad conjecture and local theta correspondence, Invent. Math., {\bf 206} (2016), no. 3, 705-799.
 \bibitem[Gu18]{Gu18} M. Gurevich, On restriction of unitarizable representations of general linear groups and the non-generic local Gan-Gross-Prasad conjecture, arXiv: 1808.02640, to appear in JEMS
\bibitem[GS20]{GS20} D. Gourevitch and E. Sayag, Annihilator varieties of distinguished modules of reductive lie algebras, arXiv:2001.11746v2
\bibitem[HT01]{HT01} M. Harris and R. Taylor, The geometry and cohomology of some simple Shimura varieties, With an appendix by Vladimir G. Berkovich, Annals of Mathematics Studies, 151. Princeton University Press, Princeton, NJ, 2001.
\bibitem[He00]{He00} G. Henniart, Une preuve simple des conjectures de Langlands pour GL(n) sur un corps p-adique. Invent. Math., 139(2):439-455, 2000.
\bibitem[He17]{He17} Hongyu He, On the Gan-Gross-Prasad Conjecture for U(p,q), Invent. Math. 209(2017) 837-884
\bibitem[Or05]{Or05} S. Orlik, On extensions of generalized Steinberg representations, J. Algebra, 293 (2) (2005), pp. 611-630
\bibitem[Pr93]{Pr93} D. Prasad,  On the decomposition of a representation of $\mathrm{GL}(3)$ restricted to $\mathrm{GL}(2)$ over a $p$-adic field, {\it Duke Math. J.} {\bf 69} (1993), 167-177.
\bibitem[Pr18]{Pr18} D. Prasad, An Ext-analogue of branching laws, ICM proceedings 2018.
\bibitem[JPSS83]{JPSS83} H. Jacquet, I. Piatetski-Shapiro and J. Shalika, Rankin-Selberg convolutions, Amer. J. Math. 105 (1983), no. 2, 367-464.
\bibitem[KL12]{KL12} Arno Kret and Erez Lapid, Jacquet modules of ladder representations, C. R. Math. Acad. Sci. Paris 350 (2012), no. 21-22, 937-940. 
\bibitem[LRS93]{LRS93} Laumon, G.; Rapoport, M.; Stuhler, U. (1993), D-elliptic sheaves and the Langlands correspondence, Inventiones Mathematicae, 113 (2): 217-338
\bibitem[LM14]{LM14} Lapid, E. and A. M\'inguez. On a determinantal formula of Tadi\'c. Amer. J. Math. 136 (2014): 111-142.
\bibitem[LM16]{LM16} E. Lapid, A. M\'inguez, {\it On parabolic induction on inner forms of the general linear group over a non-Archimedean local field},  Sel. Math. New Ser. (2016) 22, 2347-2400. 
%\bibitem[Li14]{Li14} Liu, Y. Relative trace formulae toward Bessel and Fourier-Jacobi periods on unitary groups. manuscripta math. 145, 1-69 (2014). https://doi.org/10.1007/s00229-014-0666-x
\bibitem[MW12]{MW12} C. M{\oe}glin and J.-L. Waldspurger, La conjecture locale de Gross-Prasad pour les groupes speciaux orthogonaux: le cas general, Sur les conjectures de Gross et Prasad. II, Astersque. No. {\bf 247} (2012), 167-216.
\bibitem[OS12]{OS12} E.M. Opdam, M. Solleveld, Extensions of tempered representations, Geom. Funct. Anal. 23 (2013) 664-714.
\bibitem[Sc13]{Sc13} P. Scholze, The Local Langlands correspondence for $\mathrm{GL}_n$ over p-adic fields, Invent. math. (2013) 192: 663. https://doi.org/10.1007/s00222-012-0420-5 
\bibitem[Sh74]{Sh74} J. A. Shalika, The Multiplicity One Theorems for $\mathrm{GL}_n$, Annals of Mathematics Second Series, Vol. 100, No. 2 (Sep., 1974), pp. 171-193. 
\bibitem[Sta]{St} The Stacks Project Authors, Stacks Project, https://stacks.math.columbia.edu, 2020
\bibitem[Su12]{Su12} Sun, Binyong. Multiplicity one theorems for Fourier-Jacobi models., American Journal of Mathematics, vol. 134 no. 6, 2012, p. 1655-1678. Project MUSE, doi:10.1353/ajm.2012.0044.
\bibitem[SZ12]{SZ12} B. Sun and C. B. Zhu, {\it Multiplicity one theorems: the Archimedean case}, Ann. of Math. (2) {\bf 175} (2012), no. 1, 23-44. 
\bibitem[Ta86]{Ta86} M. Tadi\'c, On the classification of irreducible unitary representations of GL(n) and the conjectures of Bernstein and Zelevinsky, Ann. Sci. \'Ecole Norm. Sup., {\bf 19} (1986), 335-382.
\bibitem[Ve05]{Ve05}  A. Venkatesh, The Burger-Sarnak method and operations on the unitary dual of GL(n). Representation Theory, vol. 9, pages 268-286 (2005).
\bibitem[Wa12]{Wa12} J.L. Waldspurger, La conjecture locale de Gross-Prasad pour les representations temperes des groupes speciaux orthogonaux, {\it Asterisque} No. {\bf 347} (2012), 103-165.
\bibitem[Ze80]{Ze80} A. Zelevinsky, {\it Induced representations of reductive p-adic groups II}, Ann. Sci. Ecole Norm. Sup. {\bf 13} (1980), 154-210.
\end{thebibliography}
 \end{document}